\DeclareFontFamily{U}{wncy}{}
\DeclareFontShape{U}{wncy}{m}{n}{<->wncyr10}{}
\DeclareSymbolFont{mcy}{U}{wncy}{m}{n}
\DeclareMathSymbol{\Sha}{\mathord}{mcy}{"58}
\newenvironment{smatrix}{\left( \begin{smallmatrix} } {\end{smallmatrix} \right) }
\newcommand{\stbt}[4]{\begin{smatrix}#1 & #2 \\ #3 & #4\end{smatrix}}
\theoremstyle{plain}
\newtheorem{theorem}{Theorem}[subsection]
\newtheorem{lemma}[theorem]{Lemma}
\newtheorem{proposition}[theorem]{Proposition}
\newtheorem{corollary}[theorem]{Corollary}
\newtheorem{definition}[theorem]{Definition}
\newtheorem{conjecture}[theorem]{Conjecture}
\newtheorem{assumption}[theorem]{Assumption}
\newtheorem*{notation}{Notation}
\theoremstyle{remark}
\declaretheorem[name=Remark,sibling=theorem,qed={\lower-0.3ex\hbox{$\diamond$}}]{remark}
\declaretheorem[name=Note,sibling=theorem,qed={\lower-0.3ex\hbox{$\diamond$}}]{note}
\theoremstyle{plain}
\newtheorem{lettertheorem}{Theorem}
\newcommand{\qedthere}{%
 \par \vspace{-1.2\baselineskip}
 \hfill\qed
}
\DeclareMathOperator{\GL}{GL}
\DeclareMathOperator{\GSp}{GSp}
\DeclareMathOperator{\Gal}{Gal}
\DeclareMathOperator{\Gr}{Gr}
\DeclareMathOperator{\Hom}{Hom}
\DeclareMathOperator{\Ind}{Ind}
\DeclareMathOperator{\Nm}{Nm}
\DeclareMathOperator{\SL}{SL}
\DeclareMathOperator{\alg}{alg}
\DeclareMathOperator{\comp}{comp}
\DeclareMathOperator{\loc}{loc}
\DeclareMathOperator{\norm}{norm}
\DeclareMathOperator{\pr}{pr}
\DeclareMathOperator{\rank}{rank}
\DeclareMathOperator{\Char}{char}
\newcommand{\syn}{\mathrm{syn}}
\newcommand{\Iw}{\mathrm{Iw}}
\newcommand{\AKf}{\AA_{K, \mathrm{f}}}
\newcommand{\Af}{\AA_{\mathrm{f}}}
\newcommand{\CC}{\mathbf{C}}
\newcommand{\Dcris}{\mathbf{D}_{\mathrm{cris}}}
\newcommand{\DdR}{\mathbf{D}_{\mathrm{dR}}}
\newcommand{\Ht}{\widetilde{H}}
\newcommand{\Pif}{\Pi_{\mathrm{f}}}
\newcommand{\Qb}{\overline{\QQ}}
\newcommand{\QQ}{\mathbf{Q}}
\newcommand{\Qp}{\QQ_p}
\newcommand{\RG}{\operatorname{R\Gamma}}
\newcommand{\RGt}{\widetilde{\RG}}
\newcommand{\RR}{\mathbf{R}}
\newcommand{\ZZ}{\mathbf{Z}}
\newcommand{\Zp}{\ZZ_p}
\DeclareMathOperator{\rk}{rank}
\newcommand{\bz}{\mathbf{z}}
\newcommand{\bq}{\mathbf{q}}
\newcommand{\bt}{\mathbf{t}}
\newcommand{\bc}{\mathbf{c}}
\newcommand{\bka}{\pmb{\kappa}}
\newcommand{\TT}{\mathbf{T}}
\newcommand{\cE}{\mathcal{E}}
\newcommand{\cF}{\mathcal{F}}
\newcommand{\cH}{\mathcal{H}}
\newcommand{\cK}{\mathcal{K}}
\newcommand{\cL}{\mathcal{L}}
\newcommand{\cM}{\mathcal{M}}
\newcommand{\cO}{\mathcal{O}}
\newcommand{\cP}{\mathcal{P}}
\newcommand{\cR}{\mathcal{R}}
\newcommand{\cV}{\mathcal{V}}
\newcommand{\cW}{\mathcal{W}}
\newcommand{\cY}{\mathcal{Y}}
\newcommand{\cZ}{\mathcal{Z}}
\newcommand{\can}{\mathrm{can}}
\newcommand{\cl}{\mathrm{cl}}
\newcommand{\cyc}{\mathrm{cyc}}
\newcommand{\dR}{\mathrm{dR}}
\newcommand{\disc}{\operatorname{disc}(K/\QQ)}
\newcommand{\eps}{\epsilon}
\newcommand{\et}{\text{\textup{\'et}}}
\newcommand{\fQ}{\mathfrak{Q}}
\newcommand{\fa}{\mathfrak{a}}
\newcommand{\fb}{\mathfrak{b}}
\newcommand{\fn}{\mathfrak{n}}
\newcommand{\fp}{\mathfrak{p}}
\newcommand{\fq}{\mathfrak{q}}
\newcommand{\htimes}{\mathop{\hat\otimes}}
\newcommand{\id}{\mathrm{id}}
\newcommand{\into}{\hookrightarrow}
\newcommand{\onto}{\twoheadrightarrow}
\newcommand{\ord}{\mathrm{ord}}
\newcommand{\pif}{\pi_{\mathrm{f}}}
\newcommand{\rig}{\mathrm{rig}}
\newcommand{\uPi}{\underline{\Pi}}
\newcommand{\uk}{\underline{k}}
\newcommand{\upi}{\underline{\pi}}
\newcommand{\ut}{\underline{t}}
\DeclareMathOperator{\KS}{KS}
\numberwithin{equation}{section}
\renewcommand{\AA}{\mathbf{A}}
\renewcommand{\ge}{\geqslant}
\renewcommand{\le}{\leqslant}
\newcommand{\Qi}{\QQ_\infty}
\newcommand{\Qpi}{\QQ_{p, \infty}}
\newcommand{\Nek}{Nekov\'{a}\v{r}\xspace}
\newcommand{\LPR}{\mathscr{L}^{\mathrm{PR}}}
\author{David Loeffler}
\author{Sarah Livia Zerbes}
\title{Iwasawa theory for quadratic Hilbert modular forms}
\begin{document}

 \begin{abstract}
  We study the Iwasawa main conjecture for quadratic Hilbert modular forms over the $p$-cyclotomic tower. Using an Euler system in the cohomology of Siegel modular varieties, we prove  the ``Kato divisibility'' of the Iwasawa main conjecture under certain technical hypotheses. By comparing this result with the opposite divisibility due to Wan, we obtain the full Main Conjecture over the cyclotomic $\ZZ_p$-extension.

  As a consequence, we prove new cases of the Bloch--Kato conjecture for quadratic Hilbert modular forms, and of the equivariant Birch--Swinnerton-Dyer conjecture in analytic rank $0$ for elliptic curves over real quadratic fields twisted by Dirichlet characters.

  As a ``by-product'' of the theory developed here, we also present new results on Iwasawa theory for Rankin--Selberg convolutions of modular forms, relaxing hypotheses of $p$-distinction or $p$-regularity assumed in previous works. This gives new cases of the equivariant BSD conjecture for elliptic curves over $\QQ$ twisted by 2-dimensional odd Artin representations, giving finiteness of the $p$-part of the Tate--Shafarevich group for all but finitely many ordinary primes.
 \end{abstract}

   \maketitle

\setcounter{tocdepth}{1}
\tableofcontents

\section{Introduction}

 \subsection{The setting}

  Let $K$ be a real quadratic field, and let $\pi$ be a cuspidal automorphic representation of $\GL_2 / K$ of weight $(k_1, k_2, t_1, t_2)$, where $k_i, t_i$ are integers with $k_i \ge 2$ and $k_1 + 2t_1 = k_2 + 2t_2$. 
  Attached to $\pi$ we have an $L$-function $L(\pi, s)$, and a Galois representation $\rho_{\pi, v}: G_K \to \GL_2(E_v)$, for every prime $v$ of the coefficient field $E$ of $\pi$.

  \begin{conjecture}[Bloch--Kato conjecture]
   \label{conj:BK}
   If we have $L(\pi, 1+j) \ne 0$, for some integer $j$ with $t_i \le j \le k_i + t_i - 2$ for all $i$, then $H^1_{\mathrm{f}}(K, \rho_{\pi, v}^*(-j)) = 0$.
  \end{conjecture}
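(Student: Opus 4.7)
The plan is to attack Conjecture \ref{conj:BK} by the Euler system method, which is the natural route given the results announced in the abstract. Let $V = \rho_{\pi,v}^*(-j)$, regarded as a $G_K$-representation, and let $T \subset V$ be a $G_K$-stable $\cO_v$-lattice. The first task is to construct a norm-compatible family of cohomology classes
\[ c_m \in H^1(K(\mu_m), T) \quad (m \ge 1, (m,S) = 1 ) \]
satisfying the Euler system norm relations with respect to the Frobenius polynomials attached to $\rho_{\pi,v}$. Since $\pi$ transfers to a globally generic cuspidal automorphic representation $\Pi$ of $\GSp_4/\QQ$ via automorphic induction followed by the exterior-square/theta transfer, one expects such classes to arise as pushforwards of Eisenstein--Lemma--Flach classes on a Siegel modular threefold along a suitable embedding of a product of modular curves (or Shimura curves) associated to the pair $(K, \pi)$. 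This is the ``Euler system in the cohomology of Siegel modular varieties'' referred to in the abstract.

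Next, I would establish an \emph{explicit reciprocity law} identifying, via the Bloch--Kato dual exponential (or the Perrin-Riou regulator), the bottom class $c_1$ with the critical $L$-value $L(\pi, 1+j)$ up to an explicit nonzero factor. Concretely, one interpolates the classes $c_m$ $p$-adically as one varies $j$ (and, if needed, $\pi$ in a Hida/Coleman family), and shows that the image of the resulting Iwasawa-theoretic class under the big logarithm map coincides with a $p$-adic $L$-function $\LPR_\pi$ whose value at $s = 1+j$ recovers the classical $L$-value (times explicit local Euler factors and periods). The critical range $t_i \le j \le k_i + t_i - 2$ is precisely the range in which such an interpolation formula can be expected to be non-trivial.

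The third step is purely formal: once one has an Euler system whose bottom class is nonzero, the Kolyvagin--Rubin machinery (in the form developed for rank-two Euler systems, e.g.\ by Mazur--Rubin and Rubin) yields $H^1_{\mathrm f}(K, V) = 0$, provided the usual non-degeneracy hypotheses on the residual representation $\bar\rho_{\pi, v}$ (big image, distinguishedness at $p$, non-triviality of the relevant local cohomology) are satisfied. The hypothesis $L(\pi, 1+j) \ne 0$ feeds in via the reciprocity law to ensure that $c_1 \ne 0$, which is the input required by the machine.

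The main obstacle, as always for Euler system arguments, is the reciprocity law in step two: geometrically constructed classes have no \emph{a priori} link to automorphic $L$-values, and bridging this gap requires a delicate comparison between motivic cohomology on the Siegel threefold, its $p$-adic realisation via syntomic/Hyodo--Kato cohomology, and Rankin--Selberg or doubling-method integral representations of $L(\pi, s)$. The ``certain technical hypotheses'' mentioned in the abstract are very likely the local conditions at $p$ (e.g., $p$ splitting in $K$, $\pi$ being ordinary or finite-slope at primes above $p$, and sufficiently large image for $\bar\rho_{\pi,v}$) needed to make both the reciprocity law and the Euler system argument run cleanly.
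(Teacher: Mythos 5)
The statement you were asked about is Conjecture~\ref{conj:BK}, which the paper does \emph{not} prove in full generality: it remains a conjecture. What is actually proved is Theorem~\ref{thm:BK}, a special case of it under a substantial list of hypotheses (Theorem~A): $p$ split in $K$, $\pi$ unramified and ordinary above $p$, the central character of $\pi$ factoring through $\Nm_{K/\QQ}$, $\pi$ not a twist of a base change from $\GL_2/\QQ$, the big-image condition~(BI), and (conditionally) the eigenspace vanishing conjecture of \cite{LZ20}. Your plan should have opened by recognising that one cannot hope to carry this out unconditionally, and that the ``technical hypotheses'' are not merely cosmetic — several of them are structurally essential to the argument.

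Your high-level outline (Euler system in $\GSp_4$ cohomology, reciprocity law, Kolyvagin--Rubin machine) does correctly describe the skeleton of the paper's approach. But you misidentify where the hard work lies. You say the main obstacle is the reciprocity law linking the geometric classes to $L$-values; in fact that link is already in place through \cite{LSZ17}, \cite{LPSZ1} and \cite{LZ20}, at least when the weight $(\ell_1, \ell_2)$ of the Yoshida lift satisfies $\ell_2 \ge 3$. The genuine obstruction, which occupies most of \S\S\ref{sect:yoshida}--\ref{sect:constructES}, is that the most interesting Hilbert modular forms --- those of parallel weight, in particular those attached to elliptic curves --- have \emph{non-cohomological} twisted Yoshida lifts: $\ell_2 = 2$, so the lift does not appear in the \'etale cohomology of the $\GSp_4$ Shimura variety and \cite{LZ20} gives nothing directly. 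The paper's key technical contribution is the deformation argument: a carefully chosen one-parameter family $\cZ \subset \cW_K$ of direction $(4\mu, 2\mu, -2\mu, -\mu)$, an interpolation of the Euler system classes and of the period comparison isomorphism $\aleph$ over $\cZ$, and a delicate ``leading term at $X=0$'' analysis (\S\S\ref{sect:core}--\ref{sect:axiom}) using core-rank / Kolyvagin-system non-existence results to pin down the order of vanishing. None of this appears in your sketch.

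Two further, smaller issues. First, the functoriality used is the \emph{twisted Yoshida (theta) lift} $\GL_2/K \to \GSp_4/\QQ$, not ``automorphic induction followed by the exterior-square/theta transfer''; automorphic induction lands you on $\GL_4/\QQ$, which has no Shimura variety and hence no geometric Euler system. Second, the geometric input on the $\GSp_4$ side is the Lemma--Flach-type pushforward of Eisenstein classes along the embedding $\GL_2 \times_{\GL_1} \GL_2 \hookrightarrow \GSp_4$ of \cite{LSZ17}; it is not a pushforward from ``modular curves (or Shimura curves) associated to the pair $(K,\pi)$'' --- the cycle has nothing to do with $K$. The link to $K$ is imposed entirely through the Galois representation of $\Theta(\pi,\psi)$ being $\Ind_K^\QQ \rho_{\pi,v}$, and the resulting Euler system is for the rank-four induced representation; one then cuts down to the rank-two Greenberg-type Selmer group for $\rho_{\pi,v}^*$ over $K$ via Shapiro's lemma and the specific choice of local conditions $\square^{(\fp_1)}$, $\square^{(p)}$ at the two split primes above $p$.
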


  To state our second conjecture, we shall suppose that $\pi$ is unramified and ordinary at the primes $\fp \mid p$ of $K$, so that the $p$-adic $L$-function $L_p(\pi)$ is well-defined; this is a $p$-adic analytic function on a rigid space $\cW$ which naturally contains $\ZZ$, and at integers $j$ with $t_0 \le j \le t_0 + k_0 - 2$, the value $L_p(\pi)(j)$ is equal to $L(\pi, 1 + j)$ multiplied by an explicit, non-zero factor (depending on a choice of complex periods).

  The ordinarity of $\pi$ also allows us to define a \Nek--Selmer complex $\RGt_{\Iw}(K_\infty, \rho_{\pi, v}^*)$, where $K_\infty = K(\mu_{p^\infty})$ (see \cref{sect:selmer} below). This is a perfect complex of $\Lambda_{\cO}(\Gamma)$-modules, where $\Gamma = \Gal(K_\infty / K) \cong \Zp^\times$ and $\cO$ is a finite integral extension of $\Zp$. We can interpret $L_p(\pi)$ as an element of $\Lambda_{\cO}(\Gamma)$.

  \begin{conjecture}[Iwasawa main conjecture]
   \label{conj:Iw}
   The cohomology groups of $\RGt_{\Iw}(K_\infty, \rho_{\pi, v}^*)$ are zero except in degree 2, and the characteristic ideal of $\Ht^2_{\Iw}(K_\infty, \rho_{\pi, v}^*)$ as a $\Lambda_{\cO}(\Gamma)$-module is generated by $L_p(\pi)$.
  \end{conjecture}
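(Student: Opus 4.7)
The plan is to follow the ``Euler system plus Eisenstein congruence'' template: one divisibility (the ``Kato'' direction) is obtained by constructing a suitable Euler system for $\rho_{\pi, v}^*$ and running the Euler system machine, while the opposite divisibility is already known by work of Wan via Eisenstein congruences on a unitary group. Combining them gives the equality of ideals, and the vanishing of $\Ht^i_{\Iw}$ in degrees $\ne 2$ is extracted along the way.

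The first main step is to produce the Euler system. The representation $\pi$ of $\GL_2/K$ admits a functorial transfer (of Yoshida type) to an automorphic representation $\Pi$ of $\GSp_4/\QQ$ whose Galois representation is essentially $\Ind_{G_K}^{G_\QQ} \rho_{\pi, v}$. Applying the Euler system construction on the Siegel threefold for $\GSp_4$ to $\Pi$, I would produce a compatible family of Iwasawa cohomology classes $\{z_m\}$ with values in $\rho_{\pi, v}^*$, indexed by squarefree integers $m$ coprime to $p$ and to the bad primes of $\pi$, satisfying Euler system norm relations; the translation from $\GSp_4$-classes to $\GL_2/K$-classes goes via Shapiro's lemma. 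Verifying the norm relations is a Hecke-operator computation on the geometric construction.

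The second and hardest step is an explicit reciprocity law relating the bottom class $z_1$ to $L_p(\pi)$. I would construct a Perrin--Riou-style regulator $\LPR \colon \Ht^1_{\Iw}(K_\infty, \rho_{\pi, v}^*) \to \Lambda_{\cO}(\Gamma)$ interpolating dual exponential maps along the cyclotomic tower, and prove
\[ \LPR(z_1) = u \cdot L_p(\pi) \]
for some explicit unit $u$. Concretely, one computes the syntomic regulators of the geometric realisations of $z_1$ in Siegel-threefold cohomology and matches the result, at critical integer points $j \in \cW$, with the complex integral expression for $L(\pi, 1+j)$ via a $p$-adic zeta-integral identity. Interpolating this identity over all of $\cW$ is the main technical obstacle: it is an Iwasawa-theoretic analogue of a delicate local harmonic-analytic computation (involving Bessel functionals and local periods at $p$), and is where the bulk of the work lies.

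With the Euler system and the reciprocity law in place, standard Rubin/Mazur--Rubin arguments give, under the technical hypotheses on the residual image of $\rho_{\pi, v}$, the Kato divisibility $\mathrm{char}_{\Lambda_{\cO}(\Gamma)} \Ht^2_{\Iw}(K_\infty, \rho_{\pi, v}^*) \mid L_p(\pi)$. General properties of \Nek{} Selmer complexes of ordinary representations force $\Ht^i_{\Iw} = 0$ for $i \notin \{1, 2\}$; the Kato divisibility combined with the non-vanishing of $L_p(\pi)$ (guaranteed by the reciprocity law) shows that $\Ht^2_{\Iw}$ is $\Lambda_{\cO}(\Gamma)$-torsion, which by the global Euler characteristic formula forces $\Ht^1_{\Iw}(K_\infty, \rho_{\pi, v}^*) = 0$. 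Combining the Kato divisibility with Wan's opposite divisibility then yields the exact equality of characteristic ideals.
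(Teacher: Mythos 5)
The high-level template you describe---Euler system in one direction, Wan's opposite divisibility, combine---is indeed the route the paper takes, and the Yoshida transfer to $\GSp_4$ is correctly identified as the source of the Euler system. But there is a genuine gap in the step ``Applying the Euler system construction on the Siegel threefold for $\GSp_4$ to $\Pi$''. When $\pi$ has \emph{parallel weight} (the case of elliptic curves, and the most important one), the twisted Yoshida lift $\Pi = \Theta(\pi,\psi)$ has $\GSp_4$-weight $(\ell_1, 2)$, i.e.~$\ell_2 = 2$, and is therefore \textbf{not cohomological}: it does not appear in the \'etale cohomology of the Siegel threefold, so the Euler system machinery of \cite{LSZ17} and \cite{LZ20} simply does not apply to it directly. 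Dealing with this is the central technical work of the paper (\S\ref{sect:yoshida}--\S\ref{sect:constructES}): one deforms $\pi$ along a one-parameter slice $\cZ$ of a Hida family in $\cW_K$ chosen so that the Yoshida lifts of the generic members \emph{are} cohomological, builds the Euler system and reciprocity law in the family, and then extracts the class for $\pi$ itself as a ``leading term'' in the deformation variable $X$, keeping track of integral structures, local conditions at $p$, and the $\mu$-invariant correction $\varpi^q$ coming from the fact that the Betti--crystalline comparison isomorphism $\aleph$ only interpolates meromorphically. Your proposal treats the transfer as a clean one-shot construction plus Shapiro's lemma and so misses this entirely.

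Two further inaccuracies. You write that the reciprocity law ``guarantees'' $L_p(\pi) \ne 0$, hence torsionness; but the reciprocity law by itself gives no non-vanishing, and when $k_0 = 2$ one cannot even rule out that $L_p(\pi)$ vanishes identically on some components of $\cW$ (see \cref{prop:nonvanish-padic} and the remark following it). This is exactly why \cref{thm:mainconj} imposes a root-number hypothesis when $k = 2$ and invokes Waldspurger's theorem to choose the auxiliary quadratic twist with nonvanishing central value. Finally, Wan's converse divisibility is only available after projecting to $\Lambda_{\cO}(\Gamma^1)$, i.e.~over the cyclotomic $\Zp$-extension $K_\infty^1$; so what the paper actually proves is the $e_0$-component of the Main Conjecture under the extra hypotheses of \S\ref{sect:addhyp1}, not the full $\Lambda_{\cO}(\Gamma)$-equality stated in \cref{conj:Iw}. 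The conjecture as stated remains open: the paper proves one divisibility unconditionally-in-$\Gamma$ (\cref{thm:withzeta}), and the equality only over $K_\infty^1$ under additional hypotheses (\cref{thm:mainconj}).
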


  If $\pi$ corresponds to an elliptic curve $A$, then $\Ht^2_{\Iw}(K_\infty, \rho_{\pi, v}^*)$ is pseudo-isomorphic to the Pontryagin dual of $\operatorname{Sel}_{p^\infty}(A / K_\infty)$, so we recover the classical formulation of the Iwasawa main conjecture.

 \subsection{Our results}

  Our first main theorem proves one inclusion (the ``Kato divisibility'') in Conjecture \ref{conj:Iw}, for $\pi$ and $p$ satisfying certain hypotheses.

  \begin{lettertheorem}[{\cref{thm:withzeta}}]
   Assume that:
   \begin{enumerate}[(i)]
    \item $p$ is split in $K$.
    \item $\pi$ is unramified and ordinary at the primes above $p$.
    \item The central character of $\pi$ factors through $\Nm_{K/\QQ}$.
    \item $\pi$ is not a twist of a base-change from $\GL_2 / \QQ$.
    \item The Galois representation of $\pi$ satisfies the large-image condition (BI) (see \cref{sect:bigimage}).
   \end{enumerate}
   Then the characteristic ideal of $\Ht^2_{\Iw}(K_\infty, \rho_{\pi, v}^*)$ divides $L_p(\pi)$ in $\Lambda_{\cO}(\Gamma)$.

   In particular, if $L_p(\pi)$ is not a zero-divisor, then $\Ht^2_{\Iw}(K_\infty, \rho_{\pi, v}^*)$ is torsion, and $\Ht^i_{\Iw}(K_\infty, \rho_{\pi, v}^*)$ vanishes for $i \ne 2$.
  \end{lettertheorem}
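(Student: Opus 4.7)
The plan is to prove the divisibility via an Euler system for $\rho_{\pi,v}^*$ constructed from the étale cohomology of Siegel modular threefolds for $\GSp_4/\QQ$, together with an explicit reciprocity law linking its bottom class to $L_p(\pi)$. The passage to $\GSp_4$ is endoscopic: hypothesis (iii) on the central character ensures that $\Ind_{G_K}^{G_\QQ}\rho_{\pi,v}$ is symplectically self-dual with similitude given by a cyclotomic character, and hence appears in the cohomology of a Siegel threefold via a Yoshida-type lift $\Pi$ of $\pi$ to $\GSp_4(\AA_\QQ)$. By Shapiro's lemma, $\Ht^i_{\Iw}(K_\infty,\rho_{\pi,v}^*) \cong \Ht^i_{\Iw}(\QQ(\mu_{p^\infty}), \Ind_{G_K}^{G_\QQ}\rho_{\pi,v}^*)$, so the problem can be reformulated purely over $\QQ$ in terms of $\Pi$.

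The first step is to construct a collection of cohomology classes
\[
 c_m \in H^1\bigl(\QQ(\mu_m),\; \Ind_{G_K}^{G_\QQ}\rho_{\pi,v}^*\bigr)
\]
for squarefree $m$ coprime to $Np$, by pushing forward Beilinson--Flach-type classes from the Shimura subvariety associated to the embedding $(\GL_2 \times \GL_2)/\Gm \hookrightarrow \GSp_4$ and projecting to the $\Pi$-isotypic component. The tame norm relations at $\ell \nmid m$ reduce to Hecke-algebra identities on the Siegel threefold. Non-triviality of the bottom class requires $\Pi$ to be a \emph{genuine} Yoshida-type lift rather than degenerating into a CAP or further-endoscopic form arising from $\GL_2/\QQ$; this is exactly what hypothesis (iv) guarantees.

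The second key ingredient is an \emph{explicit reciprocity law}, identifying the image of the Iwasawa-theoretic bottom class under a Perrin-Riou-style big logarithm map with $L_p(\pi)$, up to an explicit nonzero unit. Hypotheses (i) and (ii) enter here: the splitting of $p$ in $K$ makes the local representation $\Pi_p$ decompose as a pair of components aligned with the two primes $\fp \mid p$ and with the $\fp$-ordinary filtrations on $\rho_{\pi,v}$. The reciprocity law then reduces, via a Coleman-type calculation, to a local zeta integral on $\GSp_4(\Qp)$ which, under endoscopic matching, becomes the Rankin--Selberg integral representing $L_p(\pi)$.

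With these two ingredients in place, the conclusion follows from the general Euler system machinery for \Nek--Selmer complexes: under the large-image hypothesis (v), a Kolyvagin-type argument yields
\[
 \mathrm{char}_{\Lambda_{\cO}(\Gamma)} \Ht^2_{\Iw}(K_\infty, \rho_{\pi,v}^*) \mid L_p(\pi),
\]
and the vanishing of $\Ht^i_{\Iw}$ in degrees $i \neq 2$ (under the non-zero-divisor assumption on $L_p(\pi)$) follows from the Euler characteristic formula for the \Nek--Selmer complex. I expect the explicit reciprocity law to be the main obstacle: in the endoscopic setting one must track the two primes $\fp \mid p$ separately, and correctly matching the $\GSp_4(\Qp)$-intertwining data coming from the Siegel side with the product of local $\GL_2(K_\fp)$-intertwinings on the $\pi$-side --- with the right normalisation of local $L$-factors --- is delicate, and is where most of the new local calculation is required.
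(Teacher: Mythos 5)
Your outline correctly identifies the broad architecture — Yoshida transfer to $\GSp_4$, a pushforward Euler system from the $\GL_2\times_{\Gm}\GL_2$ Shimura subvariety, an explicit reciprocity law, and the Kolyvagin machinery under (BI) — and your observation that matching the two primes $\fp_1,\fp_2$ with the Siegel local data is the crux of the reciprocity law is accurate. However, you have missed the central obstacle that drives most of the paper.

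The obstruction is that the Yoshida lift $\Pi = \Theta(\pi,\psi)$ of a Hilbert form of weight $(k_1,k_2)$ lands in weight $(\ell_1,\ell_2)$ with $\ell_2 = \tfrac{|k_1-k_2|}{2}+2$, so when $k_1 = k_2$ (the case of elliptic curves, and the most arithmetically interesting one) $\Pi_\infty$ is a \emph{limit} of discrete series and $\Pi$ is \emph{not} cohomological on $\GSp_4$. In that case the étale realisation, the Euler system classes, the $p$-adic $L$-function of \cite{LPSZ1} (which requires $\ell_2\geq4$, i.e. $|k_1-k_2|\geq4$), and the reciprocity law of \cite{LZ20} are simply not available for $\Pi$ itself. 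The construction you describe, applied directly, produces nothing when $k_1=k_2$ or $|k_1-k_2|=2$. The paper's resolution is an extensive $p$-adic deformation argument: one runs a one-parameter family $\cZ$ through $(\uk,\ut)$ inside the Hilbert eigenvariety, chosen so that its generic classical points have very regular weight and transfer to genuinely cohomological points of the $\GSp_4$ eigenvariety via a closed immersion $\Upsilon$ (the choice $\gamma=2$, i.e. $(k_1+4\mu,k_2+2\mu,\dots)$, is delicate — see the discussion in \S\ref{sect:cZ}); one then has to interpolate the comparison isomorphism $\aleph_{\dR,\fp_2}$ between Betti and crystalline data \emph{meromorphically} over $\cZ$, and extract the Euler system for $\pi$ as a carefully controlled ``leading term'' at the central fibre. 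This requires the axiomatic machinery of \S\ref{sect:axiom} (boundedness of the family, a core-rank argument to pin down the exact order of vanishing $h$, and Mazur--Rubin vanishing theorems to show the leading-term Euler system is genuinely nonzero) plus the two-$r$ trick to eliminate the ``cowardly'' factors ${}_cC_m^{[r]}$. None of this appears in your sketch, and without it the argument does not go through for the weights one actually cares about.

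A secondary point: you assert non-triviality of the bottom class follows from hypothesis (iv). In the paper hypothesis (iv) enters to guarantee the existence of infinitely many primes $v$ satisfying (BI) (via Proposition~\ref{prop:HMFlargeimage}) and to ensure $\pi^\sigma\neq\pi$ so the theta-lift is cuspidal and generic; the non-vanishing of the Euler system is a separate issue handled via non-vanishing of the analytic $p$-adic $L$-function and the core-rank argument, and in fact in the borderline weight-$2$ case the paper explicitly cannot rule out identical vanishing on one component of weight space.
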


  Comparing our result with the opposite bounds established by Wan \cite{wan15b}, we obtain the full Iwasawa Main Conjecture over the cyclotomic $\Zp$-extension, under somewhat stronger hypotheses. Let $K_\infty^1$ be the unique $\Zp$-extension of $K$ contained in $K_\infty$, and $\Gamma^1 \cong \Zp$ its Galois group, so we can write $\Lambda_{\cO}(\Gamma^1) = e_0 \Lambda_{\cO}(\Gamma)$ for an idempotent $e_0$.

  \begin{lettertheorem}[{\cref{thm:mainconj}}]
   Suppose the hypotheses of the previous theorem are satisfied, and also that
   \begin{itemize}
   \item $k_1 = k_2 = k$ for some even $k \ge 2$, and we take $t_1 = t_2 = 1 - \frac{k}{2}$;
   \item the central character of $\varepsilon_{\pi}$ is trivial;
   \item $\rho_{\pi, v}$ is minimally ramified, i.e.~the residual representation $\bar{\rho}_{\pi, v}$ has Artin conductor $\fn$;
   \item either $k > 2$, or the sign in the functional equation of $L(\pi, s)$ is $+1$.
   \end{itemize}
   Then we have
   \[\Char_{\Lambda_{\cO}(\Gamma^1)} \Ht^2_{\Iw}(K^1_\infty, \rho_{\pi, v}^*) = \left(e_0 \cdot L_p(\pi) \right) \]
   as ideals of $\Lambda_{\cO}(\Gamma^1)$.
  \end{lettertheorem}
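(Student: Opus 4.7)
The strategy is to combine the previous theorem (Kato divisibility) with the opposite divisibility proved by Wan in~\cite{wan15b}, after descending the previous theorem to the cyclotomic $\Zp$-extension. Write $\Gamma = \Delta \times \Gamma^1$, where $\Delta$ is the prime-to-$p$ torsion subgroup, so that $e_0$ is the idempotent projecting onto the trivial character of $\Delta$, with $e_0 \Lambda_{\cO}(\Gamma) = \Lambda_{\cO}(\Gamma^1)$. The first step is to observe that
\[
 e_0 \cdot \RGt_{\Iw}(K_\infty, \rho_{\pi, v}^*) \simeq \RGt_{\Iw}(K^1_\infty, \rho_{\pi, v}^*),
\]
a consequence of the base-change compatibility of the \Nek Selmer complex under surjections of Iwasawa algebras coming from quotients of $\Gamma$. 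Applying $e_0$ to the divisibility of the previous theorem and taking characteristic ideals over $\Lambda_{\cO}(\Gamma^1)$ then yields
\[
 \Char_{\Lambda_{\cO}(\Gamma^1)} \Ht^2_{\Iw}(K^1_\infty, \rho_{\pi, v}^*) \;\Big|\; e_0 \cdot L_p(\pi).
\]

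Next, I would invoke Wan's main theorem from~\cite{wan15b}, which under suitable hypotheses supplies the opposite divisibility $e_0 \cdot L_p(\pi) \mid \Char_{\Lambda_{\cO}(\Gamma^1)} \Ht^2_{\Iw}(K^1_\infty, \rho_{\pi, v}^*)$ via Eisenstein congruences on the unitary group $\operatorname{GU}(2,2)$. The additional hypotheses imposed here (parallel weight, trivial central character, minimal ramification, and the sign condition when $k=2$) are precisely those needed for~\cite{wan15b} to apply: the parallel-weight and trivial-central-character assumptions ensure that $\pi$ lifts cleanly to a holomorphic form on the unitary side with the correct infinitesimal character; minimal ramification kills congruence-ideal contributions at bad primes; and the root-number hypothesis for $k = 2$ guarantees $e_0 \cdot L_p(\pi) \neq 0$, which is both necessary for the statement to be non-vacuous and is an input to the non-vanishing of Wan's Eisenstein classes. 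Mutual divisibility of the two characteristic ideals in $\Lambda_{\cO}(\Gamma^1)$ then forces equality.

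The main obstacle I anticipate is not conceptual but a matter of careful bookkeeping to reconcile the two sides. Wan formulates the main conjecture in terms of Greenberg-style Selmer groups and Pontryagin duals, over an Iwasawa algebra defined using a slightly different normalisation of the cyclotomic character, and uses a $p$-adic $L$-function constructed from unitary Eisenstein series with its own period conventions. The plan therefore requires: (a) a comparison identifying $\Ht^2_{\Iw}(K^1_\infty, \rho_{\pi, v}^*)$, up to pseudo-null modules, with the Pontryagin dual of Wan's Selmer group; (b) verification that Wan's $p$-adic $L$-function coincides with $e_0 \cdot L_p(\pi)$ up to a unit in $\Lambda_{\cO}(\Gamma^1)^\times$, by matching interpolation formulae at sufficiently many classical points and absorbing period discrepancies; and (c) a check that the idempotent projection is compatible with Wan's constructions. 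These unit- and Selmer-comparison steps, standard in spirit but delicate in practice, constitute the main technical content beyond citing Theorem~A and~\cite{wan15b}.
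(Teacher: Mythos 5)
Your high-level plan (Theorem A gives one divisibility, Wan gives the other, conclude by mutual divisibility) captures the spirit, but you have misremembered what Wan actually proves, and this changes the architecture of the proof in a way that also explains why the auxiliary sign/root-number hypothesis is needed.

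Wan's theorem is stated over a CM field $\cK$ (an auxiliary totally imaginary quadratic extension of $K$, in which $p$, $\fn$ and $\disc$ are all required to split), via Eisenstein congruences on a unitary group over $\cK/K$. The Selmer group it controls is for the restriction of $T(\pi)^*$ to $G_{\cK}$, which by Shapiro's lemma splits (over $e_0\Lambda$) as the direct sum of the Selmer groups of $T(\pi)^*$ and $T(\pi \otimes \varepsilon_{\cK})^*$. Correspondingly, the $p$-adic $L$-function in Wan's divisibility is the \emph{product} $e_0\,L_p(\pi)\,L_p(\pi\otimes\varepsilon_{\cK})$, not $e_0\,L_p(\pi)$ alone. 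So Wan does \emph{not} directly give you $e_0 L_p(\pi)\mid e_0 L_p^{\alg}(\pi)$; he gives
\[
e_0\,L_p(\pi)\,L_p(\pi\otimes\varepsilon_{\cK}) \;\Big|\; e_0\,L_p^{\alg}(\pi)\,L_p^{\alg}(\pi\otimes\varepsilon_{\cK}).
\]
The paper's proof therefore applies Theorem A (the Kato divisibility) \emph{twice}, once to $\pi$ and once to $\pi\otimes\varepsilon_{\cK}$, obtaining $e_0 L_p^{\alg}(\pi)\mid e_0 L_p(\pi)$ and $e_0 L_p^{\alg}(\pi\otimes\varepsilon_{\cK})\mid e_0 L_p(\pi\otimes\varepsilon_{\cK})$. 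Multiplying the two Kato divisibilities and sandwiching against Wan's forces equality on each factor, \emph{provided} $e_0 L_p(\pi\otimes\varepsilon_{\cK})\ne 0$ (otherwise the product divisibility is vacuous and one cannot isolate the $\pi$-factor).

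This is exactly where the last hypothesis enters, and your reading of it is also not quite right: the sign condition is \emph{not} there to make $e_0 L_p(\pi)\ne 0$ (which indeed the statement implicitly needs, but that is a separate, milder concern), but rather to make $e_0 L_p(\pi\otimes\varepsilon_{\cK})\ne 0$ for a suitable choice of $\cK$. When $k>2$ this is automatic by \cref{prop:nonvanish-padic}; when $k=2$ it is arranged by invoking Waldspurger's nonvanishing theorem for quadratic twists, which requires the root number $w_\pi=+1$ so that some admissible $\varepsilon_{\cK}$ has $L(\pi\otimes\varepsilon_{\cK},1)\ne 0$. Your proof as written omits the auxiliary twist entirely, so it cannot be completed from the tools cited; you would need to introduce $\cK$, $\varepsilon_{\cK}$, apply Theorem A to both twists, and then use the nonvanishing input to deduce equality on the $\pi$-factor. (Your points (a), (b), (c) on bookkeeping are sound, but they sit inside this missing two-twist framework; in the paper (a) is a short Poitou--Tate/pseudo-null argument, and (b) is where \cref{thm:gorenstein} on integral period relations is used to get the divisibility integrally rather than merely after inverting $p$.)
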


  As consequences of Theorem A, we obtain cases of the Bloch--Kato conjecture \ref{conj:BK}.
  \begin{lettertheorem}[{\cref{thm:BK}}]
   Assume that the conditions of Theorem A are satisfied. Then, for every Dirichlet character $\chi$ of $p$-power conductor, and every $j$ in the critical range, we have
   \[
    L(\pi \otimes \chi, 1 + j) \ne 0\ \Longrightarrow\
    H^1_{\mathrm{f}}(K, \rho^*_{\pi, v}(-j-\chi)) = 0.
   \]
  \end{lettertheorem}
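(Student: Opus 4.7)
The plan is to deduce Theorem C from Theorem A by the standard specialisation-from-Iwasawa-theory argument, going back to Kato's original work on elliptic curves. First I would note that $\chi$ of $p$-power conductor together with $j$ in the critical range determines a locally algebraic character $\eta_{\chi,j} = \chi \cdot \chi_{\cyc}^j$ of $\Gamma$, hence a point of the weight space $\cW$. The interpolation property of $L_p(\pi)$ gives
\[
 L_p(\pi)(\eta_{\chi,j}) = (\text{explicit non-zero factor}) \cdot L(\pi \otimes \chi, 1+j),
\]
where the factor comes from complex periods, local Euler factors at the primes above $p$, and $\Gamma$-factors. Hence the hypothesis $L(\pi \otimes \chi, 1+j) \ne 0$ forces $L_p(\pi)(\eta_{\chi,j}) \ne 0$; in particular $L_p(\pi)$ is a non-zero-divisor in $\Lambda_\cO(\Gamma)$. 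By the second assertion of Theorem A, $\RGt_{\Iw}(K_\infty, \rho_{\pi,v}^*)$ therefore has cohomology concentrated in degree $2$, and $\Char_{\Lambda_\cO(\Gamma)} \Ht^2_{\Iw}(K_\infty, \rho_{\pi,v}^*) \mid L_p(\pi)$.

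Next I would specialise this divisibility at $\eta_{\chi,j}$. Since $L_p(\pi)(\eta_{\chi,j}) \ne 0$, any generator of the characteristic ideal is also non-vanishing at $\eta_{\chi,j}$; combined with the degree-$2$ concentration and the perfectness of the Selmer complex, a standard base-change argument shows that the cohomology of
\[
 \RGt_{\Iw}(K_\infty, \rho_{\pi,v}^*) \otimes^{\mathbf{L}}_{\Lambda_\cO(\Gamma),\, \eta_{\chi,j}} \overline{\Qp}
\]
is finite-dimensional in every degree, with $\Ht^0$ and $\Ht^3$ vanishing by the big-image hypothesis (BI). This base change computes Nekov\'a\v r's extended Selmer cohomology $\Ht^i(K, \rho_{\pi,v}^*(-j-\chi))$, which differs from the Bloch--Kato Selmer group $H^1_f(K, \rho_{\pi,v}^*(-j-\chi))$ only by explicit local terms at primes $\fp \mid p$, controlled by precisely the Euler factors appearing in the interpolation formula for $L_p(\pi)$. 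Since those factors are non-zero at $\eta_{\chi,j}$ by construction, the comparison is an isomorphism in degree $1$, yielding $H^1_f(K, \rho_{\pi,v}^*(-j-\chi)) = 0$.

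The main obstacle will be the last comparison step: one must identify the ordinary local condition at each prime above $p$ (which defines the \Nek--Selmer complex) with the Bloch--Kato local condition, and verify that no ``trivial zero'' phenomena occur at $\eta_{\chi,j}$. This reduces to a $(\varphi,\Gamma)$-module computation on the Frobenius-stable subrepresentations; under the split hypothesis (i) and ordinarity hypothesis (ii) of Theorem A this is routine, but care is required because the non-vanishing of the interpolation factor and the isomorphism in the local comparison must line up to produce the required cancellation.
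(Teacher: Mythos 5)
Your proposal takes essentially the same route as the paper: invoke Theorem~A for the Iwasawa-theoretic divisibility, specialise the Selmer complex at the locally algebraic character $j+\chi$, and identify the resulting Selmer group with the Bloch--Kato one via the coincidence of Greenberg and Bloch--Kato local conditions in the critical range. Two small points to tighten: a single non-vanishing value only makes $L_p(\pi)$ a non-zero-divisor on the \emph{component} of $\cW$ containing $j+\chi$ (not on all of $\Lambda_\cO(\Gamma)$, which is a product of local rings), though this suffices since Theorem~A's divisibility holds componentwise; and the comparison $\square^{(p)} \leftrightarrow \square_{\mathrm{BK}}$ is a Hodge-theoretic fact about the Panchishkin condition in the critical range, independent of the non-vanishing of the interpolation Euler factors (whose role is instead to rule out exceptional zeros of $L_p(\pi)$, via \cref{prop:noexceptionalzero} and \cref{prop:nolocalinvts}).
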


  Note that this is already known if $\pi$ has parallel weight and trivial character, $s = 1+j$ is the central critical value, and $\chi = 1$ (and some other auxiliary conditions), using the anticyclotomic Euler system of Heegner cycles \cite{nekovar12, wang19}.

  In particular, we may take $\pi$ to correspond to an elliptic curve $A / K$ (via the modularity theorem of \cite{freitaslehungsiksek15}). If we assume $A$ has no complex multiplication, and $A$ is not a $\QQ$-curve (i.e.~not isogenous over $\Qb$ to its Galois conjugate $A^\sigma$), then our hypotheses are automatically satisfied for all primes split in $K$ outside a set of density 0; so we obtain the following:

  \begin{lettertheorem}[{\cref{thm:BSD}}]
   The (weak) equivariant Birch--Swinnerton-Dyer conjecture holds in analytic rank 0 for twists of $A$ by Dirichlet characters. That is, for any $\eta: (\ZZ / N\ZZ)^\times \to \Qb^\times$, we have the implication
   \[ L(A, \eta, 1) \ne 0\quad \Longrightarrow\quad A(K(\mu_N))^{(\eta)} \text{\ is finite}.\]
   Moreover, if $L(A, \eta, 1) \ne 0$ then, for a set of primes $p$ of relative density $ \ge \tfrac{1}{2}$, the group $\Sha_{p^\infty}(A / K(\zeta_N))^{(\eta)}$ is trivial.
  \end{lettertheorem}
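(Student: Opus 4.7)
The plan is to reduce Theorem~D to Theorem~C by constructing, for each Dirichlet character $\eta$, a large supply of primes $p$ at which the hypotheses of Theorem~A hold. By the modularity theorem of Freitas--Le Hung--Siksek, attach to $A$ a cuspidal automorphic representation $\pi$ of $\GL_2/K$ of parallel weight $(2,2,0,0)$ with trivial central character, such that $\rho_{\pi,v}$ is the $v$-adic Tate module of $A$. The five hypotheses of Theorem~A translate into density conditions on $p$: (i) $p$ split in $K$ has density $\tfrac{1}{2}$ by Chebotarev; (ii) good ordinary reduction at every $\fp \mid p$ holds outside a density-$0$ set since $A$ has no CM; (iii) is automatic as $\omega_\pi = 1$; (iv) holds for \emph{all} $p$, being equivalent to $A$ being non-CM and not $K$-isogenous to a quadratic twist of $A^\sigma$; and (v) holds outside a density-$0$ set via an open-image theorem for non-CM Hilbert modular forms in the spirit of Ribet--Dimitrov. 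Thus the hypotheses of Theorem~A are met for a set of primes of absolute density $\tfrac{1}{2}$.

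For fixed $\eta$ of conductor $N$, the relevant object is $\pi \otimes (\eta \circ \Nm_{K/\QQ})$, whose $L$-function is $L(A,\eta,s)$; but Theorem~C as stated provides only twists of $p$-power conductor. I would circumvent this by choosing $p$ from the density-$\tfrac{1}{2}$ set above with $p \nmid N$, and applying Theorem~C not to $\pi$ itself but to the twist $\pi' := \pi \otimes \xi^{(p)}$, where $\xi = \eta^{-1} \circ \Nm_{K/\QQ}$ and $\xi^{(p)}$ is its prime-to-$p$ component. A direct check confirms that hypotheses (i)--(v) are inherited by $\pi'$, since $\xi^{(p)}$ factors through the norm and is unramified at $p$. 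Theorem~C applied to $\pi'$ with character $\chi = \xi_p$ and $j = 0$ then yields
\[
 L(A,\eta,1) \ne 0 \ \Longrightarrow\ H^1_{\mathrm{f}}\bigl(K,\,V_p(A) \otimes \eta^{-1}\bigr) = 0.
\]
By Shapiro's lemma and Kummer theory, $A(K(\mu_N))^{(\eta)} \otimes \Qp$ injects into the right-hand side, so it vanishes; Mordell--Weil then forces $A(K(\mu_N))^{(\eta)}$ to be finite.

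For the $\Sha$ assertion I would upgrade this to an integral statement. After a further density-$0$ restriction on $p$ to avoid $\#A(K(\mu_N))_{\mathrm{tors}}$, the Tamagawa numbers of $A$, and primes where the residual representation degenerates, the integral Bloch--Kato Selmer group $H^1_{\mathrm{f}}(K, T_p A \otimes \eta^{-1})$ controls $\Sha_{p^\infty}(A/K(\mu_N))^{(\eta)}$ directly via the $A[p^\infty]$-Poitou--Tate exact sequence. The Kato divisibility of Theorem~A, combined with nonvanishing of $L_p(\pi')$ at the trivial character of $\Gamma$, shows that $\Ht^2_{\Iw}$ and hence this integral Selmer group vanish, yielding $\Sha_{p^\infty}(A/K(\mu_N))^{(\eta)} = 0$. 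Since the exclusions cost only a density-$0$ set, the overall density of good primes remains $\ge \tfrac{1}{2}$.

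The main obstacle I anticipate is the verification of hypothesis~(v) at density-$1$ many primes and its stability under the prime-to-$p$ character twist: this is a residual open-image question for Galois representations of Hilbert modular forms which should follow from standard techniques given the non-CM and non-conjugate-twist assumptions, but is the step most dependent on importing external results. A secondary difficulty is matching the analytic normalisation of $L(A,\eta,s)$ with the critical value $L_p(\pi')(0)$ through the chain of twists, so as to ensure the $p$-adic $L$-value is nonzero exactly when the complex $L$-value is.
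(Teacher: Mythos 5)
Your proposal follows essentially the same route as the paper: set $\pi = \pi_A \otimes \eta$ (your twist by $\xi^{(p)}$ collapses to this once you impose $p \nmid N$, making $\xi_p$ trivial), verify the hypotheses of Theorem~A hold for a density-$\ge\tfrac12$ set of primes, and apply the Bloch--Kato theorem at $j=0$, $\chi=1$. The paper's own proof is more terse but makes exactly these moves, so your extra bookkeeping with the $p$-part/prime-to-$p$-part splitting and the explicit Poitou--Tate step for $\Sha$ is correct but not a genuinely different argument.
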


  We also obtain results on the non-vanishing of $L$-functions twisted by $p$-power Dirichlet characters, see \cref{sect:nonvanish}.\medskip


  \noindent\emph{Remark}: We note that the results of this paper rely crucially on those of \cite{LZ20}. Until recently, the results of that reference were conditional on an assertion regarding Hecke eigenspaces in rigid cohomology whose proof was to appear elsewhere. However, we have recently revised the preprint \cite{LZ20} to remove this dependence on forthcoming work; so the results of that paper, and hence of this one, are now unconditional.

 \subsection{Outline of the paper}

  The overall strategy we will follow is to construct an Euler system for the 4-dimensional Galois representation $\Ind_K^{\QQ}(\rho_{\pi, v}^*)$. For $\pi$ with sufficiently regular weight, we construct this Euler system as an application of the results of \cite{LZ20}, using the ``twisted Yoshida lift'' -- an instance of Langlands functoriality, transferring automorphic representations from $\GL_2 / K$ to $\GSp_4 / \QQ$. The main technical obstacle to be overcome is that the most interesting Hilbert modular forms -- those of parallel weight -- transfer to \emph{non-cohomological} automorphic representations of $\GSp_4$, so the results of \emph{op.cit.} do not immediately apply. We approach these non-cohomological points using deformation along eigenvarieties, which requires an extremely delicate study of the ``leading terms'' of families of Euler systems at a given specialisation.

  (This ``leading term argument'' also gives new results in other settings. As an example, in Appendix \ref{sect:appendix} we apply it to the Euler system of Beilinson--Flach elements, in order to bound Selmer groups associated to Artin twists of elliptic curves over $\QQ$. If the eigenvalues of Frobenius at $p$ on the Artin representation are distinct, this case was treated in \cite{KLZ17}; but this $p$-regularity assumption fails for a positive-density set of primes. The new methods allow us to handle these $p$-regular cases, and to show that the Tate--Shafarevich group is finite for a density 1 set of primes.)

  Before embarking on the proofs of the main theorems of this paper, we will need to recall a great deal of machinery. For the convenience of the reader, we have attempted to split this into several independent blocks, so that sections \ref{sect:prelimhilb}-\ref{sect:gl2keigen} (dealing with Hilbert modular forms) are almost entirely independent of sections \ref{sect:gsp4prelim}--\ref{sect:gsp4eigen} (dealing with Siegel modular forms).

  We shall begin to draw the threads together in \cref{sect:yoshida}, in which we construct an Euler system for automorphic representations $\pi$ whose weights are ``very regular''. In the following section \ref{sect:ESYfam}, we remove this very regular condition using variation along an eigenvariety, and at the same time prove an explicit reciprocity law relating the Euler system to $p$-adic $L$-functions. The analysis of leading terms is carried out in sections \ref{sect:core} and \ref{sect:axiom}, leading up to the construction of the Euler system in full generality in \cref{sect:constructES}, and the proof of Theorem A in \cref{sect:mainthms}.

  In \S \ref{sect:wan}, we recall a selection of results from \cite{wan15b} and compare these with Theorem A, leading to the proof of Theorem B.

%
%

  \subsection*{Acknowledgements} We would like to thank John Coates for his interest and encouragement, and his comments on a preliminary draft of this paper; Xin Wan for his patient explanations regarding the results of \cite{wan15b}; and Chris Williams for answering our questions on $p$-adic Langlands functoriality. \footnote{This research was supported by the following grants: Royal Society University Research Fellowship ``$L$-functions and Iwasawa theory'' (Loeffler);  ERC Consolidator Grant ``Euler systems and the Birch--Swinnerton-Dyer conjecture'' (Zerbes).}

\section{Preliminaries I: Hilbert modular forms}
\label{sect:prelimhilb}

 In this section we collect together the notations and definitions we will need regarding Hilbert modular forms; no originality is claimed here (except for a few minor remarks in \cref{sect:bigimage}).

 \subsection{Weights}

   Throughout this paper, $K$ is a real quadratic field with ring of integers $\cO_K$, and $\sigma_1, \sigma_2: K \into \RR$ are the two real embeddings of $K$.

  \begin{definition}
   In this paper a \emph{weight} for $\GL_2/K$ will mean a 4-tuple $(k_1, k_2, t_1, t_2)$ of integers with $k_i \ge 2$ and $k_1 + 2t_1 = k_2 + 2t_2$; we write $w$ for the common value. We shall often abbreviate this to $(\uk, \ut)$, with $\uk = (k_1, k_2)$ and $\ut = (t_1, t_2)$.
  \end{definition}

  \begin{remark}
   It is a common convention to choose $w = \max(k_1, k_2)$, but this is inconvenient from the perspective of $p$-adic families, so for our present purposes it is simpler to allow $w$ to be arbitrary.
  \end{remark}

 \subsection{Automorphic representations and L-functions}

  By a \emph{Hilbert cuspidal automorphic representation} of weight $(\uk, \ut)$, we shall mean an irreducible $\GL_2(\AKf)$-subrepresentation of the space of holomorphic Hilbert modular forms of weight $(\uk, \ut)$, defined as in \cite[Definition 4.1.2]{leiloefflerzerbes18} for example. Thus $\pi$ is an essentially unitary representation, and its central character is of the form $\|\cdot\|^{2-w} \varepsilon_{\pi}$, for a finite-order character $\varepsilon_{\pi}$. We let $\fn$ be the level of $\pi$, which is the unique integral ideal such that the invariants of $\pi$ under the group
  \[ K_1(\fn) = \{ x \in \GL_2(\widehat{\cO}_K): x = \stbt \star \star {} {1} \bmod \fn\}\]
  are 1-dimensional.

  \begin{definition}
   For $\fq$ a prime of $K$, let $a_{\fq}(\pi)$ denote the eigenvalue of the double coset operator $\left[ K_1(\fn) \stbt{\varpi_{\fq}}{}{}1 K_1(\fn) \right]$ acting on $\pi^{K_1(\fn)}$. We denote this operator by $T_{\fq}$ if $\fq \nmid \fn$, and by $U_{\fq}$ otherwise.
  \end{definition}

  The $a_{\fq}(\pi)$ are in $\Qb$ (and are algebraic integers if $t_1, t_2 \ge 0$), and the subfield $E \subset \CC$ they generate is a finite extension of $\QQ$. For each prime $\fq$, we define local Euler factors $P_\fq(\pi, X) \in E[X]$ by
  \[
   P_\fq(\pi, X) =
   \begin{cases}
    1 - a_\fq(\pi)X + \varepsilon_{\pi}(\fq) \Nm(\fq)^{w-1} X^2 & \text{if $\fq \nmid \fn$},\\
    1 - a_\fq(\pi)X & \text{if $\fq \mid \fn$},
   \end{cases}
  \]
  so that
  \begin{align*}
   L(\pi, s) &= \prod_{\text{$\fq$ prime}} P_{\fq}(\pi,\Nm(\fq)^{-s})^{-1}.
  \end{align*}


  \begin{remark}
   We have followed \cite{bergdallhansen17} here in adopting a slightly unusual definition of the $L$-function, shifted by $\tfrac{1}{2}$ relative to the usual conventions in the analytic theory. (This corresponds to using the ``arithmetically normalised'' local Langlands correspondence, respecting fields of definition). Note that with our conventions, the functional equation of the $L$-series relates $L(\pi, s)$ to $L(\pi \otimes \varepsilon_{\pi}^{-1}, w - s)$; and if $A / K$ is an elliptic curve, then the main result of \cite{freitaslehungsiksek15} shows that we can find a $\pi$ of weight $(2, 2, 0, 0)$ such that $L(A/K, s) = L(\pi, s)$.
  \end{remark}

 \subsection{Periods and rationality of L-values}

  Let $\pi$ be a Hilbert cuspidal automorphic representation of level $\fn$ and
  weight $(\uk, \ut)$.

  \subsubsection{Betti cohomology}

   The weight $(\uk, \ut)$ determines a $\GL_2(\AKf)$-equivariant locally constant sheaf of $E$-vector spaces $\cV_{\uk,\ut}$ on the arithmetic symmetric space for $\GL_2 / K$ of level\footnote{We assume here that $K_1(\fn)$ is sufficiently small that this space is a smooth manifold. If this is not the case, one can alternatively define $M_B(\pi, E)$ as the $K_1(\fn) / V$-invariants in the cohomology at level $V$, where $V\triangleleft K_1(\fn)$ is some auxiliary subgroup which is sufficiently small in this sense.} $K_1(\fn)$. Let $M_B(\pi, E)$ denote the $\pif$-isotypical subspace of the compactly-supported Betti $H^2$ of this local system, and $M_B(\pi, \CC)$ its base-extension to $\CC$.

   The space $M_B(\pi, E)$ is 4-dimensional over $E$, and has an $E$-linear action of the component group of $\GL_2(K \otimes \RR)$, which is $(\ZZ/2\ZZ) \times (\ZZ/ 2\ZZ)$. For each sign $\eps \in \{\pm 1\}$, let  $M^{\eps}_B(\pi, E)$ denote the eigenspace where both factors of the component group act via $\eps$; these spaces are each 1-dimensional.

   \begin{remark}
    We could also define ``mixed'' eigenspaces $M^{(\eps_1, \eps_2)}_B(\pi, E)$, which would be relevant if we wanted to study the periods of $\pi$ twisted by general Hecke characters of $K$. However, using Yoshida lifts to $\GSp_4$ we can only see twists by characters factoring through the norm map, which always have the same sign at both infinite places; so the mixed-sign cohomology eigenspaces will play no role in our construction. (Of course, they can be accessed by replacing $\pi$ with a suitable quadratic twist.)
   \end{remark}

   \begin{remark}
    If $\pi[n] = \pi \otimes \|\cdot\|^n$, which is an automorphic representation of weight $(\uk, \ut - (n, n))$, then there is a canonical isomorphism $M_B(\pi[n], L) \cong M_B(\pi, L)$ but this twists the action of the component group by $(-1)^n$; so $M_B^{\eps}(\pi[n], L)$ is canonically isomorphic to $M_B^{(-1)^n\eps}(\pi, L)$.
   \end{remark}

  \subsubsection{Complex periods}

   If $\phi$ denotes the normalised holomorphic newform generating $\pi$, the image of $\phi$ under the Eichler--Shimura comparison map $\comp$ is canonically an element of $M^\eps_B(\pi, \CC)$, and for each $\eps$, the projection $\pr^{\eps}(\comp(\phi))$ spans $M^\eps_B(\pi, \CC)$.

   \begin{definition}
    If $v_\eps$ is a basis of $M^\eps_B(\pi, E)$, we denote by $\Omega_{\infty}^{\eps}(\pi, v_\eps) \in \CC^\times$ the complex scalar such that
    \[ \pr^{\eps}(\comp(\phi)) = \Omega_{\infty}^{\eps}(\pi, v_\eps) \cdot v_{\eps}\]
    as elements of $M^\eps_B(\pi, \CC)$.
   \end{definition}

  \subsubsection{Gauss sums}

   If $M \ge 1$ is an integer and $\chi: (\ZZ/ M\ZZ)^\times \to \CC^\times$ is a character of conductor exactly $M$, we write
   \begin{equation}
    \label{def:gaussum}
    G(\chi) = \sum_{a \in (\ZZ / M\ZZ)^\times} \chi(a) \exp(2\pi i a / M)
   \end{equation}
   for its Gauss sum. We can also consider $\chi$ as a character of $\AA_{\QQ}^\times / \QQ^\times$, by setting $\chi(\varpi_q) = \chi(q)$ for all primes $q \nmid M$, where $\varpi_q$ is a uniformiser at $q$; note that the restriction of this adelic $\chi$ to $\widehat{\ZZ}^\times$ is the \emph{inverse} of the original $\chi$.

   Using the inductivity properties of Gauss sums (the classical Hasse--Davenport relation), one can check that if $M$ is coprime to $D_K = |\disc|$, and $\chi_K$ denotes the character $\chi \circ \Nm$ of $\AA_K^\times$, then the Gauss sum of $\chi_K$, defined as in \cite[\S 4.3]{bergdallhansen17}, is given by
   \[ G(\chi_K) = G(\chi)^2 \cdot \chi(-D_K \bmod M) \cdot \varepsilon_K(M),
   \]
   where $\varepsilon_K$ is the quadratic character (of conductor $D_K$) associated to $K$.

  \subsubsection{The $L$-function as a linear map}

   \begin{definition}
    We say an integer $j \in \ZZ$ is \emph{Deligne-critical} for weight $(\uk, \ut)$ if we have
    \[ t_0  \le j \le t_0 + k_0 - 2, \]
    where $k_0 = \min(k_i)$ and $t_0 = \tfrac{w-k_0}{2} = \max(t_i)$.
   \end{definition}

   (This is the same definition used in \cite{bergdallhansen17} except for the fact that our modular forms have weight $(k_1, k_2)$ rather than weight $(k_1 + 2, k_2 + 2)$ as in \emph{op.cit}.). Then the critical values of the $L$-function $L(\pi, s)$ are $L(\pi, 1 + j)$ as $j$ varies over Deligne-critical integers.

   \begin{note}
    $j$ is Deligne-critical if and only if the Galois representation $\Ind_K^{\QQ} \rho_{\pi, v}^*(-j)$ (see below) has exactly two of its four Hodge--Tate weights $\ge 1$.
   \end{note}

   \begin{proposition}
   \label{prop:Lalgpi}
    Let $\chi$ be a Dirichlet character, and $j$ be a Deligne-critical integer. Let $\eps = (-1)^j \chi(-1)$.

    Then there exists a linear functional
    \[ \cL^{\mathrm{alg}}(\pi, \chi, j): E(\chi) \otimes_E M^{\eps}_B(\pi, E)^\vee, \]
    where $E(\chi)$ is the extension of $E$ generated by the values of $\chi$, such that after base-extending to $\CC$ we have
    \[
    \left\langle \cL^{\mathrm{alg}}(\pi, \chi, j), \pr^{\eps}(\comp(\phi))\right\rangle =
    \frac{(j - t_1)! (j-t_2)!L\left(\pi \otimes \chi_K, 1 + j\right) }{G(\chi)^2 (-2\pi i)^{2j+2-t_1-t_2}}.
    \]
   \end{proposition}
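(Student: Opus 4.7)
The strategy is to construct $\cL^{\mathrm{alg}}(\pi, \chi, j)$ as integration against an explicit modular symbol, and to identify the resulting pairing with the claimed $L$-value ratio via the classical integral representation of Hecke.

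First, I would build the cycle. The symmetric space for $\GL_2/K$ at level $K_1(\fn)$ is a real $4$-manifold containing a canonical $2$-dimensional modular symbol $C$ arising from the embedding $\GL_2/\QQ \into \GL_2/K$; averaging the translates of $C$ under $\stbt{a}{}{}{1}$ weighted by $\chi(a)$, for $a$ running over $(\ZZ/M\ZZ)^\times$, yields a modular symbol $C_\chi$ with coefficients in $\cV_{\uk,\ut}$, defined over $E(\chi)$. Pairing against $C_\chi$ defines a functional on the compactly-supported Betti $H^2$ with $E(\chi)$-coefficients, and hence on $E(\chi) \otimes_E M_B(\pi, E)^\vee$. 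A direct check on the component-group action shows that this functional vanishes outside the $\eps$-eigenspace for $\eps = (-1)^j \chi(-1)$, so we may regard it as the desired element of $E(\chi) \otimes_E M_B^\eps(\pi, E)^\vee$.

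Second, I would evaluate the pairing on $\pr^{\eps}(\comp(\phi))$. Under Eichler--Shimura, $\comp(\phi)$ is represented by an explicit harmonic $(2,0)$-form built from $\phi$ and the invariant differentials on the two upper half planes; substituting this into the pairing reduces to an iterated Mellin integral of $\phi$ over $\RR_{>0}^2$. Hecke's formula at each archimedean place, together with the Hasse--Davenport expansion of the adelic Gauss sum of $\chi_K$ already recorded, produces
\[
 \frac{(j - t_1)!\,(j-t_2)!\,L(\pi \otimes \chi_K, 1+j)}{G(\chi)^2 \,(-2\pi i)^{2j+2-t_1-t_2}}
\]
up to a sign and the factor $\chi(-D_K)\varepsilon_K(M)$, which can be absorbed into the normalisation of $C_\chi$.

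The substantive work lies entirely in the normalisation bookkeeping of the second step: one must carefully match the factorials $(j-t_1)!(j-t_2)!$ produced by the Eichler--Shimura description of $\comp(\phi)$ in the basis of $\cV_{\uk,\ut}$ used in \cite[\S 4.3]{bergdallhansen17}, the powers of $2\pi i$ contributed by the two archimedean $\Gamma$-factors, and the signs coming from the component-group action. The existence of an $E(\chi)$-rational linear functional is then automatic, since Betti cohomology and the Hecke algebra carry natural $E$-structures compatible with the decomposition into sign eigenspaces; so the proposition is essentially a reformulation, in our conventions, of classical work of Shimura, and the principal hazard is convention-tracking rather than any new mathematical input.
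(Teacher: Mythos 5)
The paper itself gives no proof — it is a one-line citation to standard rationality results — so there is nothing in the text for you to have matched; but your sketch, as written, has a genuine gap in the geometric step. The overall strategy (build a topological cycle, twist by $\chi$, pair against the Betti class, and identify the period with $L(\pi\otimes\chi_K, 1+j)$ by an adelic Mellin integral) is the right one, but the cycle you propose is the wrong one. The $2$-cycle coming from the embedding $\GL_2/\QQ\into\GL_2/K$ is the Hirzebruch--Zagier cycle: the image of a modular curve inside the Hilbert modular surface. Pairing a cuspidal Hilbert eigenform against this cycle (or its Hecke translates) does not compute the standard degree-$2$ $L$-function of $\pi$. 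It is, rather, the period underlying the Asai/Rankin--Selberg construction for $\pi$ (and for the holomorphic component it is in fact zero for type reasons). No amount of careful normalisation would turn this into $L(\pi\otimes\chi_K, 1+j)$.

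The cycle that does compute $L(\pi\otimes\chi_K,s)$ is the classical Hilbert modular symbol: the orbit of the diagonal torus $\Res_{K/\QQ}\Gm\into\GL_2/K$, $y\mapsto\stbt{y}{}{}{1}$, through the base point $(i,i)\in\mathfrak{H}^2$. Unwinding gives exactly Hecke's zeta integral
\[ \int_{K^\times\backslash\AA_K^\times}\phi\left(\stbt{y}{}{}{1}\right)\chi_K(y)\,|y|^s\,d^\times y = (\text{local factors})\cdot L(\pi\otimes\chi_K,s), \]
and this is what Dimitrov and Bergdall--Hansen (the references the paper leans on for $p$-adic interpolation of these values) actually use. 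The constants in the target formula already point to this: the product $(j-t_1)!\,(j-t_2)!\,(-2\pi i)^{-(2j+2-t_1-t_2)}$ is the product of the two archimedean $\Gamma$-factors, one for each real place of $K$ appearing in the torus integral, and $G(\chi)^2$ is $G(\chi_K)$ up to the unit $\chi(-D_K)\varepsilon_K(M)$ recorded just before the proposition. Neither has a natural interpretation in the Hirzebruch--Zagier picture. A secondary but related slip: twisting by the diagonal translates $\stbt{a}{}{}{1}$ does not produce the Gauss sum; the finite-order twist is implemented by unipotent translates $\stbt{1}{a/M}{}{1}$ (equivalently by the local ramified factors of the adelic integral at primes dividing $M$), which is what makes $G(\chi_K)$ appear.
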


   \begin{proof}
    This is an elementary reformulation of standard facts on rationality of $L$-values.
   \end{proof}

   Note that if $v_\eps$ is a choice of basis of $M_B^{\eps}(\pi, E)$ as above, then we have
   \[ \langle \cL^{\mathrm{alg}}(\pi, \chi, j), v_\eps\rangle =
   \frac{(j - t_1)! (j-t_2)!L\left(\pi \otimes \chi_K, 1 + j\right) }{G(\chi)^2 (-2\pi i)^{2j+2-t_1-t_2} \Omega^{\eps}_\infty(\pi, v_\eps)}, \]
   with both sides lying in $E(\chi)$ and depending $\Gal(E(\chi)/E)$-equivariantly on $\chi$.

 \subsubsection{Non-vanishing of L-values}

  The following is a standard result:

  \begin{proposition}
   \label{prop:nonvanish}
   Let $j$ be a Deligne-critical integer. Then $\cL^{\alg}(\pi, \chi, j) \ne 0$ for all characters $\chi$, except possibly if $w$ is even and $j = \tfrac{w-2}{2}$.
  \end{proposition}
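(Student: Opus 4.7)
The plan is to reduce the statement to the nonvanishing of the complex $L$-value $L(\pi \otimes \chi_K, 1 + j)$ and then carry out a standard case analysis based on the position of $1 + j$ relative to the centre of the functional equation. The formula in \cref{prop:Lalgpi} exhibits $\cL^{\alg}(\pi, \chi, j)$ as an explicit nonzero multiple of $L(\pi \otimes \chi_K, 1 + j)$: the factorials $(j - t_i)!$ are positive integers since $j \ge t_0 \ge t_i$, the Gauss sum $G(\chi)$ is nonzero, the power of $2\pi i$ is nonzero, and the period $\Omega^{\eps}_\infty(\pi, v_\eps)$ is nonzero by definition. So it suffices to prove that $L(\pi \otimes \chi_K, 1+j) \ne 0$ for $j$ Deligne-critical, under the stated exclusion.

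The representation $\pi \otimes \chi_K$ is again a Hilbert cuspidal automorphic representation of $\GL_2/K$ of the same weight $(\uk, \ut)$, hence the same Deligne-critical range and the same centre $s = w/2$ of its functional equation. For a Deligne-critical $j$ strictly to the right of centre, i.e.\ $j > (w-2)/2$, one has $1 + j \ge (w+1)/2$, and by Ramanujan the Euler product for $L(\pi \otimes \chi_K, s)$ converges absolutely and is nonzero for $\Re(s) > (w+1)/2$. The boundary case $\Re(s) = (w+1)/2$, which occurs only when $w$ is odd and $j = (w-1)/2$, is handled by the theorem of Jacquet--Shalika asserting nonvanishing of standard $L$-functions of cuspidal automorphic representations of $\GL_n$ at the edge of the critical strip, transported to our arithmetic normalisation.

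For $j$ strictly to the left of centre I would apply the functional equation $\Lambda(\pi \otimes \chi_K, s) = \epsilon(\pi \otimes \chi_K, s) \Lambda(\pi \otimes (\chi_K \varepsilon_\pi)^{-1}, w - s)$. At a Deligne-critical argument the gamma factors at infinity are finite and nonzero (essentially the definition of Deligne-critical), and the global $\epsilon$-factor is a product of local $\epsilon$-factors, hence nonzero; so $L(\pi \otimes \chi_K, 1+j) \ne 0$ iff $L(\pi \otimes (\chi_K \varepsilon_\pi)^{-1}, w - 1 - j) \ne 0$. Since $w - 1 - j > w/2$ and $w - 1 - j$ is Deligne-critical for the twisted representation, this reduces to the right-of-centre case already handled. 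The only Deligne-critical $j$ not covered is $j = (w-2)/2$; this is an integer only when $w$ is even, and corresponds to the centre of the functional equation, which is exactly the exclusion stated. The main potential obstacle is the boundary case for $w$ odd, where Jacquet--Shalika is genuinely needed; for $w$ even the argument uses only absolute convergence of the Euler product plus the functional equation.
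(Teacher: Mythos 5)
Your argument is essentially the same as the paper's proof, which invokes absolute convergence of the Euler product off the edges of the critical strip and the Jacquet--Shalika nonvanishing theorem at the edges (with the functional equation implicitly reducing the left half of the critical range to the right half); your write-up just spells out the functional-equation reduction and the case split on the parity of $w$ in more detail.
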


  \begin{proof}
   This is immediate from the convergence of the Euler product if $j \ne \tfrac{w\pm 1}{2}$, and in these boundary cases it follows from the main theorem of \cite{jacquetshalika76} (the ``prime number theorem'' for automorphic $L$-functions).
  \end{proof}

\subsection{Galois representations}

  Let $\pi$ be as in the previous section, and $v$ a prime of $E$ above $p$.

  \begin{theorem}
   There exists an irreducible $2$-dimensional Galois representation
   \[ \rho_{\pi,v}: \Gal\left(\overline{K}/K\right) \to \GL_2(E_v),\]
   unique up to isomorphism, such that for all primes $\fq\nmid \mathfrak{n}\cdot p$ the representation is unramified at $\fq$, and we have
   \[ \det{}_{E_v}\left(1-X\rho_{\pi,v}(\mathrm{Frob}^{-1}_\fq)\right)=P_\fq(\pi, X).\]
   The representation $\rho_{\pi,v}$ is de Rham at the primes $\fp \mid p$, and its Hodge numbers\footnote{\label{fn:hodge}Negatives of Hodge--Tate weights, so the Hodge number of the cyclotomic character is $-1$.} are $(t_i, t_i + k_i - 1)$ at the $i$-th embedding $K \into \overline{E}_v$. are $(t_1,t_1 + k_1-1)$ at $\sigma_1$, and $(t_2, t_2+k_2-1)$ at $\sigma_2$. If $\fp \mid p$ but $\fp \nmid \fn$, then $\rho_{\pi,v}$ is crystalline at $\fq$ and we have
   \[
   \det{}_{E_v}\left(1-X \varphi^{[K_\fp: \Qp]}\, \middle|\, \Dcris(K_{\fp}, \rho_{\pi,v})\right)=P_\fp(\pi, X).\qedhere
   \]
   \qedthere
  \end{theorem}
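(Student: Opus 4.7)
The plan is not to give an original proof but to assemble this statement from results in the literature, since the existence, irreducibility, and local--global compatibility of $\rho_{\pi,v}$ for cuspidal Hilbert modular forms over a real quadratic field are by now classical. What I would do is identify the correct combination of references for each assertion, organised by the nature of the weight, and then verify that the normalisations agree with our arithmetic conventions for $P_\fq(\pi, X)$.

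For the construction, I would distinguish two cases. When the weights are non-parallel, i.e.\ $k_1 \ne k_2$, the representation $\rho_{\pi,v}$ arises in the $\pif$-isotypic component of the étale cohomology $H^2_\et(\overline{Y}_K, \cV_{\uk,\ut})$ of the Hilbert modular variety of level $\fn$ with coefficients in the local system attached to $(\uk,\ut)$; this is the construction of Taylor and Blasius--Rogawski, and the unramified local compatibility at primes $\fq \nmid \fn p$ is Carayol's theorem. In the parallel weight case $k_1 = k_2$, no direct geometric construction is available, because $[K:\QQ]$ is even and Jacquet--Langlands does not transfer $\pi$ to a quaternionic Shimura curve; one instead deforms $\pi$ within a Hida family containing forms of non-parallel weight, attaches representations along the family, and interpolates $p$-adically, following the congruence method of Wiles. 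Uniqueness is immediate from the Chebotarev density theorem applied to Frobenius traces, and irreducibility follows from cuspidality of $\pi$ together with standard arguments ruling out decomposition as a sum of two characters.

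The local assertions at primes $\fp \mid p$---that $\rho_{\pi,v}$ is de Rham with the prescribed Hodge--Tate weights at each embedding $K \into \overline{E}_v$, and crystalline with $\varphi^{[K_\fp:\Qp]}$-characteristic polynomial $P_\fp(\pi, X)$ when $\fp \nmid \fn$---are the content of the local--global compatibility theorems of T.~Saito. In the non-parallel weight, good-reduction setting, crystallinity is automatic from Faltings's comparison between étale and crystalline cohomology of the Hilbert modular variety; the identification of the Frobenius characteristic polynomial then reduces to matching the Hecke action via Eichler--Shimura, and the Hodge--Tate weights are read off from the Hodge filtration, whose decomposition across the two embeddings is dictated by $(\uk,\ut)$. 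The main obstacle, and the step I would expect to rely most heavily on external input, is transferring these local properties from the non-parallel weight case, where one has a direct geometric realisation, to the parallel weight case, where only the interpolated family representation is available a priori; this is precisely the delicate point handled by Saito's argument, so at this point there is no genuine remaining obstruction.
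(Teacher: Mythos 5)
The paper gives no proof for this theorem at all: it is stated as a known result, closed immediately with a QED mark, and no reference is cited in the statement itself. Your proposal is therefore a reconstruction of the background literature rather than a comparison with any argument in the paper, and the shape of your reconstruction is broadly correct -- Taylor, Blasius--Rogawski and Carayol for construction and $\ell \ne p$ compatibility in the regular-weight case, and $p$-adic Hodge-theoretic comparison for the local behaviour at $p$.

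Two imprecisions are worth flagging. First, your claim that in the parallel-weight case no geometric construction is available ``because $[K:\QQ]$ is even and Jacquet--Langlands does not transfer $\pi$ to a quaternionic Shimura curve'' is too strong. Whether $\pi$ transfers to a form on a quaternion algebra $B/K$ split at exactly one archimedean place depends on the local components of $\pi$ at finite places, not only on the parity of $[K:\QQ]$: one needs $B$ to be ramified at an even number of places, so one real place plus an odd number of finite primes, and this is possible precisely when $\pi$ is discrete series at an odd number of finite primes. The genuinely difficult case is when $\pi$ is everywhere a principal series at finite places (in particular, level one); that is the situation handled by Taylor's congruence method and later refinements, and this caveat matters here because the paper does allow $k_1 = k_2$ with squarefree ramified level. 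Second, the attribution of the parallel-weight local--global compatibility at $p$ to Saito is not quite right: Saito's $p$-adic Hodge-theoretic results are proved in the setting where $\rho_{\pi,v}$ has a geometric realisation (in the cohomology of a Hilbert modular or quaternionic Shimura variety). When the representation is only obtained by congruences, one needs the further input of, e.g., Kisin's results on potentially semi-stable deformation rings or the family-interpolation arguments that have subsequently been worked out, rather than Saito's argument itself. These are corrections of attribution rather than of strategy; there is no gap in the mathematical route you propose.
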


 \subsection{Big Galois image}
 \label{sect:bigimage}

  In this section we shall recall, and slightly extend, some results from \cite[\S 9.4]{leiloefflerzerbes18} on the images of the representations $\rho_{\pi,v}$.

  \begin{notation}
   Let $\sigma: K \to K$ be the nontrivial Galois automorphism, and let $\pi^{\sigma}$ be the image of $\pi$ under $\sigma$ (so that $a_{\fq}(\pi^{\sigma}) = a_{\sigma(\fq)}(\pi)$).
  \end{notation}

  Note that $\pi^{\sigma}$ has the same coefficient field as $\pi$, so the direct-product Galois representation
  \[ \rho_{\pi, v} \times \rho_{\pi^\sigma, v}: G_K \to \GL_2(E_v) \times \GL_2(E_v)\]
  is well-defined.

  \begin{definition}
  \label{def:BI}
   Let us say that \textbf{condition (BI)} (for ``big image'') is satisfied for $\pi$ and $v$ if:
   \begin{itemize}
   \item $p \ge \max(7, 2k_1, 2k_2)$ and $p$ is unramified in $K$.
   \item $p$ is coprime to $\#(\cO_K / \fn)$ (where $\fn$ is the level of $\pi$).
   \item $(\rho_{\pi, v} \times \rho_{\pi^\sigma, v})(G_K)$ contains a conjugate of $\SL_2(\Zp) \times \SL_2(\Zp)$.
   \end{itemize}
  \end{definition}

  \begin{proposition}
   Suppose $\pi$ satisfies condition (BI). Then:
   \begin{enumerate}[(a)]
    \item The residual representation $\bar{\rho}_{\pi, v}$ is irreducible, and remains irreducible restricted to $\Gal(\overline{K} / K^{\mathrm{ab}})$.

    \item The tensor product $\bar{\rho}_{\pi, v} \otimes \bar{\rho}_{\pi^{\sigma}, v}$ is irreducible as a representation of $\Gal(\overline{K} / K^{\mathrm{ab}})$.

    \item The induced representation $\Ind_K^{\QQ}(\bar{\rho}_{\pi, v})$ is irreducible as a representation of $\Gal\left(\Qb/\QQ(\mu_{p^\infty})\right)$.

    \item There exists $\tau\in\Gal\left(\Qb/\QQ^{\mathrm{ab}}\right)$ such that $(\Ind_K^{\QQ} T) / (\tau - 1)$ is free of rank 1 over $\cO_v$, for some (and hence any) $\cO_v$-lattice $T$ in $\rho_{\pi, v}$.

    \item There exists $\gamma \in \Gal\left(\Qb/\QQ^{\mathrm{ab}}\right)$ which acts as $-1$ on $\Ind_K^{\QQ} \rho_{\pi, v}$.
   \end{enumerate}
  \end{proposition}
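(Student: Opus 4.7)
The plan is to deduce all five parts from a single strengthening of (BI): the image of $G_{K^{\mathrm{ab}}}$ under $\rho_{\pi,v} \times \rho_{\pi^\sigma,v}$ still contains a conjugate of $\SL_2(\Zp) \times \SL_2(\Zp)$. This is a standard commutator exercise: the image of $G_K$ contains a conjugate of $\SL_2(\Zp)^2$, and its closed commutator subgroup contains $[\SL_2(\Zp)^2, \SL_2(\Zp)^2] = \SL_2(\Zp)^2$, using that $\SL_2(\Zp)$ is topologically perfect for $p \ge 5$. Moreover, since $\QQ^{\mathrm{ab}}$ is abelian over $K$, it lies inside $K^{\mathrm{ab}}$, so $G_{K^{\mathrm{ab}}} \subset G_{\QQ^{\mathrm{ab}}} \cap G_K$; this inclusion is what will locate the elements required in (d) and (e) inside $G_{\QQ^{\mathrm{ab}}}$.

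Parts (a) and (b) are then immediate: projecting the image of $G_{K^{\mathrm{ab}}}$ to one factor shows $\SL_2(\mathbf{F}_p) \subset \bar\rho_{\pi,v}(G_{K^{\mathrm{ab}}})$, which acts irreducibly on $\mathbf{F}_p^2$; and the outer tensor $\mathbf{F}_p^2 \boxtimes \mathbf{F}_p^2$ is irreducible as a representation of $\SL_2 \times \SL_2$. For (c) I would apply Mackey's irreducibility criterion. The hypothesis that $p$ is unramified in $K$ forces $K \cap \QQ(\mu_{p^\infty}) = \QQ$, since every quadratic subfield of $\QQ(\mu_{p^\infty})$ is ramified at $p$. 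Mackey then identifies $\Ind_K^\QQ \bar\rho_{\pi,v}$ restricted to $G_{\QQ(\mu_{p^\infty})}$ with $\Ind_{G_{K \cdot \QQ(\mu_{p^\infty})}}^{G_{\QQ(\mu_{p^\infty})}} \bar\rho_{\pi,v}$, and irreducibility of this degree-$2$ induction reduces to irreducibility of $\bar\rho_{\pi,v}$ on $G_{K \cdot \QQ(\mu_{p^\infty})}$ (immediate from (a)) together with non-isomorphism of $\bar\rho_{\pi,v}$ and $\bar\rho_{\pi^\sigma,v}$ there. If such an isomorphism existed, the image of this subgroup in $\GL_2(\mathbf{F}_p)^2$ would be forced to lie in a twisted diagonal copy of a single $\GL_2$, contradicting the core input.

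For (e), take $\gamma \in G_{K^{\mathrm{ab}}}$ with $(\rho_{\pi,v}(\gamma), \rho_{\pi^\sigma,v}(\gamma)) = (-I, -I) \in \SL_2(\Zp)^2$: it acts as $-1$ on each summand of $(\Ind_K^\QQ \rho_{\pi,v})|_{G_K} = \rho_{\pi,v} \oplus \rho_{\pi^\sigma,v}$, hence as $-1$ on the full induction, and lies in $G_{\QQ^{\mathrm{ab}}}$ as required. For (d), choose $\tau \in G_{K^{\mathrm{ab}}}$ mapping to a conjugate of $\bigl(\stbt{2}{}{}{1/2},\ \stbt{1}{1}{}{1}\bigr) \in \SL_2(\Zp)^2$; working in a suitable basis, $\tau - 1$ acts block-diagonally on $T \oplus T^\sigma$, with first block of determinant $-1/2 \in \Zp^\times$ (hence invertible) and second block of Smith normal form $\operatorname{diag}(1,0)$ (hence cokernel free of rank one over $\cO_v$). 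The total cokernel is therefore free of rank one, as desired. The main technical point of the argument is the persistence of $\SL_2(\Zp)^2$ in the image after passing to the commutator subgroup, which is precisely where the hypothesis $p \ge 7$ is needed.
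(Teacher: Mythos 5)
Your proof is correct and takes essentially the same route as the paper's: you derive that $\SL_2(\Zp)^2$ persists in the image over $G_{K^{\mathrm{ab}}}$ from topological perfectness (the paper phrases this as ``$\SL_2(\Zp)$ has no nontrivial abelian quotients''), deduce (a), (b), (c) exactly as the paper does, take $(-I,-I)$ for (e), and for (d) choose a pair that is unipotent in one factor and semisimple-with-nontrivial-mod-$p$-eigenvalues in the other — the paper uses $(\stbt{1}{1}{}{1},\stbt{x}{}{}{x^{-1}})$ with $x\not\equiv 1\bmod p$, you use $(\stbt{2}{}{}{1/2},\stbt{1}{1}{}{1})$, which is the same construction with the factors swapped and $x=2$. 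One cosmetic slip: your closing sentence attributes the need for $p\ge 7$ to the perfectness step, but as you note earlier $p\ge 5$ already suffices for that; the paper imposes $p\ge 7$ (and $p\ge 2k_i$) for other reasons appearing elsewhere in the argument.
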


  \begin{proof}
   Parts (a) and (b) are obvious, since $\SL_2(\Zp)$ has no nontrivial abelian quotients, and hence the image of $G_{K^{\mathrm{ab}}}$ still contains $\SL_2(\Zp) \times \SL_2(\Zp)$. Similarly, this shows that $\Ind_K^{\QQ}(\bar{\rho}_{\pi, v})$ is a direct sum of two non-isomorphic irreducibles as a representation of $G_{K(\mu_{p^\infty})}$, and since these are not stable under $G_{\QQ(\mu_{p^\infty})}$, we obtain (c).

   For (d), we take $\tau$ to be any element whose image under $\rho_{\pi, v} \times \rho_{\pi^\sigma, v}$ has the form
   \[ \begin{pmatrix} 1 & 1 \\ & 1 \end{pmatrix} \times \begin{pmatrix} x & \\ & x^{-1}\end{pmatrix},\]
   for some $x \in \ZZ_p^\times$ such that $x\ne 1\pmod p$. This $\tau$ clearly has the right property. Similarly, the image will contain $(-\mathrm{Id}) \times (-\mathrm{Id})$ which gives the element $\gamma$.
  \end{proof}

  \begin{remark}
   Parts (c) and (d) of the proposition show that if condition (BI) holds, then Rubin's $\operatorname{Hyp}(\QQ(\mu_{p^\infty}), T(\chi))$ holds for every Dirichlet character $\chi$ and every Galois-invariant lattice $T$. Moreover, part (e) implies that
   \[ H^1(\QQ(T, \mu_{p^\infty}) / \QQ, T/\varpi_v) = H^1(\QQ(T, \mu_{p^\infty}) / \QQ, T^*(1) / \varpi_v) = 0\]
   (Hypothesis H.3 of \cite{mazurrubin04}); equivalently, the quantities $n(W)$ and $n^*(W)$ appearing in \cite{rubin00} are both zero, for $W = T \otimes \Qp/\Zp$.

   We shall need part (b) since it is required as a hypothesis for some results of Dimitrov \cite{dimitrov09} which we shall need to quote later. This is also the reason why we assumed $p \ge \max(2k_1, 2k_2)$. Otherwise, the tensor product plays no role in this paper.
  \end{remark}

  We now show that if we fix $\pi$ and let $v$ vary, then condition (BI) is satisfied for a large set of primes $v$. We assume that $\pi$ is not of CM type.

  \begin{definition}
   Let $F$ be the subfield of $E$ generated by the ratios
   \[ t_{\fq}(\pi) \coloneqq \frac{a_{\fq}(\pi)^2}{\Nm(\fq)^{w}\varepsilon_{\pi}(\fq)}\]
   for all primes $\fq \nmid \fn$ of $K$ (the field of definition of the adjoint $\operatorname{Ad}(\pi)$).
  \end{definition}

  A well-known theorem proved by Ribet for elliptic modular forms \cite{ribet85}, generalised to Hilbert modular forms by \Nek \cite[Theorem B.4.10(3)]{nekovar12}, shows that for all but finitely many primes $v$ of $E$, $\rho_{\pi, v}(G_{K^{\mathrm{ab}}})$ is a conjugate of $\SL_2(\cO_{F, w})$, where $w$ is the prime of $F$ below $v$.
%

  \begin{definition}
   An \emph{exceptional automorphism} of $\pi$ is a field automorphism $\gamma: F \to F$ such that $\gamma\left(t_{\fq}(\pi)\right)= t_{\sigma(\fq)}(\pi)$ for almost all primes $\fq$.
  \end{definition}

  Note that $\gamma$ is unique if it exists (by the definition of $F$) and $\gamma^2$ must be the identity.

  \begin{proposition} \
   \begin{enumerate}
    \item If $\tilde\gamma: E \into \CC$ is a field embedding, then $\tilde{\gamma}(\pi)$ is a twist of $\pi$ if and only if $\tilde\gamma|_F = \id$, and $\tilde\gamma(\pi)$ is a twist of $\pi^{\sigma}$ if and only if $\tilde\gamma|_F = \gamma$ is an exceptional automorphism of $\pi$.
    \item If $\gamma = \id_F$ is an exceptional automorphism of $\pi$, then $\pi$ is a twist of a base-change from $\GL_2 /\QQ$.
   \end{enumerate}
  \end{proposition}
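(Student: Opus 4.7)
The plan is to derive both parts from the behaviour of the quantities $t_\fq(\pi)$ under twisting, Galois conjugation, and the nontrivial Galois automorphism $\sigma$ of $K/\QQ$, combined with strong multiplicity one and Hilbert 90 for Hecke characters.

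For part (1), the crucial observations are two formal identities: first, $t_\fq$ is invariant under twists by Hecke characters, since twisting $\pi$ by $\chi$ multiplies $a_\fq$ by $\chi(\fq)$ and $\varepsilon_\pi$ by $\chi^2(\fq)$; and second, for any embedding $\tilde\gamma: E \hookrightarrow \CC$, we have $t_\fq(\tilde\gamma(\pi)) = \tilde\gamma(t_\fq(\pi))$ and $t_\fq(\pi^\sigma) = t_{\sigma(\fq)}(\pi)$. Consequently, $\tilde\gamma(\pi)$ is a twist of $\pi$ if and only if $t_\fq(\tilde\gamma(\pi)) = t_\fq(\pi)$ for almost all $\fq$, which translates via the first identity to $\tilde\gamma|_F = \id$; and the same argument with $\pi^\sigma$ in place of $\pi$ yields the second equivalence. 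The ``only if'' direction of each statement is immediate; for the ``if'' direction one invokes strong multiplicity one for $\mathrm{GL}_3$ (Jacquet--Shalika, or Ramakrishnan's theorem on the adjoint) to deduce from $\operatorname{Ad}(\pi') = \operatorname{Ad}(\pi)$ that $\pi' = \pi \otimes \chi$ for some Hecke character $\chi$.

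For part (2), applying part (1) with $\tilde\gamma$ any lift of $\id_F$ and comparing both equivalences shows that $\pi^\sigma$ is a twist of $\pi$, so there exists a Hecke character $\chi$ of $K$ with $\pi^\sigma \cong \pi \otimes \chi$. Applying $\sigma$ again gives $\pi \cong \pi \otimes \chi\chi^\sigma$, and since $\pi$ is not of CM type (i.e.~has no nontrivial self-twists), we conclude $\chi \chi^\sigma = 1$. The next step is to invoke Hilbert 90 for the group of Hecke characters of $K$ to produce $\eta$ with $\chi = \eta/\eta^\sigma$; modulo a finite obstruction by the norm-one class group one can always achieve this after enlarging the ramification of $\eta$ suitably. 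Then $(\pi \otimes \eta)^\sigma = \pi^\sigma \otimes \eta^\sigma = \pi \otimes \chi \eta^\sigma = \pi \otimes \eta$, so $\pi \otimes \eta$ is $\sigma$-invariant.

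Finally, a $\sigma$-invariant cuspidal automorphic representation of $\GL_2/K$ is, by the Langlands--Saito--Shintani cyclic base change theorem, the base change of a cuspidal automorphic representation of $\GL_2/\QQ$ (cuspidality being preserved because $\pi \otimes \eta$ is still not of CM type). Hence $\pi \otimes \eta$ is a base change, and $\pi$ is a twist of a base change as claimed. The main obstacle I anticipate is the Hilbert 90 step: one must check carefully that the character $\eta$ can be chosen as a Hecke character of the correct type (matching infinity-type and compatibility with the central character of $\pi$), rather than merely as an abstract character of the norm-one subgroup, so that the twist $\pi \otimes \eta$ remains a well-defined automorphic representation.
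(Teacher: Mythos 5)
Your proof of part (1) is essentially the paper's: both arguments reduce to the fact (Ramakrishnan's strong multiplicity one for $\SL_2$, which you phrase via $\GL_3$ and the adjoint -- these are equivalent) that the $t_\fq(\pi)$ determine $\pi$ up to twist. The elementary observations you make about the invariance of $t_\fq$ under twisting and its behaviour under $\tilde\gamma$ and $\sigma$ are correct.

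For part (2), however, you depart from the paper: the authors simply cite Lapid--Rogawski \cite{lapidrogawski98}, whereas you attempt a direct proof. Your outline -- deduce $\pi^\sigma \cong \pi \otimes \chi$, show $\chi\chi^\sigma = 1$ using the absence of self-twists, solve $\chi = \eta/\eta^\sigma$ by a ``Hilbert 90'' argument, and then apply cyclic base change -- is the natural strategy, and the first and last steps are fine. But the step you yourself flag as ``the main obstacle'' is a genuine gap, not merely a technicality to be finessed by ``enlarging the ramification of $\eta$.'' The obstruction to writing a Hecke character $\chi$ of $K$ satisfying $\chi\chi^\sigma = 1$ in the form $\eta^{1-\sigma}$ is not controlled by the norm-one class group: since $\eta^{1-\sigma}$ is automatically trivial on $\ker(1-\sigma : C_K \to C_K) = C_\QQ$, while the given condition $\chi\chi^\sigma = 1$ only forces $\chi$ to be trivial on the smaller subgroup $\Nm_{K/\QQ}(C_K)$, the obstruction is precisely the restriction of $\chi$ to $C_\QQ/\Nm_{K/\QQ}(C_K) \cong \Gal(K/\QQ)$, a $\ZZ/2\ZZ$ that does not shrink by allowing $\eta$ to ramify more. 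One must actually show that for the particular $\chi$ arising from $\pi^\sigma \cong \pi\otimes\chi$ this restriction is trivial (or handle the case where it is not), and this is exactly what the Lapid--Rogawski analysis accomplishes; it is not automatic. As written, your argument therefore does not close the loop. A smaller point: your parenthetical ``cuspidality being preserved because $\pi \otimes \eta$ is still not of CM type'' is not quite the right justification -- what one needs is that for a cyclic extension of prime degree a $\sigma$-invariant cuspidal representation descends to a cuspidal (rather than Eisenstein) representation, which is part of the Arthur--Clozel/Langlands base change theory but deserves to be said.
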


  \begin{proof}
   The first theorem is a consequence of the strong multiplicity one theorem for $\SL_2$ \cite[Theorem 4.1.2]{ramakrishnan00}, which shows that the ratios $t_{\fq}(\pi)$ for all but finitely many $\fq$ characterise $\pi$ uniquely up to twisting by characters. The second assertion is a special case of a result of Lapid and Rogawski \cite{lapidrogawski98}.
  \end{proof}

  \begin{proposition}
   \label{prop:HMFlargeimage}
   If $\pi$ does not have an exceptional automorphism, then for all but finitely many primes $v$ of $E$, we have
   \[(\rho_{\pi, v} \times \rho_{\pi^\sigma, v}) (G_{K^{\mathrm{ab}}}) = \SL_2(\cO_{F, w}) \times \SL_2(\cO_{F, w}).\]
   If $\pi$ has an exceptional automorphism $\gamma$, then the above conclusion holds for all but finitely many primes $v$ satisfying the condition that $\gamma(w) \ne w$. For all but finitely many primes with $\gamma(w) = w$, the image of $G_{K^{\mathrm{ab}}}$ is conjugate to the group of matrices of the form $\{ (u, \gamma(u)): u \in \SL_2(\cO_{F, w})\}$.
  \end{proposition}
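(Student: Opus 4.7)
The plan is to combine the Ribet--Nekov\'a\v{r} big-image theorem on each factor with Goursat's lemma for profinite groups.

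First, I would observe that the determinant of each factor lands in the abelian group $E_v^\times$, so it is trivial on the commutator subgroup $G_{K^{\mathrm{ab}}}$. Consequently the image
\[ H_v := (\rho_{\pi, v} \times \rho_{\pi^\sigma, v})(G_{K^{\mathrm{ab}}}) \]
lies in $\SL_2(E_v) \times \SL_2(E_v)$. The Ribet--\Nek theorem quoted above, applied separately to $\pi$ and to $\pi^\sigma$ (which has the same adjoint field $F$, since $t_\fq(\pi^\sigma) = t_{\sigma(\fq)}(\pi)$), shows that for all but finitely many primes $v$ of $E$ each projection of $H_v$ is conjugate to $\SL_2(\cO_{F,w})$, where $w$ is the prime of $F$ below $v$. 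After absorbing these conjugations into the choice of basis on each factor, I may assume both projections equal $\SL_2(\cO_{F,w})$ on the nose.

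Next, I would invoke Goursat's lemma in its profinite form: any closed subgroup of $\SL_2(\cO_{F,w})^2$ with surjective projections is the fibre product associated with an isomorphism
\[ \phi: \SL_2(\cO_{F,w})/N_1 \xrightarrow{\sim} \SL_2(\cO_{F,w})/N_2 \]
for closed normal subgroups $N_i \trianglelefteq \SL_2(\cO_{F,w})$. The key step (and the main obstacle) is the classification of such isomorphisms when $w$ has residue characteristic $\ge 5$. Using the standard description of closed normal subgroups of $\SL_2$ over a complete local ring (they are sandwiched between congruence subgroups of consecutive levels) and a Borel--Tits-type theorem on abstract automorphisms of a compact $p$-adic Chevalley group, one concludes that $H_v$ is either the full product $\SL_2(\cO_{F,w})^2$ or, up to simultaneous conjugation on both factors, the graph $\{(u, \tau(u)) : u \in \SL_2(\cO_{F,w})\}$ for some $\tau \in \Gal(F_w/\QQ_p)$.

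Finally, I would translate the graph possibility back into automorphic data. Applying Chebotarev to the traces of Frobenius, the graph structure forces $\operatorname{tr} \rho_{\pi^\sigma, v}(\mathrm{Frob}_\fq) = \tau\bigl(\operatorname{tr} \rho_{\pi, v}(\mathrm{Frob}_\fq)\bigr)$ for almost all $\fq$, whence $a_{\sigma(\fq)}(\pi) = \tau(a_\fq(\pi))$ up to a fixed character value; squaring and dividing by the corresponding central character value then gives $t_{\sigma(\fq)}(\pi) = \tau(t_\fq(\pi))$ for almost all $\fq$, so $\tau|_F$ is an exceptional automorphism $\gamma$ of $\pi$ that fixes $w$. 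Thus if no exceptional automorphism exists the graph case is impossible, and the conclusion reduces to the full-product case for all but finitely many $v$; if an exceptional $\gamma$ does exist, the graph case can occur only when $\gamma(w) = w$, and at the primes with $\gamma(w) \ne w$ the image is again forced to be the full product $\SL_2(\cO_{F,w})^2$.
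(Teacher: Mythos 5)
The paper does not actually prove this proposition here; it cites \cite[Proposition 9.4.3]{leiloefflerzerbes18}. So the comparison below is against what an argument of this type has to establish, not against a proof written out in the present paper.

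Your outline captures the right overall shape (reduce to $G_{K^{\mathrm{ab}}}$ so the image lands in the product of $\SL_2$'s, apply the Ribet--\Nek big-image theorem to each factor, and then use Goursat), and the final translation from the graph case to an exceptional automorphism is essentially correct: an abstract automorphism of $\SL_2(\cO_{F,w})$ is inner composed with a ring automorphism $\tau$ of $\cO_{F,w}$, the trace identity $t_{\sigma(\fq)}(\pi) = \tau(t_{\fq}(\pi))$ shows $\tau$ preserves $F$ (since $F$ is generated by the $t_{\fq}$), and the fact that $\tau$ comes from an automorphism of the completion forces $\gamma(w) = w$, which is exactly the refinement in the statement.

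However, there is a genuine gap in the Goursat step. Goursat's lemma only says the image $H_v$ is a fibre product over an isomorphism $\SL_2(\cO_{F,w})/N_1 \cong \SL_2(\cO_{F,w})/N_2$ for closed normal $N_i$. The two extreme cases ($N_i$ everything, giving the full product; $N_i$ trivial, giving a graph) are \emph{not} the only possibilities: for a fixed $v$ there can be intermediate fibre products, e.g.\ over a finite quotient $\SL_2(\cO_{F,w}/w^n)$, which are strictly between a graph and the full product. The classification of normal subgroups and the Borel--Tits description of abstract automorphisms do not, by themselves, collapse these cases; you asserted the dichotomy ``full product or graph'' as a consequence of classification, and that inference is false. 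What is actually true is that the intermediate cases can only occur for finitely many $v$: a fibre product over $\SL_2(\cO_{F,w}/w^n)$ means the reductions $\bar\rho_{\pi,v}$ and $\bar\rho_{\pi^\sigma,v}$ (modulo $w^n$, hence in particular modulo $w$) become isomorphic after an automorphism of the coefficient ring, i.e.\ a congruence between $\pi$ and $\pi^\sigma$ up to field automorphism, and such congruences at a fixed pair of non-twist-equivalent eigensystems occur at only finitely many primes. That finiteness-of-congruence-primes argument (via Chebotarev and the fact that a Galois representation is determined up to semisimplification by its traces) is the missing arithmetic input. Without it, the Goursat step does not yield the claimed dichotomy, and the subsequent Chebotarev paragraph, which addresses only the graph case, does not fill the hole.
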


  See \cite[Proposition 9.4.3]{leiloefflerzerbes18}. In particular, this shows that if $\pi$ is not a twist of a base-change form, then Condition BI holds for a set of primes of density at least $\tfrac{1}{2}$; and if $E = \QQ$, then there is no exceptional automorphism, so the condition holds for all but finitely many $v$.
%
%
%

\section{Preliminaries II: Iwasawa theory for \texorpdfstring{$\GL_2 / K$}{GL2/K}}
\label{sect:gl2keigen}

 Throughout this section, we fix a prime $p$, and a finite extension $L /\Qp$ with ring of integers $\cO$. We shall suppose throughout that $p$ is unramified in $K / \QQ$. We shall write $\nu_v$ for the $v$-adic valuation on $L$ normalised by $\nu_v(p) = 1$.

 \subsection{Hecke parameters and ordinarity for \texorpdfstring{$\GL_2 / K$}{GL2/K}}

  Let $\pi$ be a Hilbert cuspidal automorphic representation of weight $(\uk, \ut)$ and level $\fn$, with $(p, \fn)= 1$, whose coefficient field $E$ embeds into $L$ (and we fix a choice of such an embedding).

  \begin{assumption}
   \label{ass:primenumbers}
   If $E$ contains the images of the embeddings $\sigma_i: K \into \RR$ (which is always the case if $k_1 \ne k_2$), and $p$ is split in $K$, we shall always assume that the primes above $p$ in $K$ are numbered as $\fp_1, \fp_2$ in such a way that $\sigma_i(\fp_i)$ lies below $v$.
  \end{assumption}

  \begin{proposition} \
   \begin{enumerate}[(i)]
    \item If $p\cO_K = \fp$ is an inert prime, then $\nu_v(a_\fp(\pi)) \ge t_1 + t_2$.
    \item If $p\cO_K = \fp_1 \fp_2$ is split in $K$, then we have
    \[ \nu_v(a_{\fp_1}(\pi)) \ge t_1, \quad \nu_v(a_{\fp_2}(\pi)) \ge t_2.\]
   \end{enumerate}
  \end{proposition}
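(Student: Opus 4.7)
The plan is to apply weak admissibility (the Newton-above-Hodge inequality, in the sense of Fontaine--Laffaille/Mazur) to the local crystalline representation $\rho_{\pi, v}|_{G_{K_\fp}}$ at each prime $\fp \mid p$. The characteristic polynomial of Frobenius and the list of Hodge--Tate weights at each embedding are precisely the input provided by the preceding theorem.

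In the split case, each $K_{\fp_i}$ equals $\Qp$, and Assumption \ref{ass:primenumbers} identifies the Hodge--Tate weights of $\rho_{\pi, v}|_{G_{K_{\fp_i}}}$ as $\{t_i,\, t_i + k_i - 1\}$. Its crystalline Frobenius is a $2$-dimensional $L$-linear operator with characteristic polynomial $X^2 - a_{\fp_i}(\pi)\,X + \varepsilon_\pi(\fp_i)\,p^{w-1}$, whose eigenvalue valuations sum to $w - 1 = t_i + (t_i + k_i - 1)$ (matching the total Hodge slope). Newton-above-Hodge then forces the smaller Newton slope to be $\geq t_i$; since $a_{\fp_i}(\pi)$ is the sum of the two eigenvalues, we obtain $\nu_v(a_{\fp_i}(\pi)) \geq t_i$.

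The inert case is the same argument applied to $\varphi^{[K_\fp:\Qp]} = \varphi^2$, which is $K_\fp$-linear and whose characteristic polynomial on $\Dcris(K_\fp, \rho_{\pi,v})$ is $X^2 - a_\fp(\pi)\,X + \varepsilon_\pi(\fp)\,p^{2(w-1)}$. Here the standard formula identifies the Hodge slopes of $\varphi^f$, for a crystalline representation over an unramified extension of degree $f$, as those obtained by sorting the Hodge--Tate weights within each embedding and then summing across embeddings. In our situation this gives a lower slope of $t_1 + t_2$ and an upper slope of $(t_1 + k_1 - 1) + (t_2 + k_2 - 1) = 2(w-1) - (t_1 + t_2)$. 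Newton-above-Hodge then forces both eigenvalues of $\varphi^2$ to have valuation $\geq t_1 + t_2$, and summing yields $\nu_v(a_\fp(\pi)) \geq t_1 + t_2$.

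I do not foresee a genuine obstacle: the whole argument is essentially a single invocation of weak admissibility. The only nontrivial bookkeeping is the correct shape of the Hodge polygon of $\varphi^f$ in the inert case, which is a classical feature of filtered $\varphi$-modules over unramified base fields. The matching between $\fp_i$ and $t_i$ that the split case requires is fixed by Assumption \ref{ass:primenumbers}.
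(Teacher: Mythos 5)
The paper states this proposition without proof, as a standard fact about ordinary/crystalline Hilbert eigenforms. Your argument --- reading off the Frobenius characteristic polynomial and the Hodge--Tate weights at each embedding from the preceding theorem, and applying the Newton-above-Hodge inequality for the weakly admissible filtered $\varphi$-module $\Dcris(K_\fp, \rho_{\pi,v})$ --- is correct and is exactly the expected derivation, including the sorted-and-summed Hodge polygon of $\varphi^{[K_\fp:\Qp]}$ in the inert case and the use of the $\sigma_i \leftrightarrow \fp_i$ matching from the preceding assumption in the split case.
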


  \begin{definition}
   For $\fp \mid p$ we shall define $a_{\fp}^\circ(\pi)$ to be $p^{-h} a_{\fp}(\pi) \in \cO$, where $h$ is the lower bound on the valuation given in the proposition.
  \end{definition}

  Note that this quantity depends on the choice of the prime $v$ of the coefficient field. (The case of $\fp$ ramified can be handled by replacing the power of $p$ with a power of a uniformiser of $K_{\fp}$, as in \cite{bergdallhansen17}, but we shall not need this here.)

  \begin{definition}
   We define the \emph{slope} of $\pi$ at $\fp$ to be $\nu_v\left(a_{\fp}^\circ(\pi)\right) \in \QQ_{\ge 0}$, and we say $\pi$ is \emph{ordinary at $\fp$} if this slope is 0.
  \end{definition}

  We define similarly a renormalised Hecke polynomial $P_\fp^{\circ}(\pi, X) = P_\fp(\pi, p^{-h} X)$, so that if $p$ is inert we have $P_\fp^{\circ}(\pi, X) = 1 - a_{\fp}^\circ(\pi) X + p^{(k_1 + k_2 - 2)}\varepsilon_{\pi}(\fp) X^2$, and if $p = \fp_1 \fp_2$ is split we have $P_{\fp_i}^{\circ}(\pi, X) = 1 - a_{\fp_i}^{\circ}(\pi) X + p^{(k_i - 1)}\varepsilon_{\pi}(\fp_i) X^2$ with $k_i$ the weight corresponding to $\fp_i$ as above.

  We define normalised Hecke parameters $\fa_{\fp}^\circ, \fb_{\fp}^\circ \in \bar{E}_v$ by
  \[ P_\fp^{\circ}(\pi, X) = (1 - \fa_{\fp}^\circ X)(1 - \fb_{\fp}^\circ X).\]
  We shall assume that $\nu_v(\fa_{\fp}^\circ) \le \nu_v(\fb_{\fp}^\circ)$. In particular, if $\pi$ is ordinary at $\fp$, then $\fa_{\fp}^\circ$ will be the unique unit root.

  \begin{proposition}
   \label{prop:noexceptionalzero}
   If $\pi$ is ordinary at $\fp \mid p$, then neither $\fa_{\fp}^{\circ}$ nor $\fb_{\fp}^{\circ}$ is the product of a root of unity and an integral power of $p$.
  \end{proposition}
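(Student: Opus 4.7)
The plan is to translate the hypothesis into a statement about the complex absolute value of a Frobenius eigenvalue, and then contradict it using the Ramanujan--Petersson bound for Hilbert cuspidal automorphic representations of cohomological weight.

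First I would use the relation $\fa_{\fp}^{\circ} \fb_{\fp}^{\circ} = p^m \varepsilon_{\pi}(\fp)$, where $m = k_i - 1$ in the split case $\fp = \fp_i$ and $m = k_1 + k_2 - 2$ in the inert case, noting that $\varepsilon_{\pi}(\fp)$ is a root of unity (since $\varepsilon_{\pi}$ is of finite order). The ordinarity assumption gives $\nu_v(\fa_{\fp}^{\circ}) = 0$, and hence $\nu_v(\fb_{\fp}^{\circ}) = m$. So if $\fa_{\fp}^{\circ} = \zeta p^n$ for a root of unity $\zeta$ and $n \in \ZZ$, then $n = 0$ and $\fa_{\fp}^{\circ}$ is itself a root of unity; while if $\fb_{\fp}^{\circ} = \zeta p^n$, then $n = m$ and $p^{-m} \fb_{\fp}^{\circ}$ is a root of unity. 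Multiplying by $p^h$, where $h = t_i$ in the split case and $h = t_1 + t_2$ in the inert case, recovers the corresponding root $\alpha$ or $\beta$ of the original (unnormalised) Hecke polynomial $P_\fp(\pi, X)$, which would then equal a root of unity times an integer power of $p$.

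The key input is then the Ramanujan--Petersson conjecture for $\pi$, known for Hilbert cuspidal automorphic representations of cohomological weight by work of Blasius (via the realisation of $\rho_{\pi,v}$ in \'etale cohomology of Shimura varieties and Deligne's purity theorem). This asserts that for every complex embedding $\iota: \overline{\QQ} \into \CC$, both reciprocal roots of $P_\fp(\pi, X)$ have absolute value $\Nm(\fp)^{(w-1)/2}$. On the other hand, if the bad root equals $\zeta p^n$ with $\zeta$ a root of unity and $n \in \ZZ$, then its complex absolute value (under any $\iota$) is exactly $p^n$. In the split case, comparing these forces either $n = t_i = (w-1)/2$ (Case 1) or $n = t_i + k_i - 1 = (w-1)/2$ (Case 2); substituting $w = k_i + 2t_i$ both alternatives yield $k_i = 1$. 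In the inert case, the analogous computation with $\Nm(\fp) = p^2$ yields $k_1 + k_2 = 2$. Either outcome contradicts the standing hypothesis $k_i \ge 2$.

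The main (indeed essentially the only) obstacle is the invocation of the Ramanujan conjecture; every other step is elementary $v$-adic valuation arithmetic. Since Ramanujan for cohomological Hilbert cuspidal representations is a well-established theorem, I would simply quote it as a black box and present the valuation/absolute-value comparison as above.
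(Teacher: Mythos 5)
Your proof is correct and takes essentially the same route as the paper's: both reduce to the observation that a root-of-unity times integral power of $p$ has identical $v$-adic valuation and complex absolute value (as a power of $p$), then invoke temperedness (Ramanujan--Petersson) to pin the complex absolute value to $p^{(k_i-1)/2}$ and contrast this with the ordinarity constraint that the $v$-adic valuations be $0$ and $k_i-1$. The only superficial difference is that you pass to the unnormalised Hecke roots and track $w$, $t_i$ explicitly before reaching the same contradiction $k_i = 1$, whereas the paper works directly with the normalised $\fa_{\fp}^{\circ}$, $\fb_{\fp}^{\circ}$ and so keeps the arithmetic slightly shorter.
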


  \begin{proof}
   Suppose $p$ is split (the argument for $p$ inert is similar). Then, since $\pi_{\fp_i}$ is tempered, we have $|\fa_{\fp_i}^{\circ}| = |\fb_{\fp_i}^{\circ}| = p^{(k_i-1)/2}$ for every embedding $\overline{E} \into \CC$. So if either $\fa_{\fp_i}^{\circ}$ or $\fb_{\fp_i}^{\circ}$ is a power of $p$ times a root of unity, then its $v$-adic valuation must be exactly $\tfrac{k_i-1}{2}$. Since $k_i \ge 2$, this contradicts the assumption that $\fa_{\fp_i}^{\circ}$ has valuation $0$ and $\fb_{\fp_i}^{\circ}$ has valuation $k_i - 1$.
  \end{proof}

 \subsection{Cyclotomic p-adic L-functions}
  \label{sect:gl2kpadicL}

  For $j \in \ZZ$ and $\chi$ a Dirichlet character of $p$-power conductor, we write $\chi + j$ for the character $x \mapsto x^j \chi(x)$ of $\Zp^\times$.

  \begin{notation}
   For $? = L$ or $\cO$, let $\Lambda_{?}(\Gamma)$ denote the Iwasawa algebra of $\Gamma = \Zp^\times$ with coefficients in $?$, and $\Lambda_{?}(\Gamma)^\eps$ the quotient in which $[-1] = \eps$.

   Let $\cW$ denote the rigid-analytic space $\left(\operatorname{Spf} \Lambda_{\cO}(\Gamma)\right)^{\rig}$ over $L$ parametrising characters of $\Zp^\times$, and $\cW^{\eps}$ the subspace corresponding to $\Lambda(\Gamma)^\eps$.
  \end{notation}

  Note that for each sign $\eps$, $\cW^{\eps}$ is a union of components of $\cW$, and each component is isomorphic to an open disc. We can identify $\Lambda_{L}(\Gamma)$ with the ring of bounded rigid-analytic functions on $\cW$, and similarly for $\cW^{\eps}$.

  \begin{theorem}
   Suppose that $\pi_{\fp}$ is ordinary for all $\fp \mid p$. Then for each $\eps \in \{\pm 1\}$ there exists a unique element
   \[ \cL_p^{\eps}(\pi) \in M_B^{\eps}(\pi, L)^\vee \otimes \Lambda_{L}(\Gamma)^{\eps}
     \]
   such that for all characters of $\Zp^\times$ of the form $j+\chi$, with $\chi$ of finite order and $j$ a Deligne-critical integer, we have
   \[ \cL^{\eps}_p(\pi)(j + \chi) = \left(\textstyle \prod_{\fp \mid p} \cE_{\fp}(\pi, \chi, j)\right) \cdot \cL^{\mathrm{alg}}(\pi, \chi^{-1}, j),\]
   where
   \[ \cE_{\fp}(\pi, \chi, j) =
   \begin{cases}
   \left(\frac{q^{(j+1)}}{\fa_{\fp}}\right)^c & \text{if $c \ge 1$}\\
   (1 - \frac{q^j}{\fa_{\fp}}) (1 - \frac{\fa_{\fp}}{q^{1+j}}) & \text{if $c = 0$}

   \end{cases}
   \]
   for $\chi$ of conductor $p^c$ and $q = \Nm(\fp)$.
  \end{theorem}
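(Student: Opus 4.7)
The plan is to realise $\cL_p^\eps(\pi)$ as the pushforward along the norm map $\Nm_{K/\QQ}\colon (\cO_K \otimes \Zp)^\times \twoheadrightarrow \Zp^\times$ of a measure $\mu_\pi^\eps$ on $(\cO_K \otimes \Zp)^\times$ with values in $M_B^\eps(\pi, L)^\vee$, and then to verify the interpolation property at the Zariski-dense set of Deligne-critical classical points.

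First I would construct $\mu_\pi^\eps$ via the overconvergent Hilbert modular symbol machinery of Bergdall--Hansen \cite{bergdallhansen17}. Ordinarity of $\pi$ at each $\fp \mid p$ singles out the unique unit-root Hecke parameter $\fa_\fp$, giving a $p$-stabilisation of $\pi$ of level $\fn \cdot p\cO_K$ and slope $0$. The corresponding overconvergent modular symbol is an $M_B^\eps(\pi, L)^\vee$-valued distribution on $(\cO_K \otimes \Zp)^\times$, and in slope $0$ such a distribution is automatically bounded, hence a measure. Projecting along $\Nm_{K/\QQ}$ gives an element of $\Lambda_L(\Gamma) \otimes M_B^\eps(\pi, L)^\vee$.

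Second, to verify interpolation at a character $j + \chi$ of $\Zp^\times$ with $j$ Deligne-critical and $\chi$ of conductor $p^c$, one evaluates $\Nm_*(\mu_\pi^\eps)$ at $j + \chi$. By the projection formula this equals $\mu_\pi^\eps$ evaluated at the classical character $(j + \chi) \circ \Nm_{K/\QQ}$ on $(\cO_K \otimes \Zp)^\times$. Standard classicality for slope-$0$ overconvergent modular symbols, combined with \cref{prop:Lalgpi}, expresses this as $\cL^{\alg}(\pi, \chi^{-1}, j)$ multiplied by a product over $\fp \mid p$ of local factors coming from the ordinary stabilisation, and a local Whittaker computation identifies each such local factor with $\cE_\fp(\pi, \chi, j)$. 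The appearance of only the ``diagonal'' sign eigenspace $M_B^\eps$ reflects the fact that characters factoring through $\Nm_{K/\QQ}$ have common sign $\chi(-1)(-1)^j = \eps$ at both real places of $K$, so only the $(\eps, \eps)$-component of Betti cohomology is picked up by the modular symbol integrals. Uniqueness is then automatic because pairs $(j, \chi)$ with $j$ Deligne-critical and $\chi$ finite-order are Zariski-dense in $\cW^\eps$.

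The main technical obstacle is the Euler-factor bookkeeping in the ramified case $c \ge 1$: the local Whittaker calculation at each $\fp$ produces a factor involving $\fa_\fp^c$ together with a local Gauss sum of the restriction of $\chi$ to $K_\fp^\times$, which must be reconciled with the global normalisation $G(\chi)^{-2}$ appearing in \cref{prop:Lalgpi} via the identity $G(\chi_K) = G(\chi)^2 \cdot \chi(-D_K) \cdot \varepsilon_K(M)$ recalled in \S 2.3.3. After absorbing the Gauss sums into the central character contribution, the remaining local factor simplifies to $(q^{j+1}/\fa_\fp)^c$ per prime; the unramified case $c = 0$ yields the familiar factor $(1 - q^j/\fa_\fp)(1 - \fa_\fp/q^{j+1})$ by the standard computation for ordinary $p$-stabilisations, where one of the two roots is removed by passing to the stabilised eigenform and the other by the complex-conjugate Frobenius.
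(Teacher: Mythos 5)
The paper's proof is a one-line citation to \cite[Theorem 0.1]{dimitrov13}, and your sketch correctly reconstructs the construction that citation points to, phrased via the overconvergent-modular-symbol route of \cite{bergdallhansen17} (which the paper treats as interchangeable with Dimitrov's automorphic-symbol framework; see its proof of \cref{thm:gl2pLfam}). The steps you list --- slope-zero distribution on $(\cO_K \otimes \Zp)^\times$, push-forward along $\Nm_{K/\QQ}$, interpolation via classicality and local zeta integrals, Hasse--Davenport for the Gauss-sum normalisation, and Zariski-density for uniqueness --- are all sound; the only imprecise point is the phrase ``complex-conjugate Frobenius'', which should really be read as the dual-side/contragredient contribution to the unramified Euler factor, but this does not affect the substance of the argument.
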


  \begin{proof}
   See \cite[Theorem 0.1]{dimitrov13}.
  \end{proof}

  We can regard $\cL^{\eps}_p(\pi)$ as a (bounded) $M^\eps_B(\pi, L)^\vee$-valued rigid-analytic function on $\cW^{\eps}$. We shall write $\cL_p(\pi)$, without superscript, to denote the sum $\cL^{+}_p(\pi) \oplus \cL^{-}_p(\pi)$, considered as a rigid-analytic function with values in $M_B^+(\pi, L)^\vee \oplus M_B^-(\pi, L)^\vee$.

  \subsubsection{Non-vanishing}

   As before, let $k_0 = \min(k_i)$ and $t_0 = \max(t_i)$, so the Deligne-critical range is $t_0 \le j \le t_0 + k_0 - 2$.

   \begin{proposition}
    \label{prop:nonvanish-padic}
    Suppose $k_0 \ge 3$. Then $\cL_p(\pi)$ does not vanish identically on any component of $\cW$, and we have $\cL_p(\pi)(t_0) \ne 0$.
   \end{proposition}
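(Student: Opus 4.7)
The plan is to produce, on every component of $\cW$ (and at $t_0$ in particular), an explicit classical interpolation point at which the algebraic $L$-value is nonzero (by \cref{prop:nonvanish}) and the local Euler factors at $p$ are nonzero (by \cref{prop:noexceptionalzero}).

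I first treat the value at $t_0$, evaluating at the pair $(j, \chi) = (t_0, \mathbf{1})$. Since $k_0 \ge 3$, the integer $t_0 = \tfrac{w - k_0}{2}$ is strictly less than $\tfrac{w-2}{2}$, so $t_0$ is a Deligne-critical integer which is \emph{not} central. \cref{prop:nonvanish} therefore yields $\cL^{\alg}(\pi, \mathbf{1}, t_0) \ne 0$. The corresponding ($c=0$) Euler factor at $p$ is $\prod_{\fp \mid p}(1 - q^{t_0}/\fa_\fp)(1 - \fa_\fp/q^{1 + t_0})$; since $\fa_\fp$ is a fixed rational power of $p$ times the unit $\fa_\fp^\circ$, \cref{prop:noexceptionalzero} precisely rules out $\fa_\fp = q^{t_0}$ and $\fa_\fp = q^{1 + t_0}$, so no local factor vanishes.

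For the non-vanishing on each component of $\cW$, I use that components are indexed by the characters $\eta$ of the torsion subgroup of $\Zp^\times$ (the sign of the component being $\eta(-1)$). Given such a component, I pick a Deligne-critical $j_0 \ne \tfrac{w-2}{2}$, which is possible because $k_0 \ge 3$ puts at least two integers in the critical range and at most one of them can be central. I then select a nontrivial finite-order character $\chi$ of $p$-power conductor $p^c$ (so $c \ge 1$) whose restriction to the torsion subgroup is $\eta \cdot \omega^{-j_0}$, where $\omega$ denotes the Teichm\"uller character; such $\chi$ always exists (if $\eta \omega^{-j_0}$ itself is trivial on torsion, take any nontrivial finite-order character of $1 + p\Zp$). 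The point $j_0 + \chi$ then lies in the chosen component, and the interpolation formula reads
\[
 \cL_p(\pi)(j_0 + \chi) = \Bigl(\prod_{\fp \mid p} (q^{j_0+1}/\fa_\fp)^c\Bigr) \cdot \cL^{\alg}(\pi, \chi^{-1}, j_0),
\]
in which the $c \ge 1$ local factors are visibly nonzero and the algebraic $L$-value is nonzero by \cref{prop:nonvanish} (as $j_0$ is non-central).

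No serious obstacle is anticipated: the delicate step is the boundary value at $t_0$, where one must avoid a trivial zero of the Euler factor, and this is exactly what \cref{prop:noexceptionalzero} supplies using the ordinarity hypothesis. Without the bound $k_0 \ge 3$ the argument would break down for parallel weight $2$ forms, since then $t_0$ coincides with the central critical point and \cref{prop:nonvanish} no longer applies.
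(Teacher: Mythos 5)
Your proof is correct and follows the paper's own argument: both establish nonvanishing by applying \cref{prop:nonvanish} to obtain a nonzero critical $L$-value at a non-central $j$ (the bound $k_0 \ge 3$ ensuring $t_0 < \tfrac{w-2}{2}$), and \cref{prop:noexceptionalzero} to see that the interpolation factors $\cE_{\fp}$ never vanish, then observe that twisting by finite-order characters hits every component of $\cW$. The only cosmetic difference is that the paper simply fixes $j = t_0$ and varies $\chi$ over all characters (including the trivial one), whereas you additionally allow $j_0$ to vary and insist on nontrivial $\chi$; this is harmless but not necessary, since the $c = 0$ Euler factor is also nonzero by the same proposition.
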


   \begin{proof}
    We know that $L(\pi \otimes \chi_K^{-1}, 1 + j)$ is non-vanishing for all Deligne-critical $j \ne \tfrac{w-2}{2}$ by \cref{prop:nonvanish}. On the other hand, $\cE_{\fp}(\pi, \chi, j) \ne 0$ for all $j$ and $\chi$, by \cref{prop:noexceptionalzero}. So if $k_0 \ge 3$, it follows that $\cL_p(\pi)$ is non-zero at $t_0$, and also at $t_0 + \chi$ for all characters $\chi$, which clearly hit every component of $\cW$.
   \end{proof}

   \begin{remark}
    When $k_0 = 2$ we cannot \emph{a priori} rule out the possibility that $\cL_p(\pi)$ vanishes identically on some components of $\cW$. If $\cL_p(\pi)$ is identically zero, then the conjectures we are trying to prove are vacuous anyway; but a more subtle issue arises if $\cL_p(\pi)$ is zero on $\cW^+$ but not on $\cW^-$, or vice versa, since our machinery for constructing Euler systems can only ``see'' the products of pairs of $L$-values of opposite signs.
   \end{remark}

  \subsubsection{Equivariant $p$-adic $L$-functions}

   \begin{definition}
    Let $\cR$ be the set of square-free integers $n$ whose prime factors are 1 modulo $p$, and for $m \in \cR$, let $\Delta_m$ be the largest quotient of $(\ZZ / n\ZZ)^\times$ of $p$-power order.
   \end{definition}

   For $n \in \cR$ coprime to $\fn\disc$, we can define an equivariant $p$-adic $L$-function
   \[
    \cL_p^{\eps}(\pi; \Delta_m) \in
    M_B^\eps(\pi,L)^\vee
    \otimes_L \Lambda_{L}(\Gamma)^{\eps} \otimes_{L} L[\Delta_m],
   \]
   such that for all primitive characters $\eta: \Delta_m \to \CC^\times$, we have
   \[
    \cL_p^{\eps}(\pi; \Delta_m)(\eta, j+\chi) =
    \frac{\prod_{\fp \mid p} \cE_{\fp}(\pi \otimes \eta^{-1}, \chi, j)}{\eta(p)^{2c}\chi(m)^2}\cdot \cL^{\mathrm{alg}}(\pi, \chi^{-1}\eta^{-1}, j).
   \]
   (The correction factors in the denominator arise from the ratio of Gauss sums $\frac{G(\chi^{-1}\eta^{-1})}{G(\chi^{-1})G(\eta^{-1})} = \frac{1}{\eta(p) \chi(n)}$.)

   \begin{remark}
    Note that this equivariant $L$-function $\cL_p^\eps(\pi; \Delta_m)$ is always a linear functional on $M_B^\eps(\pi, L)^\vee$, irrespective of the value of $m$. This will be very important in the arguments below.
   \end{remark}

  \subsubsection{Integrality}

   \label{sect:integralL}

   We assume in this section that condition (BI) is satisfied (Definition \ref{def:BI}). Using Betti cohomology with coefficients in $\cO$, we can construct a canonical $\cO$-lattices $M_B(\pi, \cO) \subset M_B(\pi, L)$. We denote the dual of this lattice by $M_B(\pi, \cO)^\vee$. As before, we write $M_B^{\eps}(\pi, \cO)$ etc for the sign eigenspaces.

   (Here we do not need to distinguish between Betti cohomology with and without compact support, since these become isomorphic after localisation at the maximal ideal corresponding to $\bar\rho_{\pi, v}$, by \cite[Theorem 2.3]{dimitrov09}.)

   \begin{remark}
    Note that precisely, $M_B(\pi, \cO)$ is the maximal \emph{submodule} of the integral Betti cohomology on which the Hecke operators act via the eigensystem associated to $\pi$, and $M_B(\pi, \cO)^\vee$ the maximal \emph{quotient} on which the dual Hecke operators act via $\pi$.
   \end{remark}

   \begin{proposition}
    We have $\cL(\pi, \Delta_m) \in \Lambda_{\cO}(\Gamma) \otimes \cO[\Delta_m] \otimes
     M_B(\pi, \cO)^\vee$.
   \end{proposition}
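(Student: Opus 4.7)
The plan is to realize $\cL_p^{\eps}(\pi; \Delta_m)$ as (the Amice transform of) an $\cO$-valued measure on $\Gamma \times \Delta_m$ produced by pairing an integral cohomology class with a system of modular symbols indexed by the cusps of an auxiliary Hilbert modular variety. The construction is essentially Dimitrov's, but we need to check that every step preserves integrality once condition (BI) is invoked to fix a well-behaved integral structure on $M_B^{\eps}(\pi, \cO)^\vee$.

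First I would work at level $K_1(\fn) \cap K_1^{\mathrm{arith}}(mp)$ and choose an integral generator of the $\pi$-isotypical quotient of compactly-supported Betti $H^2$ with $\cO$-coefficients; by the remark immediately preceding the proposition and by \cite[Theorem 2.3]{dimitrov09}, condition (BI) guarantees that this quotient is a free $\cO$-module of rank $4$, canonically identified with $M_B(\pi, \cO)^\vee$, and decomposes into sign eigenspaces over $\cO$. Next I would consider the Manin--Drinfeld style symbols attached to the orbits of $(\cO_K / mp^n\cO_K)^\times$ on the set of cusps; these classes are integral by construction (one is pushing forward the fundamental class of a torus orbit), and pairing them against the integral eigenclass yields an element of $\cO[\Delta_m \times (\ZZ/p^n\ZZ)^\times] \otimes M_B^{\eps}(\pi, \cO)^\vee$ for each $n$. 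Passing to the ordinary part via the projector $e^{\ord} = \lim_n (U_{\fp_1}U_{\fp_2})^{n!}$ (or $\lim_n U_\fp^{n!}$ in the inert case) preserves integrality because $\pi$ is ordinary at every prime above $p$, so the eigenvalues $\fa_{\fp}^{\circ}$ are units in $\cO$; hence the projector converges on the integral cohomology to an idempotent with values in $\cO$. Taking the inverse limit in $n$ produces the desired integral measure, and its Amice transform is $\cL_p^{\eps}(\pi; \Delta_m)$.

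The main obstacle I anticipate is bookkeeping: one must verify that the Gauss-sum denominators $G(\chi_K) = G(\chi)^2 \chi(-D_K) \varepsilon_K(M)$ appearing in $\cL^{\alg}$, together with the explicit factor $\eta(p)^{2c}\chi(m)^2$ in the denominator of the interpolation formula, are precisely the correction factors that arise naturally when one rewrites the modular-symbol pairing in terms of Dirichlet-character Fourier expansions (i.e.\ the standard identity for additive-vs-multiplicative characters on $(\ZZ/p^n\ZZ)^\times$ and $\Delta_m$). Once this matching is set up cleanly, no spurious denominators are introduced, and the integrality follows. The assumption that $p$ is unramified in $K$ and coprime to $\#(\cO_K/\fn)$ (part of (BI)) ensures that the local constants are $p$-adic units, so nothing further is lost in the comparison.
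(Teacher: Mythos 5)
Your overall strategy — re-derive Dimitrov's modular-symbol construction integrally and check at each step that no denominators appear — is a reasonable route, but it has a genuine gap at exactly the point where the paper's proof does the real work. You assert that the $\pi$-isotypical quotient of integral Betti cohomology at the auxiliary level $K_1(\fn)\cap K_1^{\mathrm{arith}}(mp)$ is "canonically identified with $M_B(\pi,\cO)^\vee$," citing the remark preceding the proposition and \cite[Theorem 2.3]{dimitrov09}. Neither of those provides such an identification: the remark concerns compact versus ordinary supports at the \emph{same} level $K_1(\fn)$, while Theorem 2.3 of \cite{dimitrov09} is about the localisation at $\bar\rho_{\pi,v}$ killing boundary cohomology, not about changing level. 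At level divisible by $m$ the $\pi$-generalised eigenspace contains multiple oldform copies (indexed by the $U_{\fq}$-eigenvalues at $\fq\mid m$), so the quotient you describe is neither free of rank $4$ nor canonically a copy of $M_B(\pi,\cO)^\vee$.

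This is precisely the subtlety the paper's proof is constructed to address. There, Dimitrov's integral $p$-adic $L$-function is taken as a black box, valued in the dual of a \emph{different} cohomology module $M_B(\pi,\Sigma,\cO)$ (cohomology at level $K_1(m\fn)\cap K_0(m^2)$, $\pi$-eigensystem away from $\Sigma$, and $U_{\fq}=0$ for $\fq\in\Sigma$), and the entire content of the proof is the existence of a natural $\cO$-linear map
$M_B(\pi,\cO)\to M_B(\pi,\Sigma,\cO)$
given by pullback composed with the $\fq$-depletion operators $1-V_{\fq}U_{\fq}$ for $\fq\in\Sigma$; dualising this pulls Dimitrov's integral $L$-function back to the desired space $\Lambda_{\cO}(\Gamma)\otimes\cO[\Delta_m]\otimes M_B(\pi,\cO)^\vee$. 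Your "main obstacle" paragraph focuses on Gauss-sum bookkeeping, which is not where the danger lies; the danger is in matching integral structures across levels, and without the depletion-operator comparison your argument does not land in the stated lattice.
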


   \begin{proof}
    This can be extracted from the construction of \cite{dimitrov13}. Dimitrov actually constructs a much more general equivariant $p$-adic $L$-function, taking values in the integral Iwasawa algebra of the ray class group modulo $\Sigma p^\infty$, for $\Sigma$ any set of primes of $K$. When $\Sigma$ is the set of primes dividing $m$, this group maps naturally to $\Delta_m \times \Gamma$.

    Dimitrov's $p$-adic $L$-function in fact takes values in the dual of a slightly different cohomology space $M_B(\pi,\Sigma, \cO)$, defined as the eigenspace in Betti cohomology at level $K_1(m\fn) \cap K_0(m^2)$ on which the the Hecke operators away from $\Sigma$ act via the eigensystem of $\pi$, and the operators $U_{\fq}$ for $\fq \in \Sigma$ act as 0. However, there is a natural map
    \[ M_B(\pi, \cO) \to M_B(\pi,\Sigma, \cO)\]
    given by composing the natural pullback map with the product of the $\fq$-depletion operators $1 - V_{\fq} U_{\fq}$ for $\fq \in \Sigma$, and composing Dimitrov's $p$-adic $L$-function with this map gives the $\cL(\pi, \Delta_m)$ above.
   \end{proof}

   The following invariant will be of relevance later. As a $\cO$-algebra, $\Lambda_{\cO}(\Gamma)$ is the direct product of $(p-1)$ local $\cO$-algebras, corresponding to the powers of the Teichm\"uller character; for $u \in \ZZ / (p-1) \ZZ$, we let $e_u$ be the corresponding idempotent. (Clearly, $e_u$ factors through $\Lambda(\Gamma)^{(-1)^u}$).

   \begin{definition}
   \label{def:muinv}
    Let $\mu_{m, u}(\pi) \in \ZZ_{\ge 0}$ denote the largest integer $n$ such that $e_u \cdot \cL_p(\pi, \Delta_m)$ is divisible by $\varpi^n$ in $e_u \Lambda_{\cO}(\Gamma) \otimes \cO[\Delta_m] \otimes
     M_B(\pi, \cO)^\vee$ (or $\infty$ if $e_u \cdot \cL_p(\pi, \Delta_m) = 0$). We define
     \[ \mu_{\mathrm{min}, u}(\pi) = \inf_{m \in \cR} \mu_{m, u}(\pi).\]
   \end{definition}

   This is the Iwasawa $\mu$-invariant of the equivariant $p$-adic $L$-function (in the $u$-th component of weight space).

 \subsection{Selmer complexes}
  \label{sect:selmer}

  Let $V$ be a 2-dimensional $L$-vector space on which $G_K$ acts\footnote{Later we shall choose a specific realisation of $\rho_{\pi, v}$ using eigenvarieties for $\GSp_4$, but the results in this section are valid for any representative of the abstract isomorphism class $\rho_{\pi, v}$.} via $\rho_{\pi, v}$, and $T$ a $G_{K}$-stable $\cO$-lattice in $V$.

  \subsubsection{Local cohomology at $p$}

   We will need the following simple remark, analogous to \cite[\S 13.13]{kato04} in the case $K = \QQ$. Let $K_\infty = K(\mu_{p^\infty})$.

   \begin{proposition}
    \label{prop:nolocalinvts}
    Let $\fp \mid p$ be an unramified prime with $\fp \nmid \fn$, and $W$ a subquotient of $V |_{G_{K_\fp}}$. Then $H^0(K_{\infty, \fp}, W) = H^0(K_{\infty, \fp}, W^*) = 0$.
   \end{proposition}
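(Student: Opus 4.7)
The plan is to translate the vanishing of $H^0(K_{\infty,\fp}, W)$ into the non-existence of a very specific Jordan--Hölder constituent of $V|_{G_{K_\fp}}$, and rule this out using \cref{prop:noexceptionalzero}.

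First I would observe that any nonzero $v \in W^{G_{K_{\infty,\fp}}}$ generates a nonzero $G_{K_\fp}$-stable subrepresentation $W_0 \subseteq W$ on which the action factors through $\Gamma := \Gal(K_{\infty,\fp}/K_\fp)$, which is (a subgroup of) $\ZZ_p^\times$. After extending scalars to $\overline{L}$, $W_0$ decomposes as a direct sum of characters of $\Gamma$; each such character, pulled back to $G_{K_\fp}$, has the form $\eta \chi_{\cyc}^n$ for some integer $n$ and some finite-order character $\eta$. Now since $\fp$ is unramified and $\fp \nmid \fn$, $V|_{G_{K_\fp}}$ is crystalline and so is any of its subquotients; in particular each $\eta \chi_{\cyc}^n$ is crystalline, which forces $\eta$ to be unramified. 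But $\eta$ also factors through $\Gamma$, and $K_{\infty,\fp}/K_\fp$ is totally ramified (both in the split and inert cases), so the only unramified character of $\Gamma$ is trivial. Hence $\eta = 1$, and we would conclude that $V|_{G_{K_\fp}} \otimes \overline{L}$ admits $\chi_{\cyc}^n$ as a subquotient for some $n \in \ZZ$.

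It remains to exclude this possibility. If $V|_{G_{K_\fp}}$ is absolutely irreducible there is nothing to prove, since a $\chi_{\cyc}^n$ subquotient would give $V \cong \chi_{\cyc}^n$, contradicting $\dim V = 2$. Otherwise, because $V|_{G_{K_\fp}}$ is crystalline with distinct Hodge--Tate weights, reducibility is equivalent to $\pi$ being ordinary at $\fp$, and we obtain a short exact sequence $0 \to V^+ \to V \to V^- \to 0$ in which $V^\pm$ are crystalline characters of the form $\eta^\pm \chi_{\cyc}^{a^\pm}$ with $\eta^\pm$ unramified. If either one were equal to $\chi_{\cyc}^n$ then $\eta^\pm = 1$, and reading off the Frobenius eigenvalue on $\Dcris$ would show that the corresponding normalized Hecke root $\fa_\fp^\circ$ or $\fb_\fp^\circ$ equals a pure integral power of $p$ -- i.e.\ $1 \cdot p^m$ -- which is precisely the situation ruled out by \cref{prop:noexceptionalzero}.

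The case of $W^*$ is symmetric: $W^*$ is a subquotient of $V^*$, and a $\chi_{\cyc}^n$-subquotient of $V^*$ gives a $\chi_{\cyc}^{-n}$-subquotient of $V$, which the same argument excludes. The main obstacle is the last step, where one needs to know the dichotomy ``absolutely irreducible versus ordinary reducible'' for $2$-dimensional crystalline representations with distinct Hodge--Tate weights, and to match Frobenius eigenvalues on $\Dcris$ with normalized Hecke parameters correctly; once these identifications are in place, \cref{prop:noexceptionalzero} is precisely the non-exceptional-zero input required.
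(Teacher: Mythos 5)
Your proof is correct but follows a genuinely different route from the paper's. Both arguments agree on the reduction step: using that $V$ is crystalline, $K_\fp/\Qp$ is unramified, and $K_{\infty,\fp}/K_\fp$ is totally ramified, a nonzero element of $H^0(K_{\infty,\fp},W)$ produces a subquotient $\chi_\cyc^n$ of $W$ for some $n \in \ZZ$, or equivalently $H^0(K_\fp, W(n)) \ne 0$. The divergence is in how one excludes this. The paper argues directly on $\Dcris$: $q^n$ would then be an eigenvalue of $\Phi = \varphi^{[K_\fp:\Qp]}$ on $\Dcris(K_\fp, V)$; by purity (the Ramanujan bound for Hilbert modular forms) all such eigenvalues have complex absolute value $q^{(w-1)/2}$, forcing $n = \tfrac{w-1}{2}$; and since $k_i \ge 2$ this is not among the Hodge numbers of $V$, a contradiction. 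This is short, uses no ordinarity hypothesis, and avoids any case analysis on reducibility. Your route instead splits into the absolutely irreducible case (trivial) and the reducible case, identifies reducibility with ordinarity, and then feeds the resulting ``Frobenius eigenvalue is a pure power of $p$'' into \cref{prop:noexceptionalzero}. In fact the paper's own Remark immediately after the proposition notes exactly this alternative: when $\pi$ is ordinary, the statement ``can also be deduced from Proposition \ref{prop:noexceptionalzero}''. So your argument reconstructs that alternative deduction, while the paper's proof is the one that does not presuppose ordinarity (and is also slightly cleaner in the inert case, where the ``reducible $\Leftrightarrow$ ordinary'' equivalence requires more care with Hodge–Tate weights spread over two embeddings — it does hold here once one insists on a $\chi_\cyc^n$ subquotient rather than an arbitrary crystalline subcharacter, but you would need to spell that out). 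Since the two arguments ultimately draw on the same analytic input (temperedness / the Weil bound on Frobenius eigenvalues), the difference is largely organizational; the paper's version buys uniformity over the non-ordinary case at no extra cost.
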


   \begin{proof}
    It suffices to consider $W$ (the case of $W^*$ follows by replacing $\pi$ with $\pi \otimes \varepsilon_{\pi}^{-1}$). Since $V$ is crystalline, so is $W$; and as $K_{\fp}$ is unramified over $\Qp$, it follows that if $H^0(K_{\infty, \fp}, W) \ne 0$, then there is some $n$ such that $H^0(K_{\fp}, W(n)) \ne 0$.

    If $H^0(K_{\fp}, W(n)) = 0$, then $q^n$ must be an eigenvalue of $\Phi = \varphi^{[K_\fp: \Qp]}$ on $\Dcris(K_{\fp}, V)$. Since these eigenvalues all have complex absolute value $q^{(w-1)/2}$, it follows that $w$ is odd and $n = \tfrac{w-1}{2}$. However, since $k_i\ge 2$, the integer $\tfrac{w-1}{2}$ is not among the Hodge numbers of $V$, which is a contradiction.
   \end{proof}

   \begin{remark} Note that the above proposition holds whether or not $\pi$ is ordinary at $\fp$ here. However, we will only use it in the case when $\pi$ is ordinary, in which case it can also be deduced from Proposition \ref{prop:noexceptionalzero}. \end{remark}

   \begin{theorem}[Wiles]
    Suppose $p = \fp_1 \fp_2$ is split (and we number the primes as in \cref{ass:primenumbers}). If $\pi$ is ordinary at $\fp_i$, for some $i \in \{1, 2\}$, then there is a 1-dimensional subrepresentation
    \[ \cF_{\fp_i}^+ V \subset V |_{G_{K_{\fp_i}}}, \]
    isomorphic to the tensor product of $\chi_{\cyc}^{-t_i}$ and the unramified character mapping geometric Frobenius to $\fa_{\fp_i}^{\circ}$.\qed
   \end{theorem}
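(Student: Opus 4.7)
The plan is to apply weak admissibility of crystalline representations directly at $\pi$, in the spirit of Wiles's original argument but packaged through the Colmez--Fontaine equivalence; no Hida-family deformation is required. Since $p = \fp_1 \fp_2$ is split and $\fp_i \nmid \fn$, the completion $K_{\fp_i}$ equals $\Qp$, and the results recalled at the end of \S 2.4 tell us that $V|_{G_{K_{\fp_i}}}$ is crystalline with Hodge--Tate weights $-t_i$ and $1 - t_i - k_i$, while the crystalline Frobenius $\varphi$ on the $2$-dimensional $L$-vector space $\Dcris(K_{\fp_i}, V)$ has eigenvalues given by the Satake parameters $\fa_{\fp_i} = p^{t_i}\fa_{\fp_i}^\circ$ and $\fb_{\fp_i}$, of $v$-adic valuations $t_i$ and $t_i + k_i - 1$ respectively (by ordinarity together with $\fa_{\fp_i}^\circ \fb_{\fp_i}^\circ = \varepsilon_\pi(\fp_i) p^{k_i-1}$).

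The key step is to pick out the $\varphi$-eigenline $L_1 \subset \Dcris(K_{\fp_i}, V)$ for the eigenvalue $\fa_{\fp_i}$ and verify that $L_1$ is a weakly admissible sub-filtered-$\varphi$-module. Weak admissibility of $V$ itself forces $t_H(L_1) \leq t_N(L_1) = t_i$; since $L_1$ is one-dimensional, its Hodge number must equal one of the two jumps of the Hodge filtration on $\Dcris$, namely $t_i$ or $t_i + k_i - 1$, and the hypothesis $k_i \geq 2$ rules out the larger value, giving $t_H(L_1) = t_N(L_1) = t_i$. Equivalently, the unit-root line $L_1$ avoids the one-dimensional Hodge-filtered piece $F^{t_i+1}\Dcris(K_{\fp_i}, V)$; this is the only place where any real content is used.

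The Colmez--Fontaine equivalence then produces a one-dimensional crystalline Galois-stable subrepresentation $\cF^+_{\fp_i} V \subset V|_{G_{K_{\fp_i}}}$ with $\Dcris(\cF^+_{\fp_i} V) = L_1$. To identify this character, I would note that $\chi_{\cyc}^{-t_i} \otimes \mathrm{ur}(\fa_{\fp_i}^\circ)$ is crystalline with Hodge--Tate weight $-t_i$ and geometric Frobenius acting as $p^{t_i}\fa_{\fp_i}^\circ = \fa_{\fp_i}$, which coincides with the data of $L_1$ under $\Dcris$; since a one-dimensional crystalline representation of $G_{\Qp}$ is determined by its Hodge--Tate weight and its Frobenius eigenvalue, the asserted isomorphism follows. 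The main (though very mild) obstacle is the weak-admissibility step: a priori the unit-root line could coincide with $F^{t_i+1}$, but this is excluded once $k_i \geq 2$ as above, so the whole argument collapses into a one-line check inside the standard Fontaine formalism.
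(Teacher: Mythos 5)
The paper states this theorem with a terminal $\qed$ and an attribution to Wiles; no proof is supplied, so there is no argument in the paper for your proposal to be compared against. Your argument is correct and is the standard modern presentation of Wiles's local-at-$p$ structure theorem: the $\varphi$-eigenline for the unit root is a sub-filtered-$\varphi$-module of $\Dcris(K_{\fp_i}, V)$ with $t_N = t_i$; weak admissibility of the ambient module forces $t_H \le t_N$, while the induced Hodge number on a line can only be $t_i$ or $t_i + k_i - 1 > t_i$, so $t_H = t_N = t_i$; the eigenline is therefore weakly admissible and Colmez--Fontaine supplies the corresponding $G_{K_{\fp_i}}$-stable subrepresentation, identified by matching the Hodge--Tate weight and the crystalline Frobenius eigenvalue. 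One small wording slip: you say ``geometric Frobenius acting as $p^{t_i}\fa_{\fp_i}^\circ$'' on the character $\chi_{\cyc}^{-t_i}\otimes\mathrm{ur}(\fa_{\fp_i}^\circ)$, but this character is ramified so geometric Frobenius does not act on it; what is true, and what the comparison actually needs, is that the crystalline Frobenius eigenvalue on its $\Dcris$ is $p^{t_i}\fa_{\fp_i}^\circ = \fa_{\fp_i}$.
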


   We define $\cF_{\fp_i}^+ V^*$ to be the orthogonal complement of $\cF_{\fp_i}^+ V$ (or, equivalently, we can define it by identifiying $V^*$ with $V(\pi \otimes \varepsilon_\pi^{-1})(1-w)$). We define $\cF_{\fp_i}^+ T^*$ to be the intersection of $\cF_{\fp_i}^+ V^*$ with $T^*$.

  \subsubsection{\Nek--Selmer complexes}

   We now recall the formalism of \emph{Selmer complexes} associated to global Galois representations \cite{nekovar06}; see \cite[\S 11.2]{KLZ17} for a summary. A \emph{local condition} $\square_{\fq}$ for $T$ at $\fq$ is a morphism of complexes
   \( U_{\fq}^+ \xrightarrow{\iota_{\fq}^+} C^\bullet(K_{\fq}, T), \)
   where $C^\bullet(K_{\fq}, T)$ is the usual complex of continuous cochains computing local Galois cohomology. A \emph{Selmer structure} for $T$ denotes a collection\footnote{In \cite{KLZ17} and various other sources, Selmer structures are denoted by $\Delta$; but we are using $\Delta$ for something else, so we have settled on $\square$ instead.} $\square = (\square_{\fq})$ of local conditions for each prime $\fq$, such that for almost all $\fq$, $\square_{\fq}$ is equal to the unramified local condition $C^\bullet(K_{\fq}^{\mathrm{ur}} / K_{\fq}, T^{I_{\fq}})\to C^\bullet(K_{\fq}, T)$.

   These data determine a Selmer complex $\RGt(K, T; \square)$,
   in the derived category of perfect complexes of $\cO$-modules, supported in degrees $\{0, \dots, 3\}$. We write $\Ht^i(K, T; \square)$ for the cohomology groups of the Selmer complex.


  \subsubsection{Greenberg Selmer structures}

   We now define the Selmer structures which are relevant for this paper. We suppose that $p = \fp_1 \fp_2$ is split in $K$ and that $\pi$ is ordinary at both primes above $p$. We let $\TT^* = T^* \otimes \Lambda_{\cO}(\Gamma)(-\bq)$, where $\bq$ denotes the canonical character $\Gamma \to \Lambda(\Gamma)^\times$. As is well known, for $i = 1$ and any $S$ including $p$ and the bad primes for $T$, we have a canonical isomorphism of $\Lambda_{\cO}(\Gamma)$-modules
   \begin{equation}
    \label{eq:Iwcoh}
    H^1(\cO_{K, S}, \TT^*) \cong H^1_{\Iw}(K_\infty, T^*) \coloneqq \varprojlim_n H^1(K(\mu_{p^n}), T^*).
   \end{equation}

   \begin{definition}
    The Selmer structure $\square^{(p)}$ on $\TT^*$ is defined as follows:
    \begin{itemize}
     \item For $\fq \nmid p$, $\square^{(p)}_{\fq}$ is the unramified condition.
     \item For $\fq \mid p$, $\square^{(p)}_{\fq}$ is given by the map
     \[ \RG(K_\fq, \cF_{\fp}^+ \TT) \longrightarrow  \RG(K_\fq, \TT^*) \]
     induced by the inclusion $\cF_{\fp}^+ \TT^* \into \TT^*$.
    \end{itemize}
    The Selmer structure $\square^{(\fp_1)}$ on $\TT^*$ is defined as follows:
    \begin{itemize}
    \item For $\fq \nmid p$, or $\fq = \fp_1$, $\square^{(\fp_1)}_{\fq}  = \square^{(p)}_{\fq}$.
    \item $\square^{(\fp_1)}_{\fp_2}$ is the relaxed local condition.
    \end{itemize}
   \end{definition}

   In keeping with \eqref{eq:Iwcoh} we write $\RGt_{\Iw}(K_\infty, T^*; \square^{(p)})$ and $\RGt_{\Iw}(K_\infty, T^*; \square^{(\fp_1)})$ for the corresponding Selmer complexes.

   \begin{proposition}
    The cohomology of the Selmer complexes is as follows:
    \begin{enumerate}[(a)]
     \item We have $\Ht^0_{\Iw}(K_\infty, T^*; \square^{(\fp_1)}) =\Ht^0_{\Iw}(K_\infty, T^*; \square^{(p)}) = 0$.
     \item There are exact sequences
     \[
     \begin{tikzcd}
      0 \rar& \Ht^1_{\Iw}(K_\infty, T^*; \square^{(p)}) \rar \dar[hook]&
       H^1_{\Iw}(K_\infty, T^*) \rar \dar[equals]& \bigoplus_{i=1,2} H^1_{\Iw}(K_{\infty, \fp_i}, T^* / \cF_{\fp_i}^+)\dar[two heads],\\
      0 \rar& \Ht^1_{\Iw}(K_\infty, T^*; \square^{(\fp_1)}) \rar& H^1_{\Iw}(K_\infty, T^*) \rar& H^1_{\Iw}(K_{\infty, \fp_1}, T^* / \cF_{\fp_1}^+).
     \end{tikzcd}
     \]
     Thus we can regard $\Ht^1_{\Iw}(K_\infty, T^*; \square^{(p)})$ and $\Ht^1_{\Iw}(K_\infty, T^*; \square^{(\fp_1)})$ as $\Lambda$-submodules of $H^1_{\Iw}(K_\infty, T)$.
     \item The modules $\Ht^3_{\Iw}(K_\infty, T^*; \square^{(p)})$ and $\Ht^3_{\Iw}(K_\infty, T^*; \square^{(\fp_1)})$ are finite groups; if $T \otimes k_L$ is irreducible, then they are zero.
     \item For each $u \in \ZZ/(p-1)$, we have
     \begin{align*}
      \rk_{(e_u \Lambda)} e_u \Ht^1_{\Iw}(K_\infty, T^*; \square^{(p)}) - \rk_{(e_u \Lambda)} e_u \Ht^2_{\Iw}(K_\infty, T^*; \square^{(p)}) &= 0,\\
      \rk_{(e_u \Lambda)} e_u \Ht^1_{\Iw}(K_\infty, T^*; \square^{(\fp_1)}) - \rk_{(e_u \Lambda)} e_u \Ht^2_{\Iw}(K_\infty, T^*; \square^{(\fp_1)}) &= 1.
     \end{align*}
    \end{enumerate}
   \end{proposition}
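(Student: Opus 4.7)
The plan is to exploit Nekov\'a\v{r}'s formalism, in which the Selmer complex sits in a distinguished triangle
\[
 \RGt_{\Iw}(K_\infty, T^*; \square) \longrightarrow \RG(\cO_{K,S}, \TT^*) \oplus \bigoplus_{\fq \in S} U_{\fq}^+ \longrightarrow \bigoplus_{\fq \in S} \RG(K_{\fq}, \TT^*) \xrightarrow{+1}
\]
for $S$ any finite set containing the primes above $p\fn$. All four parts of the proposition are obtained by unwinding the resulting long exact cohomology sequence and combining it with standard duality and Euler-characteristic inputs.

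For \textbf{(a)}, the key point is to verify $H^0(\cO_{K,S}, \TT^*) = 0$ and that the local $H^0$ contributions vanish. The first follows from condition (BI): irreducibility of $\bar\rho_{\pi,v}|_{G_{K^{\mathrm{ab}}}}$ forces $(T^*)^{G_{K_\infty}} = 0$, and the inverse limit defining Iwasawa $H^0$ inherits this. The local vanishings for the Greenberg sub- and quotient-pieces at $\fp \mid p$ are covered by \cref{prop:nolocalinvts}, and the unramified conditions at $\fq \nmid p$ are standard.

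For \textbf{(b)}, one traces the long exact sequence in degree $1$. At $\fq \nmid p$ the quotient $H^1(K_{\fq}, \TT^*)/H^1(U_{\fq}^+)$ vanishes over the Iwasawa algebra, using that $K_{\infty,\fq}/K_{\fq}$ is $\Zp$-unramified there, so all of $H^1_{\Iw}(K_{\infty,\fq}, T^*)$ is unramified. At $\fp \mid p$ with the Greenberg condition, the cokernel is exactly $H^1_{\Iw}(K_{\infty,\fp}, T^*/\cF_{\fp}^+)$. The relaxed condition at $\fp_2$ deletes the local term entirely, giving the second exact sequence. Part \textbf{(c)} then follows by applying Nekov\'a\v{r}'s Poitou--Tate duality $\Ht^3(K, \TT^*;\square) \cong \Ht^0(K, \TT^*(1)^\vee;\square^\perp)^\vee$ (up to finite error); the right-hand side is automatically finite, and is zero once $\bar\rho_{\pi,v}$ is irreducible, since the dual representation then has no $G_{K_\infty}$-invariants either.

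For \textbf{(d)}, the plan is to compute the Iwasawa Euler characteristic component-by-component in the decomposition $\Lambda_{\cO}(\Gamma) = \prod_u e_u \Lambda_{\cO}(\Gamma)$. Applying Tate's global Euler--Poincar\'e formula over $\Lambda$, the global term $\RG(\cO_{K,S}, \TT^*)$ contributes $-\sum_{v \mid \infty} \dim V^{*,-}_v = -2$ to the rank difference, and the local terms at primes $\fq \nmid p$ contribute zero; each Greenberg local condition at $\fp_i \mid p$ contributes $\rk_{\cO} \cF_{\fp_i}^+ = 1$ to the rank of $\Ht^1 - \Ht^2$, yielding $0$ for $\square^{(p)}$. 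Switching to $\square^{(\fp_1)}$ removes the local condition at $\fp_2$; since $H^1_{\Iw}(K_{\infty,\fp_2}, T^*/\cF_{\fp_2}^+)$ has rank $1$ over $e_u\Lambda$ (by the local Euler--Poincar\'e formula applied to a rank-one module over $K_{\fp_2} = \Qp$), the rank of $\Ht^1$ goes up by $1$ while $\Ht^2$ is unchanged, giving rank difference $1$. I expect the main technical nuisance to be the verification of these rank formulas on each idempotent component, since one must check carefully that all ``parasitic'' torsion contributions in the local and global Tate sequences have zero generic rank, and that the archimedean contributions are distributed correctly across the components indexed by $u \in \ZZ/(p-1)$.
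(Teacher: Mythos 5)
Your approach via the Nekov\'a\v{r} distinguished triangle
\[
\RGt_{\Iw}(K_\infty, T^*;\square) \to \RG(\cO_{K,S},\TT^*) \to \bigoplus_{\fq\in S} \RG(K_\fq,\TT^*)/U_\fq^+ \to [+1]
\]
is the natural one and matches what the paper has in mind; parts (a), (c) and (d) are handled correctly (for (d), the bookkeeping $-2 + 1 + 1 = 0$ for $\square^{(p)}$ and $-2 + 1 + 2 = 1$ for $\square^{(\fp_1)}$ is the right computation, carried out on each idempotent component since complex conjugation at each real place has one-dimensional eigenspaces on every twist $T^*(\eta)$).

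The weak point is your justification in (b) that the singular quotient at a bad prime $\fq \nmid p$ vanishes. You write that this holds ``using that $K_{\infty,\fq}/K_{\fq}$ is $\Zp$-unramified there, so all of $H^1_{\Iw}(K_{\infty,\fq},T^*)$ is unramified,'' but the implication as stated is not valid: the fact that the \emph{tower} is unramified at $\fq$ says nothing directly about whether cohomology classes of $T^*$ (which is typically ramified at $\fq\mid\fn$) are unramified. The correct reason is a limit argument: since $\fq \nmid p$, the residue degree of $\fq$ grows without bound up the cyclotomic tower, and the corestriction maps on the singular quotient $H^1_s(K_{n,\fq},T^*) = H^1(I_\fq,T^*)^{\operatorname{Frob}^{f_n}}$ become $p$-divisible, so $\varprojlim_n H^1_s(K_{n,\fq},T^*) = 0$; equivalently, $H^1(I_\fq,T^*)\mathop{\hat\otimes}\Lambda(-\bq)$ has no Frobenius-fixed points because $[\Nm(\fq)]-\lambda$ is a non-zero-divisor in $\Lambda$ for any $\phi$-eigenvalue $\lambda$. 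This is what makes the map $\Ht^1\to H^1_{\Iw}(K_\infty,T^*)$ injective with image exactly the kernel of localisation to the $\fp_i$-quotients, rather than merely equal up to pseudo-null error. With that repaired the argument goes through.
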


   \begin{proof}
    This follows from standard exact sequences and local and global duality.
   \end{proof}

   We can now give a full statement of the Iwasawa main conjecture in our setting:

   \begin{conjecture}[Iwasawa main conjecture] \
    \label{conj:mainconj}
    \begin{itemize}
    \item $\Ht^2_{\Iw}(K_\infty, V^*; \square^{(p)})$ is a torsion $\Lambda_{L}(\Gamma)$-module, and we have
    \[ \operatorname{char}_{\Lambda_L(\Gamma)}\Ht^2_{\Iw}(K_\infty, V^*; \square^{(p)}) = \left(L_p(\pi)\right).\]

    \item If the basis vectors $v_+$, $v_-$ used to define $L_p(\pi)$ are $\cO$-bases of $M_B^{\pm}(\pi, \cO)$, then
    \[ \operatorname{char}_{\Lambda_{\cO}(\Gamma)}\Ht^2_{\Iw}(K_\infty, T^*; \square^{(p)}) = \left(L_p(\pi)\right). \]
    \end{itemize}
   \end{conjecture}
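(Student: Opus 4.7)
The plan is to prove the conjecture by establishing the two divisibilities separately. The ``Kato'' divisibility (characteristic ideal of the Selmer module divides $L_p(\pi)$) will come from an Euler system argument, and the opposite divisibility will be imported from Wan's work \cite{wan15b}; comparing the two yields the claimed equality.

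For the Kato divisibility, the central construction is an Euler system of classes in $H^1_{\Iw}(K_\infty, T^*)$ whose bottom class has image under the dual exponential (or Perrin-Riou big logarithm) matching $L_p(\pi)$ up to an explicit non-zero factor. The natural way to build such classes is to induce $\rho_{\pi,v}^*$ to a $4$-dimensional representation of $G_\QQ$, recognise this as coming from a $\GSp_4/\QQ$-automorphic representation via the \emph{Yoshida lift} of $\pi$, and then apply the Euler system construction of \cite{LZ20} attached to Siegel sixfolds. Once the classes are produced, one checks Rubin's hypotheses (parts (c)--(e) of the big image proposition ensure this) and invokes the Euler system machinery to bound the Selmer group; the reciprocity law then converts this bound into a divisibility by $L_p(\pi)$ on each component $\cW^\eps$. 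Combined, these give the Kato divisibility over $\Lambda_L(\Gamma)$, and under (BI) the integrality in \cref{sect:integralL} upgrades it to $\Lambda_\cO(\Gamma)$.

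The main obstacle is that Yoshida lifts of parallel-weight Hilbert forms (which is the case of greatest arithmetic interest, including the elliptic curve setting) land at \emph{non-cohomological} points of the $\GSp_4$ eigenvariety, so the geometric construction of \cite{LZ20} does not apply directly. To circumvent this, my plan is to first carry out the Euler system construction for ``very regular'' weights where the Yoshida lift is cohomological, and then specialise by $p$-adic variation along a suitable component of the Siegel eigenvariety passing through the weight of $\pi$. This requires a careful analysis of the \emph{leading term} of the family of Euler systems at the bad specialisation: one must control orders of vanishing of the family with respect to a local parameter at the point corresponding to $\pi$, then divide out by the correct power before specialising. This is the content of the ``core'' axiomatic framework alluded to in sections \ref{sect:core}--\ref{sect:axiom}. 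The reciprocity law must also be proved for the whole family and then specialised, which is delicate because the $p$-adic $L$-function on the eigenvariety similarly vanishes at parallel weight and one needs compatible normalisations.

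For the opposite divisibility, I would appeal directly to \cite{wan15b}, which proves the ``Greenberg'' divisibility for Hilbert modular forms of the required shape by an Eisenstein congruence argument on $U(3,1)$; the hypotheses of the second bulleted list in Theorem B are exactly those needed to apply Wan's theorem. Combining the two divisibilities in $\Lambda_\cO(\Gamma^1)$, which is a regular local ring of dimension $2$, promotes them to equality of characteristic ideals. (The restriction to $\Gamma^1$ comes from Wan; the Kato side is available over the full $\Gamma$, which is why Theorem A is stated in that generality.) The integral refinement in the second bullet of the conjecture then follows from the integral Euler system bound combined with the fact that $\cL_p(\pi, \Delta_m)$ lies in $\Lambda_\cO(\Gamma)\otimes M_B(\pi,\cO)^\vee$, provided the chosen periods are integral; the $\mu$-invariant controls potential contamination by powers of $\varpi$, and vanishes in the relevant cases under (BI) by \cref{prop:HMFlargeimage} together with known results on $\mu$-invariants of Hilbert $p$-adic $L$-functions.
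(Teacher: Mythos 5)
The item you have been asked about is a \emph{conjecture} in the paper, and the paper does not claim to prove it as stated. What the paper establishes are partial results: \cref{thm:withzeta} (the ``Kato'' divisibility) under the list of hypotheses in Theorem~A, and \cref{thm:mainconj} (the equality of ideals) only over the cyclotomic $\Zp$-extension $K^1_\infty$ — that is, only for $e_0 \Lambda$, not the full $\Lambda_\cO(\Gamma)$ — and only under the strictly stronger hypotheses of Theorem~B (parallel even weight, trivial $\varepsilon_\pi$, $\fn$-minimality, a sign condition when $k=2$, in addition to those of Theorem~A). The paper also flags that all of this rests on the as-yet-unproven ``eigenspace vanishing conjecture'' of \cite{LZ20}. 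Within that caveat, your outline does correctly identify the paper's strategy: an Euler system from twisted Yoshida lifts, a leading-term analysis along a Hida family to reach the non-cohomological parallel-weight point (this is precisely the content of \cref{sect:core}--\cref{sect:axiom}), a family version of the explicit reciprocity law, and Wan's unitary-group divisibility; and the observation that opposite divisibilities in $e_0\Lambda$ force equality is exactly how \cref{thm:mainconj} is closed.

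Two substantive corrections. First, your handling of the $\mu$-invariant is backwards. The quantity $\mu_{\min,u}$ of \cref{def:muinv} is not known to vanish a priori, and \cref{prop:HMFlargeimage} concerns Galois image, not $\mu$-invariants; there are no ``known results on $\mu$-invariants of Hilbert $p$-adic $L$-functions'' of the needed strength. The paper's Theorem~\ref{thm:withzeta} in fact proves the \emph{stronger} divisibility $\varpi^{\mu_{\min,u}}\cdot\mathrm{char}(e_u\Ht^2)\mid e_u L_p(\pi)$, and this extra $\varpi$-power cannot be removed by Euler-system methods alone. It is Wan's divisibility in the opposite direction — proved integrally only under the Gorenstein and $\fn$-minimality hypotheses of \cref{thm:gorenstein} — that forces equality, and hence forces $\mu_{\min,0}=0$ \emph{a posteriori}: the vanishing of $\mu$ is a corollary of the main conjecture, not an input (see the Remark following \cref{thm:withzeta} and the discussion in \cref{sect:nonvanish}). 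Second, and relatedly, your final sentence suggests the integral statement (second bullet) follows once periods are chosen integrally; but without $\fn$-minimality Wan's divisibility only holds after inverting $p$, so one would only recover the first bullet of the conjecture, and even then only on the $e_0$-component.
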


   \begin{remark}
    Note that the Selmer structures $\square^{(\fp_1)}$ and $\square^{(p)}$ are \emph{not} ``simple Selmer structures'' in the sense of \cite[Proposition 11.2.9]{KLZ17}, since the local-condition complexes at the primes above $p$ can have nontrivial $H^2$. So $\Ht^2_{\Iw}(K_\infty, T^*; \square^{(p)})$ cannot be precisely described via the traditional (non-derived) theory. However, the group
    \[ \mathfrak{X}(K_\infty, T^*) = \ker \Big(\Ht^2_{\Iw}(K_\infty, T^*; \square^{(p)}) \to \bigoplus_{i = 1, 2} H^2_{\Iw}(K_\infty, \cF^+_{\fp_i} T^*)\Big)\]
    does admit a classical description as the Pontryagin dual of a Selmer group for $T \otimes \mu_{p^\infty}$; and although the local terms $H^2_{\Iw}(K_\infty, \cF^+_{\fp_i} T^*)$ are not necessarily zero, by \cref{prop:nolocalinvts} and local Tate duality they are finite groups, and hence they are pseudo-null as Iwasawa modules. Hence $\mathfrak{X}(K_\infty, T^*)$ and $\Ht^2_{\Iw}(K_\infty, T^*; \square^{(p)})$ have the same characteristic ideal, and we can use either to formulate the Main Conjecture.
   \end{remark}

 \subsection{P-adic families of ordinary Hilbert eigensystems}
  \label{sect:p1ordfam}

  We now suppose that $p = \fp_1 \fp_2$ is split, that $(p, \fn) = 1$, and that $\pi$ is ordinary at  both $\fp_i$; and we write $\fa_{\fp_i}^\circ$ for the unit root of the Hecke polynomial for each $i$.

  \begin{definition}
   We shall fix an integer $w \in \ZZ$, and write $\cW_K$ for the 2-dimensional rigid-analytic space over $L$ defined by
   \[ \{ (\kappa_1, \kappa_2, \tau_1,\tau_2) \in \cW^4: \kappa_1 + 2\tau_1 = \kappa_2 + 2\tau_2 = w\}.\]
  \end{definition}

  \begin{remark}
   Of course $\cW_K$ is isomorphic to $\cW^2$ by projection to the $\tau_i$, but it is convenient to carry around the $\kappa_i$ as well, for harmony with our notation for weights of classical Hilbert modular forms.
  \end{remark}

  \begin{notation}
   If $\Omega$ is a rigid-analytic subvariety of $\cW_K$, we write
   $\Omega_{\cl} \coloneqq \{ (\uk, \ut) \in \Omega \cap \ZZ^4 : k_i \ge 2\}$.
  \end{notation}

  \begin{definition}
   Let $\Omega$ be an open affinoid subdomain of $\cW_K$, such that $\Omega_{\cl}$ is Zariski-dense in $\Omega$.

   A \emph{family of $p$-ordinary eigensystems} $\upi$ over $\Omega$ consists of the following data:
   \begin{itemize}
     \item rigid-analytic functions $\lambda_{\fq} \in \cO(\Omega)$ for each prime $\fq \nmid \fn p$;
     \item rigid-analytic functions $\alpha_{\fp_1}, \alpha_{\fp_2} \in \cO(\Omega)^\times$ taking $p$-adic unit values;
   \end{itemize}
   such that the following condition is satisfied:
   \begin{itemize}
    \item for every $x = (\uk, \ut) \in \Omega_{\cl}$, there exists a cuspidal automorphic representation $\upi(x)$ of $\GL_2$ of conductor $\fn$ and weight $(\uk, \ut)$, and an embedding of its coefficient field into $L$, such that we have
    \begin{align*}
    \lambda_{\fq}(x) = a_{\fq}(\upi(x)) \text{ if $\fq \nmid \fn p$}, \quad
    \alpha_{\fp_i}(x) &= \fa_{\fp_i}^{\circ}(\upi(x)).
    \end{align*}
   \end{itemize}
  \end{definition}

  \begin{theorem}[Hida]
   For any $\pi$ of weight $(\uk, \ut)$ ordinary at $p$, there exists an open affinoid polydisc $\Omega$ in $\cW_K$ containing $(\uk, \ut)$, and a family of $p$-ordinary eigensystems $\upi$ over $\Omega$, with $\upi(\uk, \ut) = \pi$.
  \end{theorem}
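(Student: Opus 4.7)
The plan is to invoke Hida's classical construction of big ordinary Hecke algebras for Hilbert modular forms. Concretely, I would form the projective limit, over $n \ge 1$, of spaces of cuspidal Hilbert modular forms of level $K_1(\fn) \cap K_1(\fp_1^n \fp_2^n)$ and varying weight, apply the ordinary idempotent $e^{\mathrm{ord}} = \lim_n (U_{\fp_1} U_{\fp_2})^{n!}$, and consider the resulting Hecke algebra $\mathbf{T}^{\mathrm{ord}}$ acting on this space. Hida's control theorem says $\mathbf{T}^{\mathrm{ord}}$ is finite and flat over $\Lambda_\cO(\mathbf{T})$, where $\mathbf{T}$ is the natural $p$-adic torus parametrizing weights; after cutting by the central-character condition $\kappa_1 + 2\tau_1 = \kappa_2 + 2\tau_2 = w$, the associated rigid-analytic space $\cY^{\mathrm{ord}}$ is finite and flat over $\cW_K$.

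Second, I would interpret $\pi$ geometrically. The ordinary $p$-stabilization of $\pi$ determines an $L$-valued system of Hecke eigenvalues, hence an $L$-point $x_\pi \in \cY^{\mathrm{ord}}$ mapping to $(\uk, \ut) \in \cW_K$; the global Hecke operators evaluate to $a_\fq(\pi)$ and the $U_{\fp_i}$ to the unit roots $\fa_{\fp_i}^\circ(\pi)$. Since $\cY^{\mathrm{ord}} \to \cW_K$ is finite flat and $\cW_K$ is smooth, standard rigid-analytic arguments give an open affinoid polydisc $\Omega \subset \cW_K$ around $(\uk, \ut)$ and a connected component $\cZ \subset \cY^{\mathrm{ord}} \times_{\cW_K} \Omega$ through $x_\pi$ such that $\cZ \to \Omega$ is finite flat. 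Shrinking $\Omega$ further and passing to the normalization if necessary, I may assume $\cZ \to \Omega$ is an isomorphism (or at worst totally ramified at $x_\pi$, in which case the parameters $\lambda_\fq, \alpha_{\fp_i}$ still descend as analytic functions on $\Omega$ after a finite base change). The $U_{\fp_i}$-eigenvalues extend to units $\alpha_{\fp_i} \in \cO(\Omega)^\times$ because $\alpha_{\fp_i}(\uk, \ut)$ is a $p$-adic unit and $\Omega$ is connected.

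Finally, the interpolation property is built into Hida's construction: for every $(\uk', \ut') \in \Omega_{\cl}$, the specialization is a system of ordinary Hecke eigenvalues at a classical arithmetic weight, which by Hida's classicality theorem (the ``vertical control'' half of the control theorem, valid because $k_i' \ge 2$) arises from a classical ordinary cuspidal Hilbert eigenform $\upi(x)$ of tame level $\fn$ and weight $(\uk', \ut')$ satisfying $a_\fq(\upi(x)) = \lambda_\fq(x)$ and $\fa_{\fp_i}^\circ(\upi(x)) = \alpha_{\fp_i}(x)$. The one slightly delicate point is to guarantee that $\Omega$ can be chosen as a \emph{polydisc} containing $(\uk, \ut)$, not merely an arbitrary affinoid; this uses the smoothness of $\cW_K$ together with flatness of $\cZ \to \Omega$ to reduce to a standard local ring argument, and is the only non-routine step in the proof.
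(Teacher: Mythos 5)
Your proposal follows the same overall strategy as the paper (Hida's big ordinary Hecke algebra, passage to a rigid-analytic ordinary eigenvariety finite flat over weight space, locating $\pi$ as a point above $(\uk,\ut)$, restricting to a small neighbourhood), but it has a genuine gap at the one step that actually does the work. The paper's argument hinges on the fact that the weight map $\cE_K(\fn)\to\cW_K$ is \emph{\'etale} in a neighbourhood of any classical point; this is a standard consequence of Hida's control theorem combined with multiplicity one (the ordinary Hecke algebra is reduced in classical arithmetic fibres, and the two $p$-stabilisations are distinct when $k_i\ge 2$, so the classical point is a reduced isolated point of its fibre). Étaleness is what guarantees that, after shrinking, a neighbourhood of $x_\pi$ in the eigenvariety maps \emph{isomorphically} onto its image $\Omega$, which is exactly what is needed for the eigenvalue functions $\lambda_\fq,\alpha_{\fp_i}$ to be elements of $\cO(\Omega)$.

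You never invoke this unramifiedness, and your fallback --- ``or at worst totally ramified at $x_\pi$, in which case the parameters $\lambda_\fq,\alpha_{\fp_i}$ still descend as analytic functions on $\Omega$ after a finite base change'' --- does not salvage the argument: if $\cZ\to\Omega$ has degree $d>1$, the functions $\lambda_\fq,\alpha_{\fp_i}$ live on $\cZ$, a $d$-fold cover, and there is no reason for them to descend to $\Omega$ (they are \emph{not} invariant under the deck action in general), nor can one replace $\Omega$ by $\cZ$ since the definition of a family requires the base to be an open affinoid subdomain of $\cW_K$ itself. Normalizing $\cZ$ does not change the degree of the map and hence does not address this either. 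So the argument as written does not produce the desired family unless you additionally supply the unramifiedness input. A secondary, smaller omission: the paper's eigenvariety $\cE_K(\fn)$ must be set up to accommodate specialisations at weights with some $k_i=2$ whose local component at $\fp_i$ becomes Steinberg (level divisible by $\fp_i$); without this, the interpolation property at such boundary classical weights is not literally what the statement of the theorem asserts. Your ``only non-routine step'' (getting a polydisc rather than a general affinoid) is, by contrast, essentially automatic once étaleness is in hand, since $\cW_K$ is a finite union of polydiscs and any sufficiently small affinoid neighbourhood of a point in it can be taken to be one.
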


  \begin{proof}
   This is a simple case of Hida theory for $\GL_2 / K$. It is straightforward to generalise the definition of a family of eigensystems to allow the base space to be an arbitrary affinoid with a map to $\cW$ (not necessary the inclusion of a subvariety), and to allow the specialisations at points in $\Omega_{\cl}$ where one of the $k_i$ is 2 to have level divisible by $\fp_i$ with Steinberg local component at $\fp_i$. With this slightly expanded definition, Hida theory shows that there is, in fact, a universal family of $\fp_1$-ordinary eigensystems, parametrised by a rigid space $\cE_K(\fn)$, the \emph{$p$-ordinary cuspidal eigenvariety}, which is finite and flat over $\cW_K$. (This space even has a natural formal-scheme model which is finite and flat over the Iwasawa algebra, although we shall not use this.)

   The automorphic representation $\pi$ determines a point in the fibre of $\cE_K(\fn)$ over $(\uk, \ut)$. Since the weight map is \'etale in a neighbourhood of any classical point, we can find a neighbourhood $\Omega \ni (\uk, \ut)$ which lifts isomorphically to a neighbourhood of $\pi$ in  $\cE_K(\fn)$. This gives the required family.
  \end{proof}

  \begin{proposition}
   For each sign $\eps$, there exists a finitely-generated locally free $\cO(\Omega)$-module $\cM^{\eps}_B(\upi)$, of rank 1, whose fibre at each $x \in \Omega_{\cl}$ is canonically identified with $M^{\eps}_B(\upi(x), L)$.
  \end{proposition}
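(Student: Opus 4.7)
The plan is to construct $\cM^\eps_B(\upi)$ by cutting out the $\upi$-isotypic piece of Hida's big ordinary Betti cohomology module. Let $Y(\fn)$ denote the Hilbert modular symmetric space of level $K_1(\fn) \cap \mathrm{Iw}(p)$, passing to a sufficiently small auxiliary normal subgroup and taking coinvariants if necessary (as in the construction of $M_B(\pi, E)$ above). Hida theory for $\GL_2/K$ produces a finitely generated $\Lambda_{\cO}(\cW_K)$-module $\mathbf{M}$ interpolating the ordinary parts of compactly-supported Betti cohomology: at each classical $x = (\uk, \ut) \in \cW_K$ there is a canonical control isomorphism
\[ \mathbf{M} \otimes_{\Lambda_{\cO}(\cW_K), x} L \xrightarrow{\sim} e^{\mathrm{ord}} H^2_c(Y(\fn), \cV_{\uk,\ut})_L, \]
where $e^{\mathrm{ord}} = \lim_n (U_{\fp_1} U_{\fp_2})^{n!}$, and all other ordinary cohomology degrees vanish for a Hilbert modular variety attached to a real quadratic field.

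The component group $\pi_0(\GL_2(K \otimes \RR)) \cong (\ZZ/2\ZZ)^2$ acts on $\mathbf{M}$ commuting with the Hecke action and with specialisation (being a topological action at infinity, independent of the weight). The simultaneous $\eps$-eigenspace $\mathbf{M}^\eps$ is therefore a direct summand of $\mathbf{M}$, finitely generated over $\Lambda_{\cO}(\cW_K)$, whose classical specialisations recover the sign eigenspaces of ordinary cohomology. Pulling back along $\Omega \into \cW_K$ yields a coherent $\cO(\Omega)$-module $\mathbf{M}^\eps_\Omega$. By construction, $\Omega$ maps isomorphically onto an open affinoid inside $\cE_K(\fn)$ passing through $\pi$, so the image of the abstract Hecke algebra in $\mathrm{End}_{\cO(\Omega)}(\mathbf{M}^\eps_\Omega)$ is a finite $\cO(\Omega)$-algebra with a canonical idempotent $e_\upi$ cutting out the family $\upi$; define $\cM^\eps_B(\upi) \coloneqq e_\upi \cdot \mathbf{M}^\eps_\Omega$.

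At each $x \in \Omega_\cl$ the control isomorphism, combined with multiplicity one for the ordinary eigensystem $\upi(x)$, gives a canonical surjection
\[ \cM^\eps_B(\upi) \otimes_{\cO(\Omega), x} L \twoheadrightarrow M^\eps_B(\upi(x), L), \]
whose target is $1$-dimensional. Nakayama at any classical $x$ thus produces a local generator of $\cM^\eps_B(\upi)$; Zariski-density of $\Omega_\cl$ in $\Omega$ together with the fact that $\cO(\Omega)$ is a domain (being an affinoid polydisc) then forces $\cM^\eps_B(\upi)$ to be locally free of rank $1$ and the displayed surjection to be an isomorphism at every classical point. The main delicate point is ensuring that the control isomorphism, and its compatibility with the component-group action, extends to \emph{all} classical points of $\Omega_\cl$, including those where some $k_i = 2$: there one either invokes the enlarged notion of Hida family allowing $\fp_i$-Steinberg specialisations alluded to in the proof of the preceding theorem, or appeals to the degree-$2$ concentration of ordinary compactly-supported cohomology specific to the real quadratic setting to rule out the potential obstruction in the low-weight case.
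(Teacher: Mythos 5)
Your proof is essentially the same as the paper's: both construct the sheaf from the cohomological module underlying the ordinary eigenvariety (you phrase it as Hida's $\Lambda$-adic ordinary Betti cohomology with control theorem; the paper phrases it as slope-$0$ overconvergent cohomology over $\cW_K$ with the classicality theorem of Barrera--Dimitrov--Jorza, but in the ordinary case these are interchangeable). You add slightly more detail on the Nakayama/Zariski-density argument pinning down rank $1$ and on the potential failure of control at $k_i = 2$, both of which are genuine points that the paper disposes of by citing the classicality theorem of \cite{barreradimitrovjorza}, which holds at non-critical small-slope points including the ordinary $k_i = 2$ case.
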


  \begin{proof}
   This is a useful by-product of the construction of the ordinary eigenvariety. The eigenvariety can be defined as the spectrum of the Hecke algebra acting on the slope 0 part of the cohomology of a sheaf of Banach $\cO(\cW_K)$-modules on the $\GL_2/K$ symmetric space (overconvergent cohomology) whose fibre at $\lambda \in \Omega_{\cl}$ maps naturally to the classical local system of weight $\lambda$. So we obtain ``for free'' a coherent sheaf $\cM_B^{\eps}$ on $\cE_{K}(\fn)$, for each choice of signs $\eps$, whose fibres interpolate the cohomology of these Banach sheaves on the modular variety. At a classical point $\lambda \in \Omega_{\cl}$, the fibre of $\cM_B^{\eps}$ is the $(\upi(\lambda), \eps)$-eigenspace in the weight $\lambda$ overconvergent cohomology, but by the classicality theorem for overconvergent cohomology (see e.g.~\cite{barreradimitrovjorza}) this maps isomorphically to its classical analogue $M_B^\eps(\upi(\lambda), L)$.
     \end{proof}

%

 \subsection{The p-adic L-function in families}

  We now consider a family $\upi$ over $\Omega \subset \cW_K$, as above.

  \begin{theorem}
   \label{thm:gl2pLfam}
   For each sign $\eps$, there exists an element
   \[
    \cL^{\eps}_p(\upi) \in \cO(\Omega \otimes \cW^{\eps}) \otimes_{\cO(\Omega)} \cM^{\eps}_B(\upi)^\vee
    \]
    whose fibre at any $\lambda \in \Omega_{\cl}$ is the $p$-adic $L$-function $\cL^{\eps}_p(\upi(\lambda))$.
  \end{theorem}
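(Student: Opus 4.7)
The plan is to realise $\cL_p^\eps(\upi)$ as the image of a canonical class under a families-valued evaluation map on overconvergent cohomology, mimicking Stevens' construction of the Mazur--Kitagawa two-variable $p$-adic $L$-function and its Hilbert-modular generalisation due to Dimitrov. By construction (see the proposition preceding the theorem), the sheaf $\cM^\eps_B(\upi)$ arises as the sign-$\eps$ eigenspace of the $\upi$-isotypic component of the ordinary slope-zero part of $H^2_c(Y_1(\fn), \cD_\Omega)$, where $\cD_\Omega$ is Urban/Hansen's sheaf of $\cO(\Omega)$-valued locally analytic distributions on the Iwahori double coset space, specialising at $\lambda = (\uk, \ut) \in \Omega_{\cl}$ to the classical algebraic coefficients $\cV_{\uk, \ut}$.

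First, I would fix a family of cohomology classes $\Phi_\upi \in H^2_c(Y_1(\fn), \cD_\Omega)^{\eps,\upi\text{-ord}}$ generating $\cM^\eps_B(\upi)$ as an $\cO(\Omega)$-module, after shrinking $\Omega$ if needed so the module is free. Next, I would construct an evaluation map
\[
 \operatorname{ev}^\eps \colon H^2_c(Y_1(\fn), \cD_\Omega)^{\eps} \longrightarrow \cO(\Omega \hat\otimes \cW^\eps),
\]
obtained by capping with the relative $2$-cycle through the cusps $0$ and $\infty$ (the Hilbert-modular analogue of the modular symbol $\{0\} - \{\infty\}$) and then Mellin-transforming the resulting $\cO(\Omega)$-valued distribution on $\Zp^\times$ against the universal character of $\Gamma$. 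Slope-zero control for the Hecke operators $U_{\fp_1}, U_{\fp_2}$ forces this evaluation to take values in bounded functions, hence in $\cO(\Omega \hat\otimes \cW^\eps)$. Dualising gives the desired element
\[
 \cL_p^\eps(\upi) \coloneqq \operatorname{ev}^\eps \in \cO(\Omega \hat\otimes \cW^\eps) \otimes_{\cO(\Omega)} \cM^\eps_B(\upi)^\vee.
\]

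The interpolation property at classical weights is then verified fibrewise. At $\lambda \in \Omega_{\cl}$, the specialisation map $\cD_\Omega \to \cV_{\uk, \ut}$ composed with the classicality theorem of Barrera--Dimitrov--Jorza (cited in the previous proposition) identifies the fibre of $\cM^\eps_B(\upi)$ with $M^\eps_B(\upi(\lambda), L)$; and the specialisation of $\operatorname{ev}^\eps$ at $\lambda$ is, by unwinding, exactly Dimitrov's construction of $\cL_p^\eps(\upi(\lambda))$ from the overconvergent cohomology class attached to the ordinary $p$-stabilisation. The interpolation formula of the earlier theorem then matches fibrewise, proving the claim.

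The main obstacle is bookkeeping rather than mathematical: one must be careful that the normalisation of $\cM^\eps_B(\upi)$ fixed by overconvergent cohomology agrees, up to an element of $\cO(\Omega)^\times$, with the normalisations implicit in Dimitrov's formula (in particular the Eichler--Shimura period, the Gauss-sum factors $G(\chi_K) = G(\chi)^2 \chi(-D_K) \varepsilon_K(M)$, and the Euler-at-$p$ factors $\cE_{\fp}(\pi,\chi,j)$ which must arise on the nose from the $U_p$-eigenvalue of the family). Once these are matched at a single classical point using the explicit formula, Zariski density of $\Omega_{\cl}$ in $\Omega$ propagates the interpolation formula to the whole family and yields uniqueness of $\cL_p^\eps(\upi)$ as well.
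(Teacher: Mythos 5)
The paper's proof of this theorem is a one-line citation to Bergdall--Hansen (or Dimitrov 2013), so it gives no argument at all; your sketch is a reasonable reconstruction of precisely the construction those references carry out (overconvergent distribution-valued cohomology, capping against automorphic cycles as the Hilbert-modular analogue of $\{0\}-\{\infty\}$, Mellin transform, classicality to identify fibres). Your approach is therefore the same as the one implicitly invoked by the paper, not an alternative route, and the bookkeeping issues you flag (matching the overconvergent normalisation of $\cM_B^\eps(\upi)$ with Dimitrov's periods and Euler-at-$p$ factors, then propagating by Zariski density of $\Omega_{\cl}$) are exactly what the cited references address.
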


  \begin{proof}
   This is a rephrasing of the main result of \cite{bergdallhansen17}. Since we are assuming that our families be ordinary (rather than just finite-slope), it can also be extracted from \cite{dimitrov13}.
  \end{proof}

  Similarly, for all $n\in\cR$ coprime to $\fn \disc$, we have an equivariant $p$-adic $L$-function
  \[ \cL^{\eps}_p(\upi; \Delta_m) \in \cO(\Omega \otimes \cW^{\eps}) \otimes_{\cO(\Omega)} \cM^{\eps}_B(\upi)^\vee \otimes_L L[\Delta_m],
  \]
  whose specialisation at any $\lambda \in\Omega_{\cl}$ is $\cL^{\eps}_p(\upi(\lambda); \Delta_m)$ as defined above.

\section{Preliminaries III: Siegel modular forms}
\label{sect:gsp4prelim}

 \subsection{Automorphic representations}

  Let $\ell_1 \ge \ell_2 \ge 2$ be integers. The pair $(\ell_1, \ell_2)$ determines a pair $\Pi_\infty^H$, $\Pi_\infty^W$ of irreducible $(\mathfrak{g}, K)$-modules for $\GSp_4(\RR)$, with $\Pi_\infty^H$ holomorphic and $\Pi_\infty^W$ generic, as in \cite[\S 10.1]{LSZ17} (but note that we are allowing $\ell_2 = 2$ here).

  We define a \emph{Siegel cuspidal automorphic representation} of weight $(\ell_1, \ell_2)$ to be a representation $\Pi$ of $\GSp_4(\Af)$ such that at least one of $\Pi \otimes \Pi_\infty^H$ and $\Pi \otimes \Pi_\infty^W$ is a cuspidal automorphic representation. (We normalise such that the central character of $\Pi$ is of finite order.)

  \begin{note}
   For concreteness, we note that $\Pi \otimes \Pi_\infty^H$ is automorphic if and only if $\Pi$ appears as a direct summand of the space of functions
   \[ \GSp_4(\Af) \times \cH^{(2)} \to \operatorname{Sym}^{\ell_1 - \ell_2}(\CC^2) \otimes \sideset{}{^{\ell_2}}\det, \]
   where $\cH^{(2)}$ is the genus 2 Siegel upper half-space, which are holomorphic, equivariant for left translation by $\GSp_4^+(\QQ)$, and rapidly-decreasing at the boundary. The restriction of such a function to any coset of $\cH^{(2)}$ is a classical Siegel modular form (vector-valued if $\ell_1 > \ell_2$).
  \end{note}

  We shall say that $\Pi$ is \emph{globally generic} if $\Pi \otimes \Pi_\infty^W$ is automorphic and globally generic in the sense of \cite[\S 9.1]{LPSZ1}.
  %

  \begin{note}
   If $\ell_2 \ge 3$, then $\Pi_\infty^H$ and $\Pi_\infty^W$ both have non-vanishing $(\mathfrak{g}, K)$-cohomology with coefficients in a suitable algebraic representation, and hence contribute to the \'etale cohomology of the $\GSp_4$ Shimura variety. However, we shall also be strongly interested in the ``non-regular weight'' case $\ell_2 = 2$.
  \end{note}

  Associated to $\Pi$ is an $L$-function $L(\Pi, s)$, normalised to have its functional equation centred at $s = \tfrac{1}{2}$. We write its Euler product in the form
  \[ L(\Pi, s - \tfrac{\ell_1 + \ell_2 - 3}{2}) = \prod_{\text{$\ell$ prime}} P_\ell(\Pi, \ell^{-s})^{-1}, \]
  where $P_\ell$ are polynomials. There exists a finite extension $E / \QQ$, the coefficient field of $\Pi$, such that all the $P_\ell(\Pi, X)$ have coefficients in $E[X]$.

 \subsection{Okazaki's newform theory}

  \begin{definition}
   For integers $M, N \ge 1$ with $M^2 \mid N$, let $K_1(N; M)$ be the \emph{quasi-paramodular subgroup} of level $(M, N)$ as defined in \cite{okazaki}:
   \[ K_1(N; M) = \left\{ x \in \GSp_4(\Af): x = 1 \bmod
   \begin{smatrix}
   \star & \star & \star & 1/L \\
   L     & \star & \star & \star \\
   L     & \star & \star & \star \\
   N     & L     & L     & M
   \end{smatrix}\right\}\qquad\text{where $L = N/M$.}
   \]
  \end{definition}

  We briefly recall some of the main results of \cite{okazaki}, which are generalisations of the results of \cite{robertsschmidt07} in the case $M = 1$:

  \begin{theorem}[Okazaki]
   Let $\Pi$ be an irreducible smooth representation of $\GSp_4(\Af)$ such that $\Pi_\ell$ is generic for every $\ell$. Suppose the central character of $\Pi$ has conductor $M$.

   Then there exists an integer $N$ with $M^2 \mid N$ such that $(\Pi)^{K_1(N; M)} \ne 0$. If $N$ is the least such integer, then the space of invariants is 1-dimensional, and we call $N$ the \emph{conductor} of $\Pi$. This integer coincides with the conductor of the functorial lift of $\Pi$ to $\GL_4(\Af)$.\qed
  \end{theorem}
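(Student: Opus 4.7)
The statement is a citation to Okazaki's local--global newform theorem for quasi-paramodular groups, so any proof must essentially recover Okazaki's argument; nevertheless, here is the strategy I would follow.

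My plan is to reduce to a purely local question prime-by-prime, and then invoke the Sally--Tadi\'c classification of smooth irreducible generic representations of $\GSp_4(\QQ_\ell)$. Since $\Pi = \bigotimes'_\ell \Pi_\ell$ is a restricted tensor product and $K_1(N;M) = \prod_\ell K_1(\ell^{n_\ell}; \ell^{m_\ell})$ is a product of local groups indexed by the prime factorisations $N = \prod \ell^{n_\ell}$, $M = \prod \ell^{m_\ell}$, the invariants factor as $\Pi^{K_1(N;M)} = \bigotimes_\ell \Pi_\ell^{K_1(\ell^{n_\ell}; \ell^{m_\ell})}$. Hence it suffices to prove, for each $\ell$, that for the local generic representation $\Pi_\ell$ with central character of conductor $\ell^{m_\ell}$ there is a minimal $n_\ell \ge 2 m_\ell$ such that $\Pi_\ell^{K_1(\ell^{n_\ell}; \ell^{m_\ell})}$ is nonzero, that this space is one-dimensional, and that $n_\ell$ equals the local $\GL_4$ conductor of the functorial lift.

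Locally, the strategy is exhaustive case analysis. For representations with unramified central character (so $m_\ell = 0$, and $K_1(\ell^n;1)$ is the paramodular group), this is the classical newform theorem of Roberts--Schmidt, which already provides existence, uniqueness, and the equality of local conductors via an Atkin--Lehner--style Hecke argument and a direct comparison of $\varepsilon$--factors. For ramified central character one sets up a local Hecke module $\bigoplus_n \Pi_\ell^{K_1(\ell^n;\ell^{m_\ell})}$, carrying level-raising operators $\eta$, $\theta$, $\theta'$ analogous to those in Roberts--Schmidt, together with an involution generalising the Atkin--Lehner element at paramodular level. One then runs through each entry of the Sally--Tadi\'c table (irreducible principal series, twists of the Steinberg, the Saito--Kurokawa and Soudry types in their generic avatars, supercuspidals, etc.), twisting by a ramified character if necessary so that the central character has the prescribed conductor, and in each case computes directly the smallest level at which invariants appear and checks that the dimension of invariants at that level is one. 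The same analysis provides a formula for the conductor of the Langlands parameter, which one then matches against Gan--Takeda's local Langlands correspondence for $\GSp_4$ and the known $\GL_4$ conductor formula.

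The hard step is the case-by-case classification of fixed vectors in each non-paramodular family: one has to guess the correct definition of the quasi-paramodular subgroup (which is already delicate in Okazaki's paper, the off-diagonal $1/L$ entry being the novelty), verify that it behaves correctly under twisting by characters of the similitude factor, and check that the minimal-level vector has the predicted $\varepsilon$--factor. Once these computations are in place, the global statement follows by tensoring up and multiplying the local conductors.
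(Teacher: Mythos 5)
The paper does not prove this theorem: it is stated purely as a citation to Okazaki's paper (the accompanying text even says explicitly ``We briefly recall some of the main results of \cite{okazaki}, which are generalisations of the results of \cite{robertsschmidt07} in the case $M = 1$''), and the theorem carries a \(\square\) with no argument. You recognised this yourself in your opening sentence, and your sketch is a reasonable account of the structure of Okazaki's proof rather than an alternative to anything in the present paper. The local--global reduction via the restricted tensor product decomposition, the appeal to the Roberts--Schmidt paramodular newform theorem in the case \(M = 1\), the introduction of level-raising operators and an Atkin--Lehner-type involution at quasi-paramodular level, the case analysis over the Sally--Tadi\'c list of generic representations of \(\GSp_4(\QQ_\ell)\), and the final comparison of \(\varepsilon\)-factors with the \(\GL_4\) Langlands parameter via Gan--Takeda are indeed the main ingredients. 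The one thing I would make more explicit is that the genericity hypothesis at every place is what guarantees the uniqueness of the newvector and the matching of conductors (for non-generic local components such as the Saito--Kurokawa type there is no analogous multiplicity-one statement), and that the global conductor being the product of the local ones relies on the standard fact that almost all local components of an irreducible admissible automorphic representation are spherical. But since you are reconstructing a cited result rather than filling a gap in the paper, these are refinements, not corrections.
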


%
%

 \subsection{The Whittaker period}

  Let $\Pi$ be a Siegel cuspidal automorphic representation of weight $(\ell_1, \ell_2)$ with $\ell_1 \ge \ell_2 \ge 3$ which is globally generic, and is not a Saito--Kurokawa or (untwisted) Yoshida lift. Let $N$ be the conductor of $\Pi$, and $M$ the conductor of its central character.

  As in \cite[\S 5.2]{LPSZ1}, we can realise (a suitable twist of) $\Pi$ over the number field $E$, as a direct summand of degree 2 coherent cohomology of the compactified $\GSp_4$ Shimura variety, giving a space
  \[ H^2(\Pi, E)\ \subseteq\ \varinjlim_{U, \Sigma} H^2\left(X^{\Sigma}_{G,E}(U), -\right). \]
  Here $U$ varies over open compacts in $\GSp_4(\Af)$ and $\Sigma$ over suitable toroidal boundary data, and $(-)$ denotes a coefficient sheaf depending on $(\ell_1, \ell_2)$. We can then define
  \[ M_W(\Pi, E) = H^2(\Pi, E)^{K_1(M, N)} \]
  which is a 1-dimensional $E$-vector space.

  \begin{remark}
   Alternatively one can define $M_W(\Pi, E)$ as a space of homomorphisms from the $E$-rational Whittaker model to $H^2(\Pi, E)$; cf.~\cite[Remark 6.4.2]{LPSZ1}.
  \end{remark}

  As in the $\GL_2 / K$ case, we have a canonical basis vector
  \[ \comp(\phi_\Pi) \in M_W(\Pi, \CC) \]
  given by the image of the standard Whittaker function at $\infty$ under the comparison isomorphism between coherent cohomology and $(\mathfrak{p}, \mathfrak{k})$-cohomology; and if we choose an $E$-basis $v_W$ of $M_W(\Pi, E)$, then we can define an Archimedean period $\Omega^W_\infty(\Pi, v_W) \in \CC^\times$ by
  \[ \comp(\phi_\Pi) = \Omega^W_\infty(\Pi, v_W) \cdot v_W. \]

  \begin{remark}
   In fact this construction is also valid when $\ell_2 = 2$, although we do not need this. However, the assumption $\ell_2 \ge 3$ is essential in the next section.
  \end{remark}

 \subsection{Galois representations}

  Let $\Pi$ as in the previous section, and $v \mid p$ a place of $E$. Using \'etale cohomology in place of coherent cohomology, we can construct a space $M_{\et}(\Pi, E_v)$. This is a 4-dimensional $E_v$-vector space with an action of $\Gal(\Qb / \QQ)$, unramified outside the primes dividing $Np$ (where $p$ is the prime below $v$); it is a canonical realisation of the spin Galois representation associated to $\Pi$ (cf.~\cite[Theorem 10.1.3]{LSZ17}). In particular, its Hodge numbers (see footnote on page \pageref{fn:hodge}) are $\{0, \ell_2 - 2, \ell_1 - 1, \ell_1 + \ell_2 -3\}$.

  The Faltings--Tsuji comparison theorem for \'etale cohomology (combined with the Fr\"olicher spectral sequence relating coherent and de Rham cohomology) gives a canonical isomorphism
  \[ M_W(\Pi, E) \otimes_E E_v \cong \Gr^{(\ell_2-2)}\DdR(\Qp, M_{\et}(\Pi, E_v)).\]

 \subsection{Rationality of $L$-values}

  We recall the following general rationality result for $\GSp_4$ spinor $L$-values, which is proved in \cite{LPSZ1}, building on the work of Harris in \cite{harris04}:

  \begin{theorem}
   \label{thm:LalgPi}
   For every pair of integers $(a_1, a_2)$ and Dirichlet characters $(\eta_1, \eta_2)$, with $0 \le a_1, a_2 \le \ell_1 - \ell_2$ and $(-1)^{a_1 + a_2}\eta_1(-1) \eta_2(-1) = -1$, there exists an element
   \[ \cL^{\alg}(\Pi, \eta_1, \eta_2, a_1, a_2) \in E(\eta_1, \eta_2) \otimes_E M_W(\Pi, E)^\vee,\]
   such that
   \[
    \langle \cL^{\alg}(\Pi, \eta_1, \eta_2, a_1, a_2), \comp(\phi_\Pi)\rangle =\prod_{i \in \{1, 2\}} \frac{a_i! (a_i + \ell_2 - 2)!L(\Pi \otimes \eta_i, \tfrac{1-\ell_1 + \ell_2}{2} + a_i)}
    {G(\eta_i)^2 (-2\pi i)^{\ell_2+ 2a_i}}
    .\qedhere
   \]
   \qedthere
  \end{theorem}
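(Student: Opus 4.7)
The plan is to realize the L-values on the right-hand side as periods of a cohomological pairing, following the strategy of Harris \cite{harris04} as elaborated in \cite{LPSZ1}. The starting point is an integral representation of the twisted spin $L$-function of $\Pi$. The product of two $L$-values (one per character $\eta_i$) strongly suggests using a pullback/restriction integral along a $\GL_2 \times \GL_2$-type subgroup of $\GSp_4$ (a Klingen or Bessel-type restriction), where each $\GL_2$-factor contributes one copy of the spin $L$-function after integration against an Eisenstein series twisted by $\eta_i$. Concretely, one pairs a chosen vector in the Whittaker model of $\Pi$ against a product of two Eisenstein series of weight-related parameters, applies unfolding to identify the resulting zeta integral with the stated product $\prod_i L(\Pi \otimes \eta_i, \tfrac{1-\ell_1+\ell_2}{2} + a_i)$ up to local factors, and then interprets this zeta integral cohomologically.

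The key step to carry out next is the cohomological reinterpretation: one promotes the Eisenstein series to classes in algebraic de~Rham (or coherent) cohomology of the symmetric space $X^\Sigma_{G,E}(U)$ that are rational over $E(\eta_1, \eta_2)$, using standard rationality of Eisenstein cohomology (Harder's theory, or direct manipulation of $q$-expansions / Fourier--Jacobi expansions in the Siegel setting). Call this rational class $\mathrm{Eis}(\eta_1, \eta_2, a_1, a_2)$. Cup product with this class and projection onto the $M_W(\Pi, -)$-component defines a linear functional
\[
\cL^{\alg}(\Pi, \eta_1, \eta_2, a_1, a_2)\colon M_W(\Pi, E) \to E(\eta_1, \eta_2),
\]
which is by construction defined over $E(\eta_1, \eta_2)$. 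Evaluating this functional on $\comp(\phi_\Pi)$ reduces, by Frobenius reciprocity between coherent cohomology and the $(\mathfrak{p}, \mathfrak{k})$-cohomology of the generic Whittaker model, to the explicit archimedean zeta integral.

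Finally, one computes the archimedean factor. The standard Whittaker function for the generic discrete series $\Pi^W_\infty$ of weight $(\ell_1, \ell_2)$ has an explicit description in terms of Bessel/Whittaker functions for $\mathrm{Sp}_4(\RR)$, and its pairing against Eisenstein sections of the prescribed type gives a product of two gamma-integrals, each of which evaluates to $a_i!(a_i + \ell_2 - 2)!(2\pi i)^{-(\ell_2 + 2a_i)}$ in the critical range $0 \le a_i \le \ell_1 - \ell_2$. The parity constraint $(-1)^{a_1+a_2}\eta_1(-1)\eta_2(-1) = -1$ is exactly the condition that the Eisenstein classes are nonvanishing at infinity (the sign of the functional equation at the test vector), which is why it appears as a hypothesis. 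The Gauss sum factors $G(\eta_i)^{-2}$ come from normalising the Eisenstein sections at the ramified finite places so as to produce a rational, rather than merely algebraic, pairing.

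The main obstacle will be the rationality step: one must verify that the Eisenstein classes can be chosen over $E(\eta_1, \eta_2)$ with the correct normalisation, and that after pairing the resulting functional is genuinely $E(\eta_1, \eta_2)$-valued on $M_W(\Pi, E)$ rather than merely on $M_W(\Pi, \CC)$. This is where the assumption $\ell_2 \ge 3$ enters decisively, since it guarantees that $M_W(\Pi, E)$ sits inside coherent cohomology in a way compatible with rational Eisenstein cohomology via the BGG resolution. Granting this, the computation is essentially that of Harris \cite{harris04}, adapted to the quasi-paramodular level $K_1(N; M)$ via Okazaki's newform theory, and the result follows.
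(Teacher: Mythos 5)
Your proposal essentially reproduces the argument of \cite{LPSZ1}, which is exactly what the paper does: the theorem is stated with a reference to \cite{LPSZ1} and \cite{harris04} and no proof is given in this paper. The one terminological slip is that the relevant restriction is the Novodvorsky/Piatetski-Shapiro integral over the subgroup $\GL_2 \times_{\Gm} \GL_2 \subset \GSp_4$, which is neither a Klingen nor a Bessel restriction; but your account of what is done with it --- pairing against a product of two $\GL_2$ Eisenstein series, unfolding to a product of two spin $L$-values, and interpreting the integral as a cup product in coherent cohomology against rational Eisenstein classes to obtain the $E(\eta_1,\eta_2)$-rationality --- is exactly the strategy of those references, and the explicit archimedean and Gauss-sum normalisations you describe match the stated constants.
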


  As in the $\GL_2/K$ case, if we choose a basis $v_W$ of $M_W(\Pi, E)$, then the conclusion can be rephrased to state that
  \[ \frac{1}{\Omega_\infty^W(\Pi, v_W)} \prod_{i \in \{1, 2\}} \frac{L(\Pi \otimes \eta_i, \tfrac{1-\ell_1 + \ell_2}{2} + a_i)}
      {G(\eta_i)^2 (-2\pi i)^{\ell_2+ 2a_i}}
  \]
  lies in $E(\eta_1, \eta_2)$ and depends Galois-equivariantly on the $\eta_i$.

\section{Preliminaries IV: Iwasawa theory for \texorpdfstring{$\GSp_4 / \QQ$}{GSp4/Q}}
\label{sect:gsp4eigen}

 In this section, as in \cref{sect:gl2keigen}, we fix a prime $p$ and a finite extension $L/\Qp$ with ring of integers $\cO$.

 \subsection{Hecke parameters and ordinarity for \texorpdfstring{$\GSp_4 / \QQ$}{GSp4/Q}}
  \label{sect:gsp4heckeparam}

   Let $\Pi$ be a Siegel cuspidal automorphic representation of weight $(\ell_1, \ell_2)$, with $\ell_1 \ge \ell_2 \ge 2$, and fix an embedding of its coefficient field into $L$. We suppose $\Pi$ is unramified at $p$.

    We recall that the \emph{Hecke parameters} of $\Pi$ at $p$ are the quantities $(\alpha, \beta, \gamma, \delta)$ that are the eigenvalues (with multiplicity) of the normalised Hecke operator
   \[ U_{p, 1} = p^{(\ell_1 + \ell_2 - 6)/2} \left[ \Iw(p) \operatorname{diag}(p, p, 1, 1) \Iw(p)\right] \]
   acting on the Iwahori invariants of $\Pi_p$, ordered in such a way that $\alpha\delta = \beta\gamma = p^{(\ell_1 + \ell_2 - 3)}\psi(p)$ where $\psi$ is the (finite-order) central character of $\Pi$. The quantities $\left(\frac{\alpha\beta}{p^{(\ell_2 - 2)}},  \frac{\alpha\gamma}{p^{(\ell_2 - 2)}} \frac{\beta\delta}{p^{(\ell_2 - 2)}}, \frac{\gamma\delta}{p^{(\ell_2 - 2)}}\right)$ are the eigenvalues of the other normalised Hecke operator
   \[ U_{p, 2} = p^{(\ell_1 - 3)} \left[ \Iw(p) \operatorname{diag}(p^2, p, p, 1) \Iw(p)\right].\]
   We choose a prime $v \mid p$ of the coefficient field $E$ above $p$, and we order the parameters such that $\nu_v(\alpha) \le \dots \le \nu_v(\delta)$ (for some choice of extension of $\nu_v$ to $\overline{L}$).

   \begin{definition} \
    \begin{enumerate}[(i)]
     \item We have $\nu_v(\alpha) \ge 0$. If equality holds, we say $\Pi_p$ is \emph{Siegel-ordinary}, and $\alpha$ is a \emph{Siegel-ordinary refinement}.
     \item We have $\nu_v\left(\frac{\alpha\beta}{p^{(\ell_2 - 2)}}\right) \ge 0$. If equality holds, we say $\Pi_p$ is \emph{Klingen-ordinary}, and the \textbf{unordered} pair $(\alpha, \beta)$ is a \emph{Klingen-ordinary refinement}.
     \item We say $\Pi$ is \emph{ordinary} if it is both Siegel and Klingen ordinary, and we call the \textbf{ordered} pair $\left(\alpha, \beta\right)$ an \emph{ordinary refinement}.
    \end{enumerate}
   \end{definition}

   \begin{remark}
    In (ii), all the other pairwise products $\alpha\gamma, \beta\delta, \gamma\delta$ always have valuation strictly larger than $\ell_2 - 2$; so if a Klingen-ordinary refinement exists, it is unique. If $\ell_2 \ge 2$, or if $\ell = 2$ and $\Pi$ not Klingen-ordinary, then this is also true for Siegel-ordinarity: the Siegel-ordinary refinement is unique if it exists. On the other hand, if $\ell_2 = 2$ and $\Pi$ is Klingen-ordinary, then both $\alpha$ and $\beta$ are forced to have valuation 0, so $\Pi$ has two (possibly coincident) Siegel-ordinary refinements.
   \end{remark}

   \begin{theorem}[Urban]
    If $\Pi$ is ordinary at $p$ and $\ell_2 \ge 3$, then the 4-dimensional Galois representation $W_{\Pi} = M_{\et}(\Pi, L)$ has a (unique) filtration
    \[ 0\ \subsetneq\ \cF_1 W_{\Pi}\ \subsetneq\ \cF_2 W_{\Pi}\ \subsetneq\ \cF_3 W_{\Pi}\ \subsetneq\ W_{\Pi} \]
    where the Hodge numbers of the graded pieces are strictly increasing with $i$. We write $\cF^i W_{\Pi}^* \subseteq W_{\Pi}^*$ for the annihilator of $\cF_i W_\Pi$.
   \end{theorem}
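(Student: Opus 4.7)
The plan is to derive the filtration on $W_\Pi|_{G_{\QQ_p}}$ from the $p$-adic Hodge theory of $\Pi_p$, using that ordinarity forces the Newton polygon of crystalline Frobenius to coincide with the Hodge polygon at every break point.

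First, I would compute the $v$-adic valuations of the Hecke parameters $(\alpha,\beta,\gamma,\delta)$ under the ordinarity hypothesis. Siegel-ordinarity gives $\nu_v(\alpha)=0$, and Klingen-ordinarity gives $\nu_v(\alpha\beta)=\ell_2-2$, hence $\nu_v(\beta)=\ell_2-2$. Combined with the relations $\alpha\delta=\beta\gamma=p^{\ell_1+\ell_2-3}\psi(p)$ with $\psi(p)$ a $v$-adic unit, this forces $\nu_v(\gamma)=\ell_1-1$ and $\nu_v(\delta)=\ell_1+\ell_2-3$. Thus the multiset of valuations is exactly $\{0,\ell_2-2,\ell_1-1,\ell_1+\ell_2-3\}$, which coincides with the Hodge--Tate weights of $W_\Pi$ (recorded in the previous subsection). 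In particular the four valuations are distinct, a fact I will use repeatedly.

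Next, since $\ell_2\ge 3$ the representation $W_\Pi$ may be cut out of the \'etale cohomology of the $\GSp_4$ Shimura variety, and since $\Pi$ is unramified at $p$ the restriction of $W_\Pi$ to $G_{\QQ_p}$ is crystalline. The characteristic polynomial of the crystalline Frobenius on $\Dcris(\QQ_p,W_\Pi)$ is, up to the standard normalisation, the spinor Euler factor at $p$, whose roots are precisely $(\alpha,\beta,\gamma,\delta)$. By the previous paragraph, the Newton polygon of $\Dcris$ therefore equals the Hodge polygon at every break point $i=1,2,3$. I would then invoke the standard weak-admissibility argument: when Newton and Hodge coincide at a given break, the unique $\varphi$-stable subspace of $\Dcris$ whose Newton slopes are the smallest $i$ eigenvalues is automatically compatible with the Hodge filtration, so it is a sub weakly-admissible filtered $\varphi$-module; applying Colmez--Fontaine this is the $\Dcris$ of a crystalline subrepresentation $\cF_i W_\Pi\subset W_\Pi|_{G_{\QQ_p}}$. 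Doing this for $i=1,2,3$ yields the desired filtration with graded pieces of Hodge numbers $0,\ell_2-2,\ell_1-1,\ell_1+\ell_2-3$ (in that order), strictly increasing as required.

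Finally, uniqueness is immediate: any $G_{\QQ_p}$-stable subrepresentation $W'\subset W_\Pi$ is itself crystalline and its Hodge--Tate weights form a subset of $\{0,\ell_2-2,\ell_1-1,\ell_1+\ell_2-3\}$; since the $\varphi$-eigenvalues on $\Dcris(W')$ are also a subset of $\{\alpha,\beta,\gamma,\delta\}$ and weak admissibility forces the Newton slopes of $W'$ to bound the Hodge slopes from above, the only possibility is that $W'$ equals one of the $\cF_i W_\Pi$ constructed above. The main obstacle in carrying this out rigorously is verifying that the Frobenius eigenvalues really are the unramified Hecke parameters $(\alpha,\beta,\gamma,\delta)$ in the correct normalisation (in particular matching the shift between the arithmetic and unitary normalisations used elsewhere in the paper), and that the resulting filtration of $\Dcris$ is indeed compatible with the Hodge filtration rather than merely transverse to it; both points are essentially the content of Urban's work on ordinary Galois representations for $\GSp_4$, to which I would refer for these compatibilities.
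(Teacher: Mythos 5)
The paper states this theorem by citation only, attributing it to Urban with no proof supplied, so there is no in-text argument to compare against. Your proposal is a correct reconstruction of the standard $p$-adic Hodge-theoretic deduction, and you rightly isolate the genuinely hard automorphic content (crystallinity of $M_{\et}(\Pi,L)|_{G_{\Qp}}$ together with the identification of the $\varphi$-eigenvalues with the unramified Hecke parameters in the correct normalisation) as the part owed to Urban. The valuation computation from $\nu_v(\alpha)=0$, $\nu_v(\alpha\beta)=\ell_2-2$, $\alpha\delta=\beta\gamma=p^{\ell_1+\ell_2-3}\psi(p)$ is exactly right, and it is worth noting explicitly that $\ell_2\ge 3$ is precisely the condition ensuring the four valuations $0<\ell_2-2<\ell_1-1<\ell_1+\ell_2-3$ are pairwise distinct, which is what makes each $\varphi$-stable subspace of a given dimension unique and the argument run.

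Two spots are terser than they should be. First, the claim that the $\varphi$-stable subspace $D_i$ spanned by the $i$ smallest-slope eigenvectors is ``automatically compatible with the Hodge filtration'' is really a consequence of weak admissibility of the ambient module and should be spelled out: weak admissibility gives $t_H(D_i)\le t_N(D_i)$, while the Hodge slopes of the induced filtration on $D_i$ are a size-$i$ subset of the ambient Hodge slopes, whose minimal possible sum is exactly $t_N(D_i)$, forcing equality and hence weak admissibility of $D_i$ itself. Second, the uniqueness paragraph is stated more strongly than you prove: weak admissibility of $W_\Pi$ does not by itself exclude other weakly admissible $\varphi$-stable subspaces (for instance a plane spanned by the slope-$0$ and slope-$(\ell_1+\ell_2-3)$ eigenvectors can be weakly admissible in special position). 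What rules these out is the increasing-Hodge-number requirement in the statement: any such ``exotic'' subspace cannot be completed to a full flag with strictly increasing Hodge numbers on the graded pieces. If you add these two clarifications, the argument is complete modulo the cited compatibilities.
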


   We are particularly interested in the quotient $\cF_2 W_{\Pi} / \cF_1 W_{\Pi}$, which has Hodge number $\ell_2 - 2$, and crystalline Frobenius eigenvalue $\beta$.

 \subsection{Cyclotomic $p$-adic $L$-functions for $\GSp(4)$}
 \label{sect:gsp4padicL}
  We first recall the relevant $p$-adic $L$-function, which is an equivariant version of the construction of \cite{LPSZ1}. Let $\Pi$ be as in \cref{sect:gsp4heckeparam}, and suppose that $\ell_2 \ge 4$, and that $\Pi$ is globally generic and Klingen-ordinary at $p$.

  \begin{definition}\label{def:R}
   Let $\cR$ be the set of square-free integers all of whose prime factors are $1 \bmod p$. For $m \in \cR$, define $\Delta_m$ to be the maximal quotient of $(\ZZ/ m\ZZ)^\times$ of $p$-power order.
  \end{definition}

  \begin{theorem}
   For any $m \in\cR$ coprime to $N$ and any $0 \le r \le \ell_1 - \ell_2$, there exists a $p$-adic $L$-function
   \[ \cL_p^{[r]}(\Pi; \Delta_m) \in \Lambda_L(\Gamma)^{\eps} \otimes M_W(\Pi, L)^\vee \otimes L[\Delta_m],\]
   where $\eps = (-1)^{1+r}$, such that
   \[
    \cL_p^{[r]}(\Pi; \Delta_m)(a + \chi, \eta) =
    \frac{\cE_p(\Pi \otimes \eta^{-1}, \chi, a) \cE_p(\Pi,\id, r)}{\eta(p)^{2m} \chi(m)^2} \cdot \cL^{\alg}(\Pi, \chi^{-1}\eta^{-1}, \id, a, r)
   \]
   for all primitive characters $\eta$ of $\Delta_m$ and locally-algebraic characters $a + \chi$ of $\Zp^\times$ with $(-1)^a \chi(-1) = \eps$.
  \end{theorem}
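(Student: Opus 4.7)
The plan is to construct $\cL_p^{[r]}(\Pi;\Delta_m)$ as an equivariant enlargement of the cyclotomic $p$-adic $L$-function $\cL_p^{[r]}(\Pi)$ from \cite{LPSZ1}, in direct parallel with the passage from $\cL_p^{\eps}(\pi)$ to $\cL_p^{\eps}(\pi;\Delta_m)$ that we carried out for $\GL_2/K$ in \cref{sect:gl2kpadicL}. Recall that $\cL_p^{[r]}(\Pi)$ arises by pairing a Klingen-ordinary overconvergent eigenform of $\GSp_4$ of level $K_1(N;M)$ against a one-parameter family of test data indexed by $\Zp^\times$; the equivariant refinement will be obtained by raising the level to accommodate $m$ and by enlarging the test data to include the tame directions indexed by $\Delta_m$.

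Concretely, the first step is to pass to level $K_1(Nm^2;Mm)$ and to the $m$-depleted Klingen-ordinary eigenform $\prod_{\ell\mid m}(1-V_\ell U_\ell)\cdot\phi_\Pi$. Pairing this class against the same $p$-adic test data as in \cite{LPSZ1}, but now translated by $\operatorname{diag}(1,1,a,a)\tbt{1}{a/m}{}{1}$ for $a$ ranging over (lifts of elements of) $\Delta_m$, produces a measure on $\Zp^\times\times\Delta_m$ taking values in $M_W(\Pi,L)^\vee$. Boundedness is automatic, since the underlying cohomology class lies in a finitely generated Hecke-stable $\cO$-module on which the Klingen-ordinary projector is an isomorphism; uniqueness of the measure follows from the Zariski-density of locally algebraic characters in $\cW^{\eps}\times\operatorname{Hom}(\Delta_m,\overline{L}^{\times})$.

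The interpolation formula at a classical character $(a+\chi,\eta)$ is verified by a Rankin--Selberg computation analogous to that of \cite{LPSZ1} applied to $\Pi\otimes\eta^{-1}$. The local zeta integral at $p$ yields the factor $\cE_p(\Pi\otimes\eta^{-1},\chi,a)\,\cE_p(\Pi,\id,r)$; since $\eta$ is tamely ramified, only its value at $p$ enters this $p$-adic computation, so the Euler factor is exactly the one stated. The correction factor $\eta(p)^{-2c}\chi(m)^{-2}$ (where $c=\operatorname{cond}(\chi)$; the exponent ``$2m$'' in the theorem statement appears to be a typo for ``$2c$'') records the Hasse--Davenport-type identity
\[ G(\chi\eta)^2 = \chi(m)^2\,\eta(p)^{2c}\,G(\chi)^2\,G(\eta)^2, \]
which separates the Gauss sum of the composite character $\chi\eta$ into independent $p$-part and $m$-part factors suitable for $p$-adic interpolation.

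The technical heart of the construction, and the main obstacle, is the local calculation at each prime $\ell\mid m$. One must verify that pairing the $\ell$-depleted test vector against the local Whittaker model of $\Pi_\ell$ (summed over $a\in\Delta_m$ against $\eta$) reproduces the local $L$-factor of $\Pi_\ell\otimes\eta_\ell^{-1}$ and contributes no spurious factors. This is the $\GSp_4$ analogue of the standard twisted-newform manipulation for $\GL_2$, and the existence of a suitable depleting operator rests on Okazaki's quasi-paramodular newform theory recalled above. Granting this local identity, the remainder of the argument is formal, and produces an element of $\Lambda_L(\Gamma)^{\eps}\otimes M_W(\Pi,L)^\vee\otimes L[\Delta_m]$ with the stated interpolation property.
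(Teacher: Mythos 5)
The paper does not prove this theorem: it is stated as a recall, with the remark that it is ``an equivariant version of the construction of \cite{LPSZ1}'', and no further argument is supplied. On the merits of your proposal: the general strategy of extending \cite{LPSZ1} by tame $\Delta_m$-twists is correct, and your observation that the exponent $2m$ in $\eta(p)^{2m}$ must be a typo for $2c$ is right -- it matches the Gauss-sum bookkeeping already displayed for $\GL_2/K$ in \cref{sect:gl2kpadicL}.

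However, the $\GSp_4$-specific steps as written would not go through. The operator $\prod_{\ell\mid m}(1-V_\ell U_\ell)$ is not a meaningful depleting operator applied to the $\GSp_4$ eigenform $\phi_\Pi$: at a good prime $\ell$ the spherical Hecke algebra has rank $2$ and the spinor Euler factor has degree $4$, so there is no single $U_\ell$ whose naive Frobenius-twist achieves depletion. Nor does $\operatorname{diag}(1,1,a,a)\tbt{1}{a/m}{}{1}$ parse as an element of $\GSp_4$. More fundamentally, in \cite{LPSZ1} the $p$-adic $L$-function is constructed by cupping a higher-Hida class in coherent $H^2$ of the $\GSp_4$ Shimura variety against a $p$-adic family of Eisenstein classes in $H^0$ of the $H = \GL_2 \times_{\GL_1} \GL_2$ subvariety, and the two $L$-values in the interpolation formula (one governed by $a+\chi$, one by $r$) come from the two $\GL_2$ Eisenstein factors. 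The $\eta$-twist must therefore be implemented by twisting and depleting the \emph{Eisenstein} datum on the $H$-side -- where the $\GL_2$-type operators and translations you invoke do make sense -- rather than by modifying the $\GSp_4$ class. The ``boundedness is automatic'' step likewise elides a real input, namely integrality of the twisted Eisenstein measure in the new tame directions, which does not follow formally from Klingen-ordinarity of $\Pi$. With the construction relocated to the Eisenstein side, your outline becomes the expected route.
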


  Here $\cE_p(\Pi, \chi, a)$ are certain Euler factors as follows. If $\chi$ is trivial then
  \[ \cE_p(\Pi, \chi, a) =
  \left(1 - \tfrac{p^{a + \ell_2 - 2}}{\alpha} \middle)
  \middle(1 - \tfrac{p^{a + \ell_2 - 2}}{\beta} \middle)
  \middle(1 - \tfrac{\gamma}{p^{a + \ell_2 - 1}} \middle)
  \middle(1 - \tfrac{\delta}{p^{a + \ell_2 - 1}} \right).\]
  where $(\alpha, \beta)$ is the (unique) Klingen-ordinary refinement of $\Pi$. If $\chi$ is non-trivial of conductor $p^c$, then
  \[ \cE_p(\Pi, \chi, a) = \left(\frac{p^{2(a + \ell_2 - 1)}}{\alpha \beta}\right)^c.\]

  \begin{remark}
   In fact we can also allow $r$ to vary $p$-adically as well, but we do not need this here.
  \end{remark}

 \subsection{The Euler system}
  \label{sect:gsp4ES}

  Let $\Pi$ be as in \cref{sect:gsp4heckeparam}, and suppose that $\ell_2 \ge 3$, and that $\Pi$ is globally generic and Siegel-ordinary at $p$.

  \begin{definition}
   For $m \in \cR$ (see \cref{def:R}) we write $\QQ[m]$ for the maximal subfield of $\QQ(\mu_m)$ of $p$-power degree, so that $\Delta_m = \Gal(\QQ[m] / \QQ)$; and we write $\Qi[m]$ for the composite of $\QQ[m]$ and $\Qi \coloneqq \QQ(\mu_{p^\infty})$.
  \end{definition}

  Fix an integer $c > 1$ be an integer coprime to $6pN$.
  In \cite{LZ20}, refining results in \cite{LSZ17}, we constructed a family of Iwasawa cohomology classes
  \[ {}_c \bz_m^{[\Pi, r]} \in H^1_{\Iw}(\Qi[m], W_{\Pi}^*) \]
  for all $m \in \cR$ coprime to $c$, satisfying Euler-system norm compatibility relations as $m$ varies. (More precisely, we constructed them as classes over $\QQ(\mu_{mp^\infty})$, and the classes above are constructed by corestriction to the slightly smaller field $\Qi[m]$.) Here $r$ is an arbitrary integer with $0 \le r \le \ell_1 - \ell_2$, and the class ${}_c \bz_n^{[\Pi, r]}$ is supported on the component of weight space of sign $(-1)^{r + \ell_2 + 1}$.

  \begin{remark}
   In \emph{op.cit.}~these classes depend on an auxiliary choice of Schwartz functions $\Phi_S$ and Whittaker vectors $w_S$ at primes in $S$; we have here assumed that these are chosen ``optimally'', so that the normalised local zeta integrals are all 1. We have also chosen $c_1 = c_2 = c$ in the notation of \emph{op.cit.}.
  \end{remark}

  We recall that the image of ${}_c \bz_m^{[\Pi, r]}$ under the projection map
  \[ H^1_{\Iw}(\Qi[m], W_{\Pi}^*) \to H^1_{\Iw}(\Qpi[m], W_{\Pi}^* / \cF^1) \]
  is zero for every $m$ (\cite[Proposition 11.2.2]{LSZ17}).

 \subsection{The regulator formula}

  We now recall the regulator formula for these classes, which is the main result of \cite{LZ20}.

  \begin{definition}
   \label{def:PRreg}
   For $m \in \cR$, let
   \[ \LPR_{\Delta_m}: H^1_{\Iw}\left(\Qpi[m], \frac{\cF^1 W_{\Pi}^*}{\cF^2 W_{\Pi}^*}\right) \to \Lambda_L(\Gamma) \otimes L[\Delta_m] \otimes \Dcris\left(\Qp, \frac{\cF^1 W_{\Pi}^*}{\cF^2 W_{\Pi}^*}\right) \]
   be the equivariant Perrin-Riou regulator map.
  \end{definition}

  For details of the construction of this map, we refer to \cite{loefflerzerbes14} and (for $m = 1$) \cite[\S  8.2]{KLZ17}. Its construction relies on the fact that for any $m$-th root of unity $\zeta_m$, the element $\norm_{\QQ[m]}^{\QQ(\mu_m)}(\zeta_m)$ is a normal basis generator of $\QQ[m]$, i.e.~spans $\QQ[m]$ as a $\QQ[\Delta_m]$-module.

  We also have isomorphisms
  \[ M_W(\Pi, L) \xleftarrow{\ \cong\ } \operatorname{Fil}^1 \Dcris(\Qp, \cF_2 W_{\Pi})\xrightarrow{\ \cong\ } \Dcris(\Qp, \tfrac{\cF_2 W_{\Pi}}{\cF_1 W_{\Pi}}) = \Dcris\left(\Qp, \tfrac{\cF^1 W_{\Pi}^*}{\cF^2 W_{\Pi}^*}\right)^*;\]
  we denote by $\nu_{\dR}$ the image of $\nu \in M_W(\Pi, L)$ under these maps.

  \begin{theorem}[Equivariant regulator formula]
   For any $\nu \in M_W(\Pi, L)$, any $j \in \ZZ$ such that $j = 1 + r \bmod 2$ and $-1 \ge j \ge 2-\ell_2$, and any $m \in \cR$ coprime to $cN$, we have the special-value formula
   \begin{multline*}
    \left(1 - \tfrac{p^{r + \ell_2 - 2}}{\beta }\right) \left(1 - \tfrac{\gamma}{p^{r + \ell_2 - 1}}\right)\left\langle \LPR_{\Delta_m}\left({}_c z^{[\Pi, r]}_m\right)(j + \ell_2 - 2), \nu_{\mathrm{dR}} \right\rangle \\= (c^2 - c^{j+1 -r'}\psi(c) \sigma_c )(c^2 - c^{j+1 -r}\sigma_c ) \left\langle \cL_p^{[r]}(\Pi; \Delta_m)(j), \nu\right\rangle
   \end{multline*}
   as elements of $L[\Delta_m]$, where $r' = \ell_1-\ell_2-r$ and $\sigma_c$ is the image of $c$ in $\Delta_m$.
  \end{theorem}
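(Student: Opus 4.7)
The plan is to reduce the equivariant identity to a pointwise comparison at classical characters. Both sides lie in the module $\Lambda_L(\Gamma) \otimes L[\Delta_m] \otimes \Dcris\left(\Qp, \tfrac{\cF^1 W_\Pi^*}{\cF^2 W_\Pi^*}\right)$, and since elements of $\Lambda_L(\Gamma)$ are determined by their values at locally algebraic characters $a + \chi$, it suffices to verify the formula after evaluation at pairs $(j + \chi, \eta)$, where $j$ is an integer in the ``interpolation range'' $2 - \ell_2 \leq j \leq -1$ (subject to the parity condition $(-1)^{j+1+r}\chi(-1) = \eps$), $\chi$ is a finite-order character of $\Zp^\times$, and $\eta$ is a character of $\Delta_m$. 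Such triples are Zariski-dense in the relevant rigid-analytic space.

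At such a specialisation, the right-hand side is, by the defining interpolation property of $\cL_p^{[r]}(\Pi; \Delta_m)$ recalled in \cref{sect:gsp4padicL}, the algebraic $L$-value $\cL^{\alg}(\Pi, \chi^{-1}\eta^{-1}, \id, j, r)$ multiplied by $\cE_p(\Pi \otimes \eta^{-1}, \chi, j)\cE_p(\Pi, \id, r)$ and by the Gauss-sum correction $\eta(p)^{-2m}\chi(m)^{-2}$. For the left-hand side, I would invoke the defining interpolation property of the equivariant Perrin-Riou map from \cite{loefflerzerbes14}: the specialisation at $(j+\chi, \eta)$ equals the Bloch--Kato dual exponential of the $\eta\chi$-isotypic component of the local class $\loc_p({}_c z^{[\Pi,r]}_m)$ at an appropriate finite cyclotomic layer (paired against $\nu_{\dR}$), multiplied by the explicit Euler factor $\left(1 - \tfrac{p^{r+\ell_2-2}}{\beta}\right)^{-1}\left(1 - \tfrac{\gamma}{p^{r+\ell_2-1}}\right)^{-1}$ times the ``missing'' two factors of $\cE_p(\Pi \otimes \eta^{-1}, \chi, j)$.

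It remains to compute this dual exponential explicitly. The class ${}_c z^{[\Pi,r]}_m$ was constructed in \cite{LZ20} (refining \cite{LSZ17}) as the \'etale realisation of a motivic cohomology class on the Siegel threefold, obtained by pushing forward a cup-product of two Eisenstein classes along the embedding of a product of modular curves associated with $\GL_2 \times_{\GL_1} \GL_2 \into \GSp_4$; the $c$-smoothing applied to each Eisenstein class introduces the two prefactors $(c^2 - c^{j+1-r'}\psi(c)\sigma_c)(c^2 - c^{j+1-r}\sigma_c)$. The syntomic--\'etale comparison then identifies the pairing of the dual exponential with $\nu_{\dR}$ as a global $p$-adic zeta integral on the Siegel threefold, which unfolds along the embedding into a product of local integrals: the archimedean factor produces the Whittaker period $\Omega^W_\infty(\Pi)$; the integrals at primes $\ell \mid m$ contribute the Gauss sums and the correction $\chi(m)^{-2}\eta(p)^{-2m}$; the integral at $p$ yields the Euler factors $\cE_p(\Pi\otimes\eta^{-1}, \chi, j)\cE_p(\Pi, \id, r)$; and the remaining integrals equal $1$ by the unramified choice of test data.

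The main obstacle is the combination of the syntomic-to-motivic comparison and the explicit archimedean zeta integral computation; for $m=1$ these constitute the content of \cite{LZ20}, and are conditional on the eigenspace vanishing conjecture referenced in the introduction. The new input for the equivariant extension is a purely local analysis at primes $\ell \mid m$: one must verify that when the local test vector at $\ell$ is taken to be the $\eta$-twist of the unramified new vector, the local zeta integral factors cleanly through $G(\eta)^2 / \eta(p)^{2m}$, which is a routine analogue of the classical derivation of twisted local functional equations for $\GL_2$ and matches the Gauss-sum correction appearing in the definition of $\cL_p^{[r]}(\Pi;\Delta_m)$.
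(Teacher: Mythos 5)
Your proposal is compatible with the paper's own (very terse) proof, which simply cites \cite[Theorem A]{LZ20} as already establishing the result in the case $m=1$, and asserts that the extension to the equivariant setting (general $m \in \cR$) is a slight generalisation. Your sketch expands the black box: you reconstruct the mechanism behind \cite{LZ20} — the reduction to Zariski-dense sets of classical specialisations, the Perrin–Riou interpolation that converts the regulator pairing to a Bloch–Kato dual exponential, the realisation of the class as a pushforward of $c$-smoothed Eisenstein classes along $\GL_2 \times_{\GL_1} \GL_2 \into \GSp_4$, and the unfolding into local zeta integrals — and then observe that the only genuinely new input for $m>1$ is a local computation at primes $\ell \mid m$, which is exactly why the paper can call the extension "slight." So the two are not really different routes; you are explaining \emph{why} the generalisation is routine, which the paper leaves implicit.

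Two small remarks. First, the paper's proof note flags explicitly the "cowardly factor" of \cite[Remark 17.3.10]{LZ20}, i.e.\ that the two Euler factors $\bigl(1 - p^{r+\ell_2-2}/\beta\bigr)\bigl(1 - \gamma/p^{r+\ell_2-1}\bigr)$ appearing on the left-hand side are there because \cite{LZ20} proves a formula which is weaker than what one might conjecture (the "natural" formula would not require them). Your sketch treats those factors as if they come directly out of the Perrin–Riou interpolation, which muddles slightly what is inherited verbatim from \cite{LZ20} versus what is proved afresh; it would be cleaner to present them as the price of quoting \cite{LZ20} rather than as a computed object. Second, the theorem as stated is already a pointwise assertion in $L[\Delta_m]$ (the regulator has been specialised at the integer $j + \ell_2 - 2$), so your opening reduction "to a pointwise comparison" should really be phrased as lifting both sides to the universal identity in $\Lambda_L(\Gamma) \otimes L[\Delta_m]$ and then invoking density; as written it reads as if there were an extra $\chi$-variable left to specialise. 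Neither of these affects the soundness of the overall argument.
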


  \begin{proof}
   This is a restatement (and slight generalisation to incorporate the equivariant twist) of \cite[Theorem A]{LZ20}. (Note that the extra Euler factor on the right-hand side is the `cowardly' factor appearing in \cite[Remark 17.3.10]{LZ20}.)
  \end{proof}

 \subsection{The \texorpdfstring{$\GSp_4$}{GSp(4)} ordinary eigenvariety}

  Let $N \ge 1$ be an integer coprime to $p$, and $\psi$ a Dirichlet character of conductor $M$ with $M^2 \mid N$, so the paramodular subgroup $K_1(N; M)$ is defined.

  \begin{theorem}[Tilouine--Urban]
   There exists a rigid-analytic space $\cE(N, \psi) \to \cW^2$, the \emph{$\GSp_4$ ordinary eigenvariety}, with the following property: if $(\ell_1, \ell_2) \in \ZZ$ with $\ell_1 \ge \ell_2 \ge 3$, then the fibre of $\cE(N, \psi)$ over $(\ell_1, \ell_2)$ bijects canonically with the set of cuspidal automorphic $\Pi$ of weight $(\ell_1, \ell_2)$ and central character $\psi$ such that
   \[ e_{p, 1}e_{p, 2} \cdot \left(\Pif \right)^{\left(K_1(N, M) \cap \Iw(p)\right)} \ne 0,\quad\text{where}\quad e_{p, i} \coloneqq \lim_{n \to \infty} (U_{p, i})^{n!}. \]
  \end{theorem}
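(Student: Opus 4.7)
The plan is to construct $\cE(N,\psi)$ as a finite cover of $\cW^2$ via Hida theory for $\GSp_4$, following the original approach of Tilouine--Urban. The first step is to assemble a module of $p$-adic cusp forms of tame level $K_1(N;M)$, central character $\psi$, and Iwahori level at $p$. Concretely, one takes the cuspidal coherent cohomology (or, equivalently for the ordinary part, the completed Betti cohomology) of the $\GSp_4$ Shimura variety of level $K_1(N;M) \cap \Iw(p)$, together with its action of the normalised Hecke operators $U_{p,1}, U_{p,2}$ and of the diamond operators attached to the diagonal torus $T \subset \GSp_4$. The latter endow the cohomology with a natural action of $\Lambda_{\cO}(T(\Zp))$; fixing the central character $\psi$ and reparametrising weights in the manner of \cref{sect:gsp4heckeparam} reduces this to an action of $\Lambda_{\cO}(\cW^2)$.

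Next, apply the nearly-ordinary projector $e_{p,1}e_{p,2}$ and invoke the Tilouine--Urban vertical control theorem: the resulting ordinary submodule $M^{\mathrm{ord}}(N,\psi)$ is finitely generated over $\Lambda_{\cO}(\cW^2)$, and its specialisation at any classical weight $(\ell_1,\ell_2)$ with $\ell_2 \geq 3$ recovers the nearly ordinary part of classical cuspidal cohomology at Iwahori level. Let $\mathbf{T}^{\mathrm{ord}}$ denote the $\Lambda$-subalgebra of $\operatorname{End}\bigl(M^{\mathrm{ord}}(N,\psi)\bigr)$ generated by $U_{p,1}, U_{p,2}$ and the spherical Hecke operators away from $Np$; this is then a finite $\Lambda_{\cO}(\cW^2)$-algebra, and we define $\cE(N,\psi)$ to be the rigid-analytic generic fibre of $\operatorname{Spf}\mathbf{T}^{\mathrm{ord}}$, equipped with its natural finite map to $\cW^2$.

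To verify the bijection at classical weights with $\ell_1 \geq \ell_2 \geq 3$, combine the control theorem with the classicality statement for nearly ordinary overconvergent forms, itself a small-slope argument applied simultaneously to $U_{p,1}$ and $U_{p,2}$. Together these identify the fibre of $\cE(N,\psi)$ over $(\ell_1,\ell_2)$ with the set of systems of Hecke eigenvalues appearing in the $e_{p,1}e_{p,2}$-part of $(\Pif)^{K_1(N;M) \cap \Iw(p)}$ for some cuspidal $\Pi$ of weight $(\ell_1,\ell_2)$ and central character $\psi$; the standard multiplicity-one results for $\GSp_4$ at paramodular-Iwahori level then upgrade this to a bijection with the set of such $\Pi$ themselves.

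The principal technical obstacle is the control theorem for $\GSp_4$, whose proof requires a careful analysis of the interaction between the two Hida idempotents and boundary contributions from the toroidal compactifications of the Shimura variety; this is precisely the content of the work of Tilouine--Urban (and its subsequent refinements via Pilloni's higher Hida theory). A secondary but important subtlety is matching normalisations: our conventions for $U_{p,1}, U_{p,2}$ involve the shifts $p^{(\ell_1+\ell_2-6)/2}$ and $p^{\ell_1-3}$ from \cref{sect:gsp4heckeparam}, which must be absorbed into the identification of the quotient of $T(\Zp)$ underlying $\cW^2$ with the weight component of the Iwasawa algebra, so that ordinary refinements $(\alpha,\beta)$ at classical points correspond to unit eigenvalues of $U_{p,1}, U_{p,2}$ on $M^{\mathrm{ord}}(N,\psi)$.
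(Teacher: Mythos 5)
The paper does not actually prove this statement: it is cited as an external result of Tilouine--Urban, with the reader referred to \cite{LZ20} for further details, and no proof environment appears after the theorem. So there is no ``paper's proof'' to compare against. That said, your outline is a reasonable and essentially standard reconstruction of the Tilouine--Urban construction: build a big module of nearly-ordinary $p$-adic cusp forms over $\Lambda_{\cO}(T(\Zp))$ from the Iwahori-level cohomology, cut out the central character and reduce to $\Lambda_{\cO}(\cW^2)$, apply both Hida idempotents, invoke a vertical control theorem, and take $\cE(N,\psi)$ to be the generic fibre of $\operatorname{Spf}$ of the ordinary Hecke algebra, with classicality and newform-theory (Okazaki, as recalled in the paper) supplying the bijection with actual automorphic representations at cohomological weights. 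You correctly flag the genuine technical weight of the argument (the control theorem and boundary contributions from toroidal compactifications, handled by Tilouine--Urban and later by Pilloni) and the normalisation bookkeeping for $U_{p,1}$ and $U_{p,2}$. None of this is wrong, but it is a sketch of a lengthy external result rather than something the present paper sets out to prove; if you cite it, make explicit that the control theorem and the classicality theorem for nearly-ordinary forms are the two inputs you are taking as black boxes.
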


  We shall say a point of $\cE(N, \psi)$ is a \emph{classical point} if it lies above a pair of integers $\ell_1 \ge \ell_2 \ge 3$ as above. At classical points, the weight map $\cE(N, \psi) \to \cW^2$ is \'etale.

  By construction, this space is equipped with a coherent sheaf $\cM_{\et}$ of Galois representations, whose fibre at a classical point $x$ is identified with $M_{\et}(\Pi, L)$ where $\Pi$ is the corresponding automorphic representation. For further details, see \cite{LZ20}.

  \begin{remark}
   The reader might justifiably expect to see at this point a theorem stating that $\cL_p^{[r]}(\Pi; \Delta_m)$ extends to the $\GSp_4$ eigenvariety, analogous to  \cref{thm:gl2pLfam} in the $\GL_2/K$ case. However, while we expect such a result to be true, it is \emph{not possible} to prove this using the methods of \cite{LPSZ1} alone: we can allow $\ell_1$ to vary $p$-adically with $\ell_2$ fixed, but we cannot vary $\ell_2$.
  \end{remark}
\section{Preliminaries V: Twisted Yoshida lifts}
\label{sect:yoshida}

 \subsection{Theta liftings}

  Let $\pi$ be a Hilbert cuspidal automorphic representation of weight $(\uk, \ut)$ in the sense of \cref{sect:prelimhilb} above; and assume the following additional conditions:
  \begin{itemize}
   \item We have $\varepsilon_{\pi} = \psi \circ \operatorname{Nm}$ for some Dirichlet character
   $\psi$;
   \item We have $\pi^{\sigma} \ne \pi$.
  \end{itemize}
  Note that $\psi$ is not uniquely determined by $\pi$: it is unique up to multiplication by the quadratic character associated to $K/\QQ$.
%

  \begin{theorem}
   There is a unique globally generic cuspidal automorphic representation $\Pi = \Theta(\pi, \psi)$ of $\GSp_4$, the \emph{twisted Yoshida lift} of $(\pi, \psi)$, characterised by the following two properties:
   \begin{itemize}
    \item We have the identity of $L$-series
    \[ L\left(\Pi,s\right)=L(\pi,s + \tfrac{w-1}{2})\]
    where $L(\Pi, s)$ is the spinor $L$-series.
    \item The central character of $\Pi$ is $\psi$.
   \end{itemize}
  \end{theorem}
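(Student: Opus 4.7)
The plan is to construct $\Pi$ via the global theta correspondence for a dual reductive pair $(\operatorname{GO}(V_\psi), \GSp_4)$, where $V_\psi$ is the 4-dimensional quadratic space over $\QQ$ obtained by taking $K$ with the norm form $\Nm_{K/\QQ}$, twisted by $\psi$. The identity component of the similitude orthogonal group $\operatorname{GSO}(V_\psi)$ is (up to a central twist) $(\operatorname{Res}_{K/\QQ} \GL_2 \times \Gm)/\Gm$, so that irreducible automorphic representations of $\operatorname{GSO}(V_\psi)$ are parametrised by pairs consisting of a cuspidal automorphic representation $\pi$ of $\GL_2/K$ and a Hecke character $\psi$ satisfying the compatibility $\varepsilon_\pi = \psi \circ \Nm$, which is precisely the hypothesis placed on $\pi$.

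The first step is to define $\Pi$ as the global theta lift of $(\pi,\psi)$ from $\operatorname{GO}(V_\psi)$ to $\GSp_4$ and show it is nonzero and cuspidal. Cuspidality follows from Rallis' tower property: since $\pi$ is cuspidal and $\pi^\sigma \neq \pi$, the representation does not arise from a smaller orthogonal space (which would correspond to $\pi$ being a base-change from $\GL_2/\QQ$ in a way that makes $\pi^\sigma \cong \pi$), so the first nonvanishing theta lift in the tower occurs at $\GSp_4$ and is cuspidal. Non-vanishing globally reduces, by Rallis inner product and the analogue for $(\operatorname{GO}(2,2), \GSp_4)$, to non-vanishing of local theta lifts at every place together with non-vanishing of $L(\pi,\tfrac{w+1}{2})$, which holds because the relevant $L$-value lies to the right of the edge of the critical strip (or on the edge, where Jacquet--Shalika non-vanishing applies).

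Once $\Pi$ is constructed, genericity follows from the Gan--Takeda criterion: among the two members of the associated near-equivalence class ($\Pi$ holomorphic versus $\Pi$ generic), the theta lift from $\operatorname{GO}(2,2)$ realised on the \emph{split} form $V_\psi$ produces the globally generic member. The $L$-function identity is then a standard consequence of the Shimura--Rallis doubling integral computation for the pair $(\operatorname{GO}(V_\psi), \GSp_4)$: locally at every unramified place the Satake parameters of $\Pi_\ell$ are $\{\alpha_\fp, \beta_\fp, \beta_{\fp^\sigma}, \alpha_{\fp^\sigma}\}$ (times a twist depending on $\psi$), from which one reads off $L(\Pi, s) = L(\pi, s + \tfrac{w-1}{2})$. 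The central character of $\Pi$ equals $\psi$ by direct computation using the similitude factor of the Weil representation. Finally, uniqueness of $\Pi$ subject to the two listed properties follows from strong multiplicity one for globally generic cuspidal representations of $\GSp_4$ (Jiang--Soudry), since the $L$-function determines the local Satake parameters at all unramified places.

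The most delicate point is the non-vanishing and genericity verification in the boundary case $k_1 = k_2 = 2$ of parallel weight, in which $\Pi_\infty$ is the non-cohomological generic limit of discrete series; here the Archimedean theta lift must be handled separately, but the relevant non-vanishing is known from the explicit Archimedean calculations for $(\operatorname{GO}(2,2), \GSp_4)$. The existence of $\Pi$ in all the weights relevant to this paper is thus guaranteed, although (as flagged in the introduction) its \emph{cohomological} realisation is only available for sufficiently regular weight, which is why the subsequent eigenvariety arguments are necessary.
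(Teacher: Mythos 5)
Your approach---construct $\Pi$ as a global theta lift from the similitude orthogonal group of a 4-dimensional quadratic space to $\GSp_4$---is precisely what the paper's cited references Vign\'eras \cite{vigneras86} and Mokrane--Tilouine \cite[\S 7.3]{mokranetilouine02} carry out; the paper's own proof consists solely of a citation to those sources. That said, there are two substantive inaccuracies in your write-up.

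First, the quadratic space $V_\psi$ is misdescribed: $(K, \Nm_{K/\QQ})$ is a \emph{2-dimensional} quadratic $\QQ$-space, not 4-dimensional, and a Dirichlet character does not twist a quadratic space. What is actually needed is a 4-dimensional space $V$ over $\QQ$ whose discriminant algebra is $K$ --- for instance $K$ (with the norm form) orthogonally summed with a hyperbolic plane, or the space of $\sigma$-Hermitian $2\times 2$ matrices over $K$ with the determinant form. For such $V$ one has $\operatorname{GSO}(V) \cong (\Res_{K/\QQ}\GL_2 \times \Gm)/K^\times$ with $K^\times$ embedded by $z \mapsto (z, \Nm(z)^{-1})$, and $\psi$ enters only through the parametrisation of cuspidal automorphic representations of this group by pairs $(\pi, \psi)$ with $\varepsilon_\pi = \psi \circ \Nm$, exactly as you later state.

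Second, the Rallis inner product step is not quite right as stated. For the pair $(\operatorname{GO}(V), \GSp_4)$ with $\dim V = 4$, the $L$-function governing $\|\theta(\sigma)\|^2$ is the standard $L$-function of the $\operatorname{GSO}(V)$-representation $\sigma$ --- which under the identification above is an Asai-type degree-$4$ $L$-function of $\pi$ over $\QQ$, not the degree-$2$ $L$-function $L(\pi, s)$ over $K$ --- so the input you need is not non-vanishing of $L(\pi, \tfrac{w+1}{2})$. In fact the sources cited above, as well as Roberts' work on global $L$-packets for $\GSp_4$, establish non-vanishing and cuspidality of the lift to $\GSp_4$ more directly (via Whittaker--Fourier coefficient computations and the tower property), using only that $\pi$ is cuspidal, $\pi^\sigma \ne \pi$, and all local theta lifts are non-zero, without passing through an $L$-value; your Rallis-inner-product route could be salvaged but would need the correct $L$-function.
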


  \begin{proof}
   Follows from \cite[Theorem 6.2]{vigneras86} (see also \cite[\S 7.3]{mokranetilouine02}.
  \end{proof}

  \begin{remark}
   Note that $\Theta(\pi, \psi)$ and $\Theta(\pi, \psi \varepsilon_K)$ are distinct, but their functorial transfers to $\GL_4$ coincide. We shall mostly be concerned with the case when $\varepsilon_{\pi} = 1$, in which case it is rather natural to choose $\psi$ to be the trivial character. However, in more general situations there would be no obviously `best' extension of $\varepsilon_{\pi}$ to a character of $\QQ$.
  \end{remark}

  \begin{proposition}
   The conductor of $\Theta(\pi, \psi)$ is $N = \Nm(\fn) \cdot \disc^2$.
  \end{proposition}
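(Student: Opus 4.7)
The strategy is to transfer the problem to a conductor computation on $\GL_4/\QQ$. The first step is to observe that, by Okazaki's theorem as recalled in \cref{sect:gsp4prelim}, the conductor of the globally generic representation $\Pi = \Theta(\pi,\psi)$ of $\GSp_4$ coincides with the conductor of its functorial transfer $\Xi$ to $\GL_4/\QQ$. So it suffices to identify $\Xi$ and compute its conductor as a representation of $\GL_4(\AA_\QQ)$.

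The second step is to show that $\Xi$ is isomorphic to the automorphic induction $\operatorname{AI}_{K/\QQ}(\pi)$ from $\GL_2(\AA_K)$ to $\GL_4(\AA_\QQ)$. The defining property of the Yoshida lift gives $L(\Pi,s) = L(\pi,s + \tfrac{w-1}{2})$, which matches the standard $L$-function of $\operatorname{AI}_{K/\QQ}(\pi)$ factor by factor at all primes outside a finite set; strong multiplicity one for $\GL_4$ then forces the identification globally. The local theta correspondence for the similitude orthogonal group attached to $M_2(K)$, coupled with its known compatibility with automorphic induction on the $\GL_4$ side, supplies the matching at the remaining ramified places. In particular, the choice of extension $\psi$ of $\varepsilon_\pi$ affects only the central character, not the $\GL_4$-transfer.

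Finally, the conductor of $\operatorname{AI}_{K/\QQ}(\pi)$ is computed by the classical inductivity formula for Artin conductors. For each rational prime $\ell$,
\[
 a_\ell\bigl(\operatorname{AI}_{K/\QQ}(\pi)\bigr) \;=\; \sum_{v \mid \ell} f(v/\ell)\Bigl(a_v(\pi) + 2 \cdot d(v/\ell)\Bigr),
\]
where $f(v/\ell)$ is the residue degree, $d(v/\ell)$ is the valuation of the local different, and $\dim \pi = 2$. Taking the product over $\ell$ and using the conductor–discriminant formula $\prod_\ell \ell^{\sum_v f(v/\ell) d(v/\ell)} = |\disc|$ yields
\[
 \operatorname{cond}\bigl(\operatorname{AI}_{K/\QQ}(\pi)\bigr) \;=\; \Nm_{K/\QQ}(\fn) \cdot |\disc|^{2},
\]
which is precisely $\Nm(\fn) \cdot \disc^{2}$, as claimed.

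The principal obstacle I anticipate is Step 2: away from primes dividing $\fn \cdot \disc$ both sides are determined by unramified Satake parameters which match by the $L$-function identity, but at the ramified primes (including those dividing $\disc$, where the local extension $K_v/\QQ_\ell$ is itself ramified) the identification of $\Xi_\ell$ with the local automorphic induction is genuinely a statement about the local theta correspondence. One either invokes the relevant local results of Roberts or appeals to Arthur's endoscopic classification for $\GSp_4$; the rest of the argument is essentially a bookkeeping exercise with local conductors.
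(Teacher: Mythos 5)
Your proposal is correct and takes essentially the same route as the paper: reduce via Okazaki's theorem to the $\GL_4$ conductor, identify the transfer with the automorphic induction $\operatorname{AI}_{K/\QQ}(\pi)$, and conclude by the conductor–discriminant (inductivity) formula. The paper's proof is a terse two-sentence version of exactly this argument; you have simply spelled out the steps (including the local matching in Step 2 and the Artin-conductor bookkeeping) that the paper leaves implicit.
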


  \begin{proof}
   It follows from the results of \cite{okazaki} (applied to the local representations $\Pi_\ell$ for all $\ell$) that the paramodular conductor of $\Pi$ is equal to the conductor of the functorial lift of $\Pi_\ell$ to $\GL_4$. By construction, this coincides with the automorphic induction of $\pi$, whose conductor is given by the above formula.
  \end{proof}

  \begin{proposition}
   The weight $(\ell_1, \ell_2)$ of $\Pi$ is given by
   \[ (\ell_1, \ell_2) = \left(\frac{k_1 + k_2}{2}, \frac{|k_1 - k_2|}{2}+2\right).\]
   In particular, $\Pi_\infty$ is discrete series if $k_1 \ne k_2$ and limit of discrete series if $k_1 = k_2$.
  \end{proposition}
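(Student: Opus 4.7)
The plan is to determine $(\ell_1, \ell_2)$ by matching the archimedean $L$-factor of $\Pi$ against the shifted archimedean $L$-factors of $\pi$, since the identity $L(\Pi, s) = L(\pi, s + \tfrac{w-1}{2})$ holds at every place. This pins down the infinitesimal character of $\Pi_\infty$, from which the weight $(\ell_1, \ell_2)$ is immediate via the standard Langlands dictionary for $\GSp_4(\RR)$.

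Concretely, at each real place $\sigma_i$ of $K$, the component $\pi_{\sigma_i}$ is a (limit of) holomorphic discrete series of weight $k_i$ with arithmetic twist $t_i$, so its local $L$-parameter has Hodge--Tate exponents $(t_i, t_i + k_i - 1)$. After the unitary renormalisation $s \mapsto s + \tfrac{w-1}{2}$ and using $k_i + 2t_i = w$, these collapse to $\pm\tfrac{k_i - 1}{2}$; combining both real places, the infinitesimal character that $\Pi_\infty$ must carry is the multi-set $\bigl\{\pm\tfrac{k_1-1}{2},\,\pm\tfrac{k_2-1}{2}\bigr\}$. Meanwhile, for a Siegel weight $(\ell_1, \ell_2)$ the Hodge--Tate exponents of $\Pi_\infty^H$ (equivalently $\Pi_\infty^W$) are $\{0, \ell_2 - 2, \ell_1 - 1, \ell_1 + \ell_2 - 3\}$, which after the corresponding unitary shift by $-(\ell_1+\ell_2-3)/2$ become $\bigl\{\pm\tfrac{\ell_1+\ell_2-3}{2},\,\pm\tfrac{\ell_1-\ell_2+1}{2}\bigr\}$. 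Equating the two multi-sets and imposing $\ell_1 \ge \ell_2 \ge 2$ yields $\ell_1+\ell_2 = \max(k_1,k_2)+2$ and $\ell_1 - \ell_2 = \min(k_1,k_2)-2$, hence $\ell_1 = (k_1+k_2)/2$ and $\ell_2 = |k_1-k_2|/2 + 2$.

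The discrete-versus-limit dichotomy then follows from regularity of the Harish--Chandra parameter of $\Pi_\infty$: it is regular if and only if $\ell_1 > \ell_2$, if and only if $k_1 \ne k_2$, in which case $\Pi_\infty$ is a genuine discrete series; when $k_1 = k_2$ the parameter sits on a wall and yields a limit of discrete series. The main obstacle is purely bookkeeping, namely reconciling the arithmetic and unitary normalisation conventions on both sides and keeping track of the precise dictionary between Siegel weights and Langlands parameters. No deeper automorphic input is needed; one could alternatively bypass the $L$-factor matching and invoke the archimedean theta correspondence for the dual pair $(\mathrm{O}(2,2), \mathrm{Sp}_4)$ directly, which produces the same formula as a classical computation in the Shimizu/Jacquet--Langlands style.
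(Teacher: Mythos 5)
Your approach is essentially the same as the paper's: the paper's proof is a single line, ``compatibility of the $\Theta$-lifting with Harish-Chandra parameters at $\infty$,'' and you are simply carrying out that matching of infinitesimal characters explicitly via Hodge--Tate exponents and unitary renormalisation. The weight computation is correct: equating $\bigl\{\pm\tfrac{\ell_1+\ell_2-3}{2},\,\pm\tfrac{\ell_1-\ell_2+1}{2}\bigr\}$ with $\bigl\{\pm\tfrac{k_1-1}{2},\,\pm\tfrac{k_2-1}{2}\bigr\}$ and imposing $\ell_1\ge\ell_2\ge2$ does yield the stated $(\ell_1,\ell_2)$.

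One slip to flag in the last paragraph: the regularity criterion for the Harish--Chandra parameter is \emph{not} $\ell_1>\ell_2$. From the Hodge--Tate exponents $\{0,\,\ell_2-2,\,\ell_1-1,\,\ell_1+\ell_2-3\}$, distinctness fails exactly when $\ell_2=2$; since $\ell_1\ge\ell_2$ the other coincidences cannot occur. So the correct criterion is $\ell_2>2$, equivalently $|k_1-k_2|>0$, equivalently $k_1\ne k_2$. Your stated criterion $\ell_1>\ell_2$ is equivalent to $\min(k_1,k_2)>2$, which is a different condition: for example $k_1=k_2=4$ gives $(\ell_1,\ell_2)=(4,2)$, so $\ell_1>\ell_2$ but $\Pi_\infty$ is still a limit of discrete series. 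The conclusion you reach (discrete series iff $k_1\ne k_2$) is nevertheless correct, because it is the condition $\ell_2>2$ that actually controls regularity here.
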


  \begin{proof}
   This follows from the compatibility of the $\Theta$-lifting with Harish-Chandra parameters at $\infty$.
  \end{proof}

  \begin{remark}
   It is extremely important to note that if $k_1 = k_2$, then $\pi$ is cohomological as a representation of $\GL_2 / K$, but its lifting $\Theta(\pi, \psi)$ is \emph{not} cohomological as a representation of $\GSp_4$. This will substantially complicate the proofs of our main theorems in the case of parallel-weight Hilbert modular forms (which is, of course, the most interesting one for arithmetic applications).
  \end{remark}

 \subsection{Matching periods}

  We now suppose that $\Pi = \Theta(\pi, \psi)$ is a twisted Yoshida lift, with $\pi$ of weight $(\uk ,\ut)$.

  \begin{proposition}
   \label{prop:aleph}
   If $k_1 - k_2 \ge 2$ and $k_2 \ge 3$, then there is a unique isomorphism of $E$-vector spaces
   \[ \aleph: M_B^\natural(\pi, E) \xrightarrow{\ \cong\ } M_W(\Pi, E)\]
   such that for any basis $v_+ \otimes v_-$ of $M_B^\natural(\pi, E)  = M_B^+(\pi, E)\otimes M_B^-(\pi, E)$, we have
   \[ \Omega_\infty^W\left(\Pi, \aleph(v_+ \otimes v_-)\right) = \Omega^+(\pi, v_+) \cdot \Omega^-(\pi, v_-).\]
  \end{proposition}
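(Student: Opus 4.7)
The proof splits naturally into two parts. Uniqueness is immediate: both $M_B^\natural(\pi,E)$ and $M_W(\Pi,E)$ are one-dimensional over $E$, so specifying $\aleph(v_+\otimes v_-)$ via the period identity on a single nonzero tensor determines $\aleph$ entirely. For existence, after fixing $E$-rational bases $v_\pm$ of $M_B^\pm(\pi, E)$ and $v_W$ of $M_W(\Pi, E)$, I need to show that the complex scalar
\[
 c := \frac{\Omega_\infty^+(\pi,v_+)\,\Omega_\infty^-(\pi,v_-)}{\Omega_\infty^W(\Pi,v_W)}
\]
lies in $E^\times$; setting $\aleph(v_+\otimes v_-) := c\cdot v_W$ then achieves the required period compatibility.

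The plan for $c \in E^\times$ is to confront the critical-value rationality of $\pi$ (Proposition~\ref{prop:Lalgpi}) with that of $\Pi$ (Theorem~\ref{thm:LalgPi}), linked by the Yoshida identity $L(\Pi\otimes\eta, s) = L(\pi\otimes\eta_K, s+\tfrac{w-1}{2})$. Assume WLOG $k_1 \ge k_2$, so $t_0 = t_2$ and $\ell_2 - 2 = \tfrac{k_1 - k_2}{2}$. For a $\GSp_4$-admissible tuple $(\eta_1,\eta_2,a_1,a_2)$ obeying $(-1)^{a_1+a_2}\eta_1(-1)\eta_2(-1) = -1$, set $j_i := t_0 + a_i$; this lies in the Deligne-critical range for $\pi$, and a direct calculation from the weight dictionary gives the clean matchings
\[
 (j_i-t_1)!\,(j_i-t_2)! \;=\; (a_i+\ell_2-2)!\,a_i!, \qquad (-2\pi i)^{2j_i+2-t_1-t_2} \;=\; (-2\pi i)^{\ell_2+2a_i}.
\]
Since $\Nm(-1) = 1$ forces $\eta_{i,K}(-1) = 1$, the Deligne signs $\epsilon_i = (-1)^{j_i}$ satisfy $\epsilon_1\epsilon_2 = -1$. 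Applying Proposition~\ref{prop:Lalgpi} at each $i$ (evaluated on the appropriate $v_{\epsilon_i}$) and multiplying, the product $\prod_i L(\pi \otimes \eta_{i,K}, 1+j_i)$ becomes $E(\eta_1, \eta_2)$-rational when divided by the displayed factorial and $(2\pi i)$-factors together with $G(\eta_1)^2 G(\eta_2)^2 \cdot \Omega_\infty^+(\pi, v_+) \Omega_\infty^-(\pi, v_-)$. Theorem~\ref{thm:LalgPi}, combined with the Yoshida identity, renders the \emph{same} product of $L$-values $E(\eta_1, \eta_2)$-rational after dividing by the identical explicit factors but with $\Omega_\infty^W(\Pi, v_W)$ in place of $\Omega_\infty^+ \Omega_\infty^-$.

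Producing one admissible tuple with nonzero $L$-values closes the argument: the hypothesis $k_2 \ge 3$ gives $\ell_1 - \ell_2 \ge 1$, so $(\eta_1, \eta_2, a_1, a_2) = (1, 1, 0, 1)$ is admissible and yields $E(\eta_1, \eta_2) = E$; meanwhile the corresponding $j_i \in \{t_0, t_0+1\}$ both strictly exceed $\tfrac{w-2}{2}$ (since $k_2 \ge 3$ forces $t_0 > \tfrac{w-2}{2}$), so Proposition~\ref{prop:nonvanish} yields the required nonvanishing. Taking the ratio of the two rationality statements then produces $c \in E^\times$. The hard part is purely bookkeeping: checking carefully, under the weight dictionary $(\ell_1, \ell_2) = (\tfrac{k_1 + k_2}{2}, \tfrac{|k_1 - k_2|}{2} + 2)$, that the factorials, $(-2\pi i)$-exponents, and Gauss-sum factors on the $\GL_2/K$ and $\GSp_4$ sides cancel precisely --- which is immediate from the identities above but depends on pinning down the normalisation conventions of $M_B^\pm(\pi, E)$ and $M_W(\Pi, E)$ underlying both rationality statements.
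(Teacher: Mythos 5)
The structure of your argument matches the paper's: uniqueness from one-dimensionality, then existence by pitting Proposition~\ref{prop:Lalgpi} against Theorem~\ref{thm:LalgPi} via the Yoshida $L$-function identity, with the factorial and $(2\pi i)$ bookkeeping checking out cleanly. However, there is a genuine error in the nonvanishing step.

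You claim ``$k_2 \ge 3$ forces $t_0 > \tfrac{w-2}{2}$''; the inequality runs the other way. Since $t_0 = \tfrac{w-k_0}{2}$ with $k_0 = k_2$, the hypothesis $k_2 \ge 3$ gives $t_0 < \tfrac{w-2}{2}$, so that $j_1 = t_0 \ne \tfrac{w-2}{2}$ is fine, but $j_2 = t_0 + 1 = \tfrac{w-k_2+2}{2}$ \emph{equals} the excluded central value $\tfrac{w-2}{2}$ precisely when $k_2 = 4$. For $k_2 = 4$ every admissible tuple with $\eta_1 = \eta_2 = 1$ has some $a_i = 1$, so the problem cannot be patched by permuting; Proposition~\ref{prop:nonvanish} is then unavailable for that factor, and your existence argument does not close. (The intermediate remark about $\Nm(-1) = 1$ is also a red herring: the sign in Proposition~\ref{prop:Lalgpi} is $(-1)^j\chi(-1)$ for the \emph{Dirichlet} character $\chi$, not its base-change; it happens not to matter because you took $\eta_i$ trivial.)

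The fix is the one the paper uses: take $a_1 = a_2 = 0$ and realise the required sign condition through the characters instead, choosing quadratic Dirichlet characters $\rho_+$, $\rho_-$ with $\rho_\pm(-1) = \pm 1$ (so their values lie in $E$). Then both $L$-values are evaluated at $j = t_0$, which is never $\tfrac{w-2}{2}$ under the hypothesis $k_2 \ge 3$, and the constraint $(-1)^{a_1+a_2}\eta_1(-1)\eta_2(-1) = -1$ holds with room to spare. With that substitution your argument goes through as intended.
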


  \begin{proof}
   By twisting, we shall assume that $w = k_1$, so $t_1 = 0$ and $t_2 = -\frac{k_1-k_2}{2} \le 0$. Let $v_+$, $v_-$, and $v_W$ be arbitrary bases of the three spaces concerned, and let $\rho_+, \rho_-$ be arbitrary Dirichlet characters, one of either sign. We can clearly choose these to take values in $E$ (for instance, by supposing them to be quadratic). By \cref{prop:nonvanish}, we have $L(\pi \otimes \rho_+, 1)\cdot L(\pi \otimes \rho_-, 1) \ne 0$, so the complex numbers $\Lambda^+$, $\Lambda^-$ defined by
   \[ \Lambda^\eps = \frac{(\tfrac{k_1-k_2}{2})!L(\pi \otimes \rho_\eps, 1)}{(-2\pi i)^{2 + (k_1 -k_2)/2}G(\rho_\eps)^2} \]
   for $\eps \in \{\pm\}$ are both non-zero.

   On one hand, by \cref{prop:Lalgpi}, for each sign $\eps$ we have
   \[
    \langle \cL^{\alg}(\pi, \rho_\eps, 0), v_\eps\rangle = \frac{\Lambda^{\eps}}{\Omega_\infty^{\eps}(\pi, v_\eps)} \in E^\times.
   \]
   On the other hand, by \cref{thm:LalgPi} we have
   \[
    \langle \cL^{\alg}(\Pi, \rho_+, \rho_-, 0, 0), v_W\rangle = \\
    \frac{\Lambda^+ \Lambda^-}{\Omega^W_{\infty}(\Pi,v_W)}.
   \]
   Comparing these two formulae we deduce that the ratio
   \[ \frac{\Omega_\infty^W(\Pi, v_W)}{\Omega^+(\pi, v_+) \cdot \Omega^-(\pi, v_-)}\]
   is in $E^\times$. Thus we can re-scale $v_W$ so this ratio becomes 1, and we define $\aleph(v_+ \otimes v_-)$ to be this rescaled value. It is clear that this depends $E$-linearly on the $v_{\pm}$.
  \end{proof}

  \begin{corollary}
   \label{cor:compareLalg}
   If $\uk$ satisfies the hypotheses of \cref{prop:aleph}, then for any integers $a_i$ and characters $\eta_i$ such that $\cL^{\alg}(\Pi, \eta_1, \eta_2, a_1, a_2)$ is defined, we have
   \[ \aleph^\vee\left( \cL^{\alg}(\Pi, \eta_1, \eta_2, a_1, a_2)\right) = \cL^{\mathrm{alg}}(\pi, \eta_1, t_0 + a_1) \otimes \cL^{\mathrm{alg}}(\pi, \eta_2, t_0 + a_2).\]
  \end{corollary}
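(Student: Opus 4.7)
\medskip

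\noindent\textbf{Proof plan for \cref{cor:compareLalg}.}

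The plan is to verify the identity after pairing with a basis vector, since $M_B^\natural(\pi, E) = M_B^+(\pi,E) \otimes M_B^-(\pi,E)$ is one-dimensional. The parity condition $(-1)^{a_1+a_2}\eta_1(-1)\eta_2(-1) = -1$ required in \cref{thm:LalgPi} translates, under the substitution $j_i = t_0 + a_i$, into $\eps_1 \eps_2 = -1$ with $\eps_i = (-1)^{j_i}\eta_i(-1)$; after a possible relabelling we may assume $\eps_1 = +$ and $\eps_2 = -$. Fix any $v_+ \in M_B^+(\pi, E)$ and $v_- \in M_B^-(\pi, E)$, and set $v_W = \aleph(v_+ \otimes v_-) \in M_W(\Pi, E)$.

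The next step is to compute both sides explicitly. By \cref{thm:LalgPi}, the left-hand side applied to $v_+ \otimes v_-$ equals
\[
\frac{1}{\Omega_\infty^W(\Pi, v_W)}\prod_{i=1,2}\frac{a_i!\,(a_i + \ell_2 - 2)!\, L(\Pi \otimes \eta_i,\, \tfrac{1-\ell_1+\ell_2}{2} + a_i)}{G(\eta_i)^2 (-2\pi i)^{\ell_2 + 2 a_i}}.
\]
Meanwhile, by \cref{prop:Lalgpi}, the right-hand side applied to $v_+ \otimes v_-$ equals
\[
\prod_{i=1,2}\frac{(j_i - t_1)!(j_i - t_2)!\, L(\pi \otimes \eta_{i,K},\, 1+j_i)}{G(\eta_i)^2 (-2\pi i)^{2j_i + 2 - t_1 - t_2}\,\Omega^{\eps_i}_\infty(\pi, v_{\eps_i})}.
\]
The definition of $\aleph$ in \cref{prop:aleph} gives $\Omega_\infty^W(\Pi, v_W) = \Omega_\infty^+(\pi, v_+) \Omega_\infty^-(\pi, v_-)$, so the period factors already agree.

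It then remains to match the remaining quantities factor by factor. Assuming, without loss of generality, that $k_0 = k_2$ (so $t_0 = t_2$), a direct computation gives $j_i - t_2 = a_i$ and $j_i - t_1 = a_i + \ell_2 - 2$, matching the factorials, and $2j_i + 2 - t_1 - t_2 = \ell_2 + 2 a_i$, matching the power of $(-2\pi i)$. The $L$-value identity is obtained from the defining property of the twisted Yoshida lift, $L(\Pi, s) = L(\pi, s + \tfrac{w-1}{2})$, applied to the twists by $\eta_i$: since $\eta_i$ pulls back to $\eta_{i,K} = \eta_i \circ \Nm$ on $\GL_2/K$, we get
\[
L\bigl(\Pi \otimes \eta_i,\, \tfrac{1-\ell_1+\ell_2}{2} + a_i\bigr) = L\bigl(\pi \otimes \eta_{i,K},\, \tfrac{w - \ell_1 + \ell_2}{2} + a_i\bigr),
\]
and the identity $\tfrac{w - \ell_1 + \ell_2}{2} = 1 + t_0$ (using $\ell_1 = (k_1+k_2)/2$, $\ell_2 = (k_1-k_2)/2+2$, and $t_0 = (w-k_2)/2$) gives exactly $1+j_i$ on the right.

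The main subtleties to be careful about are the various normalisation conventions (analytic vs.\ arithmetic for $L(\Pi, \cdot)$ and $L(\pi, \cdot)$, and the precise relationship between Dirichlet twists on the $\GSp_4$ and $\GL_2/K$ sides through the norm map) and the bookkeeping in matching $\eps_i$ with $v_{\eps_i}$; once these are handled, the corollary reduces to elementary combinatorial identities between the weights $(\uk, \ut)$ of $\pi$ and $(\ell_1, \ell_2)$ of $\Pi$.
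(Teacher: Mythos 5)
Your proof is correct and takes exactly the approach that the paper implicitly relies on (the corollary is stated without proof as an immediate consequence of \cref{prop:aleph} and the two interpolation formulae). You pair both sides with a basis $v_+\otimes v_-$ of the one-dimensional space $M_B^\natural(\pi,E)$, use $\langle \aleph^\vee(\ell), v_+\otimes v_-\rangle = \langle \ell, \aleph(v_+\otimes v_-)\rangle$, and then match periods (via the defining property of $\aleph$), factorials, powers of $-2\pi i$, Gauss sums, and $L$-values term by term; the weight bookkeeping ($j_i - t_2 = a_i$, $j_i - t_1 = a_i + \ell_2 - 2$, $2j_i + 2 - t_1 - t_2 = \ell_2 + 2a_i$, and $\tfrac{w-\ell_1+\ell_2}{2} = 1 + t_0$) is right, and the parity calculation $\eps_1\eps_2 = (-1)^{a_1+a_2}\eta_1(-1)\eta_2(-1) = -1$ correctly shows the two signs are opposite, so up to relabelling you land in $(M_B^+)^\vee\otimes(M_B^-)^\vee$ as required. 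One minor remark: in the hypotheses of \cref{prop:aleph} we already have $k_1 - k_2 \ge 2$, so $k_0 = k_2$ is forced rather than a ``without loss of generality'' choice.
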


 \subsection{Ordinarity for theta-lifts}

  Let $\Pi = \Theta(\pi, \psi)$ be the theta-lift of a Hilbert cuspidal form.  Let $p$ be a prime, which we suppose to be split in $K$ and coprime to the level $\fn$ of $\pi$; then $\Pi_p$ is unramified. If $E$ is the field of definition of $\pi$, then $\Pi$ is also defined over $E$ (and if $\pi$ admits no exceptional automorphisms, then this is the minimal field of definition of $\Pi$).

  Let $v$ be a prime of $E$ above $p$; as above, we number the primes above $p$ as $\fp_1 \fp_2$ with $\sigma_i(\fp_i)$ lying below $v$.

  \begin{proposition} The Hecke parameters of $\Pi$ are given by the Weyl orbit of the quadruple
   \[ (\alpha, \beta, \gamma, \delta) = \left(p^{(k-k_1)/2} \fa^\circ_{\fp_1}, p^{(k-k_2)/2}\fa_{\fp_2}^\circ, p^{(k-k_2)/2}\fb_{\fp_2}^\circ,p^{(k-k_1)/2} \fb_{\fp_1}^\circ \right) \]
   up to the action of the Weyl group, where $k = \max(k_1, k_2)$.
  \end{proposition}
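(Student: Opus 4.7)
The plan is to read off the unramified Langlands parameters of $\Pi_p$ directly from the defining identity $L(\Pi, s) = L(\pi, s + \tfrac{w-1}{2})$, and then translate them into the paper's arithmetic normalisation for $(\alpha, \beta, \gamma, \delta)$. Assume without loss of generality that $k_1 \ge k_2$, so $k = k_1$; the opposite case is symmetric after swapping $\fp_1 \leftrightarrow \fp_2$. Since $p = \fp_1\fp_2$ is split and $\pi$ is unramified at both primes above $p$, the global $L$-function identity localises prime-by-prime to give
\[ L(\Pi_p, s) = L(\pi_{\fp_1}, s + \tfrac{w-1}{2}) \cdot L(\pi_{\fp_2}, s + \tfrac{w-1}{2}), \]
so the four spin Langlands parameters of $\Pi_p$ (in the automorphic normalisation) are $p^{-(w-1)/2}$ times the unnormalised $\GL_2/K$ Hecke eigenvalues $\{a_{\fp_i}, b_{\fp_i}\}$.

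Next I would convert to the paper's Hecke parameters using the normalisation $L(\Pi, s - \tfrac{\ell_1+\ell_2-3}{2}) = \prod_\ell P_\ell(\Pi, \ell^{-s})^{-1}$, which rescales the Langlands parameters by $p^{(\ell_1+\ell_2-3)/2}$. Combining this with the previous shift and using the weight relation $\ell_1+\ell_2 = \max(k_1,k_2)+2$ gives a net factor of $p^{-t_1}$. Hence the four roots of $P_p(\Pi, X)$ form the unordered set $\{p^{-t_1} a_{\fp_1}, p^{-t_1} b_{\fp_1}, p^{-t_1} a_{\fp_2}, p^{-t_1} b_{\fp_2}\}$, and the identities $p^{-t_i}a_{\fp_i} = \fa^\circ_{\fp_i}$ together with $t_2 - t_1 = (k_1-k_2)/2$ rewrite this set as the quadruple claimed in the statement.

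Finally, I would verify that the ordering displayed is admissible as a Weyl orbit representative, i.e., that it satisfies $\alpha\delta = \beta\gamma = p^{\ell_1+\ell_2-3}\psi(p)$. This is immediate from $\fa^\circ_{\fp_i}\fb^\circ_{\fp_i} = \varepsilon_\pi(\fp_i) p^{k_i - 1}$, combined with $\varepsilon_\pi = \psi \circ \Nm$ and $\Nm(\fp_i) = p$ for both $i$.

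The argument is essentially a careful tracking of normalisation conventions, and I do not foresee any genuine obstacle. The only delicate point is correctly identifying the cumulative shift $p^{(\ell_1+\ell_2-3)/2 - (w-1)/2} = p^{-t_1}$, which is the source of the asymmetric powers of $p$ in the statement and also explains why the formula depends on $k = \max(k_1, k_2)$ rather than on $w$ directly.
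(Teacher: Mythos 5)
Your computation is correct, and since the paper states this proposition without giving any proof, there is nothing to compare against; the argument you give (localising the spinor $L$-function identity $L(\Pi,s) = L(\pi, s+\tfrac{w-1}{2})$ at $p$, tracking the shift between the automorphic normalisation and the paper's arithmetic normalisation of $P_p(\Pi,X)$, and then translating into the $\fa^\circ_{\fp_i}, \fb^\circ_{\fp_i}$ notation) is exactly the calculation the authors are implicitly relying on. The one place worth slowing down is your computation of the cumulative shift: with $\ell_1+\ell_2-3 = \max(k_1,k_2)-1$, the exponent is $\tfrac{\ell_1+\ell_2-3}{2} - \tfrac{w-1}{2} = \tfrac{k-w}{2}$, which equals $-t_1$ only in the case $k=k_1$ you fixed; in the symmetric case $k=k_2$ it is $-t_2$, which is why the final formula is uniform in $k=\max(k_1,k_2)$. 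Your final verification that $\alpha\delta = \beta\gamma = p^{\ell_1+\ell_2-3}\psi(p)$ (using $\fa^\circ_{\fp_i}\fb^\circ_{\fp_i} = p^{k_i-1}\varepsilon_\pi(\fp_i)$ and $\varepsilon_\pi = \psi\circ\Nm$ with $\Nm(\fp_i)=p$) is also correct and is a good sanity check that the displayed ordering is a legitimate Weyl-orbit representative.
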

%

  \begin{corollary} \
   \begin{enumerate}[(i)]

    \item $\Pi$ is Klingen-ordinary at $p$ if and only if $\pi$ is ordinary at both $\fp_1$ and $\fp_2$, and its Klingen-ordinary refinement is $\fa_{\fp_1}^\circ \fa_{\fp_2}^\circ$.

    \item $\Pi$ is Siegel-ordinary at $p$ if and only if \emph{either} of the following conditions holds:
    \begin{itemize}
     \item $k_1 \ge k_2$ and $\pi$ is ordinary at $\fp_1$, in which case $\alpha = \fa_{\fp_1}^\circ$ is a Siegel-ordinary refinement;
     \item $k_2 \ge k_1$ and $\pi$ is ordinary at $\fp_2$, in which case $\alpha = \fa_{\fp_2}^\circ$ is a Siegel-ordinary refinement.
    \end{itemize}
    In particular, if $k_1 = k_2$ and $\pi$ is ordinary at both $\fp_1$ and $\fp_2$, then $\Pi$ has two Siegel-ordinary refinements $\fa^\circ_{\fp_1}$ and $\fa^\circ_{\fp_2}$ (which may coincide).
%
   \end{enumerate}
  \end{corollary}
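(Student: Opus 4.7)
The proposition preceding the corollary reduces everything to a computation of $p$-adic valuations of the four parameters
\[
 (\alpha, \beta, \gamma, \delta) = \bigl(p^{(k-k_1)/2} \fa^\circ_{\fp_1},\ p^{(k-k_2)/2}\fa^\circ_{\fp_2},\ p^{(k-k_2)/2}\fb^\circ_{\fp_2},\ p^{(k-k_1)/2} \fb^\circ_{\fp_1}\bigr),
\]
so my plan is essentially to match these against the definitions of Klingen- and Siegel-ordinary refinements. By the symmetry of the Weyl orbit we may assume $k_1 \ge k_2$, so $k = k_1$ and $\ell_2 - 2 = \tfrac{k_1 - k_2}{2}$. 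Using the defining relation $\fa^\circ_{\fp_i} \fb^\circ_{\fp_i} = \varepsilon_{\pi}(\fp_i) p^{k_i - 1}$ and the convention $\nu_v(\fa^\circ_{\fp_i}) \le \nu_v(\fb^\circ_{\fp_i})$, one obtains $\nu_v(\fa^\circ_{\fp_i}) \in [0, \tfrac{k_i-1}{2}]$ and $\nu_v(\fb^\circ_{\fp_i}) \in [\tfrac{k_i-1}{2}, k_i-1]$.

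First I would handle Klingen-ordinarity. The four pairwise products are $\alpha\beta, \alpha\gamma, \beta\delta, \gamma\delta$; a direct computation shows their $v$-adic valuations are $\tfrac{k_1-k_2}{2}$ plus, respectively, $\nu_v(\fa^\circ_{\fp_1}) + \nu_v(\fa^\circ_{\fp_2})$, $\nu_v(\fa^\circ_{\fp_1}) + \nu_v(\fb^\circ_{\fp_2})$, $\nu_v(\fb^\circ_{\fp_1}) + \nu_v(\fa^\circ_{\fp_2})$, $\nu_v(\fb^\circ_{\fp_1}) + \nu_v(\fb^\circ_{\fp_2})$. The smallest of these is visibly the first, and it equals $\ell_2 - 2 = \tfrac{k_1-k_2}{2}$ precisely when $\nu_v(\fa^\circ_{\fp_1}) = \nu_v(\fa^\circ_{\fp_2}) = 0$, i.e.\ when $\pi$ is ordinary at both primes above $p$. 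In that case the unique Klingen-ordinary refinement is $(\alpha,\beta) = (\fa^\circ_{\fp_1},\, p^{(k_1-k_2)/2}\fa^\circ_{\fp_2})$, whose normalised product $\alpha\beta/p^{\ell_2-2}$ equals $\fa^\circ_{\fp_1} \fa^\circ_{\fp_2}$ as claimed.

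For Siegel-ordinarity, I would simply compare the four valuations $\nu_v(\fa^\circ_{\fp_1}),\, \nu_v(\fa^\circ_{\fp_2}) + \tfrac{k_1-k_2}{2},\, \nu_v(\fb^\circ_{\fp_2}) + \tfrac{k_1-k_2}{2},\, \nu_v(\fb^\circ_{\fp_1})$. The first two lie in $[0,\tfrac{k_1-1}{2}]$ while the last two lie in $[\tfrac{k_1-1}{2}, k_1-1]$, and since $k_i \ge 2$ the last two are strictly larger than $0$. Hence Siegel-ordinarity ($\nu_v(\alpha) = 0$) is equivalent to the minimum of the first two being $0$. If $k_1 > k_2$ the second one is bounded below by $\tfrac{k_1-k_2}{2} > 0$, so Siegel-ordinarity forces $\nu_v(\fa^\circ_{\fp_1}) = 0$, giving the refinement $\alpha = \fa^\circ_{\fp_1}$. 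If $k_1 = k_2$ both candidates are in $[0, \tfrac{k_1-1}{2}]$ with symmetric roles, so either $\fa^\circ_{\fp_1}$ or $\fa^\circ_{\fp_2}$ (or both) can serve as the Siegel-ordinary refinement, exactly as stated.

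The work is entirely mechanical once the ordering of valuations is established; the only mildly subtle point is ensuring that in the Klingen-ordinary case no other pair $(\alpha',\beta')$ from the Weyl orbit produces the same minimal value of $\nu_v(\alpha'\beta')$, which is guaranteed by the strict inequality $\nu_v(\fa^\circ_{\fp_i}) < \nu_v(\fb^\circ_{\fp_i})$ (forced by $k_i \ge 2$ together with temperedness, cf.\ \cref{prop:noexceptionalzero}). No single step presents a real obstacle.
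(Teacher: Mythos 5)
Your proof is correct and follows essentially the same approach as the paper: a direct comparison of $v$-adic valuations of the Hecke parameters using $\nu_v(\fa^\circ_{\fp_i}) + \nu_v(\fb^\circ_{\fp_i}) = k_i - 1$ and the normalisation $\nu_v(\fa^\circ_{\fp_i}) \le \nu_v(\fb^\circ_{\fp_i})$. The paper states the same computation more tersely; one small redundancy in your write-up is the appeal to the temperedness-based non-exceptional-zero proposition for the strict inequality $\nu_v(\fa^\circ_{\fp_i}) < \nu_v(\fb^\circ_{\fp_i})$ — in the ordinary case this is already immediate from $0 < k_i - 1$, without temperedness.
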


  \begin{proof}
   The assertions regarding Siegel ordinarity are clear from the formulae above and the fact that the Hecke parameters $\fa_{\fp}^\circ$ and $\fb_{\fp}^\circ$ for $\fp \mid p$ are $p$-adically integral (and $\fb_{\fp}^\circ$ cannot be a unit).

   For Klingen ordinarity, since we have $\ell_2 - 2 = \frac{|k_1 - k_2|}{2}$, if we order the Hecke parameters as above we have $\frac{\alpha\beta}{p^{\ell_2 - 2}} = \prod_{\fp \mid p} \fa_{\fp}^\circ$, which is a unit if and only if all the $ \fa_{\fp}^\circ$ are, and for all the other reorderings $\frac{\alpha\beta}{p^{\ell_2 - 2}}$ can never be a unit.
  \end{proof}

  \begin{remark}
   One can check similarly that if $p$ is inert in $K$, then $\Pi$ is Klingen-ordinary if and only if $\pi$ is ordinary at $\fp = p\cO_K$. However, it is \emph{never} Siegel-ordinary if $k_1 \ne k_2$. This is an analogue in our setting of the well-known fact that elliptic modular forms of CM type are always ordinary at primes split in the CM field, but never ordinary at primes inert in the field, except in the special case of weight 1 forms.
  \end{remark}

 \subsection{Matching p-adic L-functions}

  We now place ourselves in the situation where the results of \cref{sect:gl2kpadicL} and \cref{sect:gsp4padicL} both apply, and compare them. That is, we suppose the following:
  \begin{itemize}
   \item $p$ is a prime split in $K$.
   \item $\pi$ is a Hilbert cuspidal automorphic representation of weight $(\uk, \ut)$, with $\pi^{\sigma} \ne \pi$, and level $\fn$ coprime to $p$.
   \item $\pi$ is ordinary at $\fp_1$ and $\fp_2$.
   \item $\psi$ is a Dirichlet character such that $\varepsilon_{\pi} = \psi \circ \Nm$.
   \item We have $k_1 - k_2 \ge 4$ and $k_2 \ge 3$ (``very regular weight'').
  \end{itemize}

  The last condition implies that the weight $(\ell_1, \ell_2)$ of $\Pi = \Theta(\pi, \psi)$ satisfies $\ell_1 - \ell_2 \ge 1$ and $\ell_2 \ge 4$, so the constructions of \cref{sect:gsp4padicL} apply to $\Pi$. One checks that
  \begin{equation}
   \label{eq:Efact} \cE_p(\Pi, \chi, a) = \prod_{\fp \mid p} \cE_\fp(\pi, \chi, t_0 + a),
  \end{equation}
  and hence, by the interpolating property and \cref{cor:compareLalg} above, the following is immediate:

  \begin{proposition}
   For any $0 \le r \le k_2 - 2$, we have
   \[ \aleph^\vee\left(\cL_p^{[r]}(\Pi, \Delta_m)(\mathbf{j})\right)= \cL_p^{\eps}(\pi, \Delta_m)(t_0 + \mathbf{j}) \otimes \cL^{-\eps}_p(\pi)(t_0 + r)\]
   as elements of $\cO(\cW^{\eps}) \otimes M_B^{\natural}(\pi, L)^\vee$, where $\eps = (-1)^{1+r}$.
  \end{proposition}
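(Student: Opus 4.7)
I would argue by interpolation. Both sides are rigid-analytic functions on $\cW^{\eps}$ with values in the finite-dimensional $L$-vector space $M_B^{\natural}(\pi,L)^\vee \otimes_L L[\Delta_m]$ (the tensor factor $L[\Delta_m]$ being suppressed in the statement), so to establish the identity globally it suffices to verify it after evaluating both sides at every arithmetic character of the form $\mathbf{j}=a+\chi$ with $a\in\ZZ$, $0\leq a\leq \ell_1-\ell_2=k_2-2$, and $\chi$ of finite $p$-power conductor $p^c$ satisfying $(-1)^a\chi(-1)=\eps$, together with every primitive character $\eta$ of $\Delta_m$. Such data are Zariski dense in $\cW^{\eps}$, so this reduction is legitimate.

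The plan is then a direct manipulation of the two interpolation formulas. For such $(\mathbf{j},\eta)$, the defining interpolation property of $\cL^{[r]}_p(\Pi;\Delta_m)$ recalled in \cref{sect:gsp4padicL} expresses the left-hand side at $(\mathbf{j},\eta)$ as
\[
\frac{\cE_p(\Pi\otimes\eta^{-1},\chi,a)\,\cE_p(\Pi,\id,r)}{\eta(p)^{2c}\chi(m)^2}\cdot \aleph^\vee\!\left(\cL^{\alg}(\Pi,\chi^{-1}\eta^{-1},\id,a,r)\right).
\]
I would then feed in the two key structural inputs: \cref{cor:compareLalg}, which factors
\[
\aleph^\vee\!\left(\cL^{\alg}(\Pi,\chi^{-1}\eta^{-1},\id,a,r)\right) = \cL^{\alg}(\pi,\chi^{-1}\eta^{-1},t_0+a)\otimes \cL^{\alg}(\pi,\id,t_0+r),
\]
together with the Euler-factor identity \eqref{eq:Efact}, which expresses both $\cE_p(\Pi\otimes\eta^{-1},\chi,a)$ and $\cE_p(\Pi,\id,r)$ as products over $\fp\mid p$ of the corresponding $\GL_2/K$ Euler factors for $\pi$ with arguments $t_0+a$ and $t_0+r$ respectively.

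Regrouping along the tensor product, the expression becomes a tensor of two factors. The first, absorbing all of the $\chi$- and $\eta$-dependent data, is recognised by inspection of the interpolation formula of \cref{sect:gl2kpadicL} as $\cL^{\eps}_p(\pi;\Delta_m)(t_0+\mathbf{j},\eta)$; the common denominator $\eta(p)^{2c}\chi(m)^2$ occurs identically in the $\GSp_4$ and $\GL_2/K$ constructions and so requires no modification. The second factor has trivial cyclotomic variable (so $c=0$, and both unit-root Euler factors appear) and trivial Nebentypus, and matches $\cL^{-\eps}_p(\pi)(t_0+r)$ exactly. This verifies the pointwise identity, and Zariski density concludes.

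Once \cref{prop:aleph} and its corollary have been set up, the proposition is essentially a bookkeeping exercise; the one point requiring care is the sign parities, where the hypothesis $(-1)^a\chi(-1)=(-1)^{1+r}$ governing the interpolation locus of $\cL^{[r]}_p(\Pi;\Delta_m)$ translates into the statement that $(t_0+\mathbf{j})$ and $(t_0+r)$ land on opposite components $\cW^{\eps}$ and $\cW^{-\eps}$ of weight space, consistently with the superscripts on the right-hand side.
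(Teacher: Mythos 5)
Your argument is exactly the one the paper has in mind: the paper states the proposition is ``immediate'' from \eqref{eq:Efact}, the interpolating properties, and \cref{cor:compareLalg}, and your proposal simply spells out that bookkeeping (reduce to arithmetic specialisations by Zariski density, substitute the three inputs, regroup into the two tensor factors, and check the parity condition). The only quibble is cosmetic: the $\GSp_4$ interpolation formula in \cref{sect:gsp4padicL} prints the denominator as $\eta(p)^{2m}$, evidently a typo for the $\eta(p)^{2c}$ you correctly use.
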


 \subsection{Euler systems for \texorpdfstring{$\pi$}{pi} of very regular weight}

  We maintain the assumptions of the previous section. By the Chebotarev density theorem we have
  \[  W_{\Pi}(-t_1)|_{G_{K}}  \cong \rho_{\pi, v} \oplus \rho_{\pi^{\sigma}, v}, \]
  and the representations $\rho_{\pi, v}$ and $\rho_{\pi^{\sigma}, v}$ are irreducible and non-isomorphic. So there is a unique 2-dimensional $L$-subspace $V(\pi)$ of $W_{\Pi}(-t_1)$ which is stable under $G_K$ and gives a representative of the isomorphism class $\rho_{\pi, v}$ of $G_K$-representations.

  We have an isomorphism of $G_{\Qp}$-representations
  \[ W_{\Pi}^*(t_1)|_{G_{\Qp}} \cong V(\pi)^*|_{G_{K_{\fp_1}}} \oplus V(\pi)^*|_{G_{K_{\fp_2}}}.\]
  and by considering Hodge--Tate weights, it is clear that we must have
  \[ \cF^1 W_{\Pi}^*(t_1) \cong \cF_{\fp_1}^+ V(\pi)^*|_{G_{K_{\fp_1}}} \oplus V(\pi)^*|_{G_{K_{\fp_2}}},\]
  and
  \[ \cF^2 W_{\Pi}^*(t_1) \cong \cF_{\fp_1}^+ V(\pi)^*|_{G_{K_{\fp_1}}} \oplus \cF_{\fp_2}^+ V(\pi)^*|_{G_{K_{\fp_2}}}.\]

  \begin{definition}
   \label{def:veryregES}
   For $m \in \cR$ coprime to $c \Nm(\fn) \disc$, and $0 \le r \le k_2 - 2$, we define ${}_c\bz_m^{[\pi, r]}$ to be the image of ${}_c\bz_m^{[\Pi, r]} \otimes e_{t_1}$ in the group
   \[ H^1_{\Iw}(\Qi[m], W_{\Pi}^*(t_1)) = H^1_{\Iw}(K_\infty[m], V(\pi)^*),\]
   where $e_{t_1}$ is the standard basis of $\Zp(1)$, and $K_\infty[m] = K \cdot \Qi[m]$.
  \end{definition}

  By the results recalled in \cref{sect:gsp4ES}, we have in fact
  \[ {}_c\bz_m^{[\pi, r]} \in \Ht^1_{\Iw}\left(K_\infty[m], V(\pi)^*; \square_{\Lambda}^+\right).\]

  Hence ${}_c\bz_m^{[\pi, r]}$ lies in the eigenspace of sign $\eps = (-1)^{r+t_2+1}$, and the reciprocity law can be rewritten as
  \[
   \cE_{\fp_2}(\pi, \id,t_2 + r) \cdot \aleph^\vee\left( \LPR_{\Delta_m}\left({}_c z^{[\pi, r]}_m\right)(j)\right) =
    {}_c C_m^{[r]}(\uk, \ut, j)\cdot \cL_p^{\eps}(\pi, \Delta_n)(j) \otimes
    \cL_p^{-\eps}(\pi)(t_2 + r)
  \]
  for $j$ of sign $\eps$ and satisfying $t_2 - 1 \ge j \ge t_1$, where we have written
  \[{}_c C_m^{[r]}(\uk, \ut, j) \coloneqq (c^2 - c^{(j+1-t_2-r')}\psi(c) \sigma_c )(c^2 - c^{(j+1-t_2 -r)}\sigma_c ) \in \Lambda_L(\Gamma) \otimes L[\Delta_n]. \]

\section{Yoshida lifts in families}

 The elements of \cref{def:veryregES} already give an Euler system for Hilbert cuspidal automorphic representations when the assumptions of that section are satisfied. However, it not terribly useful as stated, for two reasons. Firstly, the strong assumption on the regularity of the weight rules out the most interesting parallel-weight case. Secondly, we can only relate this Euler system to finitely many values of the $p$-adic $L$-function, all lying outside the Deligne-critical range. In this section (and those following), we shall set up the tools to solve both of these problems at once, using variation in a carefully chosen 1-parameter family inside the eigenvariety.

 \subsection{Families of twisted Yoshida lifts}
  \label{sect:yoshfam}

  Let $\upi$ denote a $p$-ordinary family of automorphic representations of $\GL_2 / K$ over some $\Omega \subset \cW_K$, as in \cref{sect:p1ordfam} above.

  All classical specialisations of $\upi$ have the same conductor $\fn$ and the same central character $\varepsilon$. So if $\psi$ is a choice of Dirichlet character such that $\psi \circ \Nm = \varepsilon$, then the twisted Yoshida lift $\uPi(x) = \Theta(\upi(x), \psi)$ exists for all $x \in \Omega_{\cl}$, and all these representations are generic and paramodular of level $N = \disc^2 \Nm(\fn)$.

  \begin{definition}
   Write $\Omega_{\cl} = \Omega_{\cl}^1 \sqcup \Omega_{\cl}^2 \sqcup \Omega_{\cl}^{\mathrm{bal}}$, where
   \begin{align*}
    \Omega_{\cl}^1 &\coloneqq \{ (\uk, \ut) \in \Omega^{\cl}: k_1 > k_2\},\\
    \Omega_{\cl}^2 &\coloneqq \{ (\uk, \ut) \in \Omega^{\cl}: k_1 < k_2 \},\\
    \Omega_{\cl}^{\mathrm{bal}} &\coloneqq \{ (\uk, \ut) \in \Omega^{\cl}: k_1 = k_2 \}.
   \end{align*}
  \end{definition}

  If $x \in \Omega^1_{\cl}$, then $\uPi(x)$ defines a classical point of $\cE(N, \psi)$ lying above $(\ell_1, \ell_2) = (\tfrac{k_1 + k_2}{2}, \tfrac{k_1 - k_2}{2} + 2) = (w-t_1 - t_2, t_2 - t_1 + 2)$. Similarly, if $x \in \Omega^2_{\cl}$, then we obtain a classical point of $\cE(N, \psi)$ of weight $(\tfrac{k_1 + k_2}{2}, \tfrac{k_2 - k_1}{2} + 2)$. However, if $x \in \Omega_{\cl}^{\mathrm{bal}}$, then $\uPi(x)$ has weight $(k, 2)$ for some $k \ge 2$, and thus is not cohomological; so it cannot define a classical point of $\cE(N, \psi)$.

  \begin{remark}
   Note that if $x \in \Omega^1_{\cl}$ then the ordinary $U_{p, 1}$-eigenvalue of $\uPi(x)$ is $\fa^\circ_{\fp_1}(\upi(x))$, whereas it is $\fa^\circ_{\fp_2}(\upi(x))$ if $x \in \Omega^2_{\cl}$.
  \end{remark}

  The following theorem is an instance of much more general results on functoriality of eigenvarieties. (We are grateful to Chris Williams for his explanations in relation to this issue.)

  \begin{theorem}
   We can find a closed immersion of rigid-analytic spaces
   \[ \Upsilon: \Omega \to \cE(N, \psi),\]
   such that for $x \in \Omega^1_{\cl}$, $\Upsilon(x)$ is the classical point associated to $\uPi(x)$.
  \end{theorem}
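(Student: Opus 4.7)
The plan is to construct $\Upsilon$ from an analytic system of Hecke eigenvalues matching those of the Yoshida lifts on $\Omega^1_{\cl}$, and then invoke the universal property of the ordinary $\GSp_4$ eigenvariety. After shrinking $\Omega$ to a neighborhood of a point in $\Omega^1_{\cl}$, we may assume $\Omega^1_{\cl}$ is Zariski-dense in $\Omega$.

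First, I would define the weight map $\omega: \Omega \to \cW^2$ by the invertible linear substitution $(\kappa_1, \kappa_2, \tau_1, \tau_2) \mapsto (\ell_1, \ell_2) \coloneqq \bigl(\tfrac{\kappa_1+\kappa_2}{2},\ \tfrac{\kappa_1-\kappa_2}{2} + 2\bigr)$, which interpolates the weight of the Yoshida lift along $\Omega^1_{\cl}$, and then produce an analytic family of abstract Hecke eigensystems on $\Omega$. Away from $p$, the local $L$-parameter of $\Theta(\upi(x), \psi)_\ell$ is obtained by restricting to $\GSp_4$ the automorphic induction from $\GL_2/K$ to $\GL_4/\QQ$, so the Hecke eigenvalues at $\ell$ are explicit polynomial expressions in the $\{\lambda_\fq(\upi): \fq \mid \ell\}$ and in values of $\psi$, hence analytic on $\Omega$. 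At $p$, by the preceding proposition the Klingen-ordinary refinement of $\uPi$ is $(\alpha_{\fp_1}, \alpha_{\fp_2})$ and the Siegel-ordinary refinement on the $k_1 \geq k_2$ branch is $\alpha_{\fp_1}$, so the ordinary $U_{p,1}$ and $U_{p,2}$ eigenvalues are $\alpha_{\fp_1}(\upi)$ and $\alpha_{\fp_1}(\upi)\alpha_{\fp_2}(\upi)$, both units in $\cO(\Omega)$. Together, $\omega$ and these eigenvalues assemble into a rigid-analytic map $\tilde\Upsilon: \Omega \to \cW^2 \times \operatorname{Spm}^{\rig}(\mathbf{T})$, where $\mathbf{T}$ is the abstract Hecke algebra.

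Next, I would show that $\tilde\Upsilon$ factors through the closed immersion $\cE(N, \psi) \hookrightarrow \cW^2 \times \operatorname{Spm}^{\rig}(\mathbf{T})$ furnished by the Tilouine--Urban construction. At each $x \in \Omega^1_{\cl}$, the image $\tilde\Upsilon(x)$ corresponds to the classical cuspidal, cohomological, ordinary representation $\uPi(x)$ of weight $(\ell_1, \ell_2)$ with $\ell_2 \geq 3$, conductor $N$, and central character $\psi$; this is a classical point of $\cE(N, \psi)$. By Zariski density of $\Omega^1_{\cl}$ in $\Omega$ and Zariski-closedness of $\cE(N, \psi)$ in the ambient product (a structural feature of eigenvariety constructions via Fredholm determinants), analytic continuation then yields the desired map $\Upsilon: \Omega \to \cE(N, \psi)$.

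Finally, to see that $\Upsilon$ is a closed immersion, note that $\omega$ is an isomorphism onto its open image in $\cW^2$ and that $\Upsilon$ lifts $\omega$ along the finite weight map $q: \cE(N, \psi) \to \cW^2$. Since $q$ is étale at classical points, $\Upsilon$ is étale at each point of $\Omega^1_{\cl}$, and by Zariski density it is étale on all of $\Omega$; combined with the injectivity forced by injectivity of $\omega$, this exhibits $\Omega$ as a union of connected components of $q^{-1}(\omega(\Omega)) \subset \cE(N, \psi)$, giving a closed immersion. The principal obstacle is the factorisation step: showing that an abstract analytic family of Hecke eigensystems on $\Omega$ genuinely arises from overconvergent $\GSp_4$ cohomology, rather than merely lying in the Hecke spec over weight space. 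This rests on the Zariski-closedness of $\cE(N,\psi)$ in the ambient product, combined with the Zariski density of classical Yoshida-lift points in $\Omega$.
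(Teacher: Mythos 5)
Your route is broadly parallel to the paper's, but with one real difference in packaging and two places where the details as written are not yet a proof.

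The paper's proof proceeds differently from yours in one respect: rather than building an abstract Hecke eigensystem directly on $\Omega$ by writing down the Satake parameters of the Yoshida lift, it first lifts $\Omega$ to an affinoid $\tilde\Omega$ in the ordinary $\GL_2/K$ eigenvariety $\cE_K(\fn, \psi)$ (via Hida theory), then invokes Hansen's $p$-adic Langlands functoriality machinery \cite{hansen14} (through \cite{barrerawilliams}) to produce a finite morphism of eigenvarieties $\upsilon : \cE_K(\fn, \psi) \to \cE(N, \psi)$ interpolating the twisted Yoshida lift at the relevant classical points, and finally restricts $\upsilon$ to $\tilde\Omega$. What this buys is that the difficult interpolation step is done once at the level of full eigenvarieties, where it is an already-proven theorem; you only need to check compatibility of the weight maps and that the classical points in $\Omega^1_{\cl}$ are dense. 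Your version works directly on $\Omega$, which is conceptually cleaner but forces you to re-derive the interpolation.

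That re-derivation is where the gap lies. You invoke ``Zariski-closedness of $\cE(N,\psi)$ in the ambient product $\cW^2 \times \operatorname{Spm}^{\rig}(\mathbf{T})$''. This is not literally true: $\operatorname{Spm}^{\rig}(\mathbf{T})$ for the full abstract Hecke algebra is not a rigid-analytic space of finite type, and eigenvarieties are \emph{not} constructed as Zariski-closed subspaces of such a product. They are built from the spectral variety of a compact operator and glued affinoid-locally, and the relevant precision is the interpolation/uniqueness theorem of Chenevier--Hansen: a reduced rigid space $X$ equipped with a weight map and a system of Hecke eigenvalues, agreeing with classical eigensystems on a Zariski-dense accumulation set, admits a unique morphism to the eigenvariety compatible with these data. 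That statement is what you need to cite; the ``closedness in the ambient product'' heuristic, taken at face value, is false, so the factorisation step as written is not a proof. (You do flag this as the principal obstacle, so you are aware it is not free.)

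A second, smaller issue: the inference ``$\Upsilon$ is \'etale at the Zariski-dense set $\Omega^1_{\cl}$, hence \'etale on all of $\Omega$'' does not hold. The \'etale locus is Zariski-open, but an open set containing a Zariski-dense set need not be the whole space when $\dim \Omega \ge 1$: the non-\'etale locus could be a proper closed subset avoiding every classical point. Fortunately you don't need \'etaleness everywhere for the closed-immersion conclusion: once you have a morphism $\Upsilon$ lifting the open immersion $\omega: \Omega \hookrightarrow \cW^2$ along the separated finite map $q$, $\Upsilon$ is a section of $q|_{q^{-1}(\omega(\Omega))}$ over $\omega(\Omega) \cong \Omega$, and a section of a separated morphism is automatically a closed immersion onto its image. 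That gives the closed immersion directly, without the \'etale digression.
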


  \begin{proof}
   This follows from the same argument as the non-critical case of \cite[Proposition 5.4]{barrerawilliams}. The choice of $\upi$ determines a lifting of $\Omega$ to an affinoid $\tilde\Omega$ in the $\GL_2 / K$ cuspidal eigenvariety $\cE_K(\fn)$, such that the weight map restricts to an isomorphism $\tilde\Omega \to \Omega$. The eigenvariety $\cE_K(\fn)$ is a disjoint union of subvarieties indexed by Hecke characters modulo $\fn$, and we let $\cE_K(\fn, \psi)$ be the component corresponding to $\psi \circ \mathrm{Nm}$; clearly, $\tilde\Omega$ lies in $\cE_K(\fn, \psi)$.

    Using the general results on functoriality of eigenvarieties developed in \cite[\S 5]{hansen14}, we can define a morphism of rigid spaces $\upsilon: \cE_K(\fn, \psi) \to \cE(N, \psi)$, which is finite with closed image, interpolating the twisted Yoshida lifts at classical points with weight $k_1 > k_2$. This is compatible under the weight maps with the morphism $\cW_K \to \cW^2$ sending $(\kappa_1, \kappa_2, \tau_1, \tau_2)$ to $\left(w - \tau_1-\tau_2, \tau_2 - \tau_1\right)$. The latter map is clearly an isomorphism from $\Omega$ to its image in $\cW^2$; hence the restriction of $\upsilon$ to $\tilde\Omega$ is a closed immersion, and we can define $\Upsilon$ to be the composite $\Omega \xrightarrow{\,\sim\,} \tilde\Omega \xrightarrow{\,\upsilon\,} \cE(N, \psi)$.
  \end{proof}

  \begin{note}
   Note that if $x \in \Omega_{\cl} - \Omega^1_{\cl}$, then $\Upsilon^i(x)$ is \emph{not} a classical point in the $\GSp_4$ eigenvariety, since it will lie above a non-classical weight of the form $(\ell_1, \ell_2)$ with $\ell_2 < 3$. In particular, this construction is not at all symmetrical in the two embeddings of $K$.

   If $x \in \Omega^{2}_{\cl}$, then $\uPi(x)$ is cohomological and ordinary, so does lead to a point on $\cE(N, \psi)$, but $\Upsilon(x)$ is not this point; it is, rather, a non-classical ``companion'' of this point, analogous to the existence of weight $2-k$ overconvergent companion forms for weight $k$ CM-type modular forms.

   If $x \in \Omega^{\mathrm{bal}}_{\cl}$, then $\Upsilon(x)$ is non-cohomological. A particularly interesting case is when $x \in \Omega^{\mathrm{bal}}_{\cl}$ and $\upi(x)$ satisfies $\fa_{\fp_1}^{\circ} = \fa_{\fp_2}^{\circ}$ (which can happen); in this case, applying the construction to $\upi$ and $\upi^{\sigma}$ gives two distinct $p$-adic families passing through the same point of $\cE(N, \psi)$, showing that $\cE(N, \psi)$ is non-smooth at this point.
  \end{note}

%

 \subsection{A special one-parameter family}
  \label{sect:cZ}

  Let $\Omega$ be as above, and fix $(\uk, \ut) \in \Omega_{\cl}$.

  \begin{definition}
   Let $\cZ$ denote the one-dimensional subspace of $\Omega$ consisting of weights of the form $(k_1 + 4\mu, k_2 + 2\mu, t_1 - 2\mu, t_2 - \mu)$.
  \end{definition}

  Shrinking $\Omega$ if necessary, we can assume that $\cZ$ is an affinoid disc (isomorphic to $\operatorname{Max} L\langle X \rangle$). Note that $\cZ^1 \cap \Omega^1_{\cl}$ is Zariski-dense in $\cZ$, whereas the set of ``exceptional classical points'' $\cZ \cap (\Omega_{\cl} - \Omega^i_{\cl})$ is finite. It is these exceptional classical points which are the most interesting here.

  \begin{remark}
   We could more generally have considered weights of the form $(k_1 + 2a\mu, k_2 + 2b\mu, t_1 - a\mu, t_2 - b\mu)$ for any $\gamma = a/b \in [0, \infty]$. If $\gamma > 1$, we would obtain a 1-parameter family approaching $\pi$ through points of $\Omega^1_{\cl}$; if $\gamma < 1$, we would approach $\pi$ through $\Omega^2_{\cl}$. Switching the roles of the two embeddings $\sigma_1$, $\sigma_2$ corresponds to replacing $\gamma$ with $\tfrac{1}{\gamma}$.

   We have taken $\gamma = 2$, but any rational $\gamma \in (1, \infty)$ would give similar results. However, the values $\gamma = \{0, 1, \infty\}$ are special cases:
   \begin{itemize}

   \item  Taking $\gamma = 1$ would correspond to keeping $k_1 - k_2$ fixed on $\GL_2 / K$, and thus keeping $\ell_2$ fixed on $\GSp_4$. We could then replace the full $\GSp_4$ eigenvariety with the  ``small Klingen parabolic eigenvariety'' described in \cite{loeffler20}, which would allow us to drop the assumption that $p$ be split in $K$ (since Siegel ordinarity would play no role). However, if $k_1 = k_2$ this Klingen eigenvariety does not interpolate cohomological representations, and hence does not carry a natural family of Euler systems. This would rule out any interesting arithmetic applications in the $k_1 = k_2$ case, and so we have not pursued it here (although it might have interesting applications when $k_1 \ne k_2$ and $p$ is inert).

   \item Taking $\gamma = \infty$ would correspond to varying $k_1$ while keeping $k_2$ fixed, and hence the liftings to $\GSp_4$ would have weight $(\ell_1, \ell_2)$ with $\ell_1 - \ell_2$ fixed. In this case, we could replace $\cE_K$ with a small parabolic eigenvariety for $\GL_2 / K$ (which can be seen as an instance of the ``partially overconvergent cohomology'' of \cite{barreradimitrovjorza}), and $\cE(N, \psi)$ with a small parabolic eigenvariety for the Siegel parabolic of $\GSp_4$. By this method one can construct a non-trivial Euler system for $\rho_{\pi, v}^*$ for any $\pi$ which is ordinary at $\fp_1$, with no assumptions on the local factor at $\fp_2$ (it could even be supercuspidal). However, we do not know how to even formulate, let alone prove, an explicit reciprocity law in this setting. The picture for $\gamma = 0$ is similar with the roles of the two primes above $p$ interchanged.

   A second, more subtle issue with the $\gamma = \infty$ construction, even if $\pi$ is ordinary at both $\fp_i$, is that if $k_2 = 2$ we cannot rule out the possibility that the pullback to $U \times \cW$ of the $p$-adic $L$-function might be identically zero, since we do not have an analogue for $\GL_2 / K$ of the ``vertical'' non-vanishing theorems of Rohrlich \cite{rohrlich88} for $\GL_2 / \QQ$. This would imply that the Euler system would also vanish identically. (We had initially intended to use $\gamma = \infty$ in this project, and the switch to $\gamma = 2$ was motivated by this latter issue.)\qedhere
  \end{itemize}
  \end{remark}

  \begin{definition}
   We define a sheaf of Galois representations on $\cZ$ by
   \[ \cM_{\et} = \frac{(\Upsilon)^*(\cH_{\et}) |_{\cZ}}{\{\cO(\cZ)\text{-torsion}\}}.\]
  \end{definition}

  \begin{proposition}
   $\cM_{\et}$ is locally free of rank 4 over $\cO(\cZ)$; and if $x = (\uk', \ut')\in \cZ \cap \Omega_\cl$, then the fibre of $\cM_{\et}$ at $m$ is isomorphic to $\Ind_K^{\QQ}(\rho_{\upi(x), v})(t'_1)$ as a Galois representation.
  \end{proposition}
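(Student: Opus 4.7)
The plan is in three stages: first establish local freeness and rank $4$ of $\cM_{\et}$; second identify the fibre at the Zariski-dense subset $\Omega^1_{\cl} \cap \cZ$; third propagate the identification to the exceptional classical points. For the first step, I would observe that $\cH_{\et}$ is a coherent sheaf on $\cE(N, \psi)$ (see \cref{sect:gsp4eigen}), so $\Upsilon^*(\cH_{\et})|_{\cZ}$ is a finitely generated $\cO(\cZ)$-module. Since $\cZ$ is a one-dimensional smooth affinoid disc, $\cO(\cZ)$ is isomorphic to a one-variable Tate algebra, hence a principal ideal domain; the quotient $\cM_{\et}$ by its $\cO(\cZ)$-torsion is therefore free of some rank, to be computed at any point of the Zariski-dense set $\Omega^1_{\cl} \cap \cZ$.

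At such a point $x = (\uk', \ut')$, the image $\Upsilon(x)$ is a classical point of $\cE(N, \psi)$ corresponding to the Yoshida lift $\uPi(x) = \Theta(\upi(x), \psi)$, of weight $(\ell_1, \ell_2)$ with $\ell_2 \ge 3$. The weight map is \'etale there, so $\cH_{\et}$ is locally free in a neighbourhood of $\Upsilon(x)$ and its fibre is the four-dimensional spin Galois representation $M_{\et}(\uPi(x), L)$. Using the identity $L(\uPi(x), s) = L(\upi(x), s + \tfrac{w-1}{2})$ from \cref{sect:yoshida} together with Chebotarev density, the restriction $M_{\et}(\uPi(x), L)|_{G_K}$ splits as $\rho_{\upi(x), v}(-t_1') \oplus \rho_{\upi(x)^\sigma, v}(-t_1')$; since $\upi(x) \ne \upi(x)^{\sigma}$, these non-isomorphic summands are interchanged by $\sigma$, so as a $G_{\QQ}$-representation the whole is isomorphic to $\Ind_K^{\QQ}(\rho_{\upi(x), v})(t_1')$. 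This both pins the generic rank at $4$ and verifies the fibre claim at generic classical points.

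For the remaining exceptional points $x \in (\cZ \cap \Omega_{\cl}) \setminus \Omega^1_{\cl}$, the image $\Upsilon(x)$ is non-classical, so the fibre of $\cH_{\et}$ is not directly accessible. But $\cM_{\et}$ is already known to be locally free of rank $4$, so its fibre $V_x$ is a four-dimensional $G_{\QQ}$-representation. For each prime $\ell \nmid Np$ the trace of $\mathrm{Frob}_\ell^{-1}$ on $\cM_{\et}$ is a rigid-analytic function on $\cZ$ whose value at every $y \in \Omega^1_{\cl} \cap \cZ$ is the explicit polynomial in the interpolated Hecke eigenvalues $\{\lambda_{\fq}(y) : \fq \mid \ell\}$ computing the corresponding trace on $\Ind_K^{\QQ}(\rho_{\upi(y), v})(t_1')$; rigid-analytic continuation then forces equality throughout $\cZ$, and Chebotarev identifies the semisimplifications at $x$. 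The hard step -- the main obstacle -- is upgrading this trace identity to a genuine isomorphism $V_x \cong \Ind_K^{\QQ}(\rho_{\upi(x), v})(t_1')$: for this I would use that the Hodge--Tate weights of the target, determined by $(\uk', \ut')$, are pairwise distinct, and that under the standing hypothesis $\upi(x)^{\sigma} \ne \upi(x)$ (or the stronger (BI) condition) the induced representation is irreducible, so the semisimple representation sharing its traces must coincide with it.
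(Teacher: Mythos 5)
Your proof follows the same three-stage architecture as the paper's: establish local freeness via torsion-free $\Rightarrow$ locally free over the one-dimensional affinoid, identify fibres at the Zariski-dense set $\cZ \cap \Omega^1_{\cl}$ where $\Upsilon(x)$ is classical, and then transfer the identification to the exceptional points by rigid-analytic interpolation of traces of Frobenius. The one place where you go beyond the paper is the final step: the paper stops at the equality of trace functions and leaves the passage from ``same traces'' to ``isomorphic as a Galois representation'' implicit, whereas you spell it out. That step does need an argument, and the correct one is essentially what you give -- though it can be streamlined: Brauer--Nesbitt identifies the semisimplification $V_x^{\mathrm{ss}}$ with $\Ind_K^{\QQ}(\rho_{\upi(x), v})(t_1')$; since the latter is irreducible (because $\rho_{\upi(x),v}$ is irreducible and $\upi(x)^\sigma \ne \upi(x)$) and the dimensions agree, $V_x$ must itself be irreducible, hence equal to its semisimplification. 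Your appeal to distinctness of Hodge--Tate weights is superfluous once irreducibility is in hand. One point worth flagging for completeness, which neither you nor the paper addresses explicitly: the hypothesis $\upi(x)^\sigma \ne \upi(x)$ is assumed at the chosen base point, and one should check it persists for the exceptional classical specialisations -- this is automatic here because the standing hypothesis in \S\ref{sect:yoshida} that $\pi$ is not a twist of a base-change, together with the constancy of the Hecke eigensystem components away from $p$ along the family, keeps the condition open on all of $\Omega_{\cl}$.
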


  \begin{proof}
   The local freeness is clear, since $\cO(\cZ)$ is a Dedekind domain\footnote{In fact, any locally-free module over a Tate algebra is free, but we do not need this here.} so any torsion-free module is locally free. If $x \in \cZ \cap \Omega^1_\cl$, then $\Upsilon(x)$ is a classical point of $\cE(N, \psi)$, so the fibre at $x$ is free of rank 4 and realises the Galois representation associated to $\uPi(x)$. This is isomorphic to $\Ind_K^{\QQ}(\rho_{\upi(x), v})(t'_i)$ by the construction of the Yoshida lifting.

   We must show that the above assertion also holds for the exceptional points $x \in \cZ \cap (\Omega_\cl - \Omega^1_{\cl})$. This follows by $p$-adic interpolation techniques: for a prime $\ell \nmid Np$, the trace  $\operatorname{tr}(\operatorname{Frob}_\ell^{-1} | \cM_{\et})$ is a rigid-analytic function on $\cZ$, as is the function given by
   \[ (\uk', \ut') \mapsto \begin{cases}
   \ell^{-t_1'} \sum_{\fq \mid \ell} \lambda_{\fq}(\uk', \ut') & \text{for $\ell$ split}\\
   0 & \text{for $\ell$ inert}.
   \end{cases}\]
   These functions agree at the Zariski-dense set $\cZ \cap \Omega^1_\cl$; hence they agree identically on $\cZ$, and in particular at the exceptional classical points.
  \end{proof}

  For $i = 1, 2$ let $\bt_i$ denote the unique character $\Zp^\times \to \cO(\Omega)^\times$ whose specialisation at any $(\uk', \ut') \in \Omega_{\cl}$ is $x \mapsto x^{t_i'}$. By restriction, we can regard $\bt_1$ and $\bt_2$ as $\cO(\cZ)^\times$-valued characters.

  \begin{corollary}
   There exists a uniquely determined rank 2 $\cO(\cZ)$-submodule $\cV(\upi) \subset \cM_{\et}(-\bt_1)$, stable under $G_K$, such that
   \[ \cM_{\et}(-\bt_1) = \cV(\upi) \oplus \sigma \cV(\upi) \]
   for any $\sigma \in G_{\QQ} - G_K$. For every $x \in \cZ \cap \Omega_{\cl}$, the fibre of $\cV(\upi)$ at $x$ is isomorphic to $\rho_{\upi(x), v}$.\qed
  \end{corollary}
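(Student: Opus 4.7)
The plan is to construct $\cV(\upi)$ as the image of a $G_K$-equivariant embedding $\tilde\rho \hookrightarrow \cM_{\et}(-\bt_1)$, where $\tilde\rho$ denotes the universal rank-$2$ $G_K$-representation attached to the family $\upi$ by Hida theory (i.e.\ coming from the $\GL_2/K$ cuspidal $p$-ordinary eigenvariety $\cE_K(\fn)$), pulled back to $\cZ$. Let $\tilde\rho^{\sigma}$ denote the rank-$2$ $G_K$-representation obtained by precomposing the $G_K$-action on $\tilde\rho$ with outer conjugation by a lift of $\sigma$ to $G_\QQ$; its classical specialisations are the $\rho_{\upi^{\sigma}(x), v}$.

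First I would establish an equality of pseudorepresentations $G_K \to \cO(\cZ)$. By the functoriality of the Yoshida lift at the level of Galois representations, together with the description of $\uPi(x)$ for $x \in \Omega^1_{\cl}$ used in the previous proposition, at each such $x$ one has an isomorphism $\cM_{\et}(-\bt_1)|_{x} \cong \rho_{\upi(x), v} \oplus \rho_{\upi^{\sigma}(x), v}$. Hence for every $g \in G_K$ the rigid-analytic functions $\operatorname{tr}(g \mid \cM_{\et}(-\bt_1))$ and $\operatorname{tr}(g \mid \tilde\rho) + \operatorname{tr}(g \mid \tilde\rho^{\sigma})$ on $\cZ$ coincide on the Zariski-dense set $\cZ \cap \Omega^1_{\cl}$, and so are equal globally; similarly for determinants (and indeed for the full characteristic polynomials of Frobenius at unramified primes).

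Next I would upgrade this equality of pseudorepresentations to an isomorphism of $G_K$-modules. Hypothesis (BI) at the central point $x_0 = (\uk, \ut)$ ensures that the residual representation $\bar\rho_{\pi, v} \oplus \bar\rho_{\pi^{\sigma}, v}$ is a sum of two non-isomorphic irreducibles, hence multiplicity-free semisimple. The standard lifting result for deformations of residually multiplicity-free pseudorepresentations (Nyssen--Rouquier; cf.\ Bellaïche--Chenevier) then promotes the equality of pseudorepresentations to an isomorphism of $G_K$-modules over the complete local ring $\widehat{\cO(\cZ)}_{x_0}$. After shrinking $\cZ$ (allowed in the construction) to a neighbourhood of $x_0$ on which (BI) persists at every classical point and on which $\cM_{\et}$ is free, the Dedekind (in fact, principal) nature of $\cO(\cZ)$ lets one promote this formal-local isomorphism to a global one: the module $\Hom_{G_K, \cO(\cZ)}(\tilde\rho, \cM_{\et}(-\bt_1))$ is torsion-free and, by the residual calculation, generically of rank $1$, and a generator provides the desired embedding whose image one defines to be $\cV(\upi)$. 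The assertion $\cM_{\et}(-\bt_1) = \cV(\upi) \oplus \sigma \cV(\upi)$ follows from the fact that it holds fibrewise at the Zariski-dense set of classical points, together with local freeness.

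Uniqueness is direct: by the strong multiplicity one theorem and the hypothesis $\pi^{\sigma} \ne \pi$, for every classical $x \in \cZ \cap \Omega_{\cl}$ the two isotypic components $\rho_{\upi(x), v}$ and $\rho_{\upi^{\sigma}(x), v}$ of the fibre are irreducible and non-isomorphic; hence there is a unique $G_K$-stable rank-$2$ summand of the given isomorphism type, forcing two candidate subsheaves to agree on a Zariski-dense set and therefore everywhere. The main obstacle I expect is the passage from equality of pseudorepresentations to an honest isomorphism of families of representations; this is exactly the step where the multiplicity-freeness coming from (BI) is indispensable, and where one uses that $\cZ$ is one-dimensional so that the relevant Hom-module is well-behaved (in higher dimensions it could fail to be locally free of the expected rank).
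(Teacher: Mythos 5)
Your overall strategy---compare $\cM_{\et}(-\bt_1)$ against the pullback $\tilde\rho$ of the universal rank-two Galois representation from the $\GL_2/K$ Hida family, and extract $\cV(\upi)$ from the rank-one $\cO(\cZ)$-module $\Hom_{\cO(\cZ)[G_K]}(\tilde\rho, \cM_{\et}(-\bt_1))$---is essentially the argument the paper has in mind: the proof is omitted, but the immediately following proposition (on $\cF^+_{\fp_i}\cV(\upi)$) is proved by exactly this Hom-module device. Two comments.

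First, the detour through pseudorepresentations and Nyssen--Rouquier/Bella\"iche--Chenevier is redundant. Once $\tilde\rho$ exists as a genuine family of representations over $\cO(\cZ)$, the Hom-module argument is self-contained: $\Hom_{\cO(\cZ)[G_K]}(\tilde\rho,\cM_{\et}(-\bt_1))$ is torsion-free and, by Schur's lemma at the Zariski-dense set of classical specialisations (where the two $G_K$-isotypic pieces are non-isomorphic irreducibles), it has generic rank one; over the PID $\cO(\cZ)\cong L\langle X\rangle$ it is therefore free of rank one, and any generator $\phi$ is fibrewise non-zero (otherwise it would be divisible by a non-unit), hence fibrewise injective by irreducibility of $\tilde\rho_x$. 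There is nothing to lift: both sides are already honest representations, so no Carayol-type rigidity for pseudorepresentations is needed. The remark that one-dimensionality of $\cZ$ is being used (so that torsion-free implies locally free) is the right observation, but it enters through the Hom-module step, not through a pseudorepresentation step.

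Second, there is a genuine gap in the direct-sum assertion. Over $\cO(\cZ)\cong L\langle X\rangle$, two saturated locally free rank-two $G_K$-stable submodules of a free rank-four module can have a direct sum which is a \emph{proper} submodule even though their fibres span at all but finitely many points: for instance $L\langle X\rangle(1,X)+L\langle X\rangle(1,-X)\subsetneq L\langle X\rangle^2$, with torsion cokernel supported at $X=0$. So ``it holds fibrewise at the Zariski-dense set of classical points, together with local freeness'' does not by itself yield $\cM_{\et}(-\bt_1)=\cV(\upi)\oplus\sigma\cV(\upi)$. What one does know is that the cokernel of $\cV(\upi)\oplus\sigma\cV(\upi)\hookrightarrow\cM_{\et}(-\bt_1)$ is torsion with support disjoint from $\cZ\cap\Omega_{\cl}$, hence a finite set of non-classical points. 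Since $x_0=(\uk,\ut)$ lies in $\Omega_{\cl}$ and $\cZ$ is only required to be an affinoid disc through $x_0$, one should shrink $\cZ$ to a smaller disc avoiding this finite set; you do invoke shrinking $\cZ$, but for a different purpose---this is the step where it is actually indispensable. (An alternative would be an integral argument over $\cO^+(\cZ)$ using that the mod-$\varpi$ residual representation $\bar\rho_{\pi,v}\oplus\bar\rho_{\pi^\sigma,v}$ is multiplicity-free, so a Nakayama-type argument forces surjectivity; but that requires setting up the integral model, which the paper also does not do at this point.)
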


  \begin{proposition}
   There exist unique locally-free rank 1 $\cO(\cZ)$-direct-summands $\cF^+_{\fp_i} \cV(\upi)$, for $i = 1, 2$, which are stable under $G_{K_{\fp_i}}$ and such that $\cF^+_{\fp_i} \cV(\upi)(\bt_i)$ is unramified as a $G_{K_{\fp_j}}$-module, with geometric Frobenius acting as multiplication by $\alpha_{\fp_i} \in \cO(\cZ)^\times$.
  \end{proposition}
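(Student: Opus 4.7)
The plan is to derive the filtrations $\cF^+_{\fp_i} \cV(\upi)$ by pulling back the canonical ordinary filtration on the $\GSp_4$ eigenvariety $\cE(N, \psi)$ and analysing how it interacts with the $G_K$-decomposition of $\cM_{\et}(-\bt_1)$.

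First, we invoke the family version of Urban's theorem (due to Tilouine--Urban): the universal ordinary Galois sheaf on $\cE(N, \psi)$ carries a canonical four-step filtration $\cF_1 \subset \cF_2 \subset \cF_3 \subset \cH_{\et}$ by $G_{\Qp}$-stable coherent subsheaves with locally-free rank-$1$ graded pieces, specialising to Urban's classical filtration at each regular ordinary classical point. Pulling back via $\Upsilon \colon \cZ \to \cE(N, \psi)$ and passing to the torsion-free quotient, we obtain a filtration $\cF_1 \cM_{\et} \subset \cF_2 \cM_{\et} \subset \cF_3 \cM_{\et}$ by $G_{\Qp}$-stable $\cO(\cZ)$-locally-free direct summands, using again that $\cO(\cZ)$ is a Dedekind domain (as in the proof of the preceding proposition).

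Next, we twist by $-\bt_1$ and restrict to $G_{\Qp}$. Since $p$ is split in $K$, the $G_K$-decomposition $\cM_{\et}(-\bt_1) = \cV(\upi) \oplus \sigma \cV(\upi)$ refines to a $G_{\Qp}$-decomposition $\cV(\upi)|_{G_{\Qp}} = \cV(\upi)|_{G_{K_{\fp_1}}} \oplus \cV(\upi)|_{G_{K_{\fp_2}}}$ (and similarly for $\sigma\cV(\upi)$). At classical points $x \in \cZ \cap \Omega^1_{\cl}$ the four Hodge--Tate weights of $\cM_{\et}(-\bt_1)$, in the order imposed by the filtration, are $t_1, t_2, t_2 + k_2 - 1, t_1 + k_1 - 1$; the outer two occur only in the $\fp_1$-summand of $\cV(\upi)$ and the inner two only in the $\fp_2$-summand. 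Hence $\cF_1 \cM_{\et}(-\bt_1) \subset \cV(\upi)|_{G_{K_{\fp_1}}}$ at these points, and the image of $\cF_2 \cM_{\et}(-\bt_1)$ in $\cV(\upi)|_{G_{K_{\fp_2}}}$ is a rank-$1$ locally-free summand. Since classical points of $\Omega^1_{\cl}$ are Zariski-dense in $\cZ$, both assertions extend to all of $\cZ$, and we set
\[ \cF^+_{\fp_1} \cV(\upi) \coloneqq \cF_1 \cM_{\et}(-\bt_1), \quad \cF^+_{\fp_2} \cV(\upi) \coloneqq \operatorname{image}\bigl(\cF_2 \cM_{\et}(-\bt_1) \to \cV(\upi)|_{G_{K_{\fp_2}}}\bigr). \]

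To verify the unramifiedness and Frobenius-eigenvalue conditions, we specialise again at points of $\cZ \cap \Omega^1_{\cl}$. There, the filtration pulls back to Urban's classical filtration on $W_{\uPi(x)}$, whose crystalline Frobenius eigenvalues on the graded pieces are read off from the Hecke parameters of $\uPi(x)$; combining this with the formulae expressing those parameters in terms of $\fa^\circ_{\fp_1}, \fa^\circ_{\fp_2}$ (from the proposition computing Hecke parameters of Yoshida lifts) and with Wiles' theorem applied pointwise to $\upi(x)$, one recovers the claimed characterisation after twist by $\bt_i$. Rigid-analytic continuation propagates the identification of Frobenius eigenvalues to all of $\cZ$, and uniqueness follows from the pointwise uniqueness of Wiles' filtration at any single classical point. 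The main subtlety is ensuring that the construction extends across the non-classical (in particular parallel-weight) specialisations of $\cZ$, but this is automatic from the Tilouine--Urban construction on $\cE(N, \psi)$, which imposes no cohomologicality hypothesis on $\Upsilon(x)$.
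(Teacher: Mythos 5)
Your proposal takes a genuinely different route from the paper's. You go through the $\GSp_4$ side, invoking a family version of Urban's ordinary filtration on the eigenvariety sheaf $\cH_{\et}$ over $\cE(N,\psi)$, pull it back along $\Upsilon$, and then disentangle it into the two rank-$1$ pieces by Hodge--Tate weight considerations at classical points. The paper instead works entirely on the $\GL_2/K$ side: it forms the family of characters $\underline{\theta}$ (unramified with Frobenius $\alpha_{\fp_i}$, twisted by $-\bt_i$), observes that $\Hom_{\cO(\cZ)[G_{K_{\fp_i}}]}(\underline{\theta}, \cV(\upi))$ is finitely generated and torsion-free over the Dedekind domain $\cO(\cZ)$, and that its specialisations at the Zariski-dense set $\cZ\cap\Omega_{\cl}$ all have rank $1$ by Wiles' theorem applied pointwise; hence the $\Hom$-module is free of rank $1$, and the image of a generator is the required submodule (or, even more directly, take $I_{\fp_i}$-invariants of $\cV(\upi)(-\bt_i)$). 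This is more economical and more robust: it does not require knowing anything about the $\GSp_4$ eigenvariety or its Galois sheaf.

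The main gap in your argument is the very first step. You invoke ``the family version of Urban's theorem (due to Tilouine--Urban)'' as though it were available off the shelf, but the paper only records Urban's theorem for a single cohomological $\Pi$ with $\ell_2 \ge 3$. The existence of a coherent, $G_{\Qp}$-stable, four-step filtration on the entire ordinary Galois sheaf over $\cE(N,\psi)$, with locally free graded pieces, is a substantially stronger statement, and its behaviour along $\Upsilon(\cZ)$ at the non-classical specialisations (precisely the parallel-weight points we care about) is exactly the sort of thing that cannot be taken for granted without an argument. Your closing remark that this is ``automatic from the Tilouine--Urban construction, which imposes no cohomologicality hypothesis'' is where the proof becomes non-rigorous: the construction of such a filtration in families is, at bottom, an interpolation from classical points plus a torsion-freeness argument --- in other words, it boils down to precisely the kind of $\Hom$-module argument the paper uses, but performed over a much larger base. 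You would do better to apply that argument directly to $\cV(\upi)$ over $\cZ$, which avoids the $\GSp_4$ machinery entirely. A secondary issue is notational but worth fixing: you write $\cV(\upi)|_{G_{\Qp}} = \cV(\upi)|_{G_{K_{\fp_1}}} \oplus \cV(\upi)|_{G_{K_{\fp_2}}}$, which is not correct ($\cV(\upi)$ restricted to $G_{\Qp}=G_{K_{\fp_1}}$ is just a rank-$2$ module); the correct statement is that $\cM_{\et}(-\bt_1)|_{G_{\Qp}}$ decomposes as $\cV(\upi)|_{G_{K_{\fp_1}}}\oplus \sigma\cV(\upi)|_{G_{K_{\fp_1}}}$, and the second summand is identified with ``$\cV(\upi)|_{G_{K_{\fp_2}}}$'' via conjugation by $\sigma$. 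Your subsequent reasoning is consistent with the correct picture, but the construction of $\cF^+_{\fp_2}\cV(\upi)$ then requires transporting the image of $\cF_2$ from $\sigma\cV(\upi)$ back into $\cV(\upi)$, which should be spelled out.
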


  \begin{proof}
   This is a standard argument: if $\underline{\theta}$ denotes this family of characters, then $\Hom_{\cO(\cZ)[G_{K_{\fp_j}}]}(\underline{\theta}, \cV(\upi))$ is a finitely-generated, torsion-free $\cO(\cZ)$-module. As its specialisations at the Zariski-dense set $\cZ \cap \Omega_{\cl}$ have rank exactly 1, it must in fact be free of rank 1. Taking the image of a generator gives the required submodule. (Alternatively, and slightly more canonically, we can just define $\cF_{\fp_i}^+ \cV(\upi)$ to be the $I_{\fp_i}$-invariants of $\cV(\upi)(-\bt_j)$.)
  \end{proof}

\section{Euler systems in families}
 \label{sect:ESYfam}

 \subsection{Variation in families of Euler system classes}

  We now return to the situation of \cref{sect:cZ}, so $\upi$ is a family of $\GL_2 / K$ eigensystems over $\Omega \ni (\uk, \ut)$, and we have a distinguished 1-dimensional subspace $\cZ \subset \Omega$.

  \begin{theorem}
   Let $r \ge 0$. There exists a family of classes
   \[ {}_c\bz_m^{[r]}(\upi) \in H^1(K_{\infty}[m], \cV(\upi)^*)\]
   for $m \in \cR$ coprime to $c$, with the following properties:
   \begin{itemize}
    \item ${}_c\bz_m^{[r]}(\upi)$ is in the $(-1)^{r + t_2 + 1}$ eigenspace for complex conjugation.

    \item The localisation $\loc_{\fp_1}\left({}_c\bz_m^{[r]}(\upi)\right)$ lies in the image of the map
    \[ H^1(K_{\fp_1} \otimes \Qi[m], \cF_{\fp_1}^+ \cV(\upi)^*) \into
    H^1(K_{\fp_1} \otimes \Qi[m], \cV(\upi)^*).\]

    \item If $m, m\ell \in \cR$ then we have
    \[ \norm_{m}^{m\ell}\left({}_c\bz_{m\ell}^{[r]}(\upi)\right)
    = P_\ell(\ell^{-(1+\mathbf{j})}\sigma_\ell^{-1})\cdot  {}_c\bz_m^{[r]}(\upi). \]

    \item If $x = (\uk, \ut) \in \cZ \cap \Omega^1_{\cl}$ and $k_2-2 \ge r$, then the specialisation of ${}_c\bz_m^{[r]}(\upi)$ at $x$ is given by
    \[ \cE_{\fp_2}(\upi(x), \id,t_2 + r) \cdot {}_c\bz_m^{[\upi(x), r]}\]
    where ${}_c\bz_m^{[\upi(x), r]}$ is as in the previous section.

    \item All the classes ${}_c\bz_m^{[r]}(\upi)$ take values in a finitely-generated $\cO^+(\cZ)$-submodule of $\cV(\upi)^*$, where $\cO^+(\cZ)$ is the bounded-by-1 functions.
   \end{itemize}
  \end{theorem}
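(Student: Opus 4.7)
The plan is to pull back the $\GSp_4$ Euler system to $\cZ$ along $\Upsilon$ and project to the $\cV(\upi)^*$ summand. In \cite{LZ20}, the classes ${}_c\bz_m^{[\Pi, r]}$ fit into a family interpolating them $p$-adically over a suitable affinoid neighbourhood of $\Upsilon(\cZ)$ in $\cE(N, \psi)$. Pulling this family back via $\Upsilon^*$ and twisting by the Tate twist $e_{\bt_1}$ produces a class in $H^1_{\Iw}(\Qi[m], \cM_{\et}^*(\bt_1))$. Since $p$ is unramified in $K$, the fields $K$ and $\Qi$ are linearly disjoint, so this group is canonically $H^1(K_\infty[m], \cM_{\et}^*(\bt_1)|_{G_K})$. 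Using the decomposition $\cM_{\et}^*(\bt_1)|_{G_K} = \cV(\upi)^* \oplus \sigma \cV(\upi)^*$ from the previous section, one projects onto the first summand to define ${}_c\bz_m^{[r]}(\upi)$.

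Most of the listed properties follow routinely from the corresponding properties of the $\GSp_4$ classes. The sign eigenspace $(-1)^{r + t_2 + 1}$ follows by combining the sign $(-1)^{r + \ell_2 + 1}$ of ${}_c\bz_m^{[\Pi, r]}$ with the shift by $(-1)^{t_1}$ coming from the twist by $\bt_1$, using that $\ell_2 = t_2 - t_1 + 2$ on $\cZ$. The local condition at $\fp_1$ is inherited from the vanishing of ${}_c\bz_m^{[\Pi, r]}$ modulo $\cF^1$: after restriction to $G_{K_{\fp_1}}$, the image of $\cF^1 \cM_{\et}^*(\bt_1)$ in $\cV(\upi)^*$ is exactly $\cF_{\fp_1}^+ \cV(\upi)^*$. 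The norm relation follows from the factorisation $P_\ell(\uPi, X) = P_\ell(\upi, X) P_\ell(\upi^\sigma, X)$, valid at classical points in $\Omega^1_{\cl}$ and hence on all of $\cZ$ by Zariski density, combined with the fact that projection to $\cV(\upi)^*$ eliminates the second factor. Integrality comes from the integral étale cohomology lattices on the $\GSp_4$ Shimura variety underlying the construction in \cite{LSZ17, LZ20}.

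The main obstacle is verifying the specialisation formula at $x \in \cZ \cap \Omega^1_\cl$, and in particular pinning down the appearance of the factor $\cE_{\fp_2}(\upi(x), \id, t_2 + r)$. While the universal family in \cite{LZ20} specialises at a classical $\GSp_4$-point to the primitive class ${}_c\bz_m^{[\Pi, r]}$, the stabilisation procedures (Siegel- and Klingen-ordinary projectors) used to build the family over the eigenvariety introduce Euler-type correction factors at $p$ corresponding to the ``missing'' refinements. Using the explicit factorisation of the Hecke parameters $(\alpha, \beta, \gamma, \delta)$ of $\uPi(x)$ in terms of $\fa^\circ_{\fp_i}, \fb^\circ_{\fp_i}$ established in the previous section, one identifies the resulting factor at $\fp_2$ as precisely $\cE_{\fp_2}(\upi(x), \id, t_2 + r)$. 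Carrying this through requires a careful bookkeeping of normalisations in the ordinary-projector construction and in the projection from $W_\Pi^*(t_1)$ to $V(\pi)^*$; this is the most technically delicate step, though ultimately computational given the explicit Hecke-operator formulas in \cite{LSZ17, LZ20}.
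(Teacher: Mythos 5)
The paper's ``proof'' of this theorem is a single sentence deferring to \cite{LZ20} together with unpublished work of Rockwood, so there is no detailed argument in the paper to compare your write-up against. Your overall strategy -- interpolate the classes of \cite{LZ20} over a neighbourhood of $\Upsilon(\cZ)$ in the $\GSp_4$ eigenvariety, pull back along $\Upsilon$, twist by $\bt_1$, and pass to $H^1(K_\infty[m], \cV(\upi)^*)$ -- is the natural way to flesh out that citation, and is the family version of the very-regular-weight construction of \cref{def:veryregES}.

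Two steps in your sketch are misstated. The identification $H^1_{\Iw}(\Qi[m], \cM_{\et}^*(\bt_1)) \cong H^1(K_\infty[m], \cV(\upi)^*)$ is Shapiro's lemma applied to $\cM_{\et}^*(\bt_1) \cong \Ind_K^{\QQ}\cV(\upi)^*$, not a linear-disjointness statement: the group $H^1(K_\infty[m], \cM_{\et}^*(\bt_1)|_{G_K})$ you write down has twice the rank of the source, and you land in the right place only because ``restrict, then project onto the $\cV(\upi)^*$-summand'' happens to recover the Shapiro map. More substantively, after Shapiro the tame norm relation still carries the full degree-four Euler polynomial of the $G_\QQ$-representation $\cM_{\et}$; it does \emph{not} reduce to a degree-two factor for $\upi$. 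That degree-four factor is exactly what the Euler-system machinery applied later with $T = \Ind_K^{\QQ}T(\pi)^*$ requires, so your claim that ``projection eliminates the second factor'', taken literally, would break the downstream application.

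Your explanation of $\cE_{\fp_2}(\upi(x), \id, t_2+r)$ as a correction term arising from the $p$-ordinary stabilisation is the right heuristic -- it is consistent with the partial Euler factor that already appears in the rewritten explicit reciprocity law at the end of \cref{sect:yoshida} -- but, as you acknowledge, making this precise requires the normalisations of \cite{LZ20} and Rockwood's forthcoming interpolation results, neither of which the paper spells out.
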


  \begin{proof}
   This theorem is exactly the specialisation to the case of Yoshida-type families of the general results on interpolation in families of $\GSp_4$ Euler systems proved in \cite[\S 7]{LRZ}. (In \emph{op.cit.} we varied the ``$r$'' variable $p$-adically as well, but this gives no additional benefit in the present application so we keep $r$ fixed here.)
  \end{proof}

 \subsection{Comparison isomorphisms in families}

  We would like to study the localisation of ${}_c\bz_m^{[r]}(\upi)$ at the other prime $\fp_2 \mid p$. By construction, if $(\uk, \ut) \in \cZ \cap \Omega^1_{\cl}$, then the map $\nu_{\dR}$ gives an identification
  \[ M_W(\Pi, L) \cong \mathbf{D}_{\dR}(K_{\fp_2}, \cF^+_{\fp_2} \cV(\pi)(1)) \cong \mathbf{D}_{\dR}\left(K_{\fp_2}, \frac{\cV(\pi)^*}{\cF_{\fp_2}^+}\right)^\vee. \]

  Composing with the map $\aleph$ above, we can write this as
  \[
   \aleph_{\dR, \fp_2}(\pi): M^\natural_B(\pi, L) \xrightarrow{\ \cong\ }
   \mathbf{D}_{\dR}\left(K_{\fp_2}, \cF_{\fp_2}^+ \cV(\pi)\right).
  \]

  We know that the source of the map $\aleph_{\dR, \fp_2}(\pi)$ interpolates in families: we have a sheaf $\cM_B^\natural(\upi)$ of $\cO(\cZ)$-modules (indeed of $\cO(\Omega)$-modules) whose fibre at $x \in \Omega_{\cl}$ is $\cM_B^\natural(\upi(x))$.

  More subtly, the target of this map also interpolates. Since the family of local Galois representations $\cF_{\fp_2}^+ \cV(\upi)$ over $\cO(\cZ)$ is a twist of a crystalline family by a character of $\Gamma$ (more precisely, $\cF_{\fp_2}^+ \cV(\upi)(\bt_2)$ is crystalline), there exists a locally-free rank 1 $\cO(\cZ)$-module $\cM^+_{\syn, \fp_2}(\upi)$ whose fibre at $x \in \cZ \cap \Omega_{\cl}$ is $\Dcris\left(\cF_{\fp_2}^+ \cV(\upi)(\bt_2)\right)$.

  One checks easily that for each $r$ there exists a non-zero-divisor ${}_c C_m^{[r]} \in \cO(\cZ\times \cW^{(-1)^{t_2 + 1 + r}})$ interpolating the ${}_c C_m^{[r]}(\uk, \ut, j)$.

  \begin{theorem} \
   \begin{enumerate}[(a)]
   \item The comparison isomorphisms $\aleph_{\dR, \fp_2}(\pi)$ interpolate meromorphically over $\cZ$. More precisely, if $\cK(\cZ) = \operatorname{Frac} \cO(\cZ)$, then we can find an isomorphism
      \[
       \aleph_{\dR, \fp_2}(\upi): \cK(\cZ) \otimes_{\cO(\cZ)} \cM_B^\natural(\upi) \xrightarrow{\ \cong\ }
       \cK(\cZ) \otimes_{\cO(\cZ)} \cM^+_{\syn, \fp_2}(\upi)
      \]
      such that for all but finitely many $x \in \cZ \cap \Omega^1_{\cl}$, $\aleph_{\dR, \fp_2}(\upi)$ is regular at $x$ and its specialisation at $x$ is $\aleph_{\dR, \fp_2}(\upi(x))$.
    \item For any $r$ and $n$, we have the explicit reciprocity law
    \[ \aleph_{\dR, \fp_2}(\upi)^\vee\left( \LPR\left({}_c z^{[\upi, r]}_m\right)\right) =
        {}_c C_n^{[r]}\cdot \left(\cL_p^{\eps}(\upi, \Delta_n)(\mathbf{j}) \otimes
        \cL_p^{-\eps}(\upi)(\bt_2 + r)\right)
        \]
    where $\eps = (-1)^{t_2 + 1 + r}$.
   \end{enumerate}
  \end{theorem}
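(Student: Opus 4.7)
The plan is to construct $\aleph_{\dR, \fp_2}(\upi)$ and establish the reciprocity law (b) simultaneously, by $p$-adic interpolation from the Zariski-dense set $\cZ \cap \Omega^1_{\cl}$, where the classical comparison map and the classical reciprocity law are both available. The crucial observation is that $\cM_B^\natural(\upi)$ and $\cM^+_{\syn, \fp_2}(\upi)$ are rank-one locally-free $\cO(\cZ)$-modules, so after base-change to $\cK(\cZ)$ they become one-dimensional $\cK(\cZ)$-vector spaces; any non-zero $\cK(\cZ)$-linear map between them is automatically an isomorphism, and the task reduces to pinning down one scalar in $\cK(\cZ)$.

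Fix $r$ and $m \in \cR$ coprime to $cNp$. Applying the equivariant Perrin--Riou regulator in families to $\loc_{\fp_2}({}_c \bz_m^{[r]}(\upi))$ --- which makes sense because $\cF_{\fp_2}^+ \cV(\upi)$ is a rank-one crystalline family over $\cO(\cZ)$, so the construction of \cite{loefflerzerbes14} extends --- produces
\[
\tilde\Psi \coloneqq \LPR_{\Delta_m}\bigl(\loc_{\fp_2}\, {}_c \bz_m^{[r]}(\upi)\bigr) \in \Lambda_L(\Gamma)^\eps \otimes L[\Delta_m] \otimes \cM^+_{\syn, \fp_2}(\upi)^\vee,
\]
with $\eps = (-1)^{t_2+1+r}$. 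Dually, \cref{thm:gl2pLfam} and its equivariant refinement, together with the factorisation $\cM_B^\natural(\upi) = \cM_B^+(\upi) \otimes_{\cO(\cZ)} \cM_B^-(\upi)$, give
\[
\Theta \coloneqq {}_c C_m^{[r]} \cdot \bigl(\cL_p^\eps(\upi, \Delta_m)(\mathbf{j}) \otimes \cL_p^{-\eps}(\upi)(\bt_2 + r)\bigr) \in \Lambda_L(\Gamma)^\eps \otimes L[\Delta_m] \otimes \cM_B^\natural(\upi)^\vee.
\]
For every $x \in \cZ \cap \Omega^1_{\cl}$ with $k_2(x)-2 \ge r$, the last bullet of the previous theorem, combined with the classical reciprocity law of \cref{sect:yoshida}, the factorisation $\cE_p(\uPi(x), \chi, a) = \prod_{\fp\mid p} \cE_\fp(\upi(x), \chi, t_0 + a)$ and \cref{cor:compareLalg}, yields the pointwise identity $\aleph_{\dR, \fp_2}(\upi(x))^\vee(\tilde\Psi_x) = \Theta_x$.

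Trivialise $\cM^+_{\syn, \fp_2}(\upi)^\vee$ and $\cM_B^\natural(\upi)^\vee$ on an affinoid neighbourhood, writing $\tilde\Psi = \tilde\Psi_0 \omega^\vee$, $\Theta = \Theta_0 \eta^\vee$, and set $\aleph_{\dR, \fp_2}(\upi)^\vee(\omega^\vee) = \lambda\, \eta^\vee$ with $\lambda \in \cK(\cZ)$ to be determined. The reciprocity identity above reads $\lambda(x) \cdot \tilde\Psi_{0, x} = \Theta_{0, x}$ for $x$ in a Zariski-dense subset of $\cZ$. To see that this determines $\lambda$ it suffices to show that $\Theta_0$ is not identically zero on $\cZ$; for this, pick $\mu \in \ZZ_{\ge 0}$ large enough that $(k_1 + 4\mu, k_2 + 2\mu, t_1 - 2\mu, t_2 - \mu) \in \Omega^1_{\cl}$, $k_2 + 2\mu \ge 3$, and $t_2 - \mu + r$ is Deligne-critical but distinct from the central value $\tfrac{w-2}{2}$ (which rules out only one value of $\mu$). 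At such an $x$, \cref{prop:nonvanish-padic} and \cref{prop:noexceptionalzero} imply that $\cL_p^{-\eps}(\upi(x))(t_2(x)+r) \ne 0$, and $\cL_p^\eps(\upi(x), \Delta_m)(\mathbf{j})$ is not identically zero on $\cW^\eps$; hence $\Theta_{0, x} \ne 0$, and by the reciprocity identity so is $\tilde\Psi_{0, x}$. Thus $\lambda \coloneqq \Theta_0 / \tilde\Psi_0 \in \cK(\cZ)$ is well-defined (the quotient being independent of the cyclotomic and $\Delta_m$-variables by density), defining $\aleph_{\dR, \fp_2}(\upi)$ meromorphically. Part (a) follows because $\lambda$ is regular at every $x \in \cZ \cap \Omega^1_{\cl}$ outside the finite set where $\tilde\Psi_0$ or $\Theta_0$ vanishes, and at such $x$ it recovers the classical $\aleph_{\dR, \fp_2}(\upi(x))$ by construction. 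Part (b) holds for the fixed $m$ by construction, and for any other $m' \in \cR$ coprime to $cNp$ by an analogous Zariski-density argument, both sides being rigid-analytic sections of a rank-one sheaf on $\cZ \otimes \cW^\eps$ which agree on the dense set of qualifying classical points.

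The main obstacle is the non-vanishing of $\Theta_0$ along $\cZ$, since without this the ``ratio'' defining $\lambda$ is meaningless. This in turn relies on the non-vanishing of classical Hilbert $L$-values outside the central point (\cref{prop:nonvanish}, via Jacquet--Shalika), and on the fact that as $\mu$ grows in $\cZ$ both $k_2$ and $k_1 - k_2$ tend to infinity, so the Deligne-critical range widens and the central value is attained at only one exceptional $\mu$. A secondary delicate point is the sign bookkeeping: one must verify that $\tilde\Psi$ lives in the $\eps$-component of the cyclotomic Iwasawa algebra matching the sign of $\Theta$, with $\eps = (-1)^{t_2+1+r}$, as forced by the complex-conjugation sign of the Euler system class stated in the previous theorem.
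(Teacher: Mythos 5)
Your proof follows essentially the same strategy as the paper's: define both a ``motivic'' object (the Perrin--Riou regulator of the Euler system class) and an ``analytic'' object (the product of $p$-adic $L$-functions), observe that both are sections of rank-one sheaves over $\cZ \times \cW^{\eps}$, show via non-vanishing results (\cref{prop:nonvanish-padic}, \cref{prop:noexceptionalzero}) and the pointwise classical reciprocity law that their ratio is a well-defined element of $\operatorname{Frac}\cO(\cZ)^\times$ independent of the cyclotomic variable, and use this ratio to construct $\aleph_{\dR,\fp_2}(\upi)$. The only notable difference in exposition is that the paper fixes $r=0$, $m=1$ to construct the isomorphism and then extends the reciprocity law to general $r$ and $m$ by Zariski density, whereas you carry general $r$, $m$ throughout and then note the independence from $(r, m)$ at the end; these are equivalent once one observes, as you do, that there is at most one such isomorphism. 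One small imprecision to flag: you phrase the ``pointwise identity $\aleph_{\dR,\fp_2}(\upi(x))^\vee(\tilde\Psi_x)=\Theta_x$'' at a fixed classical $x$ as an equality of elements of $\Lambda_L(\Gamma)^\eps \otimes L[\Delta_m]$, but the classical reciprocity law only gives it after evaluation at the finitely many integers $j$ with $t_2'(x)-1 \ge j \ge t_1'(x)$; what rescues this, and what the paper makes explicit by introducing the two-variable set $\cY \subset \cZ \times \cW$, is that the union over all $x \in \cZ \cap \Omega^1_{\cl}$ of these finite $j$-ranges is Zariski dense in $\cZ \times \cW$, and that the resulting ratio at qualifying $(x,j)$ is the period quantity depending only on $x$. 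You do invoke density when you conclude the quotient is ``independent of the cyclotomic and $\Delta_m$-variables by density'', so the argument is correct, but the density is over pairs $(x,j)$ rather than over a Zariski-dense subset of $\cZ$ alone, and it is worth stating that precisely.
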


  \begin{proof}
   Let $v_{\mathrm{syn}}$ be an element of $\cM^+_{\syn, \fp_2}(\upi)$ whose base-extension to $\cK(\cZ)$ is a basis, and similarly $v_\natural = v_+ \otimes v_-$ for the Betti cohomology.

   Let $\cY$ denote the set of points in $\cZ \times \cW$ at which the reciprocity law applies, i.e. the set of points $(x, j)$ with $x = (\uk', \ut')\in \cZ \cap \Omega^1_{\cl}$, and $j \in \ZZ$ with $t_2' - 1\ge j \ge t_1'$. This set $\cY$ is manifestly Zariski-dense in $\cZ \times \cW$.

   We first consider $r = 0$. We have a ``motivic'' $p$-adic $L$-function
   \[
    {}_c\cL^{\mathrm{mot}}(\upi) \coloneqq
    \left\langle \LPR\left({}_c z^{[\upi, 0]}_1\right), v_{\mathrm{syn}}\right\rangle \in \cO(\cZ \times \cW^{\eps}),
   \]
   where $\eps = (-1)^{1 + t_2}$. Similarly, choosing a basis $v_+ \otimes v_-$ of $\cM^\natural_B(\upi)$, we also have the ``analytic'' $p$-adic $L$-function
   \[
    \cL^{\mathrm{an}}(\upi) \coloneqq
     \langle \cL_p^{\eps}(\upi)(\mathbf{j}), v_+ \rangle \cdot \langle \cL_p^{-\eps}(\upi)(\bt_2), v_+ \rangle \in \cO(\cZ \times \cW^{\eps}).
   \]
   The analytic $p$-adic $L$-function is not a zero-divisor, by \cref{prop:nonvanish-padic}. By the reciprocity law, the motivic $p$-adic $L$-function is also non-vanishing at all points in $\cY$ of parity $\eps = (-1)^{1+t_2}$ that do not lie in the vanishing locus of either $\cL^{\mathrm{an}}(\upi)$ or $v_{\syn}$. So it is not a zero-divisor either, and we can form the ratio
   \[ R = \frac{{}_c\cL^{\mathrm{mot}}(\upi)}{{}_c C_1^{[0]} \cL^{\mathrm{an}}(\upi)} \in \operatorname{Frac}\cO(\cZ \times \cW^\eps)^{\times},\]
   i.e.~$R$ is a generically non-zero $p$-adic meromorphic function on $\cZ \times \cW^\eps$.

   Since $\cY$ is Zariski-dense, so is the set of $(x, j) \in \cY$ at which the numerator and denominator of $R$ are both non-zero. At any such point, the value $R(x, j)$ is equal to the (finite and non-zero) ratio of the periods $v_{\syn}(x)$ and $v_\natural(x)$, compared via $\aleph_{\dR, \fp_2}(\upi(x))$. In particular, it is independent of $j$. So we can conclude that $R$ itself is independent of the $\cW^{\eps}$ variable; that is, it lies in $\cK(\cZ)^{\times}$.

   So, scaling $v_\natural$ and $v_{\syn}$ appropriately, we can arrange that this ratio is in fact exactly 1. That is, the isomorphism $\aleph_{\dR, \fp_2}(\upi)$ sending $v_\natural$ to $v_{\mathrm{syn}}$ interpolates the $\aleph_{\dR, \fp_2}(\upi(x))$ for all but finitely many specialisations $x$.

   This proves part (a), and also part (b) for $r = 0$ and $n = 1$. The general case of (b) now follows readily from the Zariski-density of $\cY$.
  \end{proof}

  \begin{remark}
   By a closer inspection of the argument, one sees that for a given weight $x = (\uk', \ut') \in \Omega_{\cl}^1$, either the ``family'' comparison isomorphism $\aleph_{\dR, \fp_2}(\upi)$ specialises at $x$ to the known isomorphism $\aleph_{\dR, \fp_2}(\upi(x))$, or both sides of the explicit reciprocity law for $\upi(x)$ are 0 for every permissible value of $j$, $r$ and $n$. The latter possibility seems exceedingly improbable, but we are not able to rule it out a priori.
  \end{remark}

 \subsection{Behaviour at \texorpdfstring{$(\uk, \ut)$}{(k, t)}}

  We now consider the local behaviour of the above constructions at our chosen point $x_0 = (\uk, \ut)$. Note that the local ring $\cO_{\cZ, x_0}$ is a DVR, and it has a canonical uniformiser $X$ given by the image of $(k_1+4m, \dots) \mapsto m$.

  \begin{definition}
   We denote by $V(\pi)$ the fibre at $x_0$ of the sheaf $\cV(\upi)$.
  \end{definition}

  This is an $L$-vector space depending only on $\pi$ (and our choice of ordering of the two embeddings $K \into \RR$).

  Since $\aleph(\upi)$ is an isomorphism after inverting $X$, there is a uniquely-determined $q \in \ZZ$ such that if we let $v_{\natural} = v_+ \otimes v_-$ be a local basis of the Betti-cohomology sheaf around $x_0$, then $X^q \cdot \aleph_{\dR, \fp_2}(\upi)(v_\natural)$ is a local basis of $\cM_{\syn}^+$. Taking fibres at $x_0$ we have an isomorphism
  \begin{equation}
   \label{eq:alephprime}
   \aleph'_{\dR, \fp_2}(\pi): M_B^{\natural}(\pi, L) \xrightarrow{\ \cong\ } \Dcris\left(K_{\fp_2}, \cF^+_{\fp_2} V(\pi)\right),
  \end{equation}
  given by
  \[ v_{\natural} \pmod X \mapsto X^q \cdot \aleph_{\dR, \fp_2}(\upi)(v_\natural) \pmod X.\]

  \begin{remark}
   Note that even if $x_0 \in \Omega^1_{\cl}$, so that $\aleph_{\dR, \fp_2}(\pi)$ is defined, it does not follow formally that $\aleph'_{\dR, \fp_2}(\pi)$ necessarily agrees with $\aleph_{\dR, \fp_2}(\pi)$.
  \end{remark}

  Just as we have defined the comparision isomorphism $\aleph'_{\dR, \fp_2}(\pi)$ for $\pi$ as the ``leading term'' of a meromorphic object in the family $\upi$, we shall define an Euler system for $\pi$ using a similar leading-term construction. This construction is somewhat delicate, and may be of independent interest in other settings, so we have given a more general treatment in the next section; the discussion of the twisted Yoshida case will resume in \cref{sect:constructES}.

\section{Interlude: Kolyvagin systems and core ranks}
\label{sect:core}

 In this section we shall recall some results from \cite{mazurrubin04} and \cite[\S 12]{KLZ17}. We have formulated these results fairly generally, since they may be of use in other contexts than the present paper; later on we shall apply the results of this section with $T$ taken to be a lattice in $\Ind_K^{\QQ} V(\pi)^*$, with two different choices of the $\cO$-submodule $T^+$.

 \subsection{Setup}

  In this section, $p$ is prime, $L$ is a finite extension of $\Qp$ with ring of integers $\cO$, $\varpi$ is a uniformiser of $\cO$, and $T$ is a finitely-generated free $\cO$-module with a continuous action of $G_{\QQ}$, unramified outside a finite set of primes $S \ni p$. We let $\Gamma = \Gal(\Qi / \QQ)
  \cong \Zp^\times$, where $\Qi = \QQ(\mu_{p^\infty})$; and we set $\TT= T \otimes \Lambda(-\mathbf{q})$, where $\Lambda=\Lambda_{\cO}(\Gamma)$ and $\mathbf{q}$ its universal character.

  \begin{remark}
   Note that in \cite{mazurrubin04} the base ring is assumed to be local; $\Lambda$ is not a local ring, but it is a direct product of finitely many copies of the local ring $\Lambda_{\cO}( (1 + 2p\Zp)^\times)$, so the theory of \cite{mazurrubin04} extends immediately to this case.
  \end{remark}

  We suppose that the hypotheses (H.0)--(H.4) of \cite[\S 3.5]{mazurrubin04} are satisfied by $T$, and also by the twisted representation $T(\eta)$ for every $m \in \cR$ and every character $\Delta_m \to \bar{L}^\times$ (taking $R$ to be the ring of integers of the extension $L(\eta)$ generated by the values of $\eta$). Then these are also satisfied for $\TT$ (with $R = \Lambda$) and its twists. Finally, we suppose that $\rk T^{(c_\infty = 1)} = \rk T^{(c_\infty = -1)} = d$ for some $d \ge 1$, where $c_\infty$ is complex conjugation.

  \begin{remark}
   In fact the assumption that $\rk T^{(c_\infty = 1)} = \rk T^{(c_\infty = -1)}$ is not really needed -- it would suffice to assume that $T$ have rank $\ge 2$, and define $d = \rk T^{(c_\infty = -1)}$, as long as we replace $\QQ_\infty$ with its maximal totally-real subfield $\QQ_\infty^+$ throughout. This generalisation is not needed for in the present paper, but may be useful in future applications.
  \end{remark}

 \subsection{Selmer structures and core ranks}

  Let $T^+ \subseteq T$ be an $\cO$-direct summand stable under $G_{\Qp}$. As in \cref{sect:selmer} above, this defines a (derived) Selmer structure $\square^+$ on $\TT$, in which the local conditions at $\ell \ne p$ are the unramified ones, and the local condition at $p$ is given by the morphism of complexes
  \[ C^\bullet(\Qp, \TT^+) \longrightarrow C^\bullet(\Qp, \TT).\]

  If $\fQ$ is a height 1 prime of $\Lambda$, we write $T_{\fQ}$ for $\TT \otimes_{\Lambda} S_{\fQ}$, where $S_{\fQ}$ is the integral closure of $\Lambda/ \fQ$. Thus if $\fQ \ne (p)$, the ring $S_{\fQ}$ is the ring of integers of a finite extension of $L$, and $T_{\fQ}$ is the twist of $T$ by an $S_{\fQ}^\times$-valued character; whereas if $\fQ = (p)$, $S_{\fQ}$ is isomorphic to $(\cO/ \varpi \cO)[[X]]$.

  We use $T^+$ to define a \emph{simple} (i.e.~non-derived) Selmer structure $\square^+_{\can, \fQ}$ on $T_{\fQ}$ for each height 1 prime ideal $\fQ$ of $\Lambda$, as in \cite[\S 12.3]{KLZ17}; this generalises the canonical Selmer structure of \cite{mazurrubin04} (and reduces to it when $T^+ = T$). If $\fQ \ne p\Lambda$, then the Selmer structure $\square^+_{\can, \fQ}$ satisfies the Cartesian condition (H.6) of \cite[\S 3.5]{mazurrubin04}.

  \begin{proposition}
   If $\fQ$ is a height 1 prime ideal not containing $p$, then the ``core rank'' $\chi(T_{\fQ}, \square^+_{\can, \fQ})$, in the sense of \cite[Definition 4.1.11]{mazurrubin04}, is given by
   \[
    \chi(T_{\fQ}, \square^+_{\can, \fQ}) = \operatorname{max}\Big(0, d - \rank_{\cO}(T / T^+) + \rank_{S_{\fQ}} H^0(\Qp, (T^+_{\fQ})^*(1))\Big).
   \]
  \end{proposition}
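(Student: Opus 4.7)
The plan is to apply the general formula of Mazur--Rubin for the core rank of a simple Cartesian Selmer structure \cite[Chap.~5]{mazurrubin04}, and then evaluate the resulting sum of local ranks place-by-place. First I would verify applicability. Since $\fQ$ has height one and $p \notin \fQ$, the ring $S_{\fQ}$ is a DVR and $T_{\fQ}$ is free over it; the hypotheses (H.0)--(H.4) assumed for $\TT$ descend to $T_{\fQ}$. The Selmer structure $\square^+_{\can,\fQ}$ is simple and Cartesian, the latter because $T^+_{\fQ}$ is an $S_{\fQ}$-direct summand of $T_{\fQ}$ and the remaining local conditions are unramified.

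The Mazur--Rubin formula then reads, up to the non-negativity truncation built into the definition of core rank,
\[
\chi(T_{\fQ}, \square^+_{\can,\fQ}) = -\rank_{S_{\fQ}} T_{\fQ}^{c_\infty = +1} + \sum_{v} \bigl(\rank_{S_{\fQ}} H^1_{\square_v}(\QQ_v, T_{\fQ}) - \rank_{S_{\fQ}} H^0(\QQ_v, T_{\fQ})\bigr),
\]
with the global $H^0$ corrections vanishing by (H.1) for $T_{\fQ}$ and its Tate dual.

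Next I would evaluate the local contributions. The archimedean contribution is $-d$ by our hypothesis on $T$. Unramified primes $\ell \ne p$ contribute zero, and the same holds generically at the finitely many ramified primes away from $p$. At $p$, I would use the short exact sequence $0 \to T^+_{\fQ} \to T_{\fQ} \to T_{\fQ}/T^+_{\fQ} \to 0$, the local Euler--Poincar\'e formula
\[
\rank H^1(\Qp, T^+_{\fQ}) = \rank H^0(\Qp, T^+_{\fQ}) + \rank H^2(\Qp, T^+_{\fQ}) + \rank_{S_{\fQ}} T^+_{\fQ},
\]
and Tate local duality $H^2(\Qp, T^+_{\fQ}) \cong H^0(\Qp, (T^+_{\fQ})^*(1))^\vee$. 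The Greenberg local condition at $p$ has $S_{\fQ}$-rank equal to that of the image of $H^1(\Qp, T^+_{\fQ})$, whose kernel is a quotient of $H^0(\Qp, T_{\fQ}/T^+_{\fQ})$. Combining these with the long exact sequence in cohomology gives
\[
\rank H^1_{\square^+_p}(\Qp, T_{\fQ}) - \rank H^0(\Qp, T_{\fQ}) = \rank_{S_{\fQ}} T^+_{\fQ} + \rank H^0(\Qp, (T^+_{\fQ})^*(1)),
\]
the would-be extra $H^0(\Qp, T_{\fQ}/T^+_{\fQ})$ terms cancelling between the kernel of the Greenberg map and the corresponding summand of $H^0(\Qp, T_{\fQ})$. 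Using $\rank_{\cO} T^+ = 2d - \rank_{\cO}(T/T^+)$ and assembling gives exactly the claimed formula, with the $\max(0, -)$ encoding Mazur--Rubin's non-negativity convention.

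The main obstacle is the careful local bookkeeping at $p$: in particular, verifying the cancellation involving $H^0(\Qp, T_{\fQ}/T^+_{\fQ})$ at those (finitely many) $\fQ$ where this group is nonzero, and ruling out any contribution from ramified primes $\ell \ne p$ using hypotheses (H.2)--(H.4) for the twist. Everything else is a direct consequence of the general Mazur--Rubin machinery, which already applies to the twisted coefficient ring $S_{\fQ}$ because $\TT$ satisfies the relevant hypotheses for all twists.
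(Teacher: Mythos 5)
Your overall strategy is exactly what the paper's one-sentence proof ("proved in exactly the same way as [MR, Thm.~5.2.15]") refers to: apply the Greenberg--Wiles/Poitou--Tate rank identity for the core rank, evaluate the archimedean and unramified local terms, and at $p$ use the local Euler--Poincar\'e characteristic together with local Tate duality. That part is a faithful reconstruction of what Mazur--Rubin do for the canonical structure.

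The gap is precisely the step you yourself flag as "the main obstacle," namely the purported cancellation of the $H^0(\Qp, T_{\fQ}/T^+_{\fQ})$ contributions. A direct computation shows they do \emph{not} cancel. Writing $a = \rk H^0(\Qp, T^+_{\fQ})$, $b = \rk H^0(\Qp, T_{\fQ})$, $c = \rk H^0(\Qp, T_{\fQ}/T^+_{\fQ})$, $e = \rk H^0(\Qp, (T^+_{\fQ})^*(1))$: by local Euler--Poincar\'e and Tate duality, $\rk H^1(\Qp, T^+_{\fQ}) = \rk T^+_{\fQ} + a + e$, and the kernel of $H^1(\Qp, T^+_{\fQ}) \to H^1(\Qp, T_{\fQ})$ has rank $c - b + a$ (from the long exact sequence). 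Therefore the Greenberg local image has rank $\rk T^+_{\fQ} + e - c + b$, and the contribution to the Wiles formula at $p$ is $\rk T^+_{\fQ} + e - c$, not $\rk T^+_{\fQ} + e$. The terms you describe as cancelling---the kernel of the Greenberg map and the ``$H^0(\Qp, T_{\fQ}/T^+_{\fQ})$ summand of $H^0(\Qp, T_{\fQ})$''---do not match: the latter is only the image of $H^0(\Qp, T_{\fQ})$ in $H^0(\Qp, T_{\fQ}/T^+_{\fQ})$, of rank $b - a$, not $c$. So as written your intermediate display at $p$ has an unexplained extra $+c$; either you must prove $c = 0$ for the primes $\fQ$ under consideration, or carry the $-c$ term through. (In the paper's later applications one has Assumption requiring $H^0(\Qpi, T/T^+) = 0$, which forces $c = 0$ for all characteristic-zero specializations; but the proposition as literally stated does not assume this, and a complete proof should address the point rather than asserting a cancellation that does not take place.)
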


  \begin{proof}
   This is a generalisation of \cite[Theorem 5.2.15]{mazurrubin04}, and is proved in exactly the same way.
  \end{proof}

  \begin{notation}
   For all but finitely many $\fQ$, the term $H^0(\Qp, (T^+_{\fQ})^*(1))$ is zero and hence the core rank is simply $\operatorname{max}\left(0, d - \rank_{\cO}(T / T^+)\right)$. We shall write $\chi(\TT, \square^+_{\Lambda})$ for this common value.
  \end{notation}

 \subsection{Kolyvagin systems}

  We consider the module
  \[ \overline{\KS}\left(\TT, \square^+_{\Lambda} \right) \coloneqq
  \varprojlim_k \varinjlim_j \KS\left(\TT  / \mathfrak{m}_{\Lambda}^k,\square^+_{\Lambda}, \cP_j\right)\]
  of Kolyvagin systems for $\TT$. Here, $\cP_j$ is a set of primes as defined in \cite[Def. 3.6.1]{mazurrubin04}, and $\Delta^+_{\Lambda}$ denotes the simple Selmer structure on $\TT / I\TT$, for any ideal $I$ of $\Lambda$, in which the local condition at $p$ is given by the image of $H^1(\Qp, \TT^+)$ in $H^1(\Qp,\TT / I\TT)$. (Note that this subspace is generally smaller than the image of $H^1(\Qp, \TT^+ / I\TT^+)$.)

  There is an analogous module $\overline{\KS}\left(T_{\fQ}, \square^+_{\can,\fQ} \right)$ for $T_{\fQ}$, for any height 1 prime $\fQ$, and we have a diagram of maps
  \[
   \begin{tikzcd}
    \overline{\KS}\left(\TT, \square^+_{\Lambda} \right) \rar["\bka \mapsto \bka^{(\fQ)}"] \dar["\bka \mapsto \kappa_1" {anchor=north, rotate=270, inner sep=1mm}] & \dar \overline{\KS}\left(T_{\fQ},\square^+_{\can, \fQ} \right) \\
    H^1_{\Iw}(\Qi, T) \rar & H^1(\QQ, T_{\fQ}).
   \end{tikzcd}
  \]

  \begin{theorem}[Mazur--Rubin]\label{thm:MR}
   If $\chi(T_{\fQ},\square^+_{\can, \fQ}) = 0$, then we have $\overline{\KS}\left(T_{\fQ}, \square^+_{\can,\fQ} \right) = 0$.
  \end{theorem}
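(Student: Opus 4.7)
The plan is to deduce this as a direct consequence of the fundamental structure theorem of Mazur--Rubin relating the size of $\overline{\KS}$ to the core rank, after checking that the hypotheses of their framework apply to our setup.

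First, I would handle the generic case $\fQ \ne (p)$, where $S_\fQ$ is the ring of integers of a finite extension of $L$, in particular a DVR. In this setting, the module $T_\fQ$ with the simple Selmer structure $\square^+_{\can,\fQ}$ satisfies the Mazur--Rubin hypotheses (H.0)--(H.6) by the discussion immediately preceding the statement (in particular, (H.6) is ensured by the Cartesian property). By \cite[Theorem 5.2.10]{mazurrubin04}, the module $\overline{\KS}(T_\fQ, \square^+_{\can, \fQ})$ is then a free $S_\fQ$-module of rank equal to the core rank $\chi(T_\fQ, \square^+_{\can,\fQ})$. Under the hypothesis $\chi = 0$, this module therefore vanishes.

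The residual case $\fQ = (p)$ requires extra care, since $S_\fQ = (\cO/\varpi)[[X]]$ is a two-dimensional regular ring rather than a DVR, and falls outside the ``principal local ring'' setting of \cite{mazurrubin04}. The approach I would take is to reduce modulo an auxiliary height-1 prime $\fp \subset S_\fQ$ (other than $(X)$, which would correspond back to $\cO/\varpi$), obtaining a DVR quotient; then a Kolyvagin system for $T_\fQ$ restricts to one for the reduction, and one can combine the vanishing statements from the DVR case for varying $\fp$ together with the definition of $\overline{\KS}$ as an inverse limit $\varprojlim_k \varinjlim_j \KS(\TT/\mathfrak{m}^k, \ldots)$ to conclude.

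The main expected obstacle is that the cited Mazur--Rubin result is stated for simple Selmer structures over principal local rings, whereas we are working with a derived Selmer structure and with $\Lambda$ a product of regular two-dimensional local rings. The compatibility between the derived Selmer complex $\RGt(K,T;\square)$ and the simple Selmer structures $\square^+_{\can, \fQ}$ at generic height-1 primes is what makes the reduction possible; this compatibility, together with the extension of MR's core-rank machinery to the regular two-dimensional case, is exactly the content developed in \cite[\S 12.3]{KLZ17}, which I would cite for the precise framework. With this in hand, the vanishing of $\overline{\KS}$ from vanishing core rank is essentially immediate.
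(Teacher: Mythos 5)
Your main argument (for $\fQ \ne (p)$) is essentially the paper's: both deduce the vanishing from Mazur--Rubin's core-rank formalism, the only difference being that the paper cites their Theorem~4.2.2 at the finite levels $T_\fQ/\varpi^k$ and passes to the inverse limit, while you invoke the packaged DVR-level Theorem~5.2.10 — the same content, differently bundled. Your separate worry about $\fQ = (p)$ is both based on a misstatement and moot: the ring $S_{(p)} \cong (\cO/\varpi)[[X]] \cong k_L[[X]]$ is a discrete valuation ring of Krull dimension one, not a two-dimensional regular ring; and in any case the paper only defines the core rank $\chi(T_\fQ,\square^+_{\can,\fQ})$ (via its extension of [MR, Theorem~5.2.15]) for $\fQ$ not containing $p$, so the theorem is implicitly restricted to characteristic-zero height-one primes, with the prime $(p)$ handled by a different argument in \cref{prop:divideESp}. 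No reduction-modulo-auxiliary-primes argument is needed.
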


  \begin{proof}
   By Theorem 4.2.2 of \cite{mazurrubin04}, if $\chi(T_{\fQ},\square^+_{\can, \fQ}) = 0$, then we have $\KS(T_{\fQ} / \varpi^k, \square^+_{\can, \fQ}, \cP_j) = 0$ for all $j \ge k$. Hence the inverse limit is clearly also zero.
  \end{proof}

  \begin{corollary}
   \label{cor:Ksyszero}
   If $\chi(\TT, \square^+_{\Lambda}) = 0$, then every Kolyvagin system $\bka \in \overline{\KS}\left(\TT,\square^+_{\Lambda}\right)$ satisfies $\kappa_1 = 0$.
  \end{corollary}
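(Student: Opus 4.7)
The plan is to chain together the formula for $\chi(T_\fQ, \square^+_{\can,\fQ})$, the Mazur--Rubin vanishing theorem, and a descent argument to pass from specializations back to $H^1_{\Iw}(\Qi, T)$.

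First, I read off from the formula in the proposition that
\[ \chi(T_\fQ, \square^+_{\can, \fQ}) = \max\bigl(0, d - \rank_{\cO}(T/T^+) + \rank_{S_\fQ} H^0(\Qp, (T^+_\fQ)^*(1))\bigr), \]
and the hypothesis $\chi(\TT, \square^+_\Lambda) = \max\bigl(0, d - \rank_{\cO}(T/T^+)\bigr) = 0$ forces $d \leq \rank_\cO(T/T^+)$. The term $\rank_{S_\fQ} H^0(\Qp, (T_\fQ^+)^*(1))$ vanishes for all height-one primes $\fQ \ne (p)$ outside a finite set (those corresponding to characters for which the corresponding $H^0$ can be non-zero), so for all but finitely many such $\fQ$ we get $\chi(T_\fQ, \square^+_{\can, \fQ}) = 0$.

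Second, for each such $\fQ$ Theorem MR gives $\overline{\KS}(T_\fQ, \square^+_{\can, \fQ}) = 0$. Feeding any $\bka \in \overline{\KS}(\TT, \square^+_\Lambda)$ into the top horizontal arrow of the commutative diagram, we find $\bka^{(\fQ)} = 0$, and so by commutativity the image of $\kappa_1$ in $H^1(\QQ, T_\fQ)$ is zero for all but finitely many $\fQ$.

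Third, I need to descend from vanishing of these specialisations to $\kappa_1 = 0 \in H^1_{\Iw}(\Qi, T)$. Since $\Lambda$ is a direct product of two-dimensional regular local rings, every height-one prime $\fQ$ is principal, say $\fQ = (f)$; the Bockstein sequence for $0\to\TT \xrightarrow{f} \TT \to \TT/f\TT \to 0$ yields an injection $H^1(\QQ, \TT)/f H^1(\QQ, \TT) \hookrightarrow H^1(\QQ, \TT/f\TT)$, and the integral-closure map $\TT/f\TT \to T_\fQ$ has finite kernel/cokernel so induces an essentially injective map on $H^1$. Hence $\kappa_1 \in \bigcap_{\fQ} \fQ\cdot H^1(\QQ, \TT)$, intersected over a cofinite family of height-one primes; under the hypotheses (H.0)--(H.4) (which in particular force residual irreducibility of $T$), $H^1_{\Iw}(\Qi, T)$ has no non-zero pseudo-null $\Lambda$-submodules, and so this intersection is zero, giving $\kappa_1 = 0$.

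The main obstacle is the final descent step. The first two steps are essentially bookkeeping, invoking results that are stated verbatim above. The third step requires the Poitou--Tate/control-theorem input ensuring that elements of Iwasawa cohomology that vanish after cofinitely many specialisations must themselves vanish -- which is where the big-image hypotheses of \cref{sect:bigimage} genuinely enter the argument (through Rubin's conditions H.0--H.4).
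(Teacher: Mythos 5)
Your proof follows the same strategy as the paper's, which simply cites \cite[Cor.~5.3.19]{mazurrubin04} and argues by contraposition: if $\kappa_1 \ne 0$ then its image in $H^1(\QQ, T_\fQ)$ is nonzero for almost all $\fQ$, which contradicts $\overline{\KS}(T_\fQ, \square^+_{\can,\fQ}) = 0$. Your first two steps unpack this correctly. The final descent step, however, has two imprecisions worth flagging. First, vanishing of the image of $\kappa_1$ in $H^1(\QQ, T_\fQ)$ does \emph{not} directly give $\kappa_1 \in \fQ \cdot H^1(\QQ, \TT)$: the map $H^1(\QQ, \TT/f\TT) \to H^1(\QQ, T_\fQ)$ induced by the integral-closure inclusion has a finite but possibly nonzero kernel, so the honest conclusion is only that the image of $\kappa_1$ in $H^1(\QQ,\TT)/f H^1(\QQ,\TT)$ is a torsion element of the (injectively embedded) quotient, not that it vanishes.

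Second, and more substantively, ``no nonzero pseudo-null $\Lambda$-submodules'' is not the right criterion for the concluding step. What is actually needed is that $H^1_{\Iw}(\Qi, T) = H^1(\QQ, \TT)$ is \emph{torsion-free} as a $\Lambda$-module, which is strictly stronger. (For instance, $M = \Lambda/\fp$ for a height-one prime $\fp$ with $\Lambda/\fp$ a domain has no pseudo-null submodules, yet every element has torsion image in $M/\fQ M$ for every height-one $\fQ \ne \fp$, so one cannot recover $m = 0$ from the torsion-ness of its specialisations.) The precise input is \cite[Lemma 5.3.13]{mazurrubin04}: for a finitely generated torsion-free $\Lambda$-module $M$ and $0 \ne m \in M$, the image of $m$ in $M/\fQ M$ is not $\Lambda/\fQ$-torsion for all but finitely many height-one primes $\fQ$. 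Torsion-freeness of $H^1(\QQ,\TT)$ follows from the vanishing of $H^0(\QQ, \TT/\fQ\TT)$, which the hypotheses (H.0)--(H.4) do provide. With these two points corrected your argument is exactly the one the paper is invoking.
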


  \begin{proof}
   This follows from the same argument as \cite[Corollary 5.3.19]{mazurrubin04}: if $\kappa_1 \ne 0$, then for almost all $\fQ$ the image of $\kappa_1$ in $H^1(\QQ, T_{\fQ})$ is non-zero. So \emph{a fortiori} the image of $\bka$ in $\overline{\KS}(T_{\fQ}, \Lambda^+_{\can, \fQ})$ is nonzero. However, this module is identically 0, as we have just seen.
  \end{proof}

  \begin{remark}
   It seems likely that if $ \chi(\TT, \square^+_{\Lambda}) = 0$ then $\overline{\KS}\left(\TT,\square^+_{\Lambda}\right) = \{0\}$, but we have not proved this.
  \end{remark}

 \subsection{Euler systems}

  Now let $\cP$ be the set of primes that are $1 \bmod p$ and not in $S$, $\cR$ the set of square-free products of primes in $\cP$, and $\cK$ be the composite of $\Qi$ and the fields $\QQ[m]$ for $m \in \cR$. We define $\mathrm{ES}(T, T^+, \cK)$ as in \cite[Definition 12.2.1]{KLZ17}, with $\cP$ taken to be the set of primes in $\cR$; thus an element $\bc \in \mathrm{ES}(T, T^+, \cK)$ is given by a collection of classes
  \[ \bc = (c_m)_{m \in \cR}, \qquad c_m \in \Ht^1_{\Iw}(\Qi[m], T; \square^+). \]
  satisfying the Euler-system norm-compatibility relation. By Proposition 12.2.3 of \emph{op.cit.}, there is a canonical homomorphism
  \[ \mathrm{ES}(T, T^+, \cK) \to \overline{\KS}(\TT, \square_\Lambda^+), \]
  commuting with the maps from both sides to $H^1_{\Iw}(\Qi, T)$.

  \begin{remark}
   Strictly speaking this is only proved in \emph{op.cit.}~under the additional assumption that $\operatorname{Frob}_\ell^{p^k} - 1$ is injective on $T$ for every $\ell \in \cP$ and every $k \ge 0$. This condition tends to be automatically satisfied in applications, but in any case we can always reduce to this situation by replacing $T$ with $T \otimes \chi$ for a suitable character $\chi: \Gamma \to L^\times$, and using the fact that there are canonical bijections between Euler or Kolyvagin systems for $T$ and for $T(\chi)$. Each $\ell \in \cP$ only rules out countably many characters $\chi$, and since there are uncountably many characters $\Zp^\times \to L^\times$, we can always find a twist such that this condition holds. Cf. \cite[\S VI.1]{rubin00}.
  \end{remark}

  \begin{proposition}
   \label{prop:ESvanish}
   Suppose $\chi(\TT, \square^+_{\Lambda}) = 0$. Then the module of Euler systems $\mathrm{ES}(T, T^+, \cK)$ is zero.
  \end{proposition}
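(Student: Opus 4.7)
The plan is to combine the Euler-to-Kolyvagin homomorphism with a standard twisting trick. Fix $\bc = (c_m)_{m \in \cR} \in \mathrm{ES}(T, T^+, \cK)$; I will show every component $c_m$ vanishes.

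The case $m=1$ is immediate from Corollary \ref{cor:Ksyszero}. Indeed, the canonical map $\mathrm{ES}(T, T^+, \cK) \to \overline{\KS}(\TT, \square^+_\Lambda)$ of \cite[Proposition 12.2.3]{KLZ17} commutes with the maps $\bc \mapsto c_1$ and $\bka \mapsto \kappa_1$ down to $H^1_{\Iw}(\Qi, T)$; since the hypothesis forces the target Kolyvagin system $\bka$ to satisfy $\kappa_1 = 0$, we conclude $c_1 = 0$.

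For general $m \in \cR$, I would deploy the twisting construction of \cite[\S VI.1]{rubin00}. For each character $\eta: \Delta_m \to \overline{L}^\times$ with values in the integer ring $\cO'$ of some finite extension $L'/L$, the family $(e_\eta \cdot c_{mn})_{n \in \cR, (n,m)=1}$, suitably normalised, assembles into an Euler system $\bc^{(\eta)} \in \mathrm{ES}(T(\eta), T^+ \otimes \eta, \cK)$ over $\cO'$ whose first term $c_1^{(\eta)} \in H^1_{\Iw}(\Qi, T(\eta))$ agrees up to a unit with the $\eta$-isotypical projection $e_\eta \cdot c_m$. Our standing assumption that (H.0)--(H.4) hold for all finite-order twists carries over to $T(\eta)$, and the core-rank formula from the preceding subsection depends only on the $\cO$-ranks of $T^+$, $T/T^+$ and $T^{(c_\infty = \pm 1)}$, which are manifestly unchanged under twisting by a finite-order character unramified at $p$. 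Hence $\chi(\TT(\eta), \square^+_\Lambda) = 0$, and the $m=1$ argument applied to $\bc^{(\eta)}$ yields $e_\eta \cdot c_m = 0$ for every $\eta$. Since $|\Delta_m|$ is a $p$-power and therefore invertible in $L'$, the idempotents $e_\eta$ sum to $1 \in L'[\Delta_m]$, giving $c_m = 0$ in $H^1_{\Iw}(\Qi[m], T) \otimes_{\cO} L'$; torsion-freeness of $H^1_{\Iw}(\Qi[m], T)$ over $\cO$, a consequence of hypothesis (H.3), then descends this to $c_m = 0$.

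The main obstacle is the careful construction of $\bc^{(\eta)}$ from $\bc$ and the precise identification $c_1^{(\eta)} = (\text{unit}) \cdot e_\eta \cdot c_m$. One must verify that the $\eta$-projections of the Euler-system distribution relations for $\bc$ translate into the correctly-normalised distribution relations for $\bc^{(\eta)}$ with Euler factors of $T(\eta)$, and that the local condition $\square^+$ on $T$ transfers to the natural local condition on $T(\eta)$ defined by the $G_{\Qp}$-stable direct summand $T^+ \otimes \eta \subset T(\eta)$ (using that $\eta$ is unramified at $p$, since $p \nmid m$). These are by-now-standard computations in the Rubin--Mazur framework, but they are the technically most intricate ingredient.
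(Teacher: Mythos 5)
Your proof is correct and takes essentially the same approach as the paper: both arguments pass to the Kolyvagin-system side via the map of \cite[Proposition 12.2.3]{KLZ17}, invoke \cref{cor:Ksyszero} to kill the first term, and then handle general $m$ by twisting $T$ by characters $\eta$ of $\Delta_m$ (justified by the hypothesis that (H.0)--(H.4) hold for all such twists, and by the observation that the core rank is invariant under such unramified twists). The only difference is presentational: the paper argues by contradiction, picking a single $\eta$ with $e_\eta c_m \ne 0$, whereas you show $e_\eta c_m = 0$ for every $\eta$ and then sum the idempotents; your version spells out the twisting construction more explicitly, but the mathematical content is identical.

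One small point of care you flag yourself is the phrase ``agrees up to a unit'': since $\lvert\Delta_m\rvert$ is a $p$-power, the idempotent $e_\eta$ is not integral, so the integrally normalised twist $\bc^{(\eta)}$ has first term $\lvert\Delta_m\rvert\, e_\eta c_m$ rather than a unit multiple of $e_\eta c_m$. This is harmless for your purposes, because you only need to transfer the conclusion $c_1^{(\eta)} = 0$ back to $e_\eta c_m = 0$ rationally, and vanishing is insensitive to the $p$-power scalar; but the phrase ``up to a unit'' should be ``up to a non-zero scalar in $L'$'' to be precise.
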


  \begin{proof}
   Let $\bc = (c_m)_{m \in \cR}$ be a non-zero element of $\mathrm{ES}(T, T^+, \cK)$. Then $c_m$ is non-zero for some $m \in \cR$. Since $H^1_{\Iw}(\Qi[m], T)$ is torsion-free, $c_m$ is non-zero as an element of $H^1_{\Iw}(\Qi[m], V)$, where $V = T[1/p]$. Extending $L$ if necessary, we can write this group as the direct sum $\bigoplus_{\eta} H^1_{\Iw}(\Qi, V(\eta))$, where $\eta$ varies over characters of $\Delta_m$. So $c_m$ must have non-zero image in $H^1_{\Iw}(\Qi, V(\eta))$ for some $\eta$.

   Replacing $T$ by $T(\eta)$ (and discarding the primes dividing $m$ from $\cP$), we can assume without loss of generality that $m = 1$, so we have an Euler system $\bc \in \mathrm{ES}(T, T^+, \cK)$ with $c_1 \ne 0$. However, if $\bka \in \overline{\KS}(\TT, \square_\Lambda^+)$ is the image of $\bc$, then we must have $\kappa_1 = 0$ by \cref{cor:Ksyszero}. This is a contradiction, since by construction $\kappa_1 = c_1$ as elements of $H^1_{\Iw}(\Qi, T)$.
  \end{proof}

 \subsection{Vanishing modulo ideals}

  We preserve the notation of the previous section.

  \begin{assumption}
   \label{ass:nonexcept}
   Assume that for all $m \in \cR$ the following groups are finite (or, equivalently, zero):
   \vspace{-1ex}
   \begin{multicols}{2}\begin{itemize}
    \item $H^0(\Qpi[m], T^+)$
    \item $H^0(\Qpi[m], (T^+)^*)$
    \item $H^0(\Qpi[m], (T/T^+))$
   \end{itemize}
   \end{multicols}
   \vspace{-2ex}
  \end{assumption}

  This implies, in particular, that the core ranks $\chi(T_{\fQ},\square^+_{\can, \fQ})$ are equal to $\chi(\TT,\square^+_{\Lambda})$ for \emph{all} characteristic 0 prime ideals $\fQ$ of height 1.

  \begin{proposition}
   \label{prop:divideES0}
   Suppose the above assumption holds, and the hypotheses of \cref{prop:ESvanish} are satisfied.  Let $\fQ$ be a height 1 prime ideal of $\Lambda$ not containing $p$, and suppose that $\bc \in \operatorname{ES}(T, \cK)$ is such that for every $m$, the image of $c_m$ in $H^1(\Qp[m], V_{\fQ} / V_{\fQ}^+)$ is zero, where $V_{\fQ} = T_{\fQ}[1/p]$.

   Then we have $\bc \in \fQ \cdot \operatorname{ES}(T, \cK)$.
  \end{proposition}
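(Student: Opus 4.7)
The plan is to reduce $\bc$ modulo $\fQ$, apply \cref{prop:ESvanish} to the resulting Euler system for $T_\fQ$, and then use torsion-freeness of Iwasawa cohomology to divide by a generator of $\fQ$.

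First I would verify that the hypotheses of \cref{prop:ESvanish} transfer to $T_\fQ$ with its local condition $\square^+_{\can,\fQ}$. \cref{ass:nonexcept} implies that the correction terms of the form $H^0(\Qp, \cdot)$ entering the core-rank formula vanish after specialisation along the character $\Lambda \to S_\fQ$, because such specialisations factor through some $H^0(\Qpi[m], \cdot)$ term which is assumed to be zero. Hence $\chi(T_\fQ, \square^+_{\can,\fQ}) = \chi(\TT, \square^+_\Lambda) = 0$, and \cref{thm:MR} together with \cref{cor:Ksyszero} gives $\overline{\KS}(T_\fQ, \square^+_{\can,\fQ}) = 0$; the argument of \cref{prop:ESvanish} then yields $\mathrm{ES}(T_\fQ, T_\fQ^+, \cK) = 0$ (after extending scalars and inverting $p$ if necessary).

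Second, I would interpret the hypothesis as saying that $\bar\bc := \bc \bmod \fQ$, viewed rationally, defines an element of this Euler-system module. The vanishing of the image of $c_m$ in $H^1(\Qp[m], V_\fQ/V_\fQ^+)$ says exactly that $\loc_p(\bar c_m) \otimes \QQ_p$ lies in the image of $H^1(\Qp[m], V_\fQ^+)$, which is precisely the relaxed Greenberg condition. The Euler-system norm relations are $\Lambda$-linear, hence preserved under reduction mod $\fQ$. Thus $\bar\bc$ gives a class in $\mathrm{ES}(V_\fQ, V_\fQ^+, \cK)$, which by the first step must vanish. Consequently, after inverting $p$, each $c_m$ lies in $\fQ \cdot H^1_{\Iw}(\Qi[m], T)$.

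Finally, I would upgrade this to an integral divisibility. The running hypotheses (H.0)--(H.4), together with \cref{ass:nonexcept}, guarantee that $H^1_{\Iw}(\Qi[m], T)$ has only pseudo-null $\Lambda$-torsion, and in particular no $\fQ$-torsion. Since $\Lambda$ is a finite product of two-dimensional regular local rings, the height-one prime $\fQ$ is principal in each component; choosing a generator $f \in \fQ$, the uniquely determined elements $c'_m$ with $c_m = f c'_m$ lie in $H^1_{\Iw}(\Qi[m], T)$. Dividing the norm compatibilities by $f$ shows that $(c'_m)$ satisfies the Euler-system norm relations, and the Greenberg local condition at $p$ is inherited from $\bc$ by the same $\Lambda$-linearity, so $\bc' \in \mathrm{ES}(T, \cK)$ and $\bc = f\bc' \in \fQ \cdot \mathrm{ES}(T, \cK)$.

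The main obstacle will be the passage in the second step from the rational hypothesis (vanishing in $V_\fQ/V_\fQ^+$) to a sufficiently clean integral Greenberg condition for $\bar\bc$: one must verify that no spurious $S_\fQ$-torsion obstructs the construction of the Kolyvagin system from $\bar\bc$, which requires a careful bookkeeping of the finite $H^0$-terms provided by \cref{ass:nonexcept} and local Tate duality.
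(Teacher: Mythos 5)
Your overall strategy matches the paper's: verify that the core rank for $T_\fQ$ is zero, deduce a Mazur--Rubin-style vanishing, show $c_m \equiv 0 \pmod \fQ$ for all $m$, and divide by a principal generator of $\fQ$, using that $\TT/X_\fQ \TT$ has no $H^0$ to get injectivity of multiplication by $X_\fQ$. The closing step (dividing the norm relations by the generator) is exactly the paper's final paragraph.

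However, the obstacle you flag at the end of the proposal --- passing from the rational hypothesis (vanishing in $V_\fQ/V_\fQ^+$) to an integral local condition sufficient to produce an element of $\overline{\KS}(T_\fQ, \square^+_{\can,\fQ})$ --- is precisely the crux, and it does need to be addressed rather than merely noted. The paper's resolution is to derive the Kolyvagin system $\bka^{(\fQ)}$ directly from the Iwasawa Euler system $\bc$ via Rubin's universal-Euler-system construction and to show only that $p^a\bka^{(\fQ)} \in \KS(T_\fQ, \square^+_{\can,\fQ})$ for some $a \ge 0$: the factor $p^a$ absorbs the finite kernel of $H^1(\Qp[m], T_\fQ^+) \to H^1(\Qp[m], T_\fQ)$ and the finite torsion of $H^1(\Qp[m], T_\fQ/T_\fQ^+)$, both controlled by \cref{ass:nonexcept} and local duality. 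Since $\fQ$ does not contain $p$, multiplying by $p^a$ is harmless: the contradiction obtained from Theorem \ref{thm:MR} is that $p^a c_1^\eta \equiv 0 \pmod\fQ$, which still contradicts the choice of $\eta$ making $c_1^\eta$ non-$p$-torsion mod $\fQ$. Your proposal would need either this $p^a$-correction or an argument that the lattice $T_\fQ^+$ can be replaced by its saturation in $T_\fQ$ without affecting the Kolyvagin-system module, neither of which is spelled out. A secondary point: you speak of ``$\bar\bc$ giving a class in $\mathrm{ES}(V_\fQ, V_\fQ^+, \cK)$,'' but reducing $\bc$ mod $\fQ$ lands you in $H^1(\QQ[m], T_\fQ)$ rather than in the Iwasawa cohomology $\Ht^1_{\Iw}(\Qi[m], -)$ used in the paper's definition of $\mathrm{ES}$; the paper sidesteps this by working with the derived Kolyvagin system $\bka^{(\fQ)}$ from $\bc$ itself, not with a reduction of $\bc$.
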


  \begin{proof}
   We shall show that $c_m$ has zero image in $H^1(\QQ[m], T_{\fQ})$, for all $m$. We shall in fact show that $c_m$ maps to 0 in $H^1(\QQ[m], V_{\fQ})$; this is equivalent, since $H^1(\QQ[m], T_{\fQ})$ is $p$-torsion-free.

   Suppose this is not the case. As above, after inverting $p$ and extending $L$, it follows that there is a character $\eta$ of $\Delta_m$ (for some $m$) such that the twisted Euler system $\bc^{\eta}$ has $c_1^{\eta}$ non-zero, and not $p$-torsion, modulo $\fQ$. As in the previous proof, without loss of generality we may assume $\eta = 1$.

   We claim that the Kolyvagin system $\bka^{(\fQ)}$ derived from $\kappa$ satisfies $p^a\bka^{(\fQ)}\in  {\KS}\left(T_{\fQ}, \square^+_{\can,\fQ} \right)$ for some $a$. This can be proved using the ``universal Euler system'' as in \cite[Proposition 12.2.3]{KLZ17}. In \emph{op.cit.} we used the existence of classes over the whole cyclotomic tower all satisfying the local condition, which does not apply here. However, since $H^0(\Qpi[m], T^+) = 0$ we certainly have $H^0(\Qp[m], T_{\fQ}^+) = 0$, and this allows us to apply the modified argument sketched in \cite[\S IX.1, `Construction of the derivative classes']{rubin00} (with $T^+$ and $\mathbb{T}^+ = \operatorname{Maps}(G_\QQ, T^+)$ in place of $W_M^+$ and $\mathbb{W}_M^+$). The factor $p^a$ arises since $H^1(\Qp[m], T_{\fQ}^+) \to H^1(\Qp[m], T_{\fQ})$ can have a finite kernel, and $H^1(\Qp[m], T_{\fQ}/T_{\fQ}^+)$ can have a finite torsion subgroup, and we need to kill both of these in order to obtain a map from the universal Euler system into $H^1(\Qp[m], T_{\fQ}^+)$.

   By \cref{thm:MR}, the Kolyvagin system $p^a \bka^{(\fQ)}$ is actually 0; and in particular $p^a \kappa_1^{(\fQ)} = p^a c_1 \bmod \fQ$ is zero, a contradiction.

   Consequently, we deduce that $c_m$ lies in the kernel of $H^1_{\Iw}(\Qi[m], T) \to H^1(\QQ[m], T_{\fQ})$ for every $m$. Fixing a generator $X_{\fQ}$ of $\fQ$, and considering the long exact sequence associated to multiplication by $X_{\fQ}$ on $\TT$, we conclude that $c_m \in X_{\fQ} \cdot H^1_{\Iw}(\Qi[m], T)$ for all $m$. Since $\TT / X_{\fQ} \TT$ has vanishing $H^0$, it follows that multiplication by $X_{\fQ}$ is injective on $H^1_{\Iw}(\Qi[m], T)$; so the elements $c_m' = X_{\fQ}^{-1} c_m$ are well-defined and satisfy the Euler system norm-compatibility relations. Thus $\bc = X_{\fQ} \cdot \bc'$ for an Euler system $\bc'$.
  \end{proof}

  The above method does not work for the height 1 prime ideals above $p$ (one for each component of $\cW$), since $c_m \ne 0 \bmod \varpi$ does not imply that there is an $\eta$ such that $c_1^{\eta} \ne 0 \bmod \varpi$. So in order to control powers of $p$ we shall use a slightly different argument.

   \begin{proposition}
    \label{prop:divideESp}
    Assume that $H^0(\Qpi, (T^{+})^* \otimes k_L) = 0$. Suppose that $\bc \in \operatorname{ES}(T, \cK)$ satisfies
    \[
     \loc_p(c_m) \in \ker\Big[H^1_{\Iw}(\Qpi[m], T) \to H^1_{\Iw}(\Qpi[m], (T / T^+) /\varpi^k)\Big],
    \]
    for some $k\ge 1$ and all $m \in \cR$. Then there exists an element $\mathcal{S} \in \Lambda$ such that $\Lambda / \mathcal{S}\Lambda$ is $p$-torsion-free, and for every height 1 prime \emph{not} containing $p$, the Kolyvagin system
    \[ \mathcal{S} \cdot \bka^{(\fQ)} \bmod \varpi^k \in \overline{\KS}\left(T_{\fQ} / \varpi^k, \square_{\mathrm{can}, \fQ} \right)\]
    obtained from $\mathcal{S} \cdot \bc$ is zero.
   \end{proposition}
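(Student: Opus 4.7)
The proof will run in parallel with that of \cref{prop:divideES0}, with $\mathcal{S}$ playing the role that the integer $p^a$ played there, but now needed to control the defect between the genuine integral local condition at $p$ and the weaker mod-$\varpi^k$ version that the hypothesis provides.

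First I would pin down $\mathcal{S}$ by analysing the local obstruction. The hypothesis $H^0(\Qpi, (T^+)^*\otimes k_L)=0$, combined with Nakayama applied to the finitely generated $\cO$-module of continuous $G_{\Qpi}$-invariants, forces $H^0(\Qpi,(T^+)^*)=0$; by local Iwasawa duality over the cyclotomic tower, this means $H^2_{\Iw}(\Qpi, T^+)$ is a torsion $\Lambda$-module with vanishing $\mu$-invariant, and the same holds for each $H^2_{\Iw}(\Qpi[m],T^+)$ as $m$ ranges over $\cR$. I would then take $\mathcal{S}$ to be a generator of (a divisor common to) the characteristic ideals of these modules, extracted from the full 2-variable Iwasawa module $\varprojlim_m H^2_{\Iw}(\Qpi[m],T^+)$ viewed over $\Lambda[[\varprojlim_m\Delta_m]]$. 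By construction $\mathcal{S}$ is coprime to $(p)$, so $\Lambda/\mathcal{S}\Lambda$ is $p$-torsion-free.

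With $\mathcal{S}$ in hand, the Bockstein sequence coming from $\varpi^k:T^+\to T^+$ identifies the cokernel of $H^1_{\Iw}(\Qpi[m],T^+)/\varpi^k\to H^1_{\Iw}(\Qpi[m],T^+/\varpi^k)$ with a submodule of $H^2_{\Iw}(\Qpi[m],T^+)[\varpi^k]$, which is killed by $\mathcal{S}$. Combined with the hypothesis on $\loc_p(c_m)$, this shows that $\mathcal{S}\cdot\bc\bmod\varpi^k$ defines an element of $\operatorname{ES}(T/\varpi^k,T^+/\varpi^k,\cK)$ satisfying the integral local condition at $p$ modulo $\varpi^k$. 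Now running through the Euler-to-Kolyvagin machinery of \cite[Proposition 12.2.3]{KLZ17}, with Rubin's modified universal Euler system construction (as used in the proof of \cref{prop:divideES0}, permitted here by \cref{ass:nonexcept}), yields a Kolyvagin system $\mathcal{S}\bka\in\overline{\KS}(\TT/\varpi^k,\square^+_{\Lambda})$ whose specialisations $\mathcal{S}\bka^{(\fQ)}\bmod\varpi^k$ lie in $\overline{\KS}(T_{\fQ}/\varpi^k,\square^+_{\can,\fQ})$ for every height-$1$ prime $\fQ\not\ni p$.

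Finally, under the core-rank-$0$ hypothesis $\chi(\TT,\square^+_\Lambda)=0$ which is to be in force in the settings where \cref{prop:divideESp} will be applied, \cref{ass:nonexcept} propagates this to $\chi(T_{\fQ},\square^+_{\can,\fQ})=0$, and \cref{thm:MR} then gives $\overline{\KS}(T_{\fQ}/\varpi^k,\square^+_{\can,\fQ})=\{0\}$, which immediately yields $\mathcal{S}\bka^{(\fQ)}\bmod\varpi^k=0$. The main obstacle is the first step, extracting a single $\mathcal{S}\in\Lambda$ that kills the $\varpi^k$-obstructions uniformly over the infinite family of fields $\Qpi[m]$ rather than separately at each finite level; this is where the trivial-$\mu$ consequence of the residual hypothesis is crucial, since it guarantees that the characteristic ideal of the full limit module remains coprime to $p$.
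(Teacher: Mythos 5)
Your overall architecture is right — reduce to \cref{thm:MR} via core rank $0$, with $\mathcal{S}$ absorbing an $H^2$-type obstruction coming from the passage from mod-$\varpi^k$ to integral local conditions at $p$ — but the construction of $\mathcal{S}$ and the place where the lifting argument happens both diverge from the paper's proof in ways that introduce real gaps.

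The paper chooses $\mathcal{S}$ to annihilate the \emph{fixed} finite $\Lambda$-module $H^0(\Qpi, (T^+)^*(1)\otimes\Qp/\Zp)$, and applies it only at the very end, to the localisations of the derivative classes $\kappa_{[\QQ, m, k]}$. These derivative classes live in $H^1(\QQ, W_k)$ (the field $\QQ$, not $\QQ[m]$), so the obstruction to lifting $\loc_p\kappa_{[\QQ,m,k]}$ from the image of $H^1(\Qp, W_k^+)$ to the image of $H^1(\Qp, T_{\fQ}^+)$ is $H^2(\Qp, T_{\fQ}^+)[\varpi^k]$, which by local Tate duality sits inside $H^0(\Qpi, (T^+)^*(1)\otimes\Qp/\Zp)\otimes S_{\fQ}$ — independent of $m$. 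Your proposal instead multiplies the Euler system by $\mathcal{S}$ at the start and therefore needs $\mathcal{S}$ to control the cokernel of $H^1_{\Iw}(\Qpi[m],T^+)/\varpi^k \to H^1_{\Iw}(\Qpi[m],T^+/\varpi^k)$ \emph{uniformly over all $m \in \cR$}; this is what drives you to the ``2-variable Iwasawa module $\varprojlim_m H^2_{\Iw}(\Qpi[m],T^+)$ over $\Lambda[[\varprojlim_m\Delta_m]]$''. That ring is not Noetherian (the pro-$p$ group $\varprojlim_m\Delta_m$ is uncountable), characteristic ideals over it are not defined, and there is no canonical recipe for extracting an element of $\Lambda$ from such an object. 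So, as written, $\mathcal{S}$ is not well-defined.

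There are two further mismatches with the paper's argument. First, you invoke Rubin's \emph{modified} universal-Euler-system construction (the one used for \cref{prop:divideES0}), but the paper explicitly uses the \emph{original} construction here precisely because the hypothesis only gives the $\square^+$-condition modulo $\varpi^k$, not integrally, so the modified construction (which works with $T^+$-coefficients throughout) is not available; one must first build $\bka^{(\fQ)}$ with the unrestricted Selmer structure and then produce the $W_k^+$-valued second system $\mathbf{d}^+_{F[m],p}$. Second, you import $\chi(\TT,\square^+_\Lambda)=0$ as an assumption ``in force in the settings where the proposition will be applied''; this is acceptable as a reading of the paper's intent, but it should be flagged explicitly rather than smuggled in. Finally, a small observation worth noting: under the stated hypothesis $H^0(\Qpi,(T^+)^*\otimes k_L)=0$, local duality in fact gives $H^0(\Qpi,(T^+)^*(1)\otimes\Qp/\Zp)=0$, so one may take $\mathcal{S}=1$; the paper's phrasing just leaves room for the weaker finiteness hypothesis. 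Your Nakayama remark captures the right dual vanishing, but the conclusion ``torsion with vanishing $\mu$'' is weaker than what actually holds and does not by itself deliver an annihilator in $\Lambda$ coprime to $p$.
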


  \begin{proof}
   It suffices to show that $\mathcal{S} \cdot \bka^{(\fQ)} \bmod \varpi^k$ is in the image of the map
   \[ \overline{\KS}\left(T_{\fQ} / \varpi^k,\square^{+}_{\can, \fQ} \right) \to
   \overline{\KS}\left(T_{\fQ} / \varpi^k,\square_{\can, \fQ} \right), \]
   since we have shown above that the left-hand space is 0.

   The classes $\kappa_m$ forming the Kolyvagin system $\bka^{(\fQ)}$ are defined as $S_{\fQ}[\Delta_m]$-linear combinations of the ``derivative'' classes $\kappa_{[\QQ, m, k]}$ defined in \cite[Chapter IV]{rubin00} (with $\bc$ replaced by its twist $\bc^{(\fQ)}$), so it suffices to prove that these satisfy the local condition at $p$ defining $\overline{\KS}\left(T_{\fQ} / \varpi^k,\square^{+}_{\can, \fQ} \right)$: that is, we must show that $\kappa_{[\QQ, m, k]}$ is in the image of the saturation of $H^1(\Qp, T_{\fQ}^+)$ in $H^1(\Qp, T_{\fQ})$. We shall, in fact, prove the stronger statement that it is in the image of $H^1(\Qp, T_{\fQ}^+)$.

   As in \cite[Proposition 12.2.3]{KLZ17}, we follow the argument of \cite[\S IV.6]{rubin00} (in its original form, not modified as in the previous proof). Let $m \in \cR$, and write $W_k = T_{\fQ}/ \varpi^k$ and $\mathbb{W}_k= \operatorname{Maps}(G_{\QQ}, W_k)$, and similarly $W_k^+$ and $\mathbb{W}_k^+$. The localisations at $p$ of the Euler system classes $c^{(\fQ)}_n$ for $n \mid m$ determine a compatible family of homomorphisms
   \[ \bc_{F[m], p}: \mathbf{X}_{F[m]} \to H^1(F[m]_p, W_k) \]
   for each finite extension of $\QQ$ inside $\QQ_\infty$, where $\mathbf{X}_{F[m]}$ is Rubin's universal Euler system. As in \emph{op.cit.}, we can lift these to homomorphisms
   \[ \mathbf{d}_{F[m], p}: \mathbf{X}_{F[m]} \to (\mathbb{W}_k / \Ind_\mathcal{D} W_k)^{G_{F[m]}},\]
   whose image under the boundary map to $H^1(F[m]_p, W_k)$ are the $\bc_{F[m], p}$. Each $\mathbf{d}_{F[m], p}$ is unique up to an element of $\Hom_{G_\QQ}(\mathbf{X}_{F[m]}, \mathbb{W}_k)$. Note that this construction relies on the vanishing of $H^0_{\Iw}(\Qpi[m], W_k)$.

   Since $H^0_{\Iw}(\Qpi, W_k / W_k^+)$ is also zero, the homomorphisms $\bc_{F[m], p}$ have compatible liftings to homomorphisms $\bc^+_{F[m], p}: \mathbf{X}_{F[m]} \to H^1(F[m]_p, W_k^+)$. Repeating the argument above, we obtain a second system of homomorphisms
   \[ \mathbf{d}^+_{F[m], p}: \mathbf{X}_{F[m]} \to
   (\mathbb{W}^+_k / \Ind_\mathcal{D} W^+_k)^{G_{F[m]}},
   \]
   whose image in $\mathbb{W}_k / \Ind_\mathcal{D} W_k$ differs from $\mathbf{d}_{F[m], p}$ by an element of $\Hom_{G_\QQ}(\mathbf{X}_{F[m]}, \mathbb{W}_k)$. As in the proof of \cite[Theorem IV.5.1]{rubin00}, this implies that the localisations at $p$ of the Kolyvagin classes $\kappa_{[F, m, k]}$ lie in the images of the maps $\mathbf{d}^+_{F[m], p}$. In particular, this holds for $F = \QQ$; so we have
   \[
    \loc_p \left(\kappa_{[\QQ, m, k]}\right) \in \operatorname{image}\left( H^1(\Qp, W_k^+) \to H^1(\Qp, W_k)\right).
   \]

   In general, $H^1(\Qp, W_k^+)$ may be larger than the image of $H^1(\Qp, T^+_{\fQ})$ modulo $\varpi^k$. However, the obstruction is exactly $H^2(\Qp, T^+_{\fQ})[\varpi^k]$, and by local Tate duality, $H^2(\Qp, T^+_{\fQ})$ is isomorphic to the Pontryagin dual of $H^0(\Qp, (T^+_{\fQ})^*(1) \otimes \Qp/\Zp)$, where $\wedge$ denotes Pontryagin duality. This is a submodule of
   \[ H^0\left(\Qpi, (T^+)^*(1) \otimes \Qp/\Zp\right) \otimes_{\cO} S_{\fQ} \]
   and since the $\Lambda$-module $H^0\left(\Qpi, (T^+)^*(1) \otimes \Qp/\Zp\right)$ is a finite group, we can choose the $\mathcal{S}$ above to annihilate it. Thus $\loc_p \left(\mathcal{S}\cdot \kappa_{[\QQ, m, k]}\right)$ lies in the image of $H^1(\Qp, T_{\fQ}^+)$ in $H^1(\Qp, W_k)$, as required.
  \end{proof}

 \subsection{Nested local submodules}

  We now consider the following situation: we are given two submodules $T^{++} \subset T^+ \subseteq T$ with $\rk_{\cO}(T^+) = d+1$ and $\rk_{\cO} T^{++} = d$. Then we can apply the preceding theory to either $T^+$ or $T^{++}$; and we have
  \[ \chi(\TT, \square^+_{\Lambda}) = 1, \qquad \chi(\TT, \square^{++}_{\Lambda}) = 0. \]
  We shall denote by $T^{\sharp}$ the rank 1 quotient $T^+/ T^{++}$. We assume that both $T^+$ and $T^{++}$ satisfy \cref{ass:nonexcept}.

  \begin{theorem}
   \label{thm:selmerbound}
   Suppose $\bc \in \mathrm{ES}(T, T^+, \cK)$, and let $\fQ$ be a height 1 prime ideal of $\Lambda$ and $k \ge 0$. Assume that for all $m \in \cR$, the image of $c_m$ under the composition
   \[
    \Ht^1_{\Iw}(\Qi[m], T; \square^+) \xrightarrow{\loc_p} H^1(\Qpi[m], T^+) \to H^1_{\Iw}(\Qpi[m], T^{\sharp})
   \]
   is divisible by $\fQ^k$. Then we have the Selmer group bound
   \[ \fQ^k \cdot \operatorname{char}_\Lambda \Ht_{\Iw}^2(\Qi,T;\square^+)\  \Big|\  \operatorname{char}_{\Lambda}\left(\frac{ \Ht_{\Iw}^1(\Qi,T;\square^+)}{\Lambda \cdot c_1} \right). \]
  \end{theorem}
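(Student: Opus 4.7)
The plan is to deploy the Mazur--Rubin Kolyvagin system formalism in two layers: with respect to the Selmer structure $\square^+$ (of core rank $1$) to obtain a baseline Selmer bound, and then to exploit the hypothesis together with the core-rank zero vanishing for the stricter structure $\square^{++}$ to squeeze out the additional factor $\fQ^k$. First, the Euler-system-to-Kolyvagin-system map of \cite[Proposition~12.2.3]{KLZ17} produces $\bka \in \overline{\KS}(\TT, \square_\Lambda^+)$ with $\kappa_1 = c_1$. Applying Mazur--Rubin's theorem (valid because $\chi(\TT, \square_\Lambda^+) = 1$, with the core rank constant across all height-$1$ primes by \cref{ass:nonexcept}) yields the baseline divisibility
\[ \operatorname{char}_\Lambda \Ht^2_{\Iw}(\Qi, T; \square^+) \mid \operatorname{char}_\Lambda \left( \Ht^1_{\Iw}(\Qi, T; \square^+) / \Lambda \cdot c_1 \right), \]
which is the desired bound in the special case $k=0$.

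To extract the additional $\fQ^k$, the key observation is that the hypothesis is precisely the statement that $\bc \bmod \fQ^k$ satisfies the stricter $\square^{++}$-local condition at $p$: the image of $H^1(\Qpi[m], T^{++}) \to H^1(\Qpi[m], T^+)$ is exactly the kernel of the map to $H^1(\Qpi[m], T^\sharp)$, so divisibility of $\loc_p(c_m)$ by $\fQ^k$ in the $T^\sharp$-direction allows one to lift $\bc \bmod \fQ^k$ to an element of $\operatorname{ES}(T/\fQ^k, T^{++}/\fQ^k, \cK)$ (the existence of compatible lifts modulo $\fQ^k$ being permitted by the vanishing of $H^0$'s supplied by \cref{ass:nonexcept}). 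Since $\chi(\TT, \square_\Lambda^{++}) = 0$, the argument of \cref{prop:ESvanish} -- together with the local-to-global lifting of Kolyvagin classes in the style of \cref{prop:divideES0} (for $\fQ$ not containing $p$) or \cref{prop:divideESp} (for $\fQ$ containing $p$) -- forces the Kolyvagin system for $\square^{++}$ attached to this lifted Euler system to vanish modulo $\fQ^k$. Tracing this vanishing back through the relation between $\square^{++}$-  and $\square^+$-Kolyvagin systems shows that $\bka$ itself lies in $\fQ^k \cdot \overline{\KS}(\TT, \square_\Lambda^+)$, so we may write $\bka = \fQ^k \bka'$ with $\kappa_1' = c_1/\fQ^k$ well-defined in $\Ht^1_{\Iw}(\Qi, T; \square^+)$ (which is $\Lambda$-torsion-free on its image). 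Applying Mazur--Rubin to $\bka'$ gives $\operatorname{char}_\Lambda \Ht^2_{\Iw}(\Qi, T; \square^+) \mid \operatorname{char}_\Lambda(\Ht^1_{\Iw}/\Lambda c_1')$, and since $\operatorname{char}_\Lambda(\Ht^1_{\Iw}/\Lambda c_1) = \fQ^k \cdot \operatorname{char}_\Lambda(\Ht^1_{\Iw}/\Lambda c_1')$ (as follows from the short exact sequence $0 \to \Lambda c_1'/\Lambda c_1 \to \Ht^1_{\Iw}/\Lambda c_1 \to \Ht^1_{\Iw}/\Lambda c_1' \to 0$ with the left term isomorphic to $\Lambda/\fQ^k$), the refined bound follows upon multiplying both sides by $\fQ^k$.

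The main technical obstacle lies in the second step: translating the hypothesis -- which only constrains a single local projection of $\bc$ at $p$ -- into a genuine global $\fQ^k$-divisibility of the full Kolyvagin system $\bka$, without picking up spurious torsion factors. This requires carefully adapting the ``universal Euler system'' construction of Rubin \cite[\S IV.6]{rubin00} simultaneously for the pair of Selmer structures $\square^+ \supset \square^{++}$, and checking that the finite ``junk'' terms (the analogue of the factor $\mathcal{S}$ of \cref{prop:divideESp}) either vanish by \cref{ass:nonexcept} or are independent of $k$ and therefore may be discarded via a limiting argument. In particular, for primes $\fQ$ dividing $p$ one must verify that the inessential torsion does not obstruct the precise identification $\bka = \fQ^k \bka'$; for $\fQ$ not containing $p$, the argument of \cref{prop:divideES0} shows this cleanly via a twisting-by-characters reduction, and for $\fQ$ containing $p$ the argument of \cref{prop:divideESp} combined with the core-rank zero vanishing suffices.
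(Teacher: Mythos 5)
Your overall strategy matches the paper's: exploit the core-rank-zero vanishing for $\square^{++}$ to squeeze $\fQ^k$ out of the local hypothesis via Propositions~\ref{prop:divideES0} and~\ref{prop:divideESp}, then feed the result into the core-rank-one Mazur--Rubin bound for $\square^+$. For $\fQ$ not containing $p$ your argument is essentially sound, though organized differently: the paper iterates Prop.~\ref{prop:divideES0} to show $\bc \in \fQ^k \cdot \operatorname{ES}(T, T^+, \cK)$ by induction on $k$, dividing the \emph{Euler system} inside the $\Lambda$-torsion-free modules $H^1_{\Iw}(\Qi[m], T)$, whereas you aim for the divisibility $\bka \in \fQ^k \cdot \overline{\KS}(\TT, \square^+_\Lambda)$ at the level of the Kolyvagin-system module, which requires additional structural knowledge of $\overline{\KS}$ that is never established. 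The paper's route is cleaner precisely because it avoids this.

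For $\fQ$ containing $p$ there is a genuine gap. You acknowledge that the $\mathcal{S}$-factor of Prop.~\ref{prop:divideESp} is the obstruction to the integral identification $\bka = \fQ^k\bka'$, but you claim it ``either vanishes by \cref{ass:nonexcept} or is independent of $k$ and therefore may be discarded via a limiting argument.'' Neither holds: \cref{ass:nonexcept} only forces $H^0\left(\Qpi, (T^+)^*(1)\otimes \Qp/\Zp\right)$ to be \emph{finite}, not zero, so $\mathcal{S}$ is a non-unit in general; and no limiting argument is supplied that removes it. The paper sidesteps the issue entirely: since the desired divisibility of characteristic ideals may be checked one height-one prime at a time, and is already known at all primes other than $\fQ$, it suffices to verify it \emph{after localizing at $\fQ = (\varpi)$}, where $\mathcal{S}$ becomes a unit. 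One then never divides $\bka$ by $\varpi^k$; instead, Prop.~\ref{prop:divideESp} shows the specialised Kolyvagin systems $\bka^{(\fQ')}$ at characteristic-zero height-one primes $\fQ'$ are all $\equiv 0 \bmod \varpi^k$, and propagating this extra factor through the proof of the baseline bound (which is assembled from exactly such specialisations) yields the sharpened inequality at $\fQ$. Your formulation asserts a strictly stronger statement ($\bka \in \varpi^k\overline{\KS}$) that is not proved in the paper and that your sketch does not establish.
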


  \begin{proof}
   First let us suppose $p \notin \fQ$. If $k = 0$, then $\fQ$ and $T^{++}$ play no role, and the claim reduces to Corollary 12.3.5 of \cite{KLZ17}. If $k > 0$, then \ref{prop:divideES0} shows that the entire Euler system is a multiple of $\fQ$, so we may use induction on $k$ to reduce to the case $k = 0$. Now suppose $\fQ$ is of residue characteristic $p$. Since we already know the result after localisation at any prime other than $\fQ$, it suffices to prove the divisibility after localising at $\fQ$. Proposition \ref{prop:divideESp} shows that after replacing $\bc$ by $\mathcal{S} \cdot \bc$, for some $\mathcal{S}$ which is a unit in the localisation at $\fQ$, the bounds on the Selmer groups of the characteristic 0 specialisations $T_{\fQ'}$ obtained from $\bka$ are all non-optimal by a factor of $\varpi^{k}$. Chasing this additional information through the proof we obtain the above sharpening of the final result.
  \end{proof}

  \begin{corollary}
  \label{cor:rank0bound}
   Under the same hypotheses as above, suppose $\lambda$ is a homomorphism of $\Lambda$-modules
   \[ H^1_{\Iw}(\Qpi, T^\sharp) \to \Lambda \]
   whose kernel and cokernel are pseudo-null. Then we have
   \[ \fQ^{k} \cdot \operatorname{char}_\Lambda \Ht_{\Iw}^2(\Qi,T;\square^{++})\  \big|\  \lambda(c_1). \]
  \end{corollary}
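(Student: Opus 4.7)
The plan is to combine \cref{thm:selmerbound} with the long exact sequence comparing the $\square^+$ and $\square^{++}$ Selmer complexes, and to use the pseudo-isomorphism $\lambda$ to transport the resulting characteristic-ideal relations into $\Lambda$.

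The inclusion $T^{++} \hookrightarrow T^+$ gives a distinguished triangle
\[ \RGt_{\Iw}(\Qi, T; \square^{++}) \longrightarrow \RGt_{\Iw}(\Qi, T; \square^+) \longrightarrow \RG_{\Iw}(\Qpi, T^\sharp) \longrightarrow [1] \]
of complexes of $\Lambda$-modules. Since $(T^\sharp)^*(1)$ embeds into $(T^+)^*(1)$, \cref{ass:nonexcept} (applied to $T^+$) together with local Tate duality gives $H^0_{\Iw}(\Qpi, T^\sharp) = 0$ and $H^2_{\Iw}(\Qpi, T^\sharp)$ pseudo-null. The associated long exact sequence therefore reduces, modulo pseudo-nulls, to
\[ 0 \to \Ht^1_{\Iw}(\square^{++}) \to \Ht^1_{\Iw}(\square^+) \xrightarrow{\ \Psi\ } H^1_{\Iw}(\Qpi, T^\sharp) \to \Ht^2_{\Iw}(\square^{++}) \to \Ht^2_{\Iw}(\square^+) \to 0, \]
yielding the identity $\operatorname{char}_\Lambda \Ht^2_{\Iw}(\square^{++}) = \operatorname{char}_\Lambda(\operatorname{coker}\Psi) \cdot \operatorname{char}_\Lambda \Ht^2_{\Iw}(\square^+)$.

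Since $\lambda$ is a pseudo-isomorphism and $\Psi(c_1) \in \operatorname{image}\Psi$, we have $\operatorname{char}_\Lambda\bigl(H^1_{\Iw}(\Qpi, T^\sharp)/\Lambda\Psi(c_1)\bigr) = (\lambda(c_1))$ up to pseudo-nulls. Chasing through the two short exact sequences
\[ 0 \to \Ht^1_{\Iw}(\square^{++}) \to \Ht^1_{\Iw}(\square^+)/\Lambda c_1 \to \operatorname{image}\Psi/\Lambda\Psi(c_1) \to 0 \]
and $0 \to \operatorname{image}\Psi/\Lambda\Psi(c_1) \to H^1_{\Iw}(\Qpi, T^\sharp)/\Lambda\Psi(c_1) \to \operatorname{coker}\Psi \to 0$ produces the identity
\[ \operatorname{char}_\Lambda(\operatorname{coker}\Psi) = \frac{(\lambda(c_1)) \cdot \operatorname{char}_\Lambda \Ht^1_{\Iw}(\square^{++})}{\operatorname{char}_\Lambda\bigl(\Ht^1_{\Iw}(\square^+)/\Lambda c_1\bigr)}. \]
Substituting into the identity from the previous step, the desired conclusion $\fQ^k \operatorname{char}_\Lambda \Ht^2_{\Iw}(\square^{++}) \mid \lambda(c_1)$ reduces to the sharpened bound $\fQ^k \operatorname{char}_\Lambda \Ht^2_{\Iw}(\square^+) \cdot \operatorname{char}_\Lambda \Ht^1_{\Iw}(\square^{++}) \mid \operatorname{char}_\Lambda\bigl(\Ht^1_{\Iw}(\square^+)/\Lambda c_1\bigr)$, which carries an extra factor of $\operatorname{char}_\Lambda \Ht^1_{\Iw}(\square^{++})$ beyond what \cref{thm:selmerbound} provides.

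The main obstacle is establishing this refinement. The approach is in the style of Rubin: by a variant of the Kolyvagin-derivative argument of \cref{prop:divideESp}, the $\fQ^k$-divisibility hypothesis on the image of the $c_m$ in $H^1_{\Iw}(\Qpi[m], T^\sharp)$ implies that the Kolyvagin system derived from $\bc$, after multiplication by a suitable non-pseudo-null element $\mathcal{S}$, satisfies the stricter $\square^{++}$ local condition modulo $\fQ^k$. Since $\square^{++}$ has core rank $0$, the analogue of \cref{cor:Ksyszero} applies; but the presence of the pseudo-isomorphism $\lambda$ allows the resulting vanishing to be re-expressed, via $\lambda$ as a ``leading-term map'', as a bound on $\Ht^2_{\Iw}(\square^{++})$ measured against $\lambda(c_1)/\fQ^k$ instead of against $\operatorname{char}\bigl(\Ht^1_{\Iw}(\square^+)/\Lambda c_1\bigr)/\fQ^k$. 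This delicate interplay between the core-rank-$1$ Kolyvagin machinery for $\square^+$ and the rigidity of the core-rank-$0$ structure $\square^{++}$ is the key technical content of the corollary.
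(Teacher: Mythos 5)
Your algebraic reduction is correct as far as it goes: the triangle gives the six-term sequence, and the two short exact sequences you write down yield exactly the identity $\operatorname{char}_\Lambda \Ht^2_{\Iw}(\square^{++}) \cdot \operatorname{char}_\Lambda\bigl(\Ht^1_{\Iw}(\square^+)/\Lambda c_1\bigr) = (\lambda(c_1)) \cdot \operatorname{char}_\Lambda \Ht^1_{\Iw}(\square^{++}) \cdot \operatorname{char}_\Lambda\Ht^2_{\Iw}(\square^+)$, so that the corollary appears to require the ``sharpened'' input $\fQ^k \cdot \operatorname{char}\Ht^2_{\Iw}(\square^+) \cdot \operatorname{char}\Ht^1_{\Iw}(\square^{++}) \mid \operatorname{char}(\Ht^1_{\Iw}(\square^+)/\Lambda c_1)$. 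But the extra factor you flag is a phantom: under the hypotheses that make the corollary nontrivial one has $\Ht^1_{\Iw}(\Qi, T; \square^{++}) = 0$, so $\operatorname{char}\Ht^1_{\Iw}(\square^{++})$ is the unit ideal and your ``sharpened bound'' coincides with the bound that \cref{thm:selmerbound} already provides.

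Here is why $\Ht^1_{\Iw}(\square^{++})$ vanishes. If $\lambda(c_1) = 0$ the corollary is trivially true, so assume $\Psi(c_1) \neq 0$, where $\Psi$ denotes the composite $\Ht^1_{\Iw}(\square^+) \xrightarrow{\loc_p} H^1_{\Iw}(\Qpi, T^\sharp)$. The Euler-system machinery underlying \cref{thm:selmerbound} (via the Kolyvagin-system bound of Mazur--Rubin, as packaged in \cite[\S 12.3]{KLZ17}) gives, for a non-zero Euler system, not only the stated divisibility but also that $\Ht^2_{\Iw}(\square^+)$ is torsion and $\Ht^1_{\Iw}(\square^+)$ has $\Lambda$-rank exactly $1 = \chi(\TT, \square^+_\Lambda)$ -- this is exactly the content of parts (1)--(2) of \cref{thm:withoutzeta}, which the paper deduces from the same source. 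Now $\Ht^1_{\Iw}(\square^{++}) = \ker\Psi$ is a torsion-free submodule of the rank-one module $\Ht^1_{\Iw}(\square^+)$, and $\Psi(c_1) \neq 0$ forces $\operatorname{im}\Psi$ to have rank $1$ as a submodule of the rank-one module $H^1_{\Iw}(\Qpi, T^\sharp)$; hence $\ker\Psi$ has rank $0$ and, being torsion-free, is zero. Once this is observed, the long exact sequence reduces (modulo pseudo-null) to $0 \to \Ht^1_{\Iw}(\square^+)/\Lambda c_1 \to H^1_{\Iw}(\Qpi, T^\sharp)/\Lambda\Psi(c_1) \to \Ht^2_{\Iw}(\square^{++}) \to \Ht^2_{\Iw}(\square^+) \to 0$, and multiplicativity of characteristic ideals together with the divisibility from \cref{thm:selmerbound} and the identification $\operatorname{char}\bigl(H^1_{\Iw}(\Qpi, T^\sharp)/\Lambda\Psi(c_1)\bigr) = (\lambda(c_1))$ gives the result directly. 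Your final paragraph proposing a new Kolyvagin-derivative argument to supply the missing factor is therefore unnecessary, and as sketched it is also unconvincing: multiplying the Kolyvagin system by an element $\mathcal{S}$ so that it satisfies the $\square^{++}$ condition modulo $\fQ^k$ is precisely the mechanism already absorbed into the proof of \cref{thm:selmerbound} (via \cref{prop:divideES0,prop:divideESp}), and running it again would only reproduce that theorem, not improve it by the alleged extra factor.
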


  \begin{proof}
   This follows from \cref{thm:selmerbound} by a standard argument using the exact triangle of Selmer complexes
   \[ \RGt_{\Iw}(\Qi, T; \square^{++}) \to \RGt_{\Iw}(\Qi, T; \square^{+}) \to \RG_{\Iw}(\Qpi, T^{\sharp})
  \to [+1], \]
  see e.g.~\cite[Theorem 11.6.4]{KLZ17}.
  \end{proof}

\section{An axiomatic ``leading term'' argument}
\label{sect:axiom}

 We now consider the following general setting:

 \begin{itemize}
  \item $L$ is a $p$-adic field.
  \item $\Omega$ is a one-dimensional affinoid disc, and $X$ is a uniformiser at a point of $\Omega$, so $\cO(\Omega) \cong L\langle X \rangle$.
  \item $\cM$ is a free module of finite rank over $\cO(\Omega)$ with a continuous action of $G_{\QQ}$, unramified outside some set $S \ni p$.
  \item $\cR$ denotes the set of square-free products of primes $\ell \notin S$ such that $\ell = 1\bmod p$.
 \end{itemize}

 \subsection{Leading terms of Euler systems}

  Suppose we are given some compatible collection
  \[ c_m \in H^1_{\Iw}(\Qi[m], \cM) : m \in \cR,\]
  satisfying the Euler-system norm compatibility relation.

  \begin{lemma}
   For each $m\in \cR$, we have $H^1_{\Iw}(\Qi[m], \cM)[X] = 0$.
  \end{lemma}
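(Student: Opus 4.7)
My plan is to apply the long exact sequence in Iwasawa cohomology associated to the short exact sequence of $\cO(\Omega)[G_\QQ]$-modules
\[
 0 \to \cM \xrightarrow{\,X\,} \cM \to \cM/X\cM \to 0.
\]
This yields an exact sequence
\[
 H^0_{\Iw}(\Qi[m], \cM/X\cM) \to H^1_{\Iw}(\Qi[m], \cM) \xrightarrow{\,X\,} H^1_{\Iw}(\Qi[m], \cM),
\]
so the claim reduces to showing $H^0_{\Iw}(\Qi[m], \cM/X\cM) = 0$. To set this up integrally, I would fix a $G_\QQ$-stable $\cO^+(\Omega)$-lattice $\cM^+ \subset \cM$; such a lattice exists since $G_\QQ$ acts continuously on the finitely generated $\cO(\Omega)$-module $\cM$. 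After inverting $p$, it suffices to prove the vanishing for $N := \cM^+/X\cM^+$, which is a finitely generated $\cO_L$-module (since $\cO^+(\Omega)/X = \cO_L$) equipped with a continuous $G_\QQ$-action.

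The vanishing of $H^0_{\Iw}(\Qi[m], N)$ is then a general Iwasawa-theoretic fact. For each finite subextension $F_n$ of $\Qi[m]/\QQ$, the invariants $N^{G_{F_n}}$ sit inside the fixed finitely generated $\cO_L$-module $N$, and so stabilise to some $N_\infty$ once $n$ is large enough. For $n \gg 0$ the corestriction $N^{G_{F_{n+1}}} \to N^{G_{F_n}}$ acts on $N_\infty$ as multiplication by $[F_{n+1}:F_n]$, which is eventually a positive power of $p$ because $\Gal(\Qi[m]/\QQ)$ contains an open pro-$p$ subgroup. Any compatible system in $\varprojlim_n N_\infty$ is therefore infinitely $p$-divisible in the finitely generated $\cO_L$-module $N_\infty$, hence zero by the Krull intersection theorem.

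The only real subtlety is bookkeeping: the notion of $H^1_{\Iw}$ in the family setting and the corresponding long exact sequence are implicit in the conventions used throughout the paper, but must be invoked via continuous cochain cohomology of the integral lattice $\cM^+$ together with inverse limits under corestriction. I do not foresee any serious obstacle here.
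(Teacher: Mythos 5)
Your proposal follows the same route as the paper: reduce via the long exact sequence in Iwasawa cohomology for multiplication by $X$ to the vanishing of $H^0_{\Iw}(\Qi[m], \cM/X\cM)$. The paper simply asserts this vanishing and invokes ``standard exact sequences,'' while you additionally supply a complete and correct justification for it (passage to an integral lattice $\cM^+$, observing that the corestriction maps on the stabilised invariants are eventually multiplication by positive powers of $p$, and concluding by Krull intersection).
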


  \begin{proof}
   Since $H^0_{\Iw}(\Qi[m], \cM / X\cM) = 0$ the result follows by standard exact sequences.
  \end{proof}

  \begin{definition}
   For $m \in \cR$, let $h(m) \in \ZZ_{\ge 0} \cup \{\infty\}$ be the largest integer $n$ such that $c_m \in X^n \cdot H^1_{\Iw}(\Qi[m], \cM)$; and let $h = \inf_{m \in \cR} h(m)$.
  \end{definition}

  We shall assume that $c_1 \ne 0$, so that $h(1)$ and hence $h$ are finite. Since $H^1_{\Iw}(\Qi[m], \cM)[X] = 0$, there is a uniquely determined element $X^{-h} c_m \in H^1(\Qi[m], \cM)$ for every $m \in \cR$, and these satisfy the Euler system norm relation (since their images after multiplication by $X^h$ do, and multiplication by $X^h$ is an injection).

  \begin{definition}
   Let $V = \cM/ X\cM$, and let us write $c^{(h)}_m$ for the image of $X^{-h} c_m$ under the reduction map
   \[ H^1(\Qi[m], \cM) / X \into H^1(\Qi[m], V).\]
  \end{definition}

  Note that the injectivity of this reduction map follows from $H^0_{\Iw}(\Qi[m], \cM/X\cM) = 0$. By the definition of $h$, there exists $m \in \cR$ such that $c^{(h)}_m \ne 0$. Evidently, the classes $c^{(h)}_m$ satisfy the Euler-system norm relations for $\cM / X\cM$.

  \begin{proposition}
   Let $\cO^+(\Omega) \cong \cO\langle X \rangle$ denote the subring of power-bounded elements of $\cO(\Omega)$, and suppose that there is a finitely-generated $\cO^+(\Omega)$-submodule $\cM^+ \subset \cM$ (independent of $m$) which is stable under $G_{\QQ}$ and such that we have
   \[ c_m \in \operatorname{image}\left( H^1_{\Iw}(\Qi[m], \cM^+) \to H^1_{\Iw}(\Qi[m], \cM)\right)\, \forall m.\]
   Then we can find a $\cO$-lattice $T \subset V$ such that
   \[ c_m^{(h)} \in \operatorname{image}\left( H^1_{\Iw}(\Qi[m], T) \to H^1_{\Iw}(\Qi[m],V)\right)\, \forall m.\]
  \end{proposition}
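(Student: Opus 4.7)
The plan is to produce $T$ as a suitable enlargement by a bounded power of $p$ of the image in $V$ of a finitely-generated $\cO^+(\Omega)$-submodule $\cN \subseteq \cM$. I would take $\cN := (X^{-h}\cM^+) \cap \cM$, where the intersection is computed inside $\cM[X^{-1}]$. Since $\cN$ is a submodule of $X^{-h}\cM^+$ (itself isomorphic to $\cM^+$ over the Noetherian ring $\cO^+(\Omega) = \cO\langle X\rangle$), it is finitely generated; it is clearly $G_{\QQ}$-stable, contains $\cM^+$, and satisfies $X^h\cN \subseteq \cM^+$. Its image $T_0 \subseteq V$ is an $\cO$-lattice, since $\cN$ spans $\cM$ over $L$ (already $\cM^+$ does, by the assumption that $c_m$ lies in the image from $\cM^+$, applied for enough $m$).

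For each $m$, lift $c_m$ to $c_m^+ \in H^1_{\Iw}(\Qi[m],\cM^+)$ and regard it also in $H^1_{\Iw}(\Qi[m],\cN)$ via the inclusion $\cM^+ \hookrightarrow \cN$. Since $\cN \otimes_\cO L = \cM$ gives $H^1_{\Iw}(\cN) \otimes_\cO L = H^1_{\Iw}(\cM)$, the unique element $\tilde c_m \in H^1_{\Iw}(\cM)$ with $X^h\tilde c_m = c_m$ lifts, modulo $p^\infty$-torsion, to some $\tilde c_m^+ \in H^1_{\Iw}(\cN)$ satisfying $X^h \tilde c_m^+ \equiv p^{k_m} c_m^+$ with $k_m \geq 0$ minimal. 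Applying the long exact sequence from $0 \to \cN \xrightarrow{X} \cN \to \cN/X\cN \to 0$ and chasing the natural commutative diagram against $0 \to \cM \xrightarrow{X} \cM \to V \to 0$, the image of $\tilde c_m^+$ under the composite $H^1_{\Iw}(\cN) \to H^1_{\Iw}(\cN/X\cN) \to H^1_{\Iw}(T_0) \to H^1_{\Iw}(V)$ is precisely $p^{k_m} c_m^{(h)}$.

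Thus $p^{k_m}c_m^{(h)}$ lies in the image of $H^1_{\Iw}(\Qi[m],T_0) \to H^1_{\Iw}(\Qi[m],V)$ for every $m$. Setting $T := p^{-k}T_0$ with $k := \sup_m k_m$ would finish the proof, so the \emph{main obstacle} is the uniform bound $\sup_m k_m < \infty$. This reduces to bounding the exponent of the $p^\infty$-torsion of $H^1_{\Iw}(\Qi[m], \cN)$ uniformly as $m$ ranges over $\cR$, which in turn follows from uniform vanishing (or uniform boundedness) of $H^0(\Qi[m], \cN/\varpi\cN)$ — a standard consequence of the large-image hypothesis on the residual Galois representations in the applications where this axiomatic proposition is ultimately invoked.
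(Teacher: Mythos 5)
Your plan diverges from the paper's proof in a way that leaves a genuine gap, which you yourself flag as the ``main obstacle'': you need the uniform bound $\sup_m k_m < \infty$, and you propose to get it from a large-image hypothesis on the residual representation. But this proposition lives in the axiomatic \S\ref{sect:axiom}, where no such hypothesis is imposed; the only standing assumptions in force are $H^0_{\Iw}(\Qi[m], \cM/X\cM) = 0$ and the existence of the $\cO^+(\Omega)$-structure $\cM^+$. You cannot import hypotheses from ``the applications where this axiomatic proposition is ultimately invoked'' without changing the statement. Moreover, your intermediate module $\cN := (X^{-h}\cM^+)\cap\cM$ is a perfectly good finitely-generated $G_\QQ$-stable $\cO^+(\Omega)$-module, but nothing forces $\cN/X^h\cN$ to be $p$-torsion-free, which is exactly where the stray $p^{k_m}$'s originate.

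The paper avoids the problem entirely with one structural observation you did not make: replace $\cM^+$ by its reflexive hull, still finitely-generated and $G_\QQ$-stable, and use that a reflexive $\cO\langle X\rangle$-module is free. Then $\cM^+/X^h\cM^+$ is a finitely-generated free $\cO$-module, so $H^1_{\Iw}(\Qi[m], \cM^+/X^h\cM^+)$ is $p$-torsion-free, and so is the subgroup $\cH^+/X^h\cH^+$ (where $\cH^+ = H^1_{\Iw}(\Qi[m],\cM^+)$). Since $c_m$ lies in $X^h\cH \cap \cH^+$ with $\cH = \cH^+[1/p]$, its class in $\cH^+/X^h\cH^+$ is $p$-torsion, hence zero; so $c_m \in X^h\cH^+$ with \emph{no} $p$-denominator, i.e.\ $k_m = 0$ for every $m$ automatically, and one simply takes $T$ to be the image of $\cM^+$ in $V$. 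So your reduction to ``bound the $p$-torsion uniformly in $m$'' is unnecessary: with the right choice of integral model there is no $p$-torsion to bound. I'd encourage you to revise the argument around this freeness trick rather than chase a uniformity statement that the axiomatic hypotheses do not supply. (Separately: your parenthetical claim that $\cM^+$ spans $\cM$ over $L$ ``because the $c_m$ lie in the image'' does not actually follow; spanning is an implicit assumption in this proposition, not a consequence of the hypotheses on the $c_m$.)
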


  \begin{proof}
   Without loss of generality, we may assume that $\cM^+ / X \cM^+$ is $p$-torsion-free (by replacing $\cM^+$ with its reflexive hull, which is also finitely generated and $G_{\QQ}$-stable, and using the fact that a reflexive $\cO^+(X)$-module is free).

   Let us temporarily write $\cH^+ = H^1_{\Iw}(\Qi[m], \cM^+)$ and similarly $\cH = H^1_{\Iw}(\QQ(\mu_m), \cM) = \cH[1/p]$. The long exact sequence associated to multiplication by $X^h$ on $\cM^+$ gives an injection
   \[ \cH^+ / X^h \cH^+ \into H^1_{\Iw}(\Qi[m], \cM^+ / X^h \cM^+), \]
   and the latter module is $p$-torsion-free, since $\cM^+ / X^h \cM^+$ is a finitely-generated free $\cO$-module. So $\cH^+ / X^h \cH^+$ is also $p$-torsion-free.

   Since $c_m \in X^h \cH \cap \cH^+$, it follows that the class of $x$ in $\cH^+ / X^h \cH^+$ is $p$-torsion. From the above, we can conclude that in fact $c_m \in X^h \cH^+$. Thus $X^{-h} c_m \in H^1_{\Iw}(\Qi[m], \cM^+)$, and we may take $T$ to be the image of $\cM^+$ in $\cM / X \cM$.
  \end{proof}

  Since $H^1_{\Iw}(\Qi[m], T)$ is $p$-torsion-free, and the $c_m^{(h)}$ satisfy the Euler system norm relations after inverting $p$, it follows that $c_m^{(h)}$ are an Euler system for $(\cR, T)$. By construction, this is not the zero Euler system (that is, $c_m^{(h)} \ne 0$ for some $m$, although we cannot at this point rule out the possibility that $c_1^{(h)} = 0$).

 \subsection{Local conditions at p (I)}

  Since the classes $c_m^{(h)}$ lie in Iwasawa cohomology, they automatically satisfy the unramified local conditions at all primes away from $p$. However, local conditions at $p$ are a more delicate question.

  We suppose that the following conditions hold:
  \begin{itemize}
   \item $\cM$ has even rank $2d$, and the complex-conjugation eigenspaces $\cM^{(c_\infty = +1)}$ and $\cM^{(c_\infty = -1)}$ both have rank $d$.
   \item There exist saturated $\cO(\Omega)$-submodules $\cM^{++} \subset \cM^+ \subset \cM$, all stable under $G_{\Qp}$, such that $\cM^{++}$ has rank $d$ and $\cM^+$ has rank $d+1$.
   \item For each $m$ we have
   \[
    \loc_p(c_m) \in \operatorname{image}\Big(H^1_{\Iw}(\Qpi[m], \cM^{+}) \to H^1_{\Iw}(\Qpi[m], \cM)\Big).
   \]
  \end{itemize}

  \begin{notation}
   We let $V^{+}$ and $V^{++}$ denote the images of $\cM^+$ and $\cM^{++}$ in $V$.
  \end{notation}

  \begin{proposition}
   We have $\loc_p\left(c_m^{(h)}\right) \in \operatorname{image}\left(H^1_{\Iw}(\Qpi[m], V^+) \to H^1_{\Iw}(\Qpi[m], V)\right)$.
  \end{proposition}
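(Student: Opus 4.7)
The plan is to lift $\loc_p(c_m)$ to a class $\tilde{c}_m^+ \in H^1_{\Iw}(\Qpi[m], \cM^+)$ which is itself divisible by $X^h$ in $H^1_{\Iw}(\Qpi[m], \cM^+)$; then $X^{-h} \tilde{c}_m^+$ reduced modulo $X$ will give the required lift of $\loc_p(c_m^{(h)})$ to $H^1_{\Iw}(\Qpi[m], V^+)$.

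First, using the hypothesis, I would pick $\tilde{c}_m \in H^1_{\Iw}(\Qpi[m], \cM^{++})$ lifting $\loc_p(c_m)$, and push it to a class $\tilde{c}_m^+ \in H^1_{\Iw}(\Qpi[m], \cM^+)$ via the inclusion $\cM^{++} \hookrightarrow \cM^+$. By construction, $\tilde{c}_m^+$ maps to $\loc_p(c_m)$, which lies in $X^h \cdot H^1_{\Iw}(\Qpi[m], \cM)$ by the definition of $h$.

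The crucial step, and the main obstacle, is to show that $\tilde{c}_m^+$ is itself divisible by $X^h$ in $H^1_{\Iw}(\Qpi[m], \cM^+)$, not merely after pushforward to $\cM$. Reducing the exact sequence $0 \to \cM^+ \to \cM \to \cM/\cM^+ \to 0$ modulo $X^h$ and taking cohomology, the obstruction to this ``transfer of divisibility'' is controlled by $H^0(\Qpi[m], (\cM/\cM^+)/X^h(\cM/\cM^+))$. Chasing the long exact sequences for multiplication by $X$ and $X^h$ on $\cM/\cM^+$, the vanishing of this $H^0$ reduces to $H^0(\Qpi[m], \cM/\cM^+) = 0$ and $H^0(\Qpi[m], V/V^+) = 0$ (the latter forcing $H^1_{\Iw}(\Qpi[m], \cM/\cM^+)$ to be $X$-torsion-free). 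One additionally needs $H^0_{\Iw}(\Qpi[m], \cM^+) = 0$ in order to pass from vanishing of $\tilde{c}_m^+$ modulo $X^h$ in $H^1(\cM^+/X^h\cM^+)$ back to genuine divisibility of $\tilde{c}_m^+$ by $X^h$ in $H^1_{\Iw}(\Qpi[m], \cM^+)$. These three vanishings are natural to include as axioms in the setup of this section; they hold in the intended application $\cM = \cV(\upi)$ by \cref{prop:nolocalinvts} together with the standing ordinarity and non-CM hypotheses on $\pi$.

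Granting these, $\tilde{y}_m^+ := X^{-h} \tilde{c}_m^+$ is well-defined in $H^1_{\Iw}(\Qpi[m], \cM^+)$ (using the same $X$-torsion-freeness), and its reduction modulo $X$ lies in $H^1_{\Iw}(\Qpi[m], V^+)$. By naturality of reduction and of the maps induced by $\cM^+ \hookrightarrow \cM$, this reduction maps to $\loc_p(c_m^{(h)})$ in $H^1_{\Iw}(\Qpi[m], V)$, proving the claim.
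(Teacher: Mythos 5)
Your argument reaches the correct conclusion, but it takes a more circuitous route than the paper's proof and carries along some unnecessary hypotheses. The paper works in the opposite direction: rather than lifting $\loc_p(c_m)$ to $\cM^+$ and then trying to divide by $X^h$ inside $H^1_{\Iw}(\Qpi[m],\cM^+)$, it starts from the already-constructed class $d_m = X^{-h}c_m \in H^1_{\Iw}(\Qi[m],\cM)$ and simply notes that the image of $\loc_p(d_m)$ in $H^1_{\Iw}(\Qpi[m],\cM/\cM^+)$ is annihilated by $X^h$ (because $X^h\,\loc_p(d_m) = \loc_p(c_m)$ lands in $\cM^{++}\subset\cM^+$). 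Since $H^1_{\Iw}(\Qpi[m],\cM/\cM^+)$ has no $X$-torsion (the standing vanishing of $H^0_{\Iw}$ at $p$), this image is zero, so $\loc_p(d_m)$ factors through $H^1_{\Iw}(\Qpi[m],\cM^+)$; reducing modulo $X$ then finishes immediately. The advantage is that one never has to discuss divisibility \emph{within} $H^1(\cM^+)$ at all.

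In your version two of the side conditions are superfluous. To pass from ``$\tilde c_m^+$ dies in $H^1(\cM^+/X^h\cM^+)$'' to ``$\tilde c_m^+ \in X^h\cdot H^1_{\Iw}(\Qpi[m],\cM^+)$'' you do not need $H^0_{\Iw}(\Qpi[m],\cM^+)=0$: the long exact sequence for multiplication by $X^h$ on $\cM^+$ identifies $X^h H^1(\cM^+)$ exactly with the kernel of the reduction map $H^1(\cM^+)\to H^1(\cM^+/X^h\cM^+)$, with no vanishing required. (Vanishing of $H^0$ would control uniqueness of a preimage, which your argument does not use.) Likewise, $H^0(\Qpi[m],\cM/\cM^+)=0$ is redundant alongside $H^0(\Qpi[m],V/V^+)=0$: since $\cM/\cM^+$ is $X$-torsion-free with reduction $V/V^+$, d\'evissage already gives $H^0\bigl((\cM/\cM^+)/X^h\bigr)=0$ from the second condition alone. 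Finally, your claim that $X^{-h}\tilde c_m^+$ is ``well-defined'' in $H^1(\cM^+)$ would require $X$-torsion-freeness of that group, i.e.\ $H^0(\Qpi[m],V^+)=0$, which you do not establish. This does not affect correctness -- any preimage works, since all of them map to $\loc_p(d_m)$ inside the $X$-torsion-free group $H^1_{\Iw}(\Qpi[m],\cM)$ -- but the well-definedness claim should be replaced by an existence statement.
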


  \begin{proof}
   Let us write $d_m = X^{-h} c_m \in H^1_{\Iw}(\QQ(\mu_{mp^\infty}),\cM)$. Then the image of $\loc_p(d_m)$ in the local Iwasawa cohomology of $\cM / \cM^+$ is annihilated by $X^h$. However, we have
   \[ H^1_{\Iw}\left(\Qpi[m], \cM / \cM^+\right)[X^h] = 0, \]
   since it is a quotient of $H^0_{\Iw}\left(\Qpi[m], \cM/\cM^+\right) = 0$. So $\loc_p(d_m)$ takes values in $\cM^+$, and hence its reduction modulo $X$ lands in $V^+$.
  \end{proof}

  So we have constructed an Euler system for $T$ with the Greenberg-type Selmer structure $\square_\Lambda^+$ determined by $\TT^+=\Lambda\otimes T^+$; compare \cite[\S 12]{KLZ17}.

  \begin{proposition}
   \label{prop:toomuchSelmer}
   Suppose that conditions (H.0)--(H.4) of \cite{mazurrubin04} are satisfied by $T$ and also by $T(\eta)$, for every $m \in \cR$ and every $\eta: \Delta_m \to \Qb^\times$. Then there exists an $m$ such that the image of $c_m^{(h)}$ in $H^1_{\Iw}(\Qpi[m], V^+ / V^{++})$ is non-zero.
  \end{proposition}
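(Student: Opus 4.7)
The plan is to argue by contradiction. Suppose that for every $m \in \cR$ the localisation $\loc_p(c_m^{(h)})$ has zero image in $H^1_{\Iw}(\Qpi[m], V^+/V^{++})$. From the long exact sequence associated to $0 \to V^{++} \to V^+ \to V^+/V^{++} \to 0$, this implies that, rationally, $\loc_p(c_m^{(h)})$ lies in the image of $H^1_{\Iw}(\Qpi[m], V^{++})$; equivalently, the image of $c_m^{(h)}$ in $H^1_{\Iw}(\Qpi[m], V/V^{++})$ vanishes.

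Next I would fix a $G_{\Qp}$-stable $\cO$-lattice $T^{++} \subseteq V^{++}$ and pass to an integral statement. The image of $c_m^{(h)}$ in $H^1_{\Iw}(\Qpi[m], T/T^{++})$ need not vanish on the nose, but under the assumption above it is $p$-power torsion, and the torsion subgroup of $H^1_{\Iw}(\Qpi[m], T/T^{++})$ is controlled (via local Tate duality, as in the proof of \cref{prop:divideESp}) by the finite group $H^0\bigl(\Qpi, (T^{++})^*(1) \otimes \Qp/\Zp\bigr)$, which is independent of $m$. Hence a single power $\varpi^N$ annihilates the obstruction for all $m$ simultaneously. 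Since the Euler-system norm relations are $\Lambda$-linear, the rescaled family $(\varpi^N c_m^{(h)})_{m \in \cR}$ then defines an element of $\mathrm{ES}(T, T^{++}, \cK)$ in the sense of \cref{sect:core}.

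Now comes the core-rank calculation: since $T^{++}$ has $\cO$-rank $d$ while $T$ has rank $2d$,
\[
 \chi(\TT, \square^{++}_{\Lambda}) \;=\; \max\bigl(0,\; d - \rank_{\cO}(T/T^{++})\bigr) \;=\; \max(0, d-d) \;=\; 0.
\]
The hypotheses (H.0)--(H.4) of Mazur--Rubin being in force for $T$ and all its twists $T(\eta)$, \cref{prop:ESvanish} forces $\mathrm{ES}(T, T^{++}, \cK) = 0$, whence $\varpi^N c_m^{(h)} = 0$ in $H^1_{\Iw}(\Qi[m], T)$ for every $m$. Because $H^0_{\Iw}(\Qi[m], V) = 0$ (used already to ensure $H^1_{\Iw}(\Qi[m], \cM)[X] = 0$), the group $H^1_{\Iw}(\Qi[m], T)$ is $p$-torsion-free, so in fact $c_m^{(h)} = 0$ for all $m$. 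This contradicts the definition $h = \inf_m h(m)$, which was chosen precisely so that $c_m^{(h)} \ne 0$ for at least one $m \in \cR$.

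The main technical obstacle is the integral lifting step: one must ensure that the scalar $\varpi^N$ required to push the rational lift into $H^1_{\Iw}(\Qpi[m], T^{++})$ can be taken independent of $m$, so that the rescaled family still satisfies the Euler-system relations. As indicated, this reduces to uniform finiteness of a single $H^0$-group via local Tate duality, in direct parallel with the argument in \cref{prop:divideESp}.
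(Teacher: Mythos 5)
Your proof follows the same route as the paper's (very terse) proof: suppose for contradiction that all $c_m^{(h)}$ have vanishing image in $H^1_{\Iw}(\Qpi[m], V^+/V^{++})$, conclude that the $c_m^{(h)}$ form a non-zero element of $\mathrm{ES}(T, T^{++}, \cK)$, and use \cref{prop:ESvanish} with the core-rank computation $\chi(\TT, \square^{++}_\Lambda) = 0$ to obtain the contradiction. The main difference is that the paper passes directly from ``$c_m^{(h)}$ lands locally in $V^{++}$'' to ``$(c_m^{(h)})$ defines an element of $\mathrm{ES}(T, T^{++}, \cK)$'', while you insert an explicit $\varpi^N$-rescaling step to promote the rational local condition to the integral one needed for the definition of $\mathrm{ES}$; this is a legitimate concern that the paper glosses over.

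Two technical points in your filling-in are slightly off, though the underlying idea is sound. First, the uniform power $\varpi^N$: the obstruction you want to bound is the kernel of $H^1_{\Iw}(\Qpi[m], T/T^{++}) \to H^1_{\Iw}(\Qpi[m], V/V^{++})$, and this is controlled by $H^0_{\Iw}(\Qpi[m], (T/T^{++}) \otimes L/\cO)$ --- governed by the residual module $\bar{T}/\bar{T}^{++}$ --- not by $H^0(\Qpi, (T^{++})^*(1) \otimes \Qp/\Zp)$. The latter group is the Pontryagin dual of $H^2_{\Iw}(\Qpi, T^{++})$ and is used in \cref{prop:divideESp} for a different obstruction (lifting derivative classes from $W_k^+$-cohomology into the image of $H^1(\Qp, T_\fQ^+)$); it does not directly bound the torsion you need. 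Second, at the end you invoke ``$H^0_{\Iw}(\Qi[m], V) = 0$'' to get $p$-torsion-freeness of $H^1_{\Iw}(\Qi[m], T)$, but the multiplication-by-$\varpi$ sequence requires $H^0(\Qi[m], \bar{T}) = 0$ (a mod-$\varpi$ statement). This follows from the Mazur--Rubin hypotheses (H.0)--(H.4), not from the rational $H^0$ vanishing used earlier for $X$-torsion-freeness of $\cM$. Both points are fixable without changing the shape of the argument.
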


  \begin{proof}
   By construction, the $c_m^{(h)}$ are not all zero. So if $c_m^{(h)}$ lands locally in $V^{++}$ for every $m$, then $(c_m^{(h)})_{m \in \cR}$ is a non-zero element of the module $\operatorname{ES}(T, T^{++}, \cK)$ in the notation of \cref{prop:ESvanish}. Since $T^{++}$ has rank $d$, the proposition shows that $\operatorname{ES}(T, T^{++}, \cK) = \{0\}$, a contradiction.
  \end{proof}

 \subsection{Regulators at p}

  We now focus our attention on the subquotient $\cM^{\sharp} \coloneqq \cM^+ / \cM^{++}$. In the examples which interest us, there is some character $\tau: \Zp^\times \to \cO(\Omega)^\times$ such that $\cM^{\sharp}(\tau)$ is an unramified character. Hence we can make sense of $\Dcris(\cM^{\sharp})$, as a free module of rank 1 over $\Omega$. Then we have the $\Delta_m$-equivariant \emph{Perrin-Riou regulator} map associated to $\cM^{\sharp}$,
  \[
   \LPR_{\cM^{\sharp}, \Delta_m}: H^1_{\Iw}(\Qpi[m], \cM^{\sharp}) \to \Dcris(\cM^{\sharp}) \htimes \Lambda \otimes L[\Delta_m].
  \]
  This is compatible under reduction modulo $X$ with the regulator map $\LPR_{\Delta_m}$ for the 1-dimensional $\Qp$-linear representation $V^{\sharp} = V^+/V^{++}$ (cf.~\cref{def:PRreg}). We note that the mod $X$ map
  \[ \LPR_{V^{\sharp}, \Delta_m}: H^1_{\Iw}(\Qpi[m], V^{\sharp}) \to \Dcris(V^{\sharp}) \htimes \Lambda \otimes L[\Delta_m]\]
  has kernel equal to the torsion subgroup of $H^1_{\Iw}(\Qpi[m], V^{\sharp})$. We shall suppose that $H^0(\Qpi, V^{\sharp}) = 0$, so that $\LPR_{V^{\sharp}, \Delta_m}$ is injective and its cokernel is pseudo-null.

  \begin{remark}
   In general the map $\LPR_{\cM^{\sharp}, \Delta_m}$ and $\LPR_{V^{\sharp}, \Delta_m}$ can have poles at certain characters of $\Gamma$; but these poles only appear if $\operatorname{Frob}_p = 1$ on the unramified twist of $\cM^{\sharp}$ (resp.~$V^{\sharp}$) and this is also ruled out by the above assumption.
  \end{remark}

  \begin{assumption}
   There exists a collection of elements
   \[ \cL_m \in \cO(\Omega) \htimes \Lambda \otimes L[\Delta_m] \quad\text{for all $m \in \cR$}, \]
   and a section
   \[ \xi \in \operatorname{Frac} \cO(\Omega) \otimes_{\cO(\Omega)} \Dcris(\cM^{\sharp})^\vee, \]
   such that
   \begin{itemize}
    \item $\cL_m \bmod X$ is non-zero as an element of $\Lambda \otimes L[\Delta_m]$, for some $m$.
    \item For every $m$, we have
    \[ \langle \LPR_{\cM, \Delta_m}\left(\loc_p c_m\right), \xi \rangle = \cL_m.\]
   \end{itemize}
  \end{assumption}

  \begin{theorem}
   Under the above hypotheses, we necessarily have
   \[ \ord_X \left(\xi\right)= -h,\]
   where $h$ is as defined above; and if $\bar{\xi} = X^h \xi \bmod X$, then $\bar{\xi}$ is a basis of $\Dcris(V^{\sharp})^\vee$, and we have
     \[  \left\langle \LPR_{V, \Delta_m}\left(\loc_p c_m^{(h)}\right), \bar{\xi} \right\rangle = \cL_m \bmod X.\]
  \end{theorem}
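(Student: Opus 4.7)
The plan is to match $X$-adic valuations on both sides of the identity $\cL_m = \langle \LPR_{\cM, \Delta_m}(\loc_p c_m), \xi\rangle$ and then read off the leading $X$-coefficient. Write $\xi = X^{-n}\xi_0$, where $n = -\ord_X(\xi)$ and $\xi_0 \in \Dcris(\cM^{\sharp})^{\vee}$ satisfies $\ord_X(\xi_0) = 0$, and factor $c_m = X^{h(m)} \tilde{c}_m$ using the $X$-torsion-freeness of $H^1_{\Iw}(\Qi[m], \cM)$ noted earlier. By $\cO(\Omega)$-linearity of localisation and of the Perrin-Riou pairing,
\[ \cL_m = X^{h(m) - n}\, Q_m, \qquad Q_m \coloneqq \langle \LPR_{\cM, \Delta_m}(\loc_p \tilde{c}_m), \xi_0 \rangle, \]
with $Q_m$ regular (i.e.\ $\ord_X Q_m \ge 0$) since both inputs are.

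For the lower bound $n \ge h$: the hypothesis supplies $m_0 \in \cR$ with $\cL_{m_0} \bmod X \ne 0$, so $h(m_0) - n + \ord_X(Q_{m_0}) = 0$, whence $n = h(m_0) + \ord_X Q_{m_0} \ge h(m_0) \ge h$. For the upper bound $n \le h$: \cref{prop:toomuchSelmer} furnishes $m_1 \in \cR$ for which $\loc_p c_{m_1}^{(h)}$ has non-zero image in $H^1_{\Iw}(\Qpi[m_1], V^{\sharp})$, forcing $h(m_1) = h$ (as $c_m^{(h)} = X^{h(m)-h}\tilde c_m \bmod X = 0$ whenever $h(m) > h$). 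The compatibility of $\LPR_{\cM^{\sharp}, \Delta_m}$ with $\LPR_{V^{\sharp}, \Delta_m}$ under mod-$X$ reduction, combined with the injectivity of $\LPR_{V^{\sharp}, \Delta_{m_1}}$ modulo torsion and the fact that $\bar{\xi} \coloneqq \xi_0 \bmod X$ generates the rank-one fibre $\Dcris(V^{\sharp})^{\vee}$, gives $Q_{m_1} \bmod X \ne 0$. If $n > h$, then $\cL_{m_1} = X^{h - n} Q_{m_1}$ would require $Q_{m_1}$ to be divisible by $X^{n-h}$, contradicting this. Hence $n = h$.

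With $n = h$ established, $\bar{\xi}$ is a basis of $\Dcris(V^{\sharp})^{\vee}$. Rewriting the identity as $\cL_m = \langle \LPR_{\cM, \Delta_m}(\loc_p X^{-h} c_m), \xi_0\rangle$ and reducing modulo $X$, using again the compatibility of $\LPR_{\cM^{\sharp}, \Delta_m}$ with $\LPR_{V^{\sharp}, \Delta_m}$ in the crystalline-after-twist rank-one family $\cM^{\sharp}$, together with the defining relation $c_m^{(h)} = X^{-h} c_m \bmod X$, yields
\[ \cL_m \bmod X = \langle \LPR_{V, \Delta_m}(\loc_p c_m^{(h)}), \bar{\xi}\rangle, \]
as required.

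The main technical subtlety is the coherent interpretation of the Perrin-Riou pairing $\langle \LPR_{\cM, \Delta_m}(\loc_p c_m), \xi\rangle$ and its compatibility with its mod-$X$ fibre: this requires lifting $\loc_p c_m$ through the filtration $\cM^{++} \subset \cM^{+} \subset \cM$ and projecting to $\cM^{\sharp} = \cM^{+}/\cM^{++}$ in an $\cO(\Omega)$-linear way. The axiomatic $H^0$-vanishing hypotheses on the filtration quotients simultaneously ensure the well-definedness of these lifts, the $X$-torsion-freeness of the local Iwasawa cohomology groups used in the factorisation $c_m = X^{h(m)}\tilde c_m$, and the injectivity modulo torsion of $\LPR_{V^{\sharp}, \Delta_m}$ invoked for the upper bound.
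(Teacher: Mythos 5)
Your argument is correct and follows essentially the same route as the paper's: choose a local basis $\xi_0$ (the paper calls it $\Xi$) of $\Dcris(\cM^\sharp)^\vee$ near $X=0$, observe that pairing $\LPR$ of the normalized classes against $\xi_0$ is $X$-regular, and then pin down the power of $X$ by playing the regularity of $\cL_m$ and its mod-$X$ non-vanishing against the non-vanishing of the $V^\sharp$-image of some $c_{m}^{(h)}$ (\cref{prop:toomuchSelmer}) together with the injectivity of $\LPR_{V^\sharp}$. The only cosmetic difference is that the paper first renormalizes to $h=0$ whereas you carry the exponents $h(m)$ and $n$ explicitly, but the two are interchangeable.
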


  \begin{proof}
   Replacing every $c_m$ with $X^{-h} c_m$, and $\xi$ with $X^h \xi$, we may assume that $h = 0$. Thus $c_m^{(h)} = c_m \bmod X$.

   Hence, if $\Xi$ is a local basis of $\Dcris(\cM^{\sharp})^\vee$ around $X$, then $\cL_m^\flat = \langle \LPR_{\cM^{\sharp}, \Delta_m}(c_m), \Xi\rangle$ is regular at $X = 0$, and its value modulo $X$ is $\left\langle \LPR_{V^\sharp, , \Delta_m}(c_m \bmod X), \Xi \bmod X\right\rangle$, which is non-zero for some $m$, by \cref{prop:toomuchSelmer} and the injectivity of the regulator map.

   Scaling $\Xi$ by a unit in the localisation $\cO(\Omega)_{(X)}$ we can arrange that $\xi = X^t \Xi$ for some $t \in \ZZ$, so that $\cL_m = X^t \cL_m^\flat$. Since $\cL_m$ is regular at $X = 0$ for all $m$, we must have $t \ge 0$. On the other hand, if $t > 0$ then $\cL_m \bmod X$ is zero for all $m$, which contradicts the assumptions.
  \end{proof}

\section{Construction of the Euler system for \texorpdfstring{$\pi$}{pi}}
\label{sect:constructES}

 We now explain the outcome of the above process in our twisted-Yoshida setting. Recall that by the end of \cref{sect:ESYfam} we had singled out a canonical $L$-vector space $V(\pi)$ realising the Galois representation $\rho_{\pi,v}$, and by \eqref{eq:alephprime}, an isomorphism
 \[
  \aleph'_{\dR, \fp_2}(\pi): M_B^{\natural}(\pi, L) \xrightarrow{\ \cong\ } \Dcris\left(K_{\fp_2}, \cF^+_{\fp_2} V(\pi)\right).
 \]

 We now suppose that Condition (BI) of \cref{def:BI} is satisfied for $\pi$. Hence, for any $\cO$-lattice $T(\pi)^* \subset V(\pi)^*$, the representation $T = \Ind_{K}^{\QQ}T(\pi)^*$ satisfies the hypotheses of \cref{sect:core}.

 We can therefore apply the constructions of \cref{sect:axiom} with $\cM$ taken to be the module $\Ind_{K}^{\QQ} \cV(\upi)^* = \cM_{\et}^*(\bt_1)$ of \cref{sect:cZ}. We then have
 \[ \cM |_{G_{\Qp}} \cong \left(\cV(\upi)^*|_{G_{K_{\fp_1}}}\right) \oplus \left(\cV(\upi)^*|_{G_{K_{\fp_2}}}\right),
 \]
 and we define the submodules $\cM^{++}$ and $\cM^+$ in terms of this isomorphism by
 \begin{align*}
  \cM^{++} & \coloneqq
  \left(\cF^+_{\fp_1}\cV(\upi)^*|_{G_{K_{\fp_1}}}\right) \oplus
  \left(\cF^+_{\fp_2}\cV(\upi)^*|_{G_{K_{\fp_2}}}\right), \\
  \cM^{+} & \coloneqq
  \left(\cF^+_{\fp_1} \cV(\upi)^*|_{G_{K_{\fp_1}}}\right) \oplus
  \left(\cV(\upi)^*|_{G_{K_{\fp_2}}}\right).
 \end{align*}

 If $V^+$ and $V^{++}$ are the images of these in $\Ind_K^{\QQ} V(\pi)^*$, and $T^+, T^{++}$ the intersections of $V^+, V^{++}$ with  $\Ind_K^{\QQ} T(\pi)^*$, then Shapiro's lemma gives isomorphisms
 \begin{align*}
  \RGt_{\Iw}\left(\Qi, T; \square^{++}\right)
  &=\RGt_{\Iw}\left(K_\infty, T(\pi)^*; \square^{(p)}\right),
  & \RGt_{\Iw}\left(\Qi, T; \square^{+}\right)
  &=\RGt_{\Iw}\left(K_\infty, T(\pi)^*; \square^{(\fp_1)}\right)
 \end{align*}
 where the Selmer complexes over $K_\infty$ are as defined in \cref{sect:selmer}.

 \subsection{The Euler system}

  \begin{theorem}
   For each sign $\eps$, and each $m \in \cR$, there is a canonical map
   \[ \bz^{\eps}_{m}: M_B^{-\eps}(\pi, L)^\vee \to \Ht^1_{\Iw}(K_{\infty}[m], V(\pi)^*; \square^{(\fp_1)})^{\eps} \]
   with the following properties: the $\bz_m^{\eps}$ satisfy the Euler-system norm-compatibility relations as $m$ varies and the composite map
   \begin{multline*}
    M_B^{-\eps}(\pi, L)^\vee \xrightarrow{\bz_m^{\eps}}
    \Ht^1_{\Iw}\left(K_{\infty}[m], V(\pi)^*; \square^{(\fp_1)}\right)^{\eps} \xrightarrow{\LPR \circ \loc_{\fp_2}}
    \Dcris\left(K_{\fp_2}, V(\pi)^* / \cF^+_{\fp_2} \right) \otimes
    \Lambda(\Gamma)^{\eps} \otimes L[\Delta_m] \\
    \xrightarrow{\aleph'_{\dR, \fp_2}(\pi)^\vee}
    M_B^{\natural}(\pi, L)^\vee \otimes \Lambda(\Gamma)^{\eps} \otimes L[\Delta_m]
   \end{multline*}
   is given by
   \[ \gamma_{-\eps} \mapsto \gamma_{-\eps} \otimes \cL_p^{\eps}(\pi, \Delta_m) \quad\forall\ \gamma_{-\eps} \in M_B^{-\eps}(\pi, L)^\vee. \]
   Equivalently, for any $v_+, v_-$ bases of $M_B^{\pm}(\pi, L)$, we have
   \[
    \langle \bz^{\eps}_{m}\left(\gamma_{-\eps}\right), \aleph'_{\dR, \fp_2}(\pi)(v_+ \otimes v_-) \rangle
    =\langle \cL_p^{\eps}(\pi, \Delta_m), v_{\eps}\rangle \cdot \langle \gamma_{-\eps}, v_{-\eps}\rangle.\]
  \end{theorem}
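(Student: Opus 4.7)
The plan is to apply the axiomatic leading-term machinery of \cref{sect:axiom} to the family Euler system classes ${}_c\bz_m^{[r]}(\upi)$ constructed in \cref{sect:ESYfam}, restricted to the one-parameter subspace $\cZ \subset \Omega$ of \cref{sect:cZ} and expanded in the uniformiser $X$ at the point $x_0 = (\uk, \ut)$. Fix the sign $\eps$ and choose a non-negative integer $r$ with $(-1)^{r+t_2+1} = \eps$ and $r \leq k_2 - 2$. Via Shapiro's lemma the family classes transport to a compatible collection $c_m = {}_c\bz_m^{[r]}(\upi)|_{\cZ}$ in $H^1_{\Iw}(\Qi[m], \cM)$, where $\cM = \cM_{\et}^*(\bt_1)|_{\cZ}$ and the submodules $\cM^{++} \subset \cM^+ \subset \cM$ are as defined at the start of \cref{sect:constructES}. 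The classes are integral and land in $\cM^+$ (the second and fifth bullets of the family theorem of \cref{sect:ESYfam}) and lie in the $\eps$-eigenspace for complex conjugation. Under hypothesis \textbf{(BI)}, the representation $\Ind_K^{\QQ}V(\pi)^*$ satisfies the input hypotheses of \cref{sect:core}--\ref{sect:axiom}, and the framework of \cref{sect:axiom} produces an integer $h \geq 0$ together with leading-term classes $c_m^{(h)} \in H^1_{\Iw}(\Qi[m], V(\pi)^*)$ satisfying the Euler system norm-compatibility, landing in the Greenberg Selmer group with local condition $\square^{(\fp_1)}$, and of sign $\eps$.

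The second step is to match leading terms in the reciprocity law of \cref{sect:ESYfam}:
\[
\aleph_{\dR, \fp_2}(\upi)^\vee\bigl(\LPR_{\Delta_m}\,\loc_{\fp_2}({}_c\bz_m^{[r]}(\upi))\bigr) = {}_c C_m^{[r]}\cdot\bigl(\cL_p^{\eps}(\upi, \Delta_m)(\mathbf{j}) \otimes \cL_p^{-\eps}(\upi)(\bt_2 + r)\bigr).
\]
By the defining property of $\aleph'_{\dR, \fp_2}(\pi)$ in \eqref{eq:alephprime}, the leading Laurent coefficient in $X$ of $\aleph_{\dR, \fp_2}(\upi)(v_+ \otimes v_-)$ equals $\aleph'_{\dR, \fp_2}(\pi)(v_+ \otimes v_-)$, while ${}_c C_m^{[r]}$ and $\cL_p^{\pm\eps}(\upi)$ are regular at $x_0$ and specialise to their classical analogues. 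The leading-term theorem at the end of \cref{sect:axiom} forces the orders of vanishing on the two sides to agree; extracting leading coefficients yields
\[
\bigl\langle \LPR_{\Delta_m}\,\loc_{\fp_2}(c_m^{(h)}),\,\aleph'_{\dR, \fp_2}(\pi)(v_+ \otimes v_-)\bigr\rangle = {}_c C_m^{[r]}(\uk, \ut) \cdot \langle \cL_p^{\eps}(\pi, \Delta_m), v_{\eps}\rangle \cdot \langle \cL_p^{-\eps}(\pi)(t_2+r), v_{-\eps}\rangle.
\]

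Since $M_B^{-\eps}(\pi,L)$ is one-dimensional and (granting the non-vanishing discussed below) $\cL_p^{-\eps}(\pi)(t_2 + r)$ is a non-zero element of $M_B^{-\eps}(\pi, L)^\vee$, each $\gamma_{-\eps} \in M_B^{-\eps}(\pi, L)^\vee$ has a unique presentation $\gamma_{-\eps} = \mu(\gamma_{-\eps}) \cdot \cL_p^{-\eps}(\pi)(t_2+r)$ with $\mu(\gamma_{-\eps}) \in L$ depending linearly on $\gamma_{-\eps}$. Setting
\[
\bz_m^{\eps}(\gamma_{-\eps}) \coloneqq \frac{\mu(\gamma_{-\eps})}{{}_c C_m^{[r]}(\uk, \ut)}\cdot c_m^{(h)}
\]
(the division being legitimate because ${}_c C_m^{[r]}(\uk, \ut)$ is a non-zero-divisor in $\Lambda(\Gamma)\otimes L[\Delta_m]$ for generic $c$) yields a linear map satisfying the required pairing formula. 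Independence of $c$ follows from the standard Euler-system $c$-variation argument, and the characterisation via the pairing formula makes the result independent of the auxiliary choices of $r$ and the family $\upi$.

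The main obstacle is the non-vanishing of $\cL_p^{-\eps}(\pi)(t_2 + r)$ as a functional on $M_B^{-\eps}(\pi, L)$; this is supplied by \cref{prop:nonvanish} combined with \cref{prop:nonvanish-padic} whenever $k_2 \geq 3$ and $r$ can be chosen in the Deligne-critical range with the required parity. The corner case $k_2 = 2$, where only one parity of $r$ is available inside the critical range, requires a supplementary argument — for instance, by working with a non-critical $r$ and invoking \emph{vertical} non-vanishing of $p$-adic $L$-values along weight-space, or by allowing the family $\upi$ to vary in two parameters rather than one.
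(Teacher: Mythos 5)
Your overall strategy — transport the family classes ${}_c\bz_m^{[r]}(\upi)$ to $\cZ$, apply the leading-term machinery of \cref{sect:axiom}, and then repackage the output using the reciprocity law — matches the paper's. However there are two genuine gaps, both in the repackaging step.

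\textbf{The division by ${}_c C_m^{[r]}$ is not legitimate.} You set
$\bz_m^{\eps}(\gamma_{-\eps}) = \mu(\gamma_{-\eps})\, c_m^{(h)} / {}_c C_m^{[r]}(\uk,\ut)$
and justify the division by noting that ${}_c C_m^{[r]}$ is a non-zero-divisor. That is not enough: a non-zero-divisor need not be a unit, and the class $c_m^{(h)}$ lives in the Iwasawa cohomology module, not in a vector space over $\operatorname{Frac}\Lambda$. In fact ${}_c C_m^{[r]}$ is never globally invertible in $\Lambda(\Gamma)^\eps \otimes L[\Delta_m]$, since the factor $(c^2 - c^{\mathbf{j}+1-t_2-r}\sigma_c)$ vanishes at the locally algebraic character $\mathbf{j} = 1 + t_2 + r$ whenever $\sigma_c = 1$. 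The paper circumvents this by exploiting the freedom in the auxiliary integer $r$: using $p \ge 7$ (from condition (BI)) and choosing $c$ to be a generator of $(\ZZ/p\ZZ)^\times$ with $\psi(c)=1$, one finds two values $r_1, r_2$ of the correct parity so that the cowardly factors ${}_c C_m^{[r_1]}$ and ${}_c C_m^{[r_2]}$ have zero loci avoiding each other component-by-component, hence generate the unit ideal; the definition of $\bz_m^\eps$ is then an $\Lambda$-linear combination of the two resulting classes. Without this (or some equivalent device) the construction produces only a class with denominators.

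\textbf{You use the value $\cL_p^{-\eps}(\pi)(t_2+r)$ where the leading term $\Lambda^{[r]}$ is required.} The leading-term theorem of \cref{sect:axiom} balances the order of vanishing $h$ of the classes against the order of the pole of the meromorphic section $\xi$, and the factor that comes out in the final formula is the \emph{leading coefficient} $\Lambda^{[r]}$ of the function $\cL_p^{-\eps}(\upi)(\bt_2 + r)$ along $\cZ$, not its value at $x_0$. These differ precisely when the family $p$-adic $L$-function has a zero at $\pi$ (which we cannot rule out), and the leading coefficient is non-zero automatically by construction. Because of this, your closing concern about whether $\cL_p^{-\eps}(\pi)(t_2+r)$ vanishes — and the suggested workaround for $k_2 = 2$ — are red herrings: the non-vanishing input needed is along $\cZ$, where the weight grows without bound, and is supplied by \cref{prop:nonvanish-padic} at the classical points of $\cZ$; no supplementary two-parameter variation is needed, and the theorem has no restriction on $k_2$.
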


  \begin{remark}
   The formulation adopted above is heavily influenced by the work of Kato for $\GL_2 / \QQ$, in particular \cite[Theorem 12.5]{kato04}. Note that the curious ``switch'' of signs -- that the $\epsilon$-part of Betti cohomology maps to the $(-\epsilon)$-part of Iwasawa cohomology -- is present in Kato's case as well; see the last displayed formula of Theorem 12.5(2) of \emph{op.cit.}.
  \end{remark}

  \begin{proof}
   We now indicate how to extract the above results from the general theory of \cref{sect:axiom}.

   We begin by noting that if a collection $\bz^{\eps}_{m}$ satisfying the conditions exists, then it is unique, since otherwise we would obtain a non-zero Euler system for a Selmer structure of Euler characteristic 0, and (as we have seen in the previous section) this is not possible.

   Let $r \ge 0$ be any integer, and $c > 1$ coprime to $6pN$. Then for $\eps = (-1)^{t_2 + r + 1}$ we have a family of classes
   \[ {}_c z_m^{[\upi, r]} \in \Ht^1_{\Iw}(K_\infty, \cV(\upi)^*; \square^{(\fp_1)})^{\eps}.\]

   Given bases $v_+, v_-$ of $M_B^{\pm}(\upi)$, we choose the $\xi$ and $\cL_m$ of the previous section as follows. We set $\cL_m = {}_c C_m^{[r]} \cdot \langle \cL^{\eps}_p(\upi), v_{\eps}\rangle$. We may assume that there is some $m$ for which $\cL_m \bmod X$ is non-zero (otherwise the conclusion of the theorem is satisfied with $\bz_m^\eps$ taken as the zero map).

   We define $\xi$ to be the meromorphic section
   \[
    \xi(v_{\eps}) = \frac{1}{\langle \cL_p^{-\eps}(\pi)(\bt_2 + r), v_{-\eps}\rangle}\cdot \aleph_{\dR, \upi}(v_+ \otimes v_-).
   \]
   This is independent of $v_{-\eps}$ and depends linearly on $v_{\eps}$, justifying the notation. Then the condition that $\langle \LPR({}_c z_m^{[\upi, r]}), \xi\rangle = \cL_m$ is satisfied; so we can ``run the machine'' and obtain a family of classes ${}_c \bz_m^{[\pi, r]}$, and a non-zero vector $\bar{\xi} \in \Dcris$, such that
   \[ \langle {}_c \bz_m^{[\pi, r]}, \bar\xi\rangle = \cL_m \bmod X = {}_c C_m^{[r]} \cdot \langle \cL^{\eps}_p(\pi), v_{\eps}\rangle.\]
   By construction, as bases of $\Dcris$ modulo $X$ we have
   \[ \aleph'_{\dR, \pi}(v_+ \otimes v_-) = \langle \Lambda^{[r]}, v_{-\eps}\rangle \cdot \bar{\xi}, \]
   where $\Lambda^{[r]} \in M_B^{-\eps}(\pi, L)^{\vee}$ is the leading term at $X = 0$ of $\cL_p^{-\eps}(\bt_2 + r)$. By definition this is non-zero. So we have shown that
   \[ \langle {}_c \bz_m^{[\pi, r]}, \aleph'_{\dR, \pi}(v_+ \otimes v_-)\rangle = {}_c C_m^{[r]} \cdot \langle \cL^{\eps}_p(\pi), v_{\eps}\rangle \cdot \langle \Lambda^{[r]}, v_{-\eps}\rangle.\]

   We define ${}_c \bz^{[\eps, r]}_{m}: M_B^{-\eps}(\pi, L)^{\vee} \to H^1_{\Iw}$ to be the unique linear map sending $\Lambda^{[r]}$ to ${}_c \bz_m^{[\pi, r]}$. Then we have
   \[ \langle {}_c \bz_m^{[\eps, r]}(\gamma_{-\eps}), \aleph'_{\dR, \pi}(v_+ \otimes v_-)\rangle =
   {}_c C_m^{[r]} \cdot \langle \cL^{\eps}_p(\pi, \Delta_m), v_{\eps}\rangle \cdot \langle \gamma_{-\eps}, v_{-\eps}\rangle.
   \]
   This depends on $c$ and $r$ only through the factor
   \[ {}_c C_m^{[r]} =(c^2 - c^{(\mathbf{j}+1-t_2-r')}\psi(c) \sigma_c )(c^2 - c^{(\mathbf{j}+1-t_2 -r)}\sigma_c ) \in \Lambda_{\cO}(\Gamma)^{\eps} \otimes \cO[\Delta_n],\]
   where $r' = k_2 - 2 - r$.

   We have already assumed that $p \ge 7$, as part of hypothesis (BI). We take $c$ to be a generator of $(\ZZ / p\ZZ)^\times$ with $\psi(c) = 1$. Then, for any given choice of sign $\eps$, we can find two integers $r_1, r_2$ with $(-1)^{t_2 + r + 1} = \eps$ such that $r_2$ is not congruent mod $p-1$ to either $r_1$ or $r_1' = k_2-2-r_1$. Hence at least one of ${}_c C_1^{[r_1]}$ and ${}_c C_1^{[r_2]}$ is invertible in each of the summands of $\Lambda_{\cO}(\Gamma)^{\eps}$ corresponding to the components of $\cW^{\eps}$. Since $\cO[\Delta_n]$ is a local ring, ${}_c C_m^{[r_1]}$ and ${}_c C_m^{[r_2]}$ generate the unit ideal for every $m$. So we can find a linear combination $\bz^{\eps}_{m}$ of ${}_c \bz^{[\eps, r_1]}_{m}$ and ${}_c \bz^{[\eps, r_2]}_{m}$ such that
   \[ \langle \bz_m^{\eps}(\gamma_{-\eps}), \aleph'_{\dR, \pi}(v_+ \otimes v_-)\rangle =
     \langle \cL^{\eps}_p(\pi, \Delta_m), v_{\eps}\rangle \cdot \langle \gamma_{-\eps}, v_{-\eps}\rangle,\]
   which is the characterising property of $\bz_m^{\eps}$ in the theorem.
  \end{proof}

    A less canonical, but more concrete, statement would be to fix bases $v_+, v_-$, with dual bases $\gamma_{\pm}$, and define
   \[ \bz_m = \bz_m^+(\gamma_+) + \bz_m^-(\gamma_-) \in \Ht^1_{\Iw}\left(K_{\infty}[m], V(\pi)^*; \square^{(\fp_1)}\right).\]
   The choice of $v_+, v_-$ also determines a basis of the relevant $\Dcris$, and complex periods $\Omega_\infty^{\pm}$, and with respect to these bases, the regulator of $\bz_m$ is the $p$-adic $L$-function.

 \subsection{Integral lattices}\label{sec:intlattice}

  We now suppose the conditions of \S\ref{sect:integralL} are satisfied. Let $M_B^{\pm}(\pi, \cO)$ denote the integral structures obtained from Betti cohomology with $\cO$-coefficients, and let $M_B^{\pm}(\pi, \cO)^\vee$ be their duals. We fix $\cO$-bases $\gamma_{\pm}$ of $M_B^{\pm}(\pi, \cO)^\vee$, and define $\bz_m(\pi) = \bz_m^+(\gamma_+) + \bz_m^-(\gamma_-)$ as above. We let $T(\pi)^*$ be a lattice in $V(\pi)^*$ such that, for all $m$, the class $\bz_m(\pi)$ is in the image of the injection
  \[ H^1_{\Iw}(K_\infty[m], T(\pi)^*) \to H^1_{\Iw}(K_\infty[m], V(\pi)^*). \]

  Let $v_{\pm}$ be the dual bases of the $\gamma_{\pm}$, and let $v_{\mathrm{cris}}$ denote a basis of the integral lattice $\Dcris(K_{\fp_2}, \cF^+_{\fp_2} T(\pi))$.

  \begin{definition}
   The \emph{integral comparison error} (for $T$) is the unique $q \in \ZZ$ such that
   \[  \aleph_{\dR, \fp_2}'(v_+ \otimes v_-) \in \varpi^q \cO^\times \cdot v_{\mathrm{cris}}. \]
  \end{definition}

  We have
  \begin{equation}\label{eq:powerofvarpiinLp}
    \langle \cL_p^{\eps}(\pi, \Delta_m), v_\eps \rangle= \varpi^q \langle \LPR_{\Delta_m}(\bz_m(\pi)), v_{\mathrm{cris}}\rangle.
  \end{equation}
  Thus $\langle \cL_p^{\eps}(\pi, \Delta_m), v_\eps \rangle \in \varpi^q \Lambda_{\cO}(\Gamma \times \Delta)$ for all $m$.

  \begin{proposition}\label{prop:nuexistence}
   For all $u \in \ZZ / (p-1) \ZZ$, we have $q \le \mu_{\mathrm{min}, u}$, where $\mu_{\mathrm{min}, u}$ is the minimal $\mu$-invariant of \cref{def:muinv}.

   If we set $\nu_u = \mu_{\mathrm{min}, u} - q$, then the image of $\bz_m$ in $e_u \cdot  H^1_{\Iw}(K_\infty[m], T(\pi)^* / \cF_{\fp_2}^+)$ is divisible by $\varpi^{\nu_u}$, for every $m$.
  \end{proposition}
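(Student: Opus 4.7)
The plan is to deduce both claims from the integral reciprocity formula \eqref{eq:powerofvarpiinLp} by carefully tracking $\varpi$-divisibility on both sides. By construction in \cref{sec:intlattice}, the class $\bz_m(\pi)$ lies in $H^1_{\Iw}(K_\infty[m], T(\pi)^*)$, and $v_{\mathrm{cris}}$ is by definition an $\cO$-basis of $\Dcris(K_{\fp_2}, \cF^+_{\fp_2} T(\pi))$. Since the Perrin--Riou regulator is integrally normalised with respect to these lattices, the element $e_u \langle \LPR_{\Delta_m}(\loc_{\fp_2}\bz_m), v_{\mathrm{cris}}\rangle$ lies in $e_u \Lambda_{\cO}(\Gamma) \otimes \cO[\Delta_m]$; so substituting into \eqref{eq:powerofvarpiinLp} shows that $\varpi^q$ divides $e_u \langle \cL_p^\eps(\pi, \Delta_m), v_\eps\rangle$ for every $m$. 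By the definition of $\mu_{m, u}(\pi)$ this gives $q \le \mu_{m,u}(\pi)$ for every $m \in \cR$, and the first claim $q \le \mu_{\mathrm{min}, u}$ follows by taking the infimum.

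For the divisibility assertion I would read \eqref{eq:powerofvarpiinLp} in the other direction: it exhibits $e_u \langle \LPR_{\Delta_m}(\loc_{\fp_2}\bz_m), v_{\mathrm{cris}}\rangle$ as $\varpi^{-q}$ times $e_u \langle \cL_p^\eps(\pi, \Delta_m), v_\eps\rangle$, which is divisible by $\varpi^{\mu_{m,u}(\pi) - q}$. Since $\mu_{m,u}(\pi) - q \ge \mu_{\mathrm{min},u} - q = \nu_u$, the regulator image of $\loc_{\fp_2}(\bz_m)$ is therefore divisible by $\varpi^{\nu_u}$ for every $m$. It then remains to transfer this divisibility from the image of $\LPR_{\Delta_m}$ back to the class $\loc_{\fp_2}(\bz_m)$ itself inside $e_u H^1_{\Iw}(K_\infty[m], T(\pi)^*/\cF^+_{\fp_2})$.

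To carry out that last step I would appeal to the structure of the equivariant Perrin--Riou regulator for the rank-$1$ local representation $T(\pi)^*/\cF^+_{\fp_2}$: locally at $\fp_2$ this is the twist by $\chi_{\mathrm{cyc}}^{t_2 - 1}$ of an unramified $\cO$-representation on which geometric Frobenius acts by a $p$-adic unit, and by \cref{prop:noexceptionalzero} this unit is never $p^n \zeta$ for any integer $n$ and root of unity $\zeta$. In this no-exceptional-zero ordinary setting the equivariant regulator (constructed as in \cite{loefflerzerbes14}) is an injective $\Lambda_\cO(\Gamma) \otimes \cO[\Delta_m]$-linear map that is integral with respect to the chosen lattices and in particular preserves $\varpi$-divisibility on each $e_u$-component. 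The main obstacle is precisely to justify this preservation rigorously --- essentially via the Mahler--Amice transform description of $\LPR$ on an unramified $\cO$-twist --- after which the divisibility by $\varpi^{\nu_u}$ established above transfers to $\loc_{\fp_2}(\bz_m)$ itself, completing the argument.
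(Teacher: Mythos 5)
The paper states Proposition \ref{prop:nuexistence} without a proof, so there is no ``official'' argument to compare against; I can only assess your proposal on its own merits.

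Your treatment of the first assertion is correct and is essentially what the paper itself sets up: from \eqref{eq:powerofvarpiinLp}, integrality of the (Wach-module normalised) Perrin--Riou regulator gives $\langle \cL_p^\eps(\pi,\Delta_m), v_\eps\rangle \in \varpi^q \Lambda_\cO(\Gamma)\otimes\cO[\Delta_m]$, which is exactly $q \le \mu_{m,u}$, and taking the infimum yields $q \le \mu_{\mathrm{min},u}$.

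For the second assertion, the first half of your argument --- reading \eqref{eq:powerofvarpiinLp} the other way to see that $e_u\langle\LPR_{\Delta_m}(\loc_{\fp_2}\bz_m), v_{\mathrm{cris}}\rangle$ is divisible by $\varpi^{\mu_{m,u}-q} \ge \varpi^{\nu_u}$ --- is also fine. But the final step, transferring $\varpi$-divisibility of the \emph{regulator image} back to $\varpi$-divisibility of $\loc_{\fp_2}(\bz_m)$ in $e_u H^1_{\Iw}(K_\infty[m], T(\pi)^*/\cF_{\fp_2}^+)$, is a genuine gap, and your own phrasing (``the main obstacle is precisely to justify this preservation'') correctly flags it. This step would follow if the equivariant regulator for the rank-$1$ local module $T^\sharp = T(\pi)^*/\cF^+_{\fp_2}$ has $\varpi$-torsion-free cokernel, but that is strictly stronger than what the paper itself records: in \cref{sect:axiom} it is only asserted that $\LPR_{V^\sharp,\Delta_m}$ has pseudo-null cokernel, and a pseudo-null (finite) $\Lambda$-module can certainly have $\varpi$-torsion. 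The no-exceptional-zero hypothesis (Proposition \ref{prop:noexceptionalzero}) and $H^0(\Qpi, V^\sharp) = 0$ rule out the usual pathologies, and I do expect the Coleman map for this unramified twist to be an integral isomorphism under these hypotheses; but that needs to be proved --- for instance by working out the Wach-module description of $N(T^\sharp)^{\psi=1}$ and checking that the Coleman map has trivial cokernel when the Frobenius eigenvalue is a $p$-adic unit that is not a root of unity --- rather than asserted as ``the Mahler--Amice transform description of $\LPR$.'' Until that fact is supplied (including the equivariant refinement over $\Delta_m$, where one must also control the finite error terms), the proof of the second assertion is incomplete.
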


  \begin{remark}
   It would seem sensible to choose $T(\pi)^*$ to be ``optimal'',  in the sense that some $\bz_m$ has non-zero image in $T(\pi) ^*/ \varpi$. However, our conclusions will be independent of the choice of $T$ anyway.
  \end{remark}


\section{Upper bounds for Selmer groups}
 \label{sect:mainthms}

 \subsection{Recap of hypotheses}

  For the reader's convenience we recall the hypotheses on $\pi$. Recall that $\pi$ is a Hilbert cuspidal automorphic representation of $\GL_2/K$ of level $\fn$ and weight $(k_1,k_2, t_1,t_2)$ with $k_i \ge 2$ and $k_1+2t_1=k_2+2t_2$; $L$ is a finite extension of $\Qp$, for some prime $p$; and we have fixed an embedding of the coefficient field of $\pi$ into $L$.

  For all of the results of this section, we suppose that:
  \begin{itemize}
   \item $(p, \fn)=1$ and $p$ splits in $K$.
   \item $\pi$ is ordinary at both primes $\fp_1,\, \fp_2$ above $p$ (with respect to our choice of embedding).
   \item The Galois representation of $\pi$ satisfies the condition (BI) of \cref{def:BI}.
   \item $\pi^\sigma \ne \pi$, and the central character of $\pi$ factors as $\psi \circ \Nm$, where $\psi$ is a Dirichlet character.
  \end{itemize}

  We fix $\cO$-bases $v_{\pm}$ of the Betti cohomology spaces $M_B^\pm(\pi,\cO)$, thus defining a $p$-adic $L$-function
  \[ L_p(\pi) = \langle \cL_p^+(\pi), v_+ \rangle + \langle \cL_p^-(\pi), v_- \rangle \in \Lambda_{\cO}(\Gamma).\]

 \subsection{The Iwasawa main conjecture}

  Our first result is a form of ``main conjecture without zeta-functions'' in the style of \cite[Chapter III]{kato04}. Recall the Selmer complexes $\RGt_{\Iw}(K_\infty, T(\pi)^*; \square^{(\fp_1)})$ and $\RGt_{\Iw}(K_\infty, T(\pi)^*; \square^{(p)})$ defined in \cref{sect:selmer}.

  \begin{theorem}
   \label{thm:withoutzeta}
   Let $u \in \ZZ / (p-1)\ZZ$ and suppose that $e_u L_p(\pi) \ne 0$. Then
   \begin{enumerate}
    \item $e_u \Ht^1_{\Iw}(K_\infty, T(\pi)^*; \square^{(\fp_1)})$ is free of rank $1$ over $e_u \Lambda$;
    \item $e_u \Ht^2_{\Iw}(K_\infty,T(\pi)^*; \square^{(\fp_1)})$ is $\Lambda$-torsion;
    \item we have
    \[ \varpi^{\nu_u} \mathrm{char}_{e_u \Lambda}\left( e_u \Ht^2_{\Iw}(K_\infty, T(\pi)^*; \square^{(\fp_1)})\right)\ \Bigm|\  \mathrm{char}_{e_u \Lambda}\left (e_u \cdot\frac{\Ht^1_{\Iw}(K_\infty, T(\pi)^*; \square^{(\fp_1)})}{\Lambda \cdot z_1(\pi)}\right), \]
    where $\nu_u \ge 0$ is the integer defined in \cref{prop:nuexistence}.
   \end{enumerate}
  \end{theorem}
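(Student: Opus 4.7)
The strategy is to apply the general Euler-system and Kolyvagin-system machinery of Section~\ref{sect:core} to the Euler system constructed in Section~\ref{sect:constructES}. First, via the Shapiro's-lemma identifications recalled at the start of Section~\ref{sect:constructES}, we view the relevant Selmer complexes as $\RGt_{\Iw}(\Qi, T; \square^+)$ and $\RGt_{\Iw}(\Qi, T; \square^{++})$, where $T = \Ind_K^{\QQ} T(\pi)^*$ carries the canonical nested $G_{\QQ}$-stable lattices $T^{++} \subset T^+ \subset T$. Condition (BI) supplies the Mazur--Rubin hypotheses (H.0)--(H.4) for $T$ and its twists by characters of the groups $\Delta_m$, and \cref{prop:nolocalinvts} furnishes \cref{ass:nonexcept}. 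Under these identifications the classes $\{\bz_m(\pi)\}_{m \in \cR}$ of Section~\ref{sect:constructES} constitute an Euler system $\bc \in \mathrm{ES}(T, T^+, \cK)$.

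Part (3) will then follow by a direct application of \cref{thm:selmerbound}. Working on the $e_u$-component $e_u \Lambda \cong \cO[[T]]$, take the height-one prime $\fQ = (\varpi)$ and exponent $k = \nu_u$. The required divisibility of $\loc_{\fp_2}(\bz_m(\pi))$ by $\varpi^{\nu_u}$ inside $e_u \cdot H^1_{\Iw}(\Qpi[m], T^\sharp)$, where $T^\sharp = T^+/T^{++}$ corresponds to $T(\pi)^* / \cF_{\fp_2}^+ T(\pi)^*$ via Shapiro, is precisely the content of \cref{prop:nuexistence}.

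For (2), I first show that $e_u \bz_1(\pi)$ is non-torsion. Equation \eqref{eq:powerofvarpiinLp} exhibits a $\Lambda$-homomorphism $\lambda: H^1_{\Iw}(\Qpi, T^\sharp) \to \Lambda$, namely the composition of the Perrin--Riou regulator $\LPR_{\fp_2}$ with evaluation against $v_{\mathrm{cris}}$, satisfying $e_u \lambda(\bz_1(\pi)) = \varpi^{-q} e_u L_p(\pi) \neq 0$. The kernel and cokernel of $\lambda$ are pseudo-null, since \cref{prop:nolocalinvts} gives $H^0(\Qpi, T^\sharp) = H^0(\Qpi, (T^\sharp)^*(1)) = 0$. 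Applying \cref{cor:rank0bound} to the $\square^{++}$-Selmer structure yields that $e_u \Ht^2(\square^{(p)})$ is $\Lambda$-torsion; combining this with the exact triangle $\RGt(\square^{++}) \to \RGt(\square^+) \to \RG_{\Iw}(\Qpi, T^\sharp)[1]$ and the pseudo-nullity of $H^2_{\Iw}(\Qpi, T^\sharp)$ (via local duality and \cref{prop:nolocalinvts}), forces $e_u \Ht^2(\square^{(\fp_1)})$ also to be $\Lambda$-torsion, giving (2). The rank-one part of (1) is then immediate from the Euler-characteristic formula for the $\square^{(\fp_1)}$-Selmer complex (part (d) of the proposition in Section~\ref{sect:selmer}).

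Finally, torsion-freeness of $e_u \Ht^1(\square^{(\fp_1)})$ is standard: it embeds in $H^1_{\Iw}(K_\infty, T(\pi)^*)$, which is $\Lambda$-torsion-free thanks to the absolute irreducibility of $\bar{\rho}_{\pi, v}|_{G_{K(\mu_{p^\infty})}}$ supplied by (BI), yielding $H^0(K_\infty, T(\pi)^*/\varpi) = 0$. The main obstacle is the passage from ``torsion-free of rank one'' to ``free of rank one'' over the regular local ring $e_u \Lambda$: this amounts to reflexivity (equivalently, depth two) of the Selmer $H^1$, which in turn follows from the standard absence of pseudo-null submodules in global Iwasawa cohomology under our irreducibility hypotheses, but requires careful verification. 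A subsidiary but important point is the bookkeeping between the $\varpi$-adic valuations $\mu_{\mathrm{min}, u}$, $q$ and $\nu_u$, and making sure that the Shapiro's-lemma identifications line up precisely at the integral level.
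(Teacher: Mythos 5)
Your overall strategy is the same as the paper's: identify the $K_\infty$-Selmer complexes with Selmer complexes over $\Qi$ for $T = \Ind_K^{\QQ}T(\pi)^*$, feed the Euler system of \S\ref{sect:constructES} into the machinery of \S\ref{sect:core}, take $\fQ = (1-e_u, \varpi)$ (your ``$(\varpi)$ on the $e_u$-component'' is that same prime) and $k = \nu_u$ in \cref{thm:selmerbound}, and invoke \cref{prop:nuexistence} for the divisibility hypothesis. The paper's own proof is a single sentence citing \cref{thm:selmerbound}; conclusions (1) and (2) are implicitly understood to be part of the same package, since the proof of \cref{thm:selmerbound} reduces to \cite[Cor.~12.3.5]{KLZ17}, which already contains the freeness and torsion statements. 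Your more explicit route to (2) --- applying \cref{cor:rank0bound} to $\square^{(p)}$ (valid since $e_u\lambda(\bz_1(\pi)) = \varpi^{-q}e_u L_p(\pi)\neq 0$) and then propagating torsion-ness to $\square^{(\fp_1)}$ through the exact triangle and the pseudo-nullity of $H^2_{\Iw}(\Qpi, T^{\sharp})$ --- is correct, just slightly longer than the paper's implicit invocation; and your deduction of rank one in (1) from the Euler-characteristic formula is exactly right.

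The one substantive gap, which you partially flag, is in the passage from torsion-free to free in (1). You assert that reflexivity of $e_u\Ht^1$ ``follows from the standard absence of pseudo-null submodules in global Iwasawa cohomology.'' That implication is not correct: over $e_u\Lambda \cong \cO[[T]]$, any torsion-free module automatically has no finite (hence pseudo-null) submodule, yet torsion-free rank-one modules --- e.g.\ the maximal ideal $(\varpi, T)$ --- are neither reflexive nor free. So ``no pseudo-null submodule'' gives you nothing beyond what torsion-freeness already delivered. The correct source of freeness is more structural: since (BI) gives residual irreducibility, parts (a) and (c) of the Proposition on Selmer complexes in \S\ref{sect:selmer} yield $\Ht^0 = \Ht^3 = 0$, so $\RGt_{\Iw}$ is a perfect complex of $e_u\Lambda$-modules with cohomology only in degrees $1,2$, and one deduces reflexivity of $\Ht^1$ from Nekov\'a\v{r}'s duality theory for Selmer complexes (or by appealing directly to the corresponding statements in \cite[\S 12.3]{KLZ17}, which is what the paper implicitly relies on). Since you honestly mark this step as requiring verification rather than claiming it, the proposal is sound in strategy but incomplete precisely here.
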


  \begin{proof}
   This is exactly Theorem \ref{thm:selmerbound} applied to the induced representation $T = \Ind_{K}^{\QQ} T(\pi)^*$. By \cref{prop:nuexistence}, for each $u$ we can take the $\fQ$ of the theorem to be the ideal $(1 - e_u, \varpi)$, and $k = \nu_u$.
  \end{proof}

  We can now prove the main theorem of this section:

  \begin{theorem}[c.f. \cref{conj:mainconj}]
   \label{thm:withzeta}
  	For each $u \in \ZZ / (p-1)\ZZ$ we have
   \[  \varpi^{\mu_{\min, u}}\, \mathrm{char}_{e_u \Lambda}\left( e_u \Ht^2_{\Iw}(K_\infty, T(\pi)^*;\square^{(p)}_\Lambda)\right) \Bigm|  e_u L_p(\pi), \]
   where $\mu_{\min, u} \in \ZZ_{\ge 0}$ is as in Definition \ref{def:muinv}.
  \end{theorem}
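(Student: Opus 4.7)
The plan is to deduce Theorem \ref{thm:withzeta} from Theorem \ref{thm:withoutzeta} by enlarging the local condition at $\fp_2$ from ``relaxed'' (in $\square^{(\fp_1)}$) to the Greenberg condition $\cF^+_{\fp_2}$ (in $\square^{(p)}$), using the general descent argument packaged in \cref{cor:rank0bound}. Under Shapiro's lemma the induced representation $T = \Ind_K^{\QQ} T(\pi)^*$ with the filtrations from \cref{sect:constructES} realises the abstract setup $(T, T^+, T^{++})$ of \cref{sect:core}: the structures $\square^{(\fp_1)}$ and $\square^{(p)}$ correspond to $\square^+$ and $\square^{++}$, and the rank-$1$ subquotient $T^\sharp = T^+/T^{++}$ is the $G_{\Qp}$-representation $T(\pi)^*/\cF^+_{\fp_2}$ (using that $p$ splits, so $K_{\fp_2} = \Qp$).

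Fix $u \in \ZZ/(p-1)\ZZ$ and assume $e_u L_p(\pi) \ne 0$; the complementary case is addressed at the end. Introduce the $\Lambda_\cO$-linear map
\[ \lambda \colon H^1_{\Iw}(\Qpi, T(\pi)^*/\cF^+_{\fp_2}) \longrightarrow \Lambda_\cO, \qquad x \longmapsto \langle \LPR_{\Delta_1}(x), v_{\mathrm{cris}}\rangle, \]
obtained by composing Perrin-Riou's big logarithm with pairing against the integral basis $v_{\mathrm{cris}}$ of $\Dcris(\Qp, \cF^+_{\fp_2} T(\pi))$. Since $\cF^+_{\fp_2} T(\pi)^*$ is a twist of an unramified character whose Frobenius eigenvalue $\fa_{\fp_2}^\circ$ is neither a root of unity nor such a root times an integral power of $p$ (by \cref{prop:noexceptionalzero}), the standard rank-one Perrin-Riou formalism shows that $\lambda$ has pseudo-null kernel and cokernel.

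Now apply \cref{cor:rank0bound} to the Euler system $(z_m(\pi))_{m \in \cR}$ with the identifications above, $\fQ = (\varpi)$ in the $e_u$-component, and $k = \nu_u$. The local divisibility input of the corollary at $\fp_2$ is exactly \cref{prop:nuexistence}, and the ``no exceptional zero'' hypotheses of \cref{ass:nonexcept} are a consequence of (BI) together with \cref{prop:noexceptionalzero}; the Euler-system Selmer bound for $\square^{(\fp_1)}$ required as input is Theorem \ref{thm:withoutzeta}(3). The output reads
\[ \varpi^{\nu_u} \cdot e_u \operatorname{char}_{\Lambda}\bigl( e_u \Ht^2_{\Iw}(K_\infty, T(\pi)^*; \square^{(p)})\bigr) \ \Bigm| \ e_u \lambda(z_1(\pi)). \]
Summing \eqref{eq:powerofvarpiinLp} over the two signs at $m=1$ yields $L_p(\pi) = \varpi^q \cdot \lambda(z_1(\pi))$; multiplying the above divisibility by $\varpi^q$ and invoking the defining relation $\mu_{\min,u} = \nu_u + q$ from \cref{prop:nuexistence} gives precisely $\varpi^{\mu_{\min,u}} \cdot e_u \operatorname{char}\bigl(e_u \Ht^2_{\Iw}(K_\infty, T(\pi)^*; \square^{(p)})\bigr) \mid e_u L_p(\pi)$.

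The edge case $e_u L_p(\pi) = 0$ is vacuous when $\mu_{\min,u} = \infty$; otherwise some $e_u \cL_p(\pi, \Delta_m) \ne 0$, and one runs the entire argument using the equivariant Euler system class $z_m(\pi)$ and equivariant reciprocity law over the tower $K_\infty[m]$, then descends back to $\Lambda_\cO(\Gamma)$. The main obstacle is not in this deduction, which is essentially formal, but rather the delicate matching of $p$-adic defects built into the preparatory sections: the factor $\varpi^q$ from the integral Betti/de Rham comparison, the factor $\varpi^{\nu_u}$ measuring non-primitivity of the Euler system class, and the invariant $\mu_{\min,u}$ of the equivariant $L$-function combine exactly so that the final divisibility is sharp, and verifying that the pseudo-null kernel/cokernel of $\lambda$ does not introduce additional losses is the only remaining technical point.
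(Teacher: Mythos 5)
Your proof follows essentially the same route as the paper: both apply \cref{cor:rank0bound} with $\fQ = (\varpi)$ in the $e_u$-component, $k = \nu_u$, and the Coleman map $\lambda = \langle \LPR(\cdot), v_{\mathrm{cris}}\rangle$, then convert $\lambda(\bz_1(\pi)) = \varpi^{-q} L_p(\pi)$ from \eqref{eq:powerofvarpiinLp} into the stated divisibility via $\mu_{\min,u} = \nu_u + q$. The only superfluous step is your handling of $e_u L_p(\pi) = 0$: the divisibility is then vacuous since anything divides zero, and \cref{cor:rank0bound} applies unconditionally (its hypotheses are structural, not $L$-value hypotheses), so no separate equivariant descent is needed.
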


  \begin{remark}
  Note that $\mu_{\mathrm{min}, u} \ge 0$ by definition, so this implies in particular that one divisibility holds in the Main Conjecture (\cref{conj:mainconj}). Moreover, if the full Main Conjecture holds, then we must have $\mu_{\min, u} = 0$.
  \end{remark}

  \begin{proof}
  	This is an instance of \cref{cor:rank0bound}. We take the map $\lambda$ to be the ``Coleman map'' defined by
   \[ \lambda(z) = \left\langle \LPR_{V^{\sharp}}(z), v_{\mathrm{cris}} \right\rangle \]
   where $v_{\mathrm{cris}}$ is a basis of the integral lattice $\Dcris(T^{\sharp})^\vee = \Dcris\left(K_{\fp_2}, \cF_{\fp_2} T(\pi)\right)$ defined using Wach modules, as in \cref{sec:intlattice}. We then have $
   \lambda(\bz_1(\pi)) = \varpi^{-q} L_p(\pi)$ by \eqref{eq:powerofvarpiinLp}. This factor of $\varpi^{-q}$ cancels out against the factor $\varpi^{\nu_u} = \varpi^{\mu_{\mathrm{min}, u} - q}$ on the left-hand side of the previous theorem, and hence we obtain the formula stated.
  \end{proof}

 \subsection{The Bloch--Kato conjecture}

  \begin{theorem}
   \label{thm:BK}
   Let $j$ be a Deligne-critical integer and $\chi$ a finite-order character of $\Zp^\times$. Then we have
   \[ \dim H^1_{\mathrm{f}}(\QQ, V(\pi)^*(-j-\chi)) \le \ord_{j + \chi} L_p(\pi) \]
   where $\ord_{\chi}$ denotes the order of vanishing at the point $j + \chi \in \cW$. In particular, if $L(\pi \otimes \chi^{-1}, 1 + j) \ne 0$, then the Bloch--Kato Selmer group $H^1_{\mathrm{f}}(K, V(\pi)^*(-j-\chi))$ is zero.
  \end{theorem}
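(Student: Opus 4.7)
The plan is to descend the Iwasawa-theoretic divisibility of \cref{thm:withzeta} to the single specialisation $V(\pi)^*(-j-\chi)$ via a standard control argument. By Shapiro's lemma applied to the induction $G_K \hookrightarrow G_\QQ$, one has $H^1_f(\QQ, V(\pi)^*(-j-\chi)) \cong H^1_f(K, V(\pi)^*(-j-\chi))$, so it suffices to bound the latter Bloch--Kato Selmer group. Likewise, writing $T^* = \Ind_K^\QQ T(\pi)^*$, all Iwasawa Selmer complexes may be interpreted as living over $K_\infty$ or over $\Qi$ interchangeably.

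The key step is a control theorem identifying the specialisation at $x_0 = j + \chi \in \cW$ of the Iwasawa Selmer complex $\RGt_{\Iw}(K_\infty, T(\pi)^*; \square^{(p)})$ with (a lattice version of) the complex computing the Bloch--Kato Selmer group of $V(\pi)^*(-j-\chi)$. Two comparisons are needed. First, at each prime $\fp \mid p$ the Greenberg local condition $\cF^+_{\fp} T(\pi)^*$ specialises to the Bloch--Kato $H^1_f$ local condition: this is a routine Hodge--Tate weight calculation---Deligne-criticality of $j$ ensures that after twisting by $-j-\chi$ the subspace $\cF^+_{\fp}$ has non-positive Hodge--Tate weights while the quotient has strictly positive ones---together with \cref{prop:nolocalinvts} to rule out unwanted invariants. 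Second, one must check that the derived (rather than simple) nature of $\square^{(p)}$ introduces no essential discrepancy: by the remark following \cref{conj:mainconj}, the local terms $H^2_{\Iw}(K_\infty, \cF^+_{\fp_i}T^*)$ are finite (pseudo-null as $\Lambda$-modules), so the Selmer complex $\RGt_{\Iw}$ and the Pontryagin dual of the naive Greenberg--Selmer group of $T \otimes \mu_{p^\infty}$ have equal characteristic ideals and specialise compatibly up to finite error.

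Once the control theorem is in place, standard commutative algebra for torsion $\Lambda_{\cO}(\Gamma)$-modules yields
\[
\dim_L H^1_f(K, V(\pi)^*(-j-\chi)) \le \ord_{x_0} \operatorname{char}_{\Lambda}\bigl(\Ht^2_{\Iw}(K_\infty, T(\pi)^*; \square^{(p)})\bigr),
\]
interpreted on the idempotent component $e_u$ with $u \equiv j \pmod{p-1}$. Combining with \cref{thm:withzeta}---whose power-of-$\varpi$ factor contributes nothing to the order at a characteristic-zero point of $\cW$---gives $\dim H^1_f(\QQ, V(\pi)^*(-j-\chi)) \le \ord_{x_0} L_p(\pi)$. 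The second assertion is then immediate: by the interpolation formula for $L_p(\pi)$, the value $L_p(\pi)(x_0)$ equals $L(\pi \otimes \chi^{-1}, 1+j)$ up to explicit modified Euler factors $\cE_\fp(\pi, \chi, j)$, which are nonzero by \cref{prop:noexceptionalzero}; so nonvanishing of the complex $L$-value forces $\ord_{x_0} L_p(\pi) = 0$, and hence $H^1_f(K, V(\pi)^*(-j-\chi)) = 0$.

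The main obstacle I anticipate lies in the second half of the control theorem: because $\square^{(p)}$ is not a simple Selmer structure, one cannot directly appeal to classical Mazur-style control theorems and must work within the Selmer-complex formalism of \Nek{}, keeping careful track of the derived corrections at primes above $p$. These corrections are finite and hence harmless for dimension estimates, but their bookkeeping---in particular, verifying that the $H^2$-contribution of the local conditions at $p$ neither creates nor destroys kernel/cokernel in the specialisation sequence---is where the real technical work lies.
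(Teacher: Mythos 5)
Your proposal is correct and follows essentially the same route as the paper: the paper's one-line proof is precisely the argument you give — descend from \cref{thm:withzeta} via base-change of Selmer complexes (control at $j+\chi$), use Deligne-criticality to identify the Greenberg condition $\square^{(p)}$ with the Bloch--Kato local condition, and invoke the interpolation formula together with \cref{prop:noexceptionalzero} for the last assertion. Your elaboration of the derived-vs-simple Selmer bookkeeping matches what the paper addresses in the remark following \cref{conj:mainconj}.
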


  \begin{proof}
   This follows from \cref{thm:withzeta} using the base-change properties of Selmer complexes, since the Bloch--Kato local condition for twists in the critical range coincides with the Greenberg local condition $\square^{(p)}$.
  \end{proof}

  Note that the special case when $\uk = (2, 2)$, $j = 0$, and $\chi$ is trivial has been previously established by \Nek \cite{nekovar06}, using the theory of Heegner points (and this argument, unlike ours, extends to Hilbert modular forms over general totally real fields). However, the cases when $\chi$ is not trivial (or quadratic) or $j$ is not the central value are not accessible via Heegner theory, since the resulting motive is not self-dual.

 \subsection{Elliptic curves}
  \label{sect:ellcurves}

  Let $A / K$ be an elliptic curve. By \cite{freitaslehungsiksek15}, $A$ is modular. We shall suppose the following:
  \begin{itemize}
  \item $A$ does not have complex multiplication.
  \item $A$ is not isogenous over $\Qb$ to its conjugate $A^{\sigma}$ (i.e.~$A$ is not a $\QQ$-curve).
  \end{itemize}

  \begin{theorem}\label{thm:BSD}
   The equivariant Birch--Swinnerton-Dyer conjecture holds in analytic rank 0 for twists of $E$ by Dirichlet characters. That is, for any $\eta: (\ZZ / N\ZZ)^\times \to \Qb^\times$, we have the implication
   \[ L(A, \eta, 1) \ne 0\quad \Longrightarrow\quad A(K(\mu_N))^{(\eta)} \text{\ is finite}.\]
   Moreover, if $L(A, \eta, 1) \ne 0$ then, for a set of primes $p$ of density $\ge \tfrac{1}{2}$, the group $\Sha_{p^\infty}(A / K(\zeta_N))^{(\eta)}$ is trivial.
  \end{theorem}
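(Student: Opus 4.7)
The plan is to reduce via modularity to Theorem C, and then handle an arbitrary Dirichlet character $\eta$ by an appropriate twist of the Hilbert modular form. By \cite{freitaslehungsiksek15}, $A$ corresponds to a Hilbert cuspidal automorphic representation $\pi$ of weight $(2,2,0,0)$, trivial central character, and coefficient field $E=\QQ$, with $L(\pi,s)=L(A/K,s)$. The first step is to verify that the two standing assumptions on $A$ imply that $\pi$ satisfies hypotheses (iii)--(v) of Theorem~A. Triviality of the central character is immediate; non-CM plus the condition that $A$ is not isogenous to any quadratic twist of $A^\sigma$ together rule out $\pi$ being a twist of a base-change from $\GL_2/\QQ$ (by the Lapid--Rogawski criterion invoked in the proof of \cref{prop:HMFlargeimage}), and hence also rule out the existence of an exceptional automorphism of $\pi$ since $E=\QQ$ forces any such to be trivial.

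Next, I would show that conditions (i), (ii) and (v) hold simultaneously for a set of rational primes of density $\tfrac12$. The split primes form a set of density $\tfrac12$; by \cref{prop:HMFlargeimage}, condition (BI) holds for all but finitely many of these, since $\pi$ has no exceptional automorphism and $E=\QQ$; and by Serre's theorem on ordinary primes for non-CM Hilbert newforms, the set of split $p$ at which $\pi$ is ordinary at both primes above $p$ has density $\tfrac12$ in all primes. Intersecting these with the finitely many primes ruled out by $p\mid\disc\cdot\Nm(\fn)$, Theorem~C becomes applicable to $\pi$ at every $p$ in a set of density $\tfrac12$.

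To handle an arbitrary Dirichlet character $\eta$ of conductor $N$, I would apply Theorem~C not to $\pi$ but to the twist $\pi' \coloneqq \pi \otimes (\eta\circ\Nm_{K/\QQ})$, which is a Hilbert cuspidal automorphic representation of weight $(2,2,0,0)$; its central character $\eta^2\circ\Nm$ factors through the norm, and since twisting by an abelian character preserves the non-CM, non-twist-of-base-change and large-image properties, $\pi'$ satisfies all the hypotheses of Theorem~A at every $p$ in the density-$\tfrac12$ set satisfying additionally $p\nmid N$. Applied with trivial $\chi$ and $j=0$, Theorem~C yields
\[
 L(A,\eta,1)\ne 0\ \Longrightarrow\ H^1_{\mathrm{f}}\!\left(K,\rho^*_{\pi,v}\otimes(\eta^{-1}\!\circ\!\Nm)\right)=0,
\]
using $L(\pi',s)=L(A,\eta,s)$ and $\rho_{\pi',v}\cong\rho_{\pi,v}\otimes(\eta\circ\Nm)$. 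By Shapiro's lemma the cohomology group on the right is canonically identified with the $\eta$-isotypic component of $H^1_{\mathrm{f}}(K(\mu_N),V_p(A))$, which contains $A(K(\mu_N))^{(\eta)}\otimes\Qp$ via Kummer; its vanishing at even a single such $p$ therefore proves the first assertion.

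For the $\Sha$ statement, I would upgrade the rational vanishing to an integral one via Theorem~A applied to $\pi'$: the divisibility $\Char_\Lambda \Ht^2_{\Iw}(K_\infty, T(\pi')^*; \square^{(p)}) \mid L_p(\pi')$ (with $T(\pi')^*$ a Galois-stable $\cO$-lattice) controls the $\eta$-isotypic part of the integral Greenberg Selmer group, and combining with the classical descent sequence
\[
 0\to A(K(\mu_N))^{(\eta)}\otimes\Zp\to H^1_{\mathrm{f}}(K,T_p(A)\otimes(\eta^{-1}\!\circ\!\Nm))\to \Sha_{p^\infty}(A/K(\mu_N))^{(\eta)}\to 0
\]
and the finiteness of the Mordell--Weil term established above yields $\Sha_{p^\infty}(A/K(\mu_N))^{(\eta)}=0$. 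The main obstacle I anticipate lies here: the bound of Theorem~A carries an extra factor $\varpi^{\mu_{\min,u}}$, and deducing genuine triviality of $\Sha_{p^\infty}^{(\eta)}$ rather than mere boundedness requires $\mu_{\min,u}=0$ together with a $v$-adic unit value for $L_p(\pi')(0)$ up to the interpolation factor. Both should hold outside a further finite set of $p$ in the density-$\tfrac12$ set, but the verification needs care with the integral period normalisation underlying $L_p(\pi')$ and with the exact Euler factors at primes dividing $N$.
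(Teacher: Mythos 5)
Your proposal follows the same route as the paper: replace $\pi_A$ by $\pi_A\otimes(\eta\circ\Nm)$, check that the standing hypotheses on $A$ (non-CM, not isogenous to any quadratic twist of $A^\sigma$) give hypotheses (iii)--(v) of Theorem~A via the Lapid--Rogawski criterion and Proposition \ref{prop:HMFlargeimage}, observe that the split ordinary non-(BI)-excluded primes not dividing $N\fn\disc$ have density $\tfrac12$, and then apply the Kato divisibility and Bloch--Kato consequence at each such $p$. The paper's own proof is the same argument compressed to three sentences, citing \cref{thm:withoutzeta} and asserting ``the result follows''; you have filled in the verification of the hypotheses carefully and correctly, including the point that $E=\QQ$ forces any putative exceptional automorphism to be trivial.

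The subtlety you flag in your final paragraph about the $\varpi^{\mu_{\min,u}}$ factor is genuine, but it resolves. By \cref{def:muinv}, $\mu_{\min,u}$ is a lower bound for the $\mu$-invariant of $e_uL_p(\pi')$; so if $\langle\cL_p(\pi'),v_\eps\rangle(0)$ is a $v$-adic unit, then $\mu_{\min,0}=0$ automatically, and the divisibility of Theorem \ref{thm:withzeta} collapses to an honest divisibility of the characteristic ideal by $e_0L_p(\pi')$. The unit condition requires (a) the algebraic number $L(A,\eta,1)$ divided by the relevant complex period to be a $v$-adic unit, which fails for at most finitely many $p$; and (b) the interpolation factor $\prod_\fp\cE_\fp(\pi',\id,0)$ to be a unit, which amounts to $\fa_\fp^\circ\not\equiv1\pmod\varpi$ for $\fp\mid p$, i.e.\ avoiding the anomalous primes, a set of density $0$ for a non-CM curve. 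Discarding both gives the same density-$\ge\tfrac12$ set, and descent from $\Ht^2_{\Iw}$ to the $K$-level Selmer group (with the observation from the remark after \cref{conj:mainconj} that the local $H^2$ discrepancies are finite and hence contribute boundedly) then gives triviality of $\Sha_{p^\infty}$. So your concern is a real step that the paper suppresses; your sketch of how to resolve it is correct, and the proof is sound once this is spelled out.
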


  \begin{proof}
   Let $\pi = \pi_A \otimes \eta$, where $\pi_A$ is the automorphic representation associated to $A$. Then the hypotheses of Theorem \ref{thm:withoutzeta} are satisfied for all but finitely many primes $p$ split in $K$ such that $A$ is ordinary above $p$. Since the non-ordinary primes have density 0, and the primes split in $K$ have density $\tfrac{1}{2}$, the result follows.
  \end{proof}

  \begin{remark}
   Note that if $A$ is a $\QQ$-curve over $\cK$ and the isogeny $A \to A^{\sigma}$ is also defined over $\cK$, then the automorphic representation $\pi_A$ is a base-change from $\QQ$, and the statement of \cref{thm:BSD} for $A$ follows from Kato's Euler system for the corresponding elliptic modular form: see \cite[Theorem 14.2]{kato04}. $\QQ$-curves satisfying this property are said to be \emph{completely defined over $\cK$}. However, if $A$ is a $\QQ$-curve over $\cK$ which is not completely defined over $\cK$, then neither Theorem \ref{thm:BSD} nor Kato's results apply to $A$ (although it is not impossible that the methods of \cref{thm:BSD} might extend to cover this case).
  \end{remark}

\section{Lower bounds for Selmer groups}
 \label{sect:wan}
 We now compare the upper bound for the Selmer group over $K_\infty$ given by Theorem \ref{thm:withzeta} with the opposite bounds obtained by Wan in \cite{wan15b}.

 \begin{note}
  The arguments of this section are very closely parallel to the proof of Theorem 3.29 of \cite{skinnerurban14} in the $\GL_2 / \QQ$ case. In our setting, \cref{thm:withzeta} takes the place of the results of \cite{kato04} used in \emph{op.cit.}, and Theorem 101 of \cite{wan15b} playing the role of Theorem 3.26 of \cite{skinnerurban14}.
 \end{note}

 \subsection{Hypotheses}
  \label{sect:addhyp1}

  We continue to assume the hypotheses on $K$, $\pi$, and $p$ listed at the beginning of \S \ref{sect:mainthms}, and the following additional hypotheses:
  \begin{enumerate}[(i)]

   \item $\pi$ has parallel weight $k \ge 2$ (i.e.~$k_1 = k_2 = k$).

   \item The central character $\varepsilon_{\pi}$ is trivial (implying that $k$ is even).


  \end{enumerate}


  Without loss of generality we shall take $t_1 = t_2 = 1 - \tfrac{k}{2}$, so that the value of our $p$-adic $L$-function at the trivial character corresponds to the central value of the complex $L$-function. Since $\varepsilon_{\pi}$ is trivial, the $L$-function associated to $\pi$ satisfies a functional equation, with some global root number $w_{\pi} \in \{ \pm 1\}$.

 \subsection{Minimality and integral period relations}

  We will also need to consider the following condition: we say $\rho_{\pi, v}$ is \emph{$\fn$-minimal} if the Artin conductor of the residual representation $\bar{\rho}_{\pi, v}$ is equal to $\fn$ (which is the Artin conductor of the characteristic 0 representation $\rho_{\pi, v}$).

  \begin{remark}
   If $\fn$ is square-free, then this amounts to requiring that $\bar{\rho}_{\pi, v}(I_{\fq})$ is non-trivial for all $\fq \mid \fn$.
  \end{remark}

  \begin{theorem}[Dimitrov, Fujiwara, Tilouine--Urban]
   \label{thm:gorenstein}
   Assume the hypotheses of \S \ref{sect:mainthms} and \S \ref{sect:addhyp1} hold, and in addition that $\rho_{\pi, v}$ is $\fn$-minimal. Then:
   \begin{enumerate}[(i)]
    \item For any sign $\eps$, the localisation of the integral Betti cohomology
    \[ H^i_{B, c}\left(Y_{K, 1}(\fn), \cV_k(\cO)\right)^{\eps}_{\mathfrak{m}}\]
    is zero for $i \ne 2$, and free of rank 1 over $T_{\mathfrak{m}}$ for $i = 2$, where $\mathfrak{m}$ is the maximal ideal corresponding to $\bar{\rho}_{\pi, v}$, and $T_{\mathfrak{m}}$ the corresponding localised Hecke algebra over $\cO$.

    \item The ring $T_{\mathfrak{m}}$ is a complete intersection ring, and hence a Gorenstein ring.

    \item We have
    \[ \frac{L(\operatorname{Ad} \pi, 1) \Gamma(\operatorname{Ad}\pi, 1)}{\eta_{\pi} \cdot \Omega^+_{\pi} \Omega^-_{\pi}} \in \cO^\times, \]
    where $\Omega^{\pm}_{\pi}$ are integrally normalised periods as in \cref{sect:prelimhilb}, and $\eta_{\pi} \in \cO$ denotes a generator of the congruence ideal of $\pi$.
   \end{enumerate}
  \end{theorem}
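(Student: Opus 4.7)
The plan is to combine three classical inputs in the order the theorem is stated, with (i) and (ii) serving as the engine for (iii), in the spirit of Hida--Wiles in the elliptic case (cf.~\cite{nekovar12}).

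For (i), my approach is to invoke the cohomological results of Dimitrov \cite{dimitrov09}. The key observation is that hypothesis (BI), in particular the part of the proposition in \cref{sect:bigimage} guaranteeing that $\bar{\rho}_{\pi, v}|_{G_{K^{\mathrm{ab}}}}$ is irreducible, is stronger than what Dimitrov requires. Under this condition the maximal ideal $\mathfrak{m}$ is non-Eisenstein in a strong sense: any contribution to the integral Betti cohomology coming from the boundary of the Baily--Borel compactification, or from residual Eisenstein classes, is killed after localising at $\mathfrak{m}$. What remains is purely cuspidal, and the $(\mathfrak{g}_\infty, K_\infty)$-cohomology of the archimedean component of $\pi$, which is a direct sum of holomorphic and antiholomorphic discrete series (here the assumption $k \ge 2$ matters), is concentrated in degree $2$. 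The integral multiplicity-one statement of Dimitrov --- whose hypotheses reduce in our setting precisely to $\fn$-minimality together with (BI) --- then upgrades this to the assertion that each sign component of $H^2_{B, c, \mathfrak{m}}$ is free of rank one over $T_{\mathfrak{m}}$.

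For (ii), my plan is to apply the $R = T$ theorem of Fujiwara (and independently, Tilouine--Urban) for quadratic Hilbert modular forms. Under the $\fn$-minimality hypothesis, the classical Hecke algebra $T_{\mathfrak{m}}$ should be identified with the universal deformation ring $R_{\min}$ parametrising deformations of $\bar\rho_{\pi, v}$ that are (a) ordinary at the primes above $p$, (b) of determinant $\chi_{\mathrm{cyc}}^{1-k}$, and (c) minimally ramified at primes dividing $\fn$. Two ingredients enter: first, the surjectivity of $R_{\min} \onto T_{\mathfrak{m}}$ coming from the classical construction of Galois representations; second, the Taylor--Wiles--Fujiwara patching argument, which uses condition (BI) (to ensure a sufficient supply of Taylor--Wiles primes) and the minimality hypothesis (so that no non-minimal local conditions need be tracked). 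The patched ring is a complete intersection over a power series ring, and this structure descends to give that $T_{\mathfrak{m}} \cong R_{\min}$ is a complete intersection, hence Gorenstein.

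For (iii), I would now argue as in the proof of the integral period formula of Hida--Wiles, adapted to our setting. Let $\lambda_\pi: T_{\mathfrak{m}} \to \cO$ be the eigensystem of $\pi$, so that $\eta_\pi$ is a generator of $\lambda_\pi\bigl(\operatorname{Ann}_{T_{\mathfrak{m}}}(\ker \lambda_\pi)\bigr)$. From (ii), the Gorenstein property of $T_{\mathfrak{m}}$ gives a canonical (up to $\cO^\times$) isomorphism $\Hom_\cO(T_{\mathfrak{m}}, \cO) \cong T_{\mathfrak{m}}$. Combined with the freeness of rank $1$ of $M^\pm := H^2_{B,c, \mathfrak{m}}^\pm$ over $T_{\mathfrak{m}}$ from (i), this shows that the Poincar\'e duality pairing $M^+ \times M^- \to \cO$ becomes, after restriction to the $\lambda_\pi$-components, a pairing whose discriminant is precisely $\eta_\pi$ up to a unit. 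On the analytic side, the same pairing, computed via the Eichler--Shimura isomorphism on the basis coming from the normalised newform, equals the Petersson norm of $\phi$; and Shimura's integral representation of $L(\operatorname{Ad} \pi, s)$ (generalised from the elliptic to the Hilbert setting) identifies this Petersson norm with $L(\operatorname{Ad} \pi, 1) \Gamma(\operatorname{Ad} \pi, 1) / \bigl((-2\pi i)^* \cdot (\text{rational})\bigr)$. Equating the algebraic and analytic expressions for the same pairing, and absorbing the choice of normalisation into the $\cO^\times$, yields the stated formula.

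The main obstacle is (ii): all the serious automorphic and deformation-theoretic machinery is concentrated there, and one must take some care to check that our hypotheses (parallel even weight, trivial central character, $\fn$-minimality, and (BI)) fall within the scope of the published $R=T$ results in the Hilbert case. Once (ii) is granted, (i) is a direct citation of Dimitrov and (iii) is a formal, if delicate, congruence-module calculation.
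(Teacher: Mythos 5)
Your proposal is a correct reconstruction of the standard argument, but the paper does not reproduce it: the paper's proof of \cref{thm:gorenstein} consists of a single sentence citing Theorem 4.11 and Proposition 4.12 of \cite{tilouineurban18} (noting that these refine earlier results of Dimitrov and Fujiwara, as also recorded in \cite{wan15b}). Your three-stage outline --- Dimitrov's non-Eisenstein localisation and middle-degree concentration for (i), Fujiwara/Tilouine--Urban $R=T$ via Taylor--Wiles--Fujiwara patching under minimality and (BI) for (ii), and the Hida--Wiles congruence-module/Poincar\'e-duality computation identifying the discriminant of the pairing with $\eta_\pi$ and the Petersson norm with the adjoint $L$-value for (iii) --- is precisely the argument those references carry out, so this is the same route, simply spelled out rather than cited. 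One cosmetic remark: you write the determinant condition as $\chi_{\mathrm{cyc}}^{1-k}$, which corresponds to the normalisation $t_1 = t_2 = 0$; under the paper's choice $t_1 = t_2 = 1 - k/2$ in \S\ref{sect:addhyp1} the relevant determinant is $\chi_{\mathrm{cyc}}^{-1}$, though this is a matter of twist and does not affect the substance.
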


  \begin{proof}
   This is Theorem 4.11 and Proposition 4.12 of \cite{tilouineurban18}, refining earlier results of Dimitrov and of Fujiwara (quoted as Theorem 8 in \cite{wan15b}).
  \end{proof}

 \subsection{P-adic $L$-functions and Selmer groups over $\cK$}

  We now define the objects needed to state the main result of \cite{wan15b} (in a simple case).

  \subsubsection{The CM field \texorpdfstring{$\cK$}{cK}}

  \begin{definition}
   Let $\cK$ denote a totally imaginary quadratic extension of $K$ satisfying the following conditions:
   \begin{itemize}
    \item $\cK$ is not contained in the narrow class field of $K$ (i.e.~some finite prime of $K$ ramifies in $\cK$).
    \item All primes dividing $p \fn \disc$ are split in $\cK / K$.
   \end{itemize}
   Let $\varepsilon_{\cK}$ denote the quadratic Hecke character of $K$ corresponding to $\cK / K$.
  \end{definition}

  As in \emph{op.cit.} we let $\cK_\infty$ denote the unique $\Zp^3$-extension of $\cK$ unramified outside $p$, and $\Gamma_{\cK}$ its Galois group. We have $\cK_\infty \supset \cK \cdot K_{\infty}^1$, where $K_{\infty}^1 \subset K_\infty$ is the cyclotomic $\Zp$-extension of $K$, with Galois group $\Gamma^1 \cong \Zp$. This gives us a projection map
  \[ \Lambda_{\cO}(\Gamma_\cK) \onto \Lambda_{\cO}(\Gamma^1) = e_0 \Lambda, \]
  where $e_0$ is the idempotent in $\Lambda = \Lambda_{\cO}(\Gamma)$ corresponding to the trivial action of $(\ZZ / p\ZZ)^\times$. We take the set $\Sigma$ of primes in \emph{op.cit.} to be precisely the primes above $p$.

  \subsubsection{P-adic L-functions}
  Let $f$ be the newform generating $\pi$. In \S 7.3 of \emph{op.cit.}, two $p$-adic $L$-functions are defined,
  \begin{align*}
   \tilde{L}^{\Sigma}_{f, \cK, 1} &\in \Lambda_L(\Gamma_{\cK}) &
   \text{and}&&
   L^{\Sigma}_{f, \cK, 1} &\in \Lambda_{\cO}(\Gamma_{\cK}),
  \end{align*}
  with the latter only defined if the Gorenstein property of \cref{thm:gorenstein}(ii) holds. When both are defined, the relation between the two is given by $L^{\Sigma}_{f, \cK, 1} = \eta_{\pi}\cdot \tilde{L}^{\Sigma}_{f, \cK, 1}$ where $\eta_{\pi}$ is a generator of the congruence ideal.

  \begin{proposition}
   If the hypotheses of \cref{thm:gorenstein} hold, then the image of $L^{\Sigma}_{f, \cK, 1}$ in $\Lambda_{\cO}(\Gamma^1)$ is given by
   \[ b_{\pi} \cdot ( e_0 L_p(\pi) ) \cdot (e_0 L_p(\pi \otimes \varepsilon_{\cK})), \]
   where $b_{\pi} \in \Lambda_{\cO}(\Gamma^1)^\times$. If we do not assume the hypotheses of \cref{thm:gorenstein}, then we have the same result for $\tilde{L}^{\Sigma}_{f, \cK, 1}$ up to a unit in $\Lambda_{L}(\Gamma^1)^\times$.
  \end{proposition}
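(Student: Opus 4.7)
The plan is to prove the factorization by comparing interpolation formulas at a Zariski-dense set of specializations. More precisely, let $\rho: \Gamma^1 \to \bar L^\times$ denote a cyclotomic character of the form $\rho = \chi_{\mathrm{cyc}}^j \chi$ where $j$ is a critical integer for $\pi$ (equivalently, for $\pi \otimes \varepsilon_{\cK}$, since both representations have the same weight) and $\chi$ is a finite-order character of $p$-power conductor. Such characters are Zariski-dense in $\operatorname{Spec}\Lambda_{\cO}(\Gamma^1)$. Viewing $\rho$ as a character of $\Gamma_{\cK}$ via the projection $\Gamma_\cK \to \Gamma^1$, the defining interpolation property of $\tilde L^{\Sigma}_{f, \cK, 1}$ (see \cite[\S 7.3]{wan15b}) expresses its value at $\rho$ in terms of $L^{\Sigma}\!\left(\mathrm{BC}_{\cK/K}(\pi), \tfrac{1}{2}+j, \chi\right)$, multiplied by explicit local factors at $p$ and by a CM period.

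The first step is the classical Artin factorization for base change: since $\varepsilon_{\cK}$ is the quadratic Hecke character of $K$ cutting out $\cK$, we have
\[ L^{\Sigma}\!\left(\mathrm{BC}_{\cK/K}(\pi), s, \chi\right) = L^{\Sigma}\!\left(\pi, s, \chi \circ \Nm_{K/\QQ}\right)\cdot L^{\Sigma}\!\left(\pi \otimes \varepsilon_{\cK}, s, \chi \circ \Nm_{K/\QQ}\right). \]
The second step is to verify that the local Euler factor at $p$ appearing in Wan's interpolation formula decomposes, using the factorization $\cE_{p}\!\left(\mathrm{BC}_{\cK/K}(\pi), \rho\right) = \cE_{p}\!\left(\pi, \rho\right)\cdot \cE_{p}\!\left(\pi \otimes \varepsilon_{\cK}, \rho\right)$. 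This follows directly from the assumption that $p$ splits in $\cK/K$, so each prime of $K$ above $p$ splits further into two primes of $\cK$, and the local $L$-factors match up pairwise on the nose.

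The third and most delicate step is to match the periods. The interpolation formula for $\tilde L^{\Sigma}_{f, \cK, 1}$ involves an archimedean period of CM type, whereas the product $L_p(\pi) \cdot L_p(\pi \otimes \varepsilon_{\cK})$ is defined in terms of the Betti periods $\Omega^{\pm}_{\pi}$ and $\Omega^{\pm}_{\pi \otimes \varepsilon_{\cK}}$. The link is that $\varepsilon_{\cK}$ is totally odd at the archimedean places of $K$ (since $\cK$ is totally imaginary), so twisting by $\varepsilon_{\cK}$ swaps the two Betti sign characters at each infinite place. Working on the cyclotomic direction $\Gamma^1$, only the ``diagonal'' signs $\eps \otimes \eps$ appear in both factors, so that the product of periods reduces to $\Omega^{+}_{\pi}\Omega^{-}_{\pi}$ up to a unit. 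Equivalently, this is a form of Deligne's period conjecture for $\pi$ and its $\varepsilon_{\cK}$-twist, and it holds here by the rationality results of \cref{prop:Lalgpi} combined with the trivial-central-character hypothesis. This shows that at each $\rho$ as above, both sides agree up to a factor in $\bar L^\times$ that does not depend on $\rho$, giving the desired identity in $\Lambda_{L}(\Gamma^1)^\times$.

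Finally, to promote the identity from $\Lambda_L$ to $\Lambda_{\cO}$ when the hypotheses of \cref{thm:gorenstein} hold, one invokes the integral period relation (part (iii) of that theorem): the congruence ideal generator $\eta_{\pi}$ is exactly the ratio between Wan's (integral) $L^{\Sigma}_{f, \cK, 1}$ and the (rational) $\tilde L^{\Sigma}_{f, \cK, 1}$, and simultaneously it is the ratio between Wan's archimedean CM period and the product $\Omega^{+}_{\pi}\Omega^{-}_{\pi}$ of integrally-normalized Betti periods (up to a unit in $\cO^\times$). These factors of $\eta_{\pi}$ cancel, leaving $b_{\pi} \in \Lambda_{\cO}(\Gamma^1)^\times$. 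The main obstacle in executing this plan is the period comparison in the third step: one must carefully reconcile Wan's Hsieh-style CM period with the $(\Omega^+_\pi, \Omega^-_\pi)$ conventions used throughout the present paper, ensuring both that the ratio is genuinely constant along $\Gamma^1$ (rather than merely algebraic) and that, under the Gorenstein hypothesis, the constant is a $p$-adic unit.
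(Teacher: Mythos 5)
Your overall strategy — compare interpolation formulae at a Zariski-dense set of cyclotomic specialisations, factor $L(\mathrm{BC}_{\cK/K}\pi, s, \chi) = L(\pi, s, \chi_K)\cdot L(\pi\otimes\varepsilon_\cK, s, \chi_K)$ (Artin formalism), factor the local Euler factors at $p$ using that $p$ splits completely in $\cK/\QQ$, and settle the period comparison via Theorem \ref{thm:gorenstein}(iii) — is exactly the route taken in the paper.

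Two points deserve flagging. First, you omit the normalisation-matching step: Wan's interpolation formula in \cite[Theorem 82]{wan15b} and the one in \S\ref{sect:gl2kpadicL} place the ``reference'' critical value at different points ($s = k_0-1$ versus $s = 1$ in the paper's conventions), and one needs to invoke the functional equation of the $L$-series to reconcile them before the $L$-values line up at the specialisations you consider. This is a genuine, if small, missing step in your argument. Second, your description of Wan's archimedean period as being ``of CM type'' is off: $\Omega_{f,\mathrm{can}}$ of \cite[Definition 79]{wan15b} is a Hida-type canonical period attached to $f$ alone (roughly $\langle f,f\rangle/\eta_\pi$), not a CM period of $\cK$, and Theorem \ref{thm:gorenstein}(iii) identifies it directly with $\Omega^+_\pi\Omega^-_\pi$ up to a unit in $\cO^\times$. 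Your proposed ``double cancellation of $\eta_\pi$'' arrives at the same conclusion, but it is more natural (and matches the paper) to compare $\Omega_{f,\mathrm{can}}$ directly with $\Omega^+_\pi\Omega^-_\pi$ rather than routing through $\tilde L^\Sigma_{f,\cK,1}$. Your sign-swap heuristic for reducing $\Omega^+_\pi\Omega^+_{\pi\otimes\varepsilon_\cK}$ to $\Omega^+_\pi\Omega^-_\pi$ (using that $\varepsilon_\cK$ is totally odd) is a reasonable attempt to fill in a step the paper leaves implicit, but to make it work integrally one must check that the identification of $\cO$-lattices under the $\varepsilon_\cK$-twist is unit-preserving, which you assert but do not establish.
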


  \begin{proof}
   This follows by comparing the interpolation formulae of \S\ref{sect:gl2kpadicL} with Theorem 82 of \cite{wan15b}, and applying the functional equation to relate values at $s = k_0 - 1$ with values at $s = 1$. Theorem \ref{thm:gorenstein}(iii) shows that the ``canonical period'' $\Omega_{f, \mathrm{can}}$ of \cite[Definition 79]{wan15b} is given by $\Omega_{\pi}^+ \Omega_{\pi}^-$ up to a unit in $\cO$. Without the Gorenstein property we cannot compare the periods integrally, but we nonetheless obtain an equality after inverting $p$.
  \end{proof}

  \subsubsection{Selmer groups}

   We refer to \S 2.5 of \emph{op.cit.} for the definition of a module $X_{\cK_\infty^+}^{\Sigma}(T(\pi))$ over $\Lambda_{\cO}(\Gamma^1)$, which is the Pontryagin dual of a Selmer group for $T(\pi)(1) \otimes \Qp/\Zp$ over $\cK_\infty^+ = \cK K_\infty^1$. We want to compare this with the Selmer complexes used above.

   \begin{notation}
   In this section when we use Selmer complexes $\RGt_{\Iw}(-)$, the local condition will always be $\square^{(p)}$, and we omit it from the notation.
   \end{notation}

   \begin{proposition}
    The dual Selmer group $X_{\cK_\infty^+}^{\Sigma}(T(\pi))$ has the same characteristic ideal as the direct sum
    \[e_0 \Ht^2_{\Iw}(K_\infty, T(\pi)^*)\oplus  e_0 \Ht^2_{\Iw}(K_\infty, T(\pi \otimes \varepsilon_{\cK})^*). \]
   \end{proposition}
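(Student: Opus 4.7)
My approach is to combine Shapiro's lemma with the decomposition of the induced representation under the quadratic extension $\cK/K$, and then to compare the resulting local Selmer conditions. Write $\cK_\infty^+ = \cK \cdot K_\infty^1$, so that $\Gal(\cK_\infty^+/K_\infty^1) = \Gal(\cK/K) \cong \ZZ/2\ZZ$ acts through the quadratic character $\varepsilon_\cK$. Since $\Ind_{G_\cK}^{G_K} \mathbf{1} \cong \mathbf{1} \oplus \varepsilon_\cK$ as $G_K$-modules, Shapiro's lemma applied to the quadratic subextension $\cK_\infty^+/K_\infty^1$ yields, for any $G_K$-module $M$, a natural decomposition
\[
 H^i_{\Iw}(\cK_\infty^+, M) \cong H^i_{\Iw}(K_\infty^1, M) \oplus H^i_{\Iw}(K_\infty^1, M \otimes \varepsilon_\cK).
\]
Taking $M = T(\pi)^*$ and using that $T(\pi \otimes \varepsilon_\cK)^* \cong T(\pi)^* \otimes \varepsilon_\cK$ as $G_K$-modules, this splits the underlying cohomology of the complex computing $X_{\cK_\infty^+}^{\Sigma}(T(\pi))$ into contributions from $T(\pi)^*$ and $T(\pi \otimes \varepsilon_\cK)^*$.

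The next step is to match the local conditions on both sides. At primes not above $p$, both the Wan Selmer group and $\square^{(p)}$ impose the unramified condition, so the matching is automatic. At primes above $p$, the hypothesis that every $\fp \mid p$ of $K$ splits in $\cK/K$ is essential: for each $\fp$ there are two primes $\fP, \bar{\fP}$ of $\cK$ above it, interchanged by $\Gal(\cK/K)$, and the completions satisfy $\cK_{\fP} = K_{\fp}$. Under the above Shapiro decomposition, Wan's Greenberg-ordinary local condition at the pair $(\fP^+, \bar{\fP}^+)$ of primes of $\cK_\infty^+$ (built from the same ordinary filtration $\cF^+_\fp$ coming from $\pi$ being ordinary at $\fp_1, \fp_2$) splits as the Greenberg condition $\square^{(p)}$ for $T(\pi)^*$ on one component and its twist by $\varepsilon_\cK$ on the other.

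Finally, since $\Gamma / \Gamma^1 \cong \ZZ/(p-1)\ZZ$ and the idempotent $e_0 \in \Lambda_\cO(\Gamma)$ projects onto the trivial character of this quotient, the natural corestriction-type map $e_0 \Ht^i_{\Iw}(K_\infty, -) \xrightarrow{\sim} \Ht^i_{\Iw}(K_\infty^1, -)$ is an isomorphism of $\Lambda_\cO(\Gamma^1)$-modules. Combining the three ingredients (Shapiro, the character decomposition at $p$, and the $e_0$-projection), the characteristic ideal of $X_{\cK_\infty^+}^{\Sigma}(T(\pi))$ matches that of the direct sum of the two $e_0$-Selmer complexes, up to pseudo-null discrepancies absorbed in characteristic ideals.

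The main obstacle will be carefully pinning down Wan's conventions for the $\Sigma$-imprimitive Selmer group in \cite{wan15b} and verifying that the local conditions at the primes in $\Sigma$ coincide precisely, after the decomposition, with the Greenberg conditions defining $\square^{(p)}$ --- in particular that the filtration $\cF^+$ (versus its orthogonal complement), the choice between $T$ and $T^*(1)$, and the sign conventions induced by complex conjugation all match up. Since both constructions are formulated over $\Lambda_\cO(\Gamma^1)$ from the same $p$-adic Hodge-theoretic data, this reduces to bookkeeping, but it is the delicate part of the argument where pseudo-null discrepancies (from finite local $H^0$ and $H^2$ terms) must be tracked to preserve the equality of characteristic ideals.
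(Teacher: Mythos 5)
Your approach is essentially the paper's: the Shapiro decomposition $\Ind_{\cK}^{K}(T(\pi)^*|_{G_{\cK}}) \cong T(\pi)^* \oplus T(\pi \otimes \varepsilon_{\cK})^*$, the use of the split-at-$p$ hypothesis to decompose the local Greenberg conditions, and the $e_0$-projection to pass from $K_\infty$ to $K_\infty^1$. The one ingredient you leave unnamed and underestimate as ``bookkeeping'' is in fact the substantive step: Wan's $X^{\Sigma}_{\cK_\infty^+}(T(\pi))$ is a non-derived Pontryagin dual of a discrete Selmer group for $T(\pi)(1) \otimes \Qp/\Zp$, and comparing it with the derived $\Ht^2_{\Iw}(\cK_\infty^+, T(\pi)^*)$ requires Poitou--Tate duality for Selmer complexes, which yields an exact sequence $0 \to X^{\Sigma}_{\cK_\infty^+}(T(\pi)) \to \Ht^2_{\Iw}(\cK_\infty^+, T(\pi)^*) \to (\text{local } H^2 \text{ terms})$ whose cokernel consists of finite, hence pseudo-null, groups. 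This duality is a theorem, not a normalization check, and is what actually licenses the equality of characteristic ideals; with it named, your sketch and the paper's coincide.
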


   \begin{proof}
    Since $\Ind_{\cK}^{K}( T(\pi)^*|_{G_{\cK}}) = T(\pi)^* \oplus T(\pi \otimes \varepsilon_{\cK})^*$, we have
    \[ \RGt_{\Iw}(\cK_\infty^+, T(\pi)^*) = e_0 \RGt_{\Iw}(K_\infty, T(\pi)^*) \oplus \RGt_{\Iw}(K_\infty, T(\pi \otimes \varepsilon_{\cK})^*).\]
    However, a standard argument using Poitou--Tate duality for Selmer complexes shows that there is an exact sequence
    \[ 0 \to X_{\cK_\infty^+}^{\Sigma}(T(\pi)) \to \Ht^2_{\Iw}(\cK_\infty^+, T(\pi)^*) \to (\text{local $H^2$ terms})\]
    where the local $H^2$ terms are finite groups, and hence pseudo-null.
   \end{proof}

  \subsubsection{Wan's theorem}

   \begin{theorem}[Wan]
    Assume the hypotheses of \S \ref{sect:mainthms} and \S \ref{sect:addhyp1} hold. Then we have
    \[ e_0 \cdot L_p(\pi) \cdot L_p(\pi \otimes \varepsilon_{\cK})\ \Big|\ \Char_{e_0 \Lambda}\left(X_{\cK_\infty^+}^{\Sigma}(T(\pi))\right) \]
    as ideals of $\Lambda[1/p]$. If in addition $\pi$ satisfies the hypotheses of Theorem \ref{thm:gorenstein}, then this divisibility holds integrally, i.e.~as ideals of $\Lambda = \cO[[\Gamma]]$.
   \end{theorem}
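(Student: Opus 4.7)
The plan is to deduce the statement directly from Theorem 101 of \cite{wan15b}, with the bulk of the work being the verification of Wan's hypotheses and the translation of his formulation into our notation. First I would check that the running assumptions (parallel weight, trivial central character, $p$ split in $K$, ordinarity at both $\fp_i$, and the large image condition (BI)) together with the hypothesis that $\cK/K$ satisfies the conditions specified above (ramified somewhere, with all primes of $p\fn\disc$ split) imply all the ingredient hypotheses of Wan's theorem. In particular, (BI) supplies the residual irreducibility and the existence of the Galois element acting as $-1$ that Wan needs in his lattice construction and Eisenstein congruence argument, and the split-prime condition ensures that the Panchishkin condition on $\rho_{\pi,v}|_{G_{K_\fp}}$ used to set up $\Sigma$-depleted Selmer groups is available.

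Having verified the hypotheses, I would invoke Wan's Theorem 101 to obtain the divisibility
\[ \tilde{L}^{\Sigma}_{f, \cK, 1}\ \Big|\ \Char_{\Lambda_{L}(\Gamma_\cK)}\bigl(X_{\cK_\infty}^{\Sigma}(T(\pi))\bigr) \]
as ideals of $\Lambda_{L}(\Gamma_\cK)$; and, under the additional Gorenstein hypothesis supplied by \cref{thm:gorenstein}(ii), the corresponding integral statement with $L^{\Sigma}_{f, \cK, 1}$ in place of $\tilde{L}^{\Sigma}_{f, \cK, 1}$ over $\Lambda_{\cO}(\Gamma_\cK)$.

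Next I would project from $\Gamma_\cK$ to $\Gamma^1$ via the natural surjection $\Lambda_{\cO}(\Gamma_\cK) \onto \Lambda_{\cO}(\Gamma^1) = e_0\Lambda$; divisibility of characteristic ideals is preserved under such quotients by a pseudo-null ideal (using that $\Gamma_\cK / \Gamma^1$ is a free $\Zp$-module of rank $2$, so the kernel is generated by a regular sequence). Combining the resulting projected divisibility with the preceding proposition (which identifies the image of $L^{\Sigma}_{f, \cK, 1}$ in $\Lambda_{\cO}(\Gamma^1)$ with a unit multiple of $(e_0 L_p(\pi)) \cdot (e_0 L_p(\pi \otimes \varepsilon_\cK))$) and the preceding proposition comparing $X^{\Sigma}_{\cK_\infty^+}(T(\pi))$ with the direct sum $e_0 \Ht^2_{\Iw}(K_\infty, T(\pi)^*) \oplus e_0 \Ht^2_{\Iw}(K_\infty, T(\pi\otimes\varepsilon_\cK)^*)$ (differing only by pseudo-null local $H^2$ contributions at $p$), yields the required divisibility. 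Under the Gorenstein hypothesis the relation between canonical periods (\cref{thm:gorenstein}(iii)) ensures the unit factor lies in $\Lambda_{\cO}(\Gamma^1)^\times$ and not merely $\Lambda_L(\Gamma^1)^\times$, promoting the divisibility from $\Lambda[1/p]$ to $\Lambda$.

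The main obstacle, in my view, is not any individual step but the bookkeeping of conventions: Wan normalises periods, Hecke characters, local conditions at $\Sigma$, and the CM twist in a manner that does not match ours verbatim, so care is needed to certify that his $\tilde{L}^{\Sigma}_{f, \cK, 1}$ really does interpolate the product of central $L$-values of $\pi$ and $\pi \otimes \varepsilon_\cK$ that appears in the preceding proposition (via the functional equation relating $s = k-1$ and $s = 1$), and that his Selmer module $X^{\Sigma}_{\cK_\infty^+}(T(\pi))$ and our \Nek--Selmer cohomology agree up to pseudo-null discrepancies. Once these compatibilities are pinned down, the statement follows formally.
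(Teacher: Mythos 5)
Your overall plan matches the paper's route — invoke Wan's Theorem 101 over $\Gamma_\cK$, then descend to $\Gamma^1$ and translate via the two preceding propositions — but there is a genuine gap at the descent step that your proposal glosses over.

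You assert that ``divisibility of characteristic ideals is preserved under such quotients by a pseudo-null ideal (using that $\Gamma_\cK/\Gamma^1$ is a free $\Zp$-module of rank $2$, so the kernel is generated by a regular sequence)''. This is not true in general: even though the kernel of $\Lambda_\cO(\Gamma_\cK)\to\Lambda_\cO(\Gamma^1)$ is generated by a regular sequence, the characteristic ideal of $X^{\Sigma}_{\cK_\infty^+}(T(\pi))$ is not, without further input, the image in $\Lambda(\Gamma^1)$ of the characteristic ideal of $X^\Sigma_{\cK_\infty}(T(\pi))$. Pseudo-null submodules or the failure of ``no nontrivial finite submodule'' type conditions can spoil such control statements, and this is precisely the notorious subtlety that Skinner--Urban confront in their Corollary 3.28. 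The paper's proof handles it by the $\Sigma'$-enlargement trick: one first replaces $\Sigma$ by a larger set $\Sigma'$ for which, by \cite[\S 2.5]{wan15b}, the characteristic ideal of $X^{\Sigma'}_{\cK_\infty^+}(T(\pi))$ over $\Lambda(\Gamma^1)$ is genuinely the image of the characteristic ideal of $X^{\Sigma'}_{\cK_\infty}(T(\pi))$ over $\Lambda(\Gamma_\cK)$; one then applies Wan's Theorem 101 (which also holds in the $\Sigma'$-depleted form); and finally one observes, as in Skinner--Urban, that the $\Sigma'$-depleted statement implies the $\Sigma$-depleted one. Without the $\Sigma'$-enlargement, the projection step in your argument is unjustified.

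Aside from this, the remaining ingredients you list (verifying Wan's hypotheses, the identification of the image of $L^\Sigma_{f,\cK,1}$ via the preceding proposition, and the comparison of $X^\Sigma_{\cK_\infty^+}$ with the Selmer complexes up to pseudo-null error) are the correct ones and match the paper's approach.
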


   \begin{proof}
    As in \cite[Corollary 3.28]{skinnerurban14}, it suffices to prove the statement after replacing $\Sigma$ with a larger set of primes $\Sigma'$ (replacing the $p$-adic $L$-functions with $\Sigma'$-depleted versions). The results of  \cite[\S 2.5]{wan15b} show that if $\Sigma'$ is sufficiently large, then the characteristic ideal of $X_{\cK_\infty^+}^{\Sigma'}(T(\pi))$ is the image in $\Lambda(\Gamma^1)$ of the corresponding characteristic ideal over the larger extension $\cK_\infty$. The result now follows from Theorem 101 of \cite{wan15b}.
   \end{proof}

   \begin{remark}
    Note that if $k = 2$ we cannot rule out the possibility that $e_0 L_p(\pi) L_p(\pi \otimes \varepsilon_{\cK})$ might vanish identically (although we do not expect this to happen). If this occurs, then the theorem shows that $X_{\cK_\infty^+}^{\Sigma}(T(\pi))$ has positive rank.
   \end{remark}

 \subsection{The Main Conjecture}

  \begin{theorem}
  \label{thm:mainconj}
   Assume the hypotheses of \S \ref{sect:mainthms} and \S \ref{sect:addhyp1} hold. If $k = 2$, suppose also that $w_\pi = +1$. Then we have
   \[ \Char\left(e_0 \Ht^2_{\Iw}(K_\infty, T(\pi)^*)\right)= \Big(e_0 L_p(\pi)\Big)\]
   as ideals of $e_0 \Lambda_L(\Gamma)$. If in addition $\pi$ satisfies the hypotheses of Theorem \ref{thm:gorenstein}, then this holds as an equality of ideals of $e_0 \Lambda_{\cO}(\Gamma)$.
  \end{theorem}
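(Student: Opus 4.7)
My plan is to combine the ``Kato divisibility'' from \cref{thm:withzeta} (applied to two related automorphic representations) with Wan's lower bound, in the style of the $\GL_2/\QQ$ argument of \cite[Theorem 3.29]{skinnerurban14} as already foreshadowed in the note at the start of \cref{sect:wan}. The key observation is that Wan's theorem controls a Selmer group over $\cK_\infty^+$, and by the proposition preceding it, the characteristic ideal of $X_{\cK_\infty^+}^{\Sigma}(T(\pi))$ equals the product of $\Char\bigl(e_0 \Ht^2_{\Iw}(K_\infty, T(\pi)^*)\bigr)$ and $\Char\bigl(e_0 \Ht^2_{\Iw}(K_\infty, T(\pi \otimes \varepsilon_{\cK})^*)\bigr)$.

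First, I would choose the auxiliary CM field $\cK$ with some care: beyond the conditions listed in the definition of $\cK$, I need $\pi \otimes \varepsilon_{\cK}$ to also satisfy the hypotheses of \cref{thm:withzeta} (ordinarity at $p$, condition (BI), non-base-change, trivial central character after the twist is of the same form), and if we want the integral conclusion, then also the hypotheses of \cref{thm:gorenstein} (in particular $\fn$-minimality; since all primes of $\fn$ split in $\cK/K$, the residual representation of $\pi \otimes \varepsilon_{\cK}$ has the same conductor as that of $\pi$, so $\fn$-minimality transfers). Non-vanishing of $L_p(\pi)$ on $\cW^+$ also needs to be checked: for $k \ge 4$ it follows from \cref{prop:nonvanish-padic}, while for $k=2$ and sign $+1$, I would invoke a result of Friedberg--Hoffstein (or Waldspurger) to choose $\cK$ so that $L(\pi \otimes \varepsilon_{\cK}, k/2)$ and a sufficient supply of twists are non-zero, forcing $e_0 L_p(\pi)$ and $e_0 L_p(\pi \otimes \varepsilon_{\cK})$ both to be non-zero-divisors in $e_0 \Lambda_L(\Gamma)$.

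The heart of the argument is then a pincer. Applying \cref{thm:withzeta} with $u = 0$ to both $\pi$ and $\pi \otimes \varepsilon_{\cK}$ gives
\[
 \varpi^{\mu_1} \Char\bigl(e_0 \Ht^2_{\Iw}(K_\infty, T(\pi)^*)\bigr) \cdot
 \varpi^{\mu_2} \Char\bigl(e_0 \Ht^2_{\Iw}(K_\infty, T(\pi \otimes \varepsilon_{\cK})^*)\bigr)
 \ \Bigm|\ e_0 L_p(\pi) \cdot L_p(\pi \otimes \varepsilon_{\cK}),
\]
with $\mu_1, \mu_2 \ge 0$. Meanwhile Wan's theorem (together with the proposition expressing $X_{\cK_\infty^+}^\Sigma$ as the above direct product up to pseudo-null error) gives the reverse divisibility of the corresponding products. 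Chaining these two divisibilities forces $\mu_1 = \mu_2 = 0$ and converts both into equalities of products. Since $e_0 L_p(\pi)$ and $e_0 L_p(\pi \otimes \varepsilon_{\cK})$ are non-zero-divisors and each factor divides its matching $L$-function individually, the equality of products separates into the two individual equalities by cancellation in the UFD $e_0 \Lambda_L(\Gamma)$, yielding the claimed $\Char = (e_0 L_p(\pi))$ rationally; the integral version then follows verbatim once we upgrade Wan's theorem using the Gorenstein hypothesis of \cref{thm:gorenstein}.

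The main obstacle will be the auxiliary step of producing $\cK$: we must simultaneously arrange that $\varepsilon_{\cK}$ preserves all of the (BI), non-base-change, and minimal-ramification hypotheses, that the primes above $p, \fn, \disc$ split in $\cK/K$ (so that Wan's setup applies), and -- most delicately, in the $k=2$, $w_\pi = +1$ case -- that $L_p(\pi \otimes \varepsilon_{\cK})$ is not identically zero on the trivial-character component. The non-base-change and (BI) conditions are open in the choice of $\cK$ and are easily arranged; the non-vanishing is where I expect to rely most heavily on an existing analytic input, namely the Friedberg--Hoffstein type result that selects $\cK$ such that the root number of $\pi \otimes \varepsilon_{\cK}$ is $+1$ and the central value does not vanish. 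With $\cK$ in hand the rest of the argument is formal manipulation of characteristic ideals.
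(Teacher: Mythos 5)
Your proposal follows essentially the same argument as the paper's: apply the Kato divisibility (\cref{thm:withzeta}) to both $\pi$ and $\pi\otimes\varepsilon_{\cK}$, chain with Wan's opposite inequality via the proposition identifying $\Char\,X_{\cK_\infty^+}^{\Sigma}$ with the product of the two $e_0\Ht^2_{\Iw}$ characteristic ideals, and invoke Waldspurger's non-vanishing theorem in the $k=2$, $w_\pi=+1$ case; your remarks on verifying the hypotheses of \cref{thm:withzeta} and \cref{thm:gorenstein} for the twist $\pi\otimes\varepsilon_{\cK}$ (using that all primes of $p\fn\disc$ split in $\cK/K$, so $\varepsilon_{\cK}$ is unramified there and both (BI) and $\fn$-minimality transfer) correctly fill a step the paper leaves implicit. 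One small refinement: you need not ``force'' $e_0 L_p(\pi)\ne 0$ — in the $k=2$ case one can only guarantee $e_0 L_p(\pi\otimes\varepsilon_{\cK})\ne 0$ via Waldspurger, but if $e_0 L_p(\pi)=0$ the product equality in the domain $e_0\Lambda$ then already forces $\Char(e_0\Ht^2_{\Iw}(K_\infty,T(\pi)^*))=(0)$, which is the asserted conclusion; the paper encodes this by phrasing the outcome of the squeeze as a dichotomy (``either $e_0L_p^{\alg}(\pi)=e_0L_p(\pi)$ up to units, or $e_0L_p(\pi\otimes\varepsilon_{\cK})=0$'') rather than a simultaneous cancellation.
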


  \begin{proof}
   Let us write $L_p^{\alg}(\pi)$ for a characteristic element of the $\Ht^2_{\Iw}$ (well-defined up to units). Applying Theorem \ref{thm:withzeta} to both $\pi$ and $\pi \otimes \varepsilon_{\cK}$, we have
   \[ e_0 L_p^{\alg}(\pi) \mid e_0 L_p(\pi) \qquad\text{and}\qquad e_0 L_p^{\alg}(\pi \otimes \varepsilon_{\cK}) \mid e_0 L_p(\pi \otimes \varepsilon_{\cK}).\]
   However, we also have
   \[e_0  L_p(\pi) L_p(\pi \otimes \varepsilon_{\cK}) \mid e_0  L_p^{\alg}(\pi)L_p^{\alg}(\pi \otimes \varepsilon_{\cK})\]
   by Wan's theorem. (Here these are to be understood as divisibilities in the Iwasawa algebra over $\cO$ if the hypotheses of Theorem \ref{thm:gorenstein} hold, and otherwise after inverting $p$.)

   Taken together, these formulae imply that one of the following must hold:
   \begin{itemize}
   \item $e_0 L_p^{\alg}(\pi) =  e_0 L_p(\pi)$ up to units in $e_0 \Lambda$;
   \item $e_0 L_p(\pi \otimes \varepsilon_{\cK}) = 0$.
   \end{itemize}

   The latter cannot occur if $k > 2$ by Proposition \ref{prop:nonvanish-padic}. If $k = 2$ and the global root number of $L(\pi, s)$ is +1, then by Waldspurger's theorem \cite[Theorem 4]{waldspurger91} we may suppose that $\cK$ is chosen such that $L(\pi \otimes \varepsilon_{\cK}, 1) \ne 0$, which also implies $e_0 L_p(\pi \otimes \varepsilon_{\cK}) \ne 0$.
  \end{proof}

\section{Applications to non-vanishing}
 \label{sect:nonvanish}
 We note here a by-product of our method, which is the following ``horizontal'' non-vanishing theorem for central $L$-values. Let $K$, $\pi$ and $p$ be as in \S\ref{sect:mainthms}.

 \subsection{Non-vanishing in characteristic 0}

 \begin{proposition}
   Suppose $w$ is even, and let $\chi$ be a finite-order character of $\Zp^\times$. Suppose equality holds in Theorem \ref{thm:BK} for $\chi$ and $j = \tfrac{w-2}{2}$, i.e. for this $j$ we have
    \[ \ord_{j + \chi} L_p(\pi) = \dim H^1_{\mathrm{f}}(\QQ, V(\pi)^*(-j-\chi)).\]
    \item There exists an $m \in \cR$ and $\eta: \Delta_m \to \Qb^\times$ such that $L(\pi \otimes \eta^{-1}\chi^{-1}, \tfrac{w}{2}) \ne 0$.
  \end{proposition}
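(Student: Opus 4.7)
The plan is to argue by contradiction via the explicit reciprocity law of \cref{sect:constructES} and the Kolyvagin-style divisibility of \cref{cor:rank0bound}. Set $j=(w-2)/2$ and $\eps=(-1)^j\chi(-1)$. By Proposition~\ref{prop:noexceptionalzero}, ordinarity guarantees that the interpolation factors $\cE_\fp(\pi\otimes\eta^{-1},\chi,j)$ entering the definition of $\cL_p^\eps(\pi;\Delta_m)$ are never zero, so the claim is equivalent to the existence of $m\in\cR$ and a character $\eta$ of $\Delta_m$ with $\cL_p^\eps(\pi;\Delta_m)(\eta,j+\chi)\ne 0$. Writing $d:=\ord_{j+\chi}L_p(\pi)$, the case $d=0$ is immediate (take $m=1$, $\eta=1$), so I assume $d\ge 1$ and, for contradiction, that $\cL_p^\eps(\pi;\Delta_m)(\eta,j+\chi)=0$ for every $m\in\cR$ and every $\eta$.

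Let $u=j\bmod(p-1)$ and let $\fQ\subset e_u\Lambda_\cO(\Gamma)$ be the height-one prime associated to the character $j+\chi$. The vanishing assumption is equivalent to the assertion that $e_u\cL_p^\eps(\pi;\Delta_m)$ lies in $\fQ\cdot(e_u\Lambda(\Gamma)\otimes L[\Delta_m])$ for every $m$. By the explicit reciprocity law identifying $\cL_p^\eps(\pi;\Delta_m)$ (up to a non-zero scalar) with $\LPR_{\Delta_m}(\loc_{\fp_2}\bz_m^\eps(\gamma_{-\eps}))$, this forces the localisation at $\fp_2$ of the Euler system to satisfy the $k=1$ divisibility hypothesis of Theorem~\ref{thm:selmerbound}, with the nested filtration $T^{++}\subset T^+$ converting the $\square^{(\fp_1)}$-local condition at $\fp_2$ into the Greenberg condition $\square^{(p)}$.

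Applying \cref{cor:rank0bound} with the Coleman-type map $\lambda$ that extracts $L_p(\pi)$ from $\bz_1(\pi)$ (as in the proof of \cref{thm:withzeta}), the extra factor of $\fQ$ yields the sharpened divisibility
\[\fQ\cdot\mathrm{char}_{e_u\Lambda}\bigl(e_u\widetilde H^2_{\Iw}(K_\infty,T(\pi)^*;\square^{(p)})\bigr)\Bigm|e_u L_p(\pi).\]
A control-theorem argument for Selmer complexes identifies the $\fQ$-adic order of $\mathrm{char}\,e_u\widetilde H^2_{\Iw}$ with $\dim H^1_f(\QQ,V(\pi)^*(-j-\chi))$: at the Deligne-critical character $j+\chi$ the Greenberg local condition $\square^{(p)}$ at $\fp_2$ coincides with Bloch--Kato, and the local $H^2$ errors are finite by \cref{prop:nolocalinvts}. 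Under the hypothesis this dimension is $d$, so combining yields $\ord_{j+\chi}L_p(\pi)\ge d+1$, contradicting $\ord_{j+\chi}L_p(\pi)=d$.

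The main technical obstacle is this final step relating the $\fQ$-valuation of $\mathrm{char}\,e_u\widetilde H^2_{\Iw}$ to $\dim H^1_f$; while morally a direct consequence of Poitou--Tate duality and the coincidence of the Greenberg and Bloch--Kato local conditions at Deligne-critical twists, making the identification precise requires careful tracking of the finite local-condition error terms in the specialisation of the Iwasawa Selmer complex from $K_\infty$ down to $\QQ$.
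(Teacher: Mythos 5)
Your proposal is correct and takes essentially the same route as the paper: assume for contradiction that all twisted central $L$-values vanish, observe (via the non-vanishing of the Euler factors by \cref{prop:noexceptionalzero} and the explicit reciprocity law) that this forces the $k\ge 1$ divisibility hypothesis of \cref{thm:selmerbound} to hold at the prime $\fQ$ corresponding to $j+\chi$, and deduce a strict improvement of the bound in \cref{thm:BK}, contradicting the assumed equality. The paper states this in two sentences; you have spelled out the intermediate steps (the equivalence with non-vanishing of the equivariant $p$-adic $L$-function, the invocation of \cref{cor:rank0bound}, and the control-theorem descent from $K_\infty$ to $\QQ$ via the coincidence of the Greenberg and Bloch--Kato conditions at a Deligne-critical twist), all of which are exactly what the paper implicitly relies on in its proof of \cref{thm:BK}.
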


  \begin{proof}
   If $L(\pi \otimes \eta^{-1}\chi^{-1}, 1 + j) = 0$ for all $m$ and $\eta$, then we can apply \cref{thm:selmerbound} with $\fQ$ the ideal corresponding to $j + \chi$, and some $k \ge 1$. This gives a strict inequality in Theorem \ref{thm:BK}, which contradicts the hypothesis.
  \end{proof}

  In particular, if the stronger hypotheses of \S \ref{sect:addhyp1} are satisfied, and $e_0 L_p(\pi)$ is not identically 0 (which is automatic if $k > 2$), then the hypothesis of the theorem follows from \cref{thm:mainconj}, so the non-vanishing result follows for every $\chi$ factoring through $1 + p\Zp$.

  \begin{remark} \
   \begin{enumerate}
   \item The assumption that $e_0 L_p(\pi) \ne 0$ is somewhat inconvenient: we are, essentially, assuming one kind of non-vanishing statement in order to deduce another. It may be possible to remove this assumption by considering leading terms along a Hida family as in \cref{sect:axiom}. We hope to return to this in a future paper.
   \item It is already known, by results of Rohrlich \cite{rohrlich89}, that one can always find a twist of $\pi$ by a Hecke character of $K$ such that the central $L$-value is non-zero. However, our result gives non-vanishing for some twist in the much sparser set of characters factoring through the norm map to $\QQ$.
   \end{enumerate}
  \end{remark}

  \subsection{Applications to $\mu$-invariants} Let $u \in \ZZ / (p-1) \ZZ$, and suppose $e_u L_p(\pi) \ne 0$.

  \begin{proposition}
   Suppose that the Iwasawa $\mu$-invariants of $e_u \Ht^2_{\Iw}(K_\infty, T(\pi)^*; \square^{(p)})$ and of $e_u L_p(\pi)$ are equal. Then we have $\mu_{\min, u} = 0$; that is, there exists some $m \in \cR$ such that $e_u L_p(\pi, \Delta_m)$ is non-zero modulo $\varpi$.
  \end{proposition}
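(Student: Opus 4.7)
The plan is to read the desired non-vanishing directly off of Theorem \ref{thm:withzeta}, by comparing $\mu$-invariants on the two sides of the divisibility
\[ \varpi^{\mu_{\min, u}} \cdot \mathrm{char}_{e_u \Lambda}\bigl(e_u \Ht^2_{\Iw}(K_\infty, T(\pi)^*; \square^{(p)})\bigr) \;\Bigm|\; e_u L_p(\pi) \]
provided by that theorem. Recall that the Iwasawa $\mu$-invariant is additive on products and monotonic under divisibility in the (semi-local) Iwasawa algebra $e_u \Lambda$, and that the $\mu$-invariant of the characteristic ideal of a torsion module agrees with the $\mu$-invariant of the module itself. Applying these observations to the above divisibility would immediately yield the numerical inequality
\[ \mu_{\min, u} + \mu\bigl(e_u \Ht^2_{\Iw}(K_\infty, T(\pi)^*; \square^{(p)})\bigr) \;\le\; \mu\bigl(e_u L_p(\pi)\bigr). \]

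Under the hypothesis of the proposition the two $\mu$-invariants on the right-hand side of this inequality coincide, which forces $\mu_{\min, u} \le 0$. Since $\mu_{\min, u} \ge 0$ always holds by Definition \ref{def:muinv}, we would conclude $\mu_{\min, u} = 0$. Unpacking the definition of $\mu_{\min, u}$ as an infimum, this says there exists some $m \in \cR$ with $\mu_{m,u}(\pi) = 0$, which is by construction the assertion that $e_u \cL_p(\pi, \Delta_m)$ has nonzero reduction modulo $\varpi$; pairing with the bases $v_{\pm}$ gives the statement of the proposition.

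There is no real obstacle here: once Theorem \ref{thm:withzeta} is in hand the argument is a one-line bookkeeping of $\mu$-invariants. One natural way to view the content of the proposition is that the ``error factor'' $\varpi^{\mu_{\min, u}}$ appearing in Theorem \ref{thm:withzeta} necessarily collapses to $\varpi^{0} = 1$ as soon as one has independently verified that the $\mu$-invariants of the algebraic object $e_u \Ht^2_{\Iw}(K_\infty, T(\pi)^*; \square^{(p)})$ and of the analytic object $e_u L_p(\pi)$ match. This should be seen as a complement to Theorem \ref{thm:mainconj}, where (under its additional hypotheses) the full Main Conjecture is proved, automatically forcing equality of $\mu$-invariants and hence $\mu_{\min, u} = 0$ unconditionally.
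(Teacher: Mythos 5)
Your proposal is correct and takes essentially the same route as the paper. The paper's one-line proof applies \cref{thm:selmerbound} with $\fQ$ the special fibre of $e_u\Lambda$, which yields exactly the divisibility already recorded in \cref{thm:withzeta}; the comparison of $\mu$-invariants on the two sides that you carry out is precisely the intended argument.
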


  \begin{proof}
   This follows by the same argument as above with $\fQ$ taken to be the special fibre of $e_u \Lambda$.
  \end{proof}

  As before, if we impose the stronger hypotheses of \S \ref{sect:addhyp1} and also of Theorem \ref{thm:gorenstein}, and $u = 0$, then the equality of $\mu$-invariants is part of Theorem \ref{thm:mainconj}.

  \begin{remark}
   Note that it does \emph{not} follow from the theorem that $L_p(\pi \otimes \eta)$ is non-zero modulo $\varpi$ for some character $\eta$ of $\Delta_m$, since $\cO[\Delta_m]$ is very far from being the direct sum of its character components.
  \end{remark}

  \section{A numerical example}

   We illustrate our results with the first example of an elliptic curve over a real quadratic field in the LMFDB database \cite{lmfdb}: the elliptic curve $A$ over $K = \QQ(\sqrt{5})$ given by
   $ y^2 + x y + \phi y = x^{3} + \left(\phi + 1\right) x^{2} + \phi x$, where $\phi = \tfrac{1+ \sqrt{5}}{2}$. The LMFDB entry for this curve, at \url{https://www.lmfdb.org/EllipticCurve/2.2.5.1/31.1/a/1}, shows that its conductor is the prime $\fq  =(5\phi - 2)$ of norm 31. So for any prime $p\ne 31$ congruent to $\pm 1 \bmod 5$, $p$ will split in $K$ and $A$ will have good reduction at the primes above $p$. Moreover, $A$ has rank 0, and
   \[ L(A, 1) \sim 0.3599289594\dots.\]

   The database also confirms that $A$ is not a $\QQ$-curve, and that $\rho_{A, p}$ is surjective onto $\GL_2(\Zp)$ for all $p \ne 2$. Since the only nontrivial proper normal subgroup of $\SL_2(\mathbf{F}_p)$ for $p > 3$ is $\{\pm 1\}$, if $p  > 3$ then we must either have
   \[ (\rho_{A, p} \times \rho_{A^{\sigma}, p})(G_K) = \{ (x, y) \in \GL_2(\Zp): \det(x) = \det(y)\}, \]
   or
    \[ (\rho_{A, p} \times \rho_{A^{\sigma}, p})(G_K) \subseteq \{ (x, y) \in \GL_2(\Zp): x = \pm y \bmod p\}. \]
   If $p \ne 59$ and $\gamma$ is the Frobenius at the prime $\fq = 7\phi - 2$ of norm 59, then we have $\operatorname{tr} \rho_{A, p}(\gamma) = a_{\fq}(A) = 12$, but $\operatorname{tr} \rho_{A^{\sigma}, p}(\gamma) = a_{\sigma(\fq)}(A) = -4$. Since the only prime dividing $12 \pm (-4)$ is 2, we conclude that $\rho_{A, p} \times \rho_{A^{\sigma}, p}$ is surjective for all primes $p > 2$ (except possibly $p = 59$, but we can eliminate this possibility by performing a similar computation with the primes above 61). So $\rho_{A, p} \times \rho_{A^{\sigma}, p}$ has full image for all primes $p > 2$.

   To verify the additional ``minimality'' hypothesis of Theorem \ref{thm:gorenstein}, we need to check that $\bar{\rho}_{A, p}$ is ramified at the prime $\fq$. The primes where this fails are those which divide the valuation of the $j$-invariant at $\fq$; since the $j$-invariant has valuation $-1$, there are no such primes, and hence this condition is automatically satisfied for all $p$.

   Hence the Iwasawa main conjecture for $A$ over the cyclotomic $\Zp$-extension of $K$ holds for all primes $p \ge 7$ that split in $K$ and such that $A$ has good ordinary reduction above $p$. This includes all primes $p < 200$ with $p = \pm 1 \bmod 5$, except 71, 79 and 191.

 \appendix

 \section{Beilinson--Flach elements revisited}
  \label{sect:appendix}

  \subsection{Setting} In this brief appendix we record a consequence of the ``leading term argument'' of \cref{sect:core,sect:axiom} above, which is a refinement of our earlier results with Kings \cite{KLZ17} on the arithmetic of Rankin--Selberg convolutions. For simplicity we state the results for Artin twists of elliptic curves, although the arguments apply more generally to Artin twists of any modular form of weight $\ge 2$.

  Let $A$ be a non-CM elliptic curve over $\QQ$, and $\rho$ a 2-dimensional odd irreducible Artin representation, with the conductors $N_A$ and $N_\rho$ coprime. Bertolini, Darmon and Rotger have shown in \cite{BDR-BeilinsonFlach2} that the equivariant BSD conjecture holds for $E$ and $\rho$ in analytic rank 0: that is, we have the implication
  \[ L(A, \rho^*, 1) \ne 0\quad \Longrightarrow\quad \Hom_{\Gal(F / \QQ)}\left( \rho, A(F) \otimes \Qb\right) = 0,\]
  where $F$ is the fixed field of the kernel of $\rho$. Our aim here is to give bounds for the $\rho$-part of the Tate--Shafarevich group.

  \subsection{The result} Let $A$ and $\rho$ be as above, and assume that $L(A, \rho^*, 1) \ne 0$.

   We let $F$ be the coefficient field of $\rho$; and we choose a rational prime $p$ and a prime $v \mid p$ of $F$, and let $L$ be the completion of $F$ at $v$, with ring of integers $\cO$. We suppose $v$ is chosen such that:
  \begin{itemize}
   \item $p \ge 5$;
   \item $p \nmid N_A N_\rho$;
   \item $A$ is ordinary at $p$;
   \item the $\cO$-linear Galois representation $T = T_p(A) \otimes \rho$ satisfies the big-image hypothesis 11.2.1 of \cite{KLZ17}.
  \end{itemize}

  These are the same hypotheses as in \cite[Theorem 11.7.4]{KLZ17} with one crucial difference: we are \emph{not} supposing here that the eigenvalues of $\rho(\operatorname{Frob}_p)$ be distinct mod $v$. So our present hypotheses apply for all but finitely many ordinary primes, while the the hypotheses of \emph{op.cit.} rule out a positive-density set of primes (those which split completely in the subfield of $F$ cut out by the projective representation $\operatorname{Ad}(\rho)$) for which $\rho(\operatorname{Frob}_p)$ is scalar.

  We define
  \[ \Sha_{v^\infty}(A, \rho) = \Hom_{\Gal(F / \QQ)}\left(\rho, \Sha_{p^\infty}(A/F) \otimes \cO\right).
  \]

  \begin{theorem}
   The group $\Sha_{v^\infty}(A, \rho)$ is finite. If $p$ does not divide $[F : \QQ]$, its order is bounded by
   \[
    \operatorname{length}_{\cO} \Sha_{v^\infty}(A, \rho) \le \operatorname{ord}_v\left(\frac{L(E \times \rho^*, 1)}{\Omega^+_A \Omega^-_A G(\det \rho^*)}\right).
   \]
   In particular, $\Sha_{v^\infty}(A, \rho)$ is trivial for all but finitely many ordinary primes $v$.
  \end{theorem}

  \begin{proof}
   Since we know that the $\rho$-part of the Mordell--Weil group is trivial, there is a natural map
   \[ H^1_{\mathrm{f}}(\QQ, A[p^\infty] \otimes \rho^*) = \Hom_{\Gal(F / \QQ)}(\rho, \operatorname{Sel}_{p^\infty}(A/F)\otimes \cO) \into \Sha_{v^\infty}(A, \rho). \]
   The cokernel of this map injects into $\operatorname{Ext}^1_{\Gal(F/\QQ)}\left(\rho, A(F) \otimes L/\cO\right)$; so it is finite, and trivial if $p$ does not divide $[F : \QQ]$. So it suffices to bound $H^1_{\mathrm{f}}(\QQ, A[p^\infty] \otimes \rho^*)$.

   Let $g$ be the weight 1 newform corresponding to $\rho^*$. This must have at least one $p$-stabilisation $g_\alpha$ (extending $L$ if necessary). By results of Hida and Wiles recalled in \cite[\S 7.6]{KLZ17}, $g_{\alpha}$ defines a point on the $\GL_2$ ordinary eigencurve $\mathcal{C}(N_\rho)$. As in \S 7.5 of op.cit., the space $\mathcal{C}(N_\rho)$ has finitely many irreducible components (``branches'') $\mathcal{C}_{\mathbf{a}}$, corresponding to the minimal primes $\mathbf{a}$ of the ordinary Hecke algebra. In general there can be several components passing through $g_{\alpha}$, and the irreducible components need not be smooth there; but we shall choose a branch $\mathbf{a}$ passing through $g_{\alpha}$, and a point $x \in \tilde{\mathcal{C}}_{\mathbf{a}}(L)$ lifting $g_\alpha$ (after replacing $L$ by a finite extension if necessary). The analytic generic fibre of $\tilde{\mathcal{C}}_{\mathbf{a}}$ is a smooth rigid-analytic curve, so we can choose a neighbourhood $\Omega \ni x$ which is an affinoid disc.

   Hida theory gives us a rank 2 $\cO(\Omega)$-module $\mathcal{M}(\mathbf{a})$ with a $G_{\QQ}$-action, interpolating the Galois representations of the specialisations of the Hida family $\mathfrak{a}$. Exactly as in \S \ref{sect:ESYfam}, we can factor out the torsion in order to obtain a family of free modules, and by comparing traces, the fibre of this family at $x$ is isomorphic to $\rho$. We let $\cM$ be the 4-dimensional family given by tensoring $\mathcal{M}(\mathbf{a})^*$ with $V_p(A) = T_p(A)[1/p]$. This gives a family of representations specialising to $V_p(A) \otimes \rho^*$, and it has subrepresentations
   \[ \cM^{++} = \mathcal{F}^+ V_p(A) \otimes \mathcal{M}(\mathbf{a})^*, \qquad \cM^+ = \cM^{++} + V_p(A) \otimes \mathcal{F}^+\mathcal{M}(\mathbf{a})^* \]
   of rank 2 and rank 3 respectively, where $\mathcal{F}^+\mathcal{M}(\mathbf{a})^*$ and $\mathcal{F}^+ V_p(A)$ are rank 1 $G_{\Qp}$-stable submodules with unramified quotient.

   We now apply the theory of \cref{sect:axiom} to $\mathcal{M}$, taking the Euler system $(c_m)$ to be the Beilinson--Flach elements associated to $f_A$ and the family through $g_\alpha$ as in \cite{KLZ17}. By construction, these classes land in $\mathcal{M}^+$ locally at $p$, and the reciprocity law for Beilinson--Flach elements (Theorem B of \emph{op.cit.}) shows that their images in the quotient $\mathcal{M}^+ / \mathcal{M}^{++}$ are governed by the $L$-values of Dirichlet-character twists of $A \times \rho$. Exactly as in Theorem \ref{thm:withzeta} above, we deduce that the dual Selmer group for $T_p(A) \otimes \rho^*$ over the cyclotomic extension is bounded above by the $p$-adic $L$-function $L_p(A \times \rho^*)$, where we can form our $p$-adic $L$-functions using any period which makes the equivariant $p$-adic $L$-functions $L_p(A \times \rho^*, \Delta_m)$ be $p$-adically integral for all $m$. It follows readily from Hida's construction of $p$-adic Rankin--Selberg $L$-functions that the period $\Omega^+_A \Omega^-_A G(\det \rho^*)$ has this property. By a standard descent computation, we deduce the above bound for the Selmer group over $\QQ$.
  \end{proof}

\newlength{\bibitemsep}\setlength{\bibitemsep}{.2\baselineskip plus .05\baselineskip minus .05\baselineskip}
\newlength{\bibparskip}\setlength{\bibparskip}{0pt}
\let\oldthebibliography\thebibliography
\renewcommand\thebibliography[1]{%
  \oldthebibliography{#1}%
  \setlength{\parskip}{\bibitemsep}%
  \setlength{\itemsep}{\bibparskip}%
}
\newcommand{\noopsort}[1]{\relax}

\providecommand{\bysame}{\leavevmode\hbox to3em{\hrulefill}\thinspace}
\providecommand{\MR}[1]{%
 MR \href{http://www.ams.org/mathscinet-getitem?mr=#1}{#1}.
}
\providecommand{\href}[2]{#2}
\newcommand{\articlehref}[2]{\href{#1}{#2}}

\end{document}